\documentclass[11pt,twoside]{article}
\usepackage{amsmath}
\usepackage{amssymb}
\usepackage{amsthm}
\usepackage{mathrsfs}
\usepackage{simplewick}
\usepackage{times}
\usepackage{color}
\usepackage{hep}
\usepackage{leftidx}
\usepackage{graphicx}
\usepackage{float}
\usepackage{enumerate}
\usepackage{titletoc}
\usepackage{epstopdf}
\usepackage{ulem}
\usepackage{appendix}
\usepackage[hidelinks]{hyperref}
\usepackage{enumitem}

\allowdisplaybreaks

\DeclareSymbolFont{EulerExtension}{U}{euex}{m}{n}
\DeclareMathSymbol{\euintop}{\mathop}{EulerExtension}{"52}
\DeclareMathSymbol{\euointop}{\mathop}{EulerExtension}{"48}

\def\g{\gamma}
\def\al{\alpha}
\def\be{\beta}
\def\de{\delta}

\def\o{\omega}

\def\na{\nabla}

\def\Si{\Sigma}
\def\th{\theta}

\def\ve{\varepsilon}
\def\ve0{\varepsilon_{0}}
\def\vp{\varphi}

\def \no{\nonumber}

\def\D{\mathscr{D}}

\def\f{\frac}

\def\Lies{\slashed{\mathcal{L}}}

\def\Lr{\mathring{L}}

\def\p{\partial}
\def\pis{\slashed{\pi}}

\def\dps{\displaystyle}
\def\dl{\delta}

\begin{document}
\footskip=0pt
\footnotesep=2pt
\let\oldsection\section
\renewcommand\section{\setcounter{equation}{0}\oldsection}
\renewcommand\thesection{\arabic{section}}
\renewcommand\theequation{\thesection.\arabic{equation}}
\newtheorem{claim}{\noindent Claim}[section]
\newtheorem{theorem}{\noindent Theorem}[section]
\newtheorem{lemma}{\noindent Lemma}[section]
\newtheorem{proposition}{\noindent Proposition}[section]
\newtheorem{definition}{\noindent Definition}[section]
\newtheorem{remark}{\noindent Remark}[section]
\newtheorem{corollary}{\noindent Corollary}[section]
\newtheorem{example}{\noindent Example}[section]

\title{Blowup of smooth solutions to 2D quasilinear wave equations
with higher-order null conditions and short pulse initial data}
\author{Ding
	Bingbing$^{1}$, \quad Xin Zhouping$^{2}$, \quad Yin
	Huicheng$^{1,}$\footnote{Ding Bingbing (13851929236@163.com, bbding@njnu.edu.cn) and Yin Huicheng (huicheng@nju.edu.cn, 05407@njnu.edu.cn)
are supported by the NSFC (No.12331007, No.12671259). Xin Zhouping (zpxin@ims.cuhk.edu.hk)
is partially supported by the Zheng Ge Ru Foundation, Hong Kong RGC Earmarked Research Grants
CUHK-14301421, CUHK-14300819, CUHK-14302819, CUHK-14301023, Basic and Applied Basic Research Foundations of Guangdong Province 2020131515310002, and the Key Projects of National Natural Science Foundation of China (No.12131010, No.11931013).}\vspace{0.5cm}\\
	\small 1.  School of Mathematical Sciences and IMS, Nanjing Normal University,  Nanjing, 210023, China.
	\\
	\vspace{0.5cm}
	\small 2. Institute of Mathematical Sciences, The Chinese University of Hong Kong, Shatin, NT, Hong Kong.}
\date{}
\maketitle


\begin{abstract}
This paper investigates the 2D quasilinear wave equation
$\dps\sum_{\alpha,\beta=0}^{2}
g^{\alpha\beta}(\partial\phi)
\partial_{\alpha\beta}^{2}\phi=0$ with the short pulse initial data
$(\phi, \p_t\phi)|_{t=1}=(\delta^{2-\varepsilon_0}\phi_0(\f{r-1}{\delta},\omega),
\delta^{1-\varepsilon_0}\phi_1(\f{r-1}{\delta},\omega))$, where $0<\varepsilon_0<1$ is a fixed
constant and $\delta>0$ is a small parameter. In our previous work [Ding Bingbing, Xin Zhouping, Yin Huicheng,
Sci. China Math., Vol. 69 (2026), No. 3,
667-736], we have established the global existence of a smooth solution $\phi$ under
the $k$-th order null condition  ($k\ge 2$) when the $(k+1)$-th order null condition fails, provided that $0<\varepsilon_0<\varepsilon_k^*=\frac{k}{k+1}$.
As a natural continuation, the present paper demonstrates that
for $\varepsilon_0\ge \varepsilon_k^*$, the smooth solution  $\phi$ blows up in finite time.
Consequently, the threshold
$\varepsilon_0=\varepsilon_k^*$  is shown to be the critical exponent that precisely characterizes the global existence and finite-time blowup
for smooth short pulse solutions to the aforementioned 2D quasilinear wave equation, under the assumption that the
$k$-th order null condition holds while the
$(k+1)$-th order null condition does not.  A key novelty of our approach lies in deriving sharp orders of
$\delta$  for almost all quantities in both energy and pointwise estimates. This is achieved by utilizing the higher-order null structure in conjunction with a new good unknown recently introduced by us 
and the transport equation for the difference between  this good unknown and its corrector.
\end{abstract}
\vskip 0.2 true cm
{\bf Keywords:} {Quasilinear wave equation, short pulse initial data, higher-order null condition,

\qquad\quad  critical exponent, good unknown, corrector}

\tableofcontents
\section{Introduction}\label{in}

\subsection{Main result}

Consider the 2D quasilinear wave equation
\begin{equation}\label{quasi}
\sum_{\alpha,\beta=0}^{2}
g^{\alpha\beta}(\partial\phi)
\partial_{\alpha\beta}^{2}\phi=0,
\end{equation}
where $(x^0,x^1,x^2)=(t,x^1,x^2)\in [1,\infty)\times\mathbb R^2$,
$\partial=(\partial_0,\partial_1,\partial_2)=(\p_{x^0}, \p_{x^1}, \p_{x^2})$, and
$g^{\alpha\beta}(\partial\phi)=g^{\beta\al}(\p\phi)$ are smooth functions of
$\partial\phi$. Moreover, for small $\partial\phi$,
it is assumed that
\begin{equation}\label{gh}
\begin{aligned}
g^{\alpha\beta}(\partial\phi)
=&\,m^{\alpha\beta}
+
\sum_{\gamma_1,\cdots,\gamma_k=0}^{2}
g^{\alpha\beta,\gamma_1,\cdots,\gamma_k}
\partial_{\gamma_1}\phi\cdots
\partial_{\gamma_k}\phi
\\
&+
\sum_{\gamma_1,\cdots,\gamma_{k+1}=0}^{2}
h^{\alpha\beta,\gamma_1,\cdots,\gamma_{k+1}}
\partial_{\gamma_1}\phi\cdots
\partial_{\gamma_{k+1}}\phi
+
O(|\partial\phi|^{k+2}),
\end{aligned}
\end{equation}
where $k\ge 2$ is a fixed integer, $m^{\alpha\beta}$ denotes the Minkowski metric (i.e., $m^{00}=-1$,
$m^{ii}=1$ for $1\le i\le 2$, $m^{\al\beta}=0$ for $\al\neq\beta$),
the constants $g^{\al\beta,
\gamma_1,\cdots,\gamma_k}$ and $h^{\alpha\beta,\gamma_1,\cdots,\gamma_{k+1}}$
are symmetric for the permutations of
$(\al,\beta)$, $(\gamma_1,\cdots,\gamma_k)$ and  $(\gamma_1,\cdots,\gamma_{k+1})$, respectively.
Note that the equation \eqref{quasi} with \eqref{gh} includes many interesting physical models, such as the
2D relativistic membrane equation $\p_t\big(\f{\p_t\phi}{\sqrt{1-(\p_t\phi)^2+|\na_x\phi|^2}}\big)
-div\big(\f{\nabla_x\phi}{\sqrt{1-(\p_t\phi)^2+|\na_x\phi|^2}}\big)=0$ with $\na_x\phi=(\p_1\phi,\p_2\phi)$,
the 2D nonlinear membrane equation $\p_t^2\phi-div\big(\f{\nabla_x\phi}{\sqrt{1+|\na_x\phi|^2}}\big)=0$ and others (see \cite{Ding4}
for more details).

We impose the following short pulse initial data for \eqref{quasi}:
\begin{align}\label{id}
\phi|_{t=1}=\delta^{2-\varepsilon_0}\phi_0(\f{r-1}{\delta},\omega),\
\p_t\phi|_{t=1}=\delta^{1-\varepsilon_0}\phi_1(\f{r-1}{\delta},\omega),
\end{align}
where $\delta>0$ is a small parameter, $0<\varepsilon_0<1$ is a fixed constant, $r=|x|=\sqrt{(x^1)^2+(x^2)^2}$,
$\o=(\o_1,\o_2)=\dps\f{x}{r}\in\mathbb S$, $(\phi_0,\phi_1)(\sigma,\o)$ are smooth functions
supported in $(-1,0)$ for the variable $\sigma$.
The short pulse initial data \eqref{id} with $\varepsilon_0=\f12$ were first
introduced by D. Christodoulou \cite{Ch2}
for studying the formation of black holes in general relativity
(see also \cite{K-R}). It is worth noting that although the amplitude of the initial data in \eqref{id} is small,
their derivatives are large due to the small concentration scale $\delta$.
This differs from the usual small data solution problem for the quasilinear wave equations,
where higher-order derivatives of the initial data are small
(see \cite{Ali1}-\cite{Ali2}, \cite{H} and \cite{Li-Zhou}).

As in \cite{Ding4} (also see \cite{Chen-Xie}, \cite{L-Y}, \cite{MPY}-\cite{M-Y} and \cite{Wang}-\cite{Wang-1}),
we impose the following outgoing constraint condition:
\begin{align}\label{Y-0}
(\p_t+\p_r)\phi|_{t=1}=O(\delta^{2-\varepsilon_0}),
\end{align}
which can be easily realized by choosing $\phi_1(\sigma,\omega)
=-\partial_\sigma\phi_0(\sigma,\omega)$.
In addition, the $k$-th
null condition $(k\ge 2, k\in\mathbb N)$ for \eqref{quasi} is defined as  (see \cite{Ding4})
\begin{equation}\label{null}
\sum_{0\le \al,\beta,\gamma_1, ..., \gamma_k\le 2}g^{\al\beta,\gamma_1,\cdots,\gamma_k}
\omega_\al\omega_\beta\omega_{\gamma_1}\cdots{\omega_{\gamma_k}}\equiv 0\quad \text{for $\omega_0=-1$ and
$(\omega_1, \omega_2)\in \mathbb S$}.
\end{equation}

It was shown in \cite{Ding4} that under \eqref{gh}-\eqref{null}, the smooth short pulse solution
$\phi$ exists globally when $0<\varepsilon_0<\varepsilon_k^*=\frac{k}{k+1}$.
The purpose of the present paper is to investigate the optimality of the exponent
$\varepsilon_k^*$. Namely, when the $k$-th order null condition \eqref{null} holds, but the
$(k+1)$-th order null condition fails,
i.e.,
\begin{equation}\label{notnull}
\sum_{\alpha,\beta,\gamma_1,\cdots,\gamma_{k+1}=0}^{2}
h^{\alpha\beta,\gamma_1,\cdots,\gamma_{k+1}}
\omega_\alpha\omega_\beta
\omega_{\gamma_1}\cdots
\omega_{\gamma_{k+1}}
\not\equiv 0,
\end{equation}
we prove that the smooth solution $\phi$ blows up in finite time when $\phi_1(s,\o)\not\equiv 0$
and $\varepsilon_k^*\le\varepsilon_0<1$.

Define
\begin{equation}\label{XI}
\Xi
=\max_{\sigma,\omega}
\big\{
(-1)^k
\sum_{\alpha,\beta,\gamma_1,\cdots,\gamma_{k+1}=0}^{2}
h^{\alpha\beta,\gamma_1,\cdots,\gamma_{k+1}}
\omega_\alpha\omega_\beta
\omega_{\gamma_1}\cdots
\omega_{\gamma_{k+1}}
\phi_1^k(\sigma,\omega)
\partial_\sigma\phi_1(\sigma,\omega)
\big\}.
\end{equation}
Note that
$0<\Xi<\infty$ holds due to  $\phi_1\not\equiv0$ and $\phi_1\in C_0^{\infty}$. Indeed,  let
\begin{equation}\label{YCao-5}
\begin{aligned}
\mathcal H(\sigma,\omega)
=(-1)^k\phi_1^{k+1}(\sigma,\omega)
\sum_{\alpha,\beta,\gamma_1,\cdots,\gamma_{k+1}=0}^{2}
h^{\alpha\beta,\gamma_1,\cdots,\gamma_{k+1}}
\omega_\alpha\omega_\beta
\omega_{\gamma_1}\cdots\omega_{\gamma_{k+1}}.
\end{aligned}
\end{equation}
By \eqref{notnull} and $\phi_1\not\equiv0$, one has
$\mathcal H\not\equiv0$
and $\operatorname{supp}\mathcal H
\subset(-1,0)\times\mathbb S$.
Therefore, there exists a point $(\sigma_0,\omega_0)$ such that
$\partial_\sigma\mathcal H(\sigma_0,\omega_0)>0$,
which yields $\Xi>0$.

Without loss of generality and for simplicity, suppose that in \eqref{quasi},
\begin{equation}\label{g00}
g^{00}(\p\phi)=-1.
\end{equation}
In addition, in order to treat the blowup problem of \eqref{quasi} with \eqref{id} for the critical exponent 
$\varepsilon_0=\varepsilon_k^*$,
we will impose the following quantitative condition
\begin{equation}\label{critical-compression}
(k+1)\Xi>k-1,
\end{equation}
which means that there is a point $(\sigma_0,\omega_0)\in(-1,0)\times\mathbb S$
such that $\partial_\sigma\mathcal H(\sigma_0,\omega_0)>k-1$.
This can be easily realized by the suitable choice of $\phi_1$.

Our main result is stated as follows.
\begin{theorem}\label{maintheorem}
Assume that $k\ge2$, $\phi_1\not\equiv0$ and \eqref{gh}--\eqref{g00} hold.
Let $T_\delta$ denote the lifespan of the smooth solution $\phi$.
Then we have
\begin{enumerate}[label=(\roman*)]
\item when $\varepsilon_k^*<\varepsilon_0<1$, for small $\delta>0$, the solution $\phi$ develops a singularity in finite time, and
\begin{equation}\label{lifespan}
\limsup_{\delta\to0^+}
\delta^{(k+1)(\varepsilon_k^*-\varepsilon_0)}(T_\delta-1)
\le\frac{2}{(k+1)\Xi}.
\end{equation}
\item when $\varepsilon_0=\varepsilon_k^*$ and \eqref{critical-compression}
holds, for small $\delta>0$,  the solution $\phi$ will blow up in finite time and
\begin{equation}\label{critical-lifespan}
\limsup_{\delta\to0^+}T_\delta
\le S_{\mathrm{crit}}
:=\big(1-\frac{k-1}{(k+1)\Xi}\big)^{-\frac{2}{k-1}}.
\end{equation}
\end{enumerate}
\end{theorem}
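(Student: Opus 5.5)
The plan follows the Christodoulou--Miao--Yu framework for short pulse data: introduce the optical function $u$ with $u|_{t=1}=1-r$, its outgoing null frame $\{\Lr, \underline{\mathring L}, X\}$, and the inverse foliation density $\mu=-\big(g^{\alpha\beta}\p_\alpha t\,\p_\beta u\big)^{-1}$. Blowup of $\phi$ (shock formation) corresponds to $\mu\to0^+$ at a finite time. The proof has two parts. First, a bootstrap argument shows that the solution, measured in these geometric coordinates $(s,u,\vartheta)$, stays smooth and obeys sharp $\delta$-weighted bounds as long as $\mu>0$, on the time scale relevant to each case. Second, an ODE argument for $\mu$ along the integral curves of $\Lr$ shows that $\mu$ must vanish before the claimed time.

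For the first part I reuse the energy method of \cite{Ding4}: the multipliers $\mu\Lr$ and $\underline{\mathring L}$, commutators built from $\Lr$, $T=\mu\p_u$-type and rotation vector fields, and the good unknown with its corrector. Their difference satisfies a transport equation along $\Lr$, and this is what gives $\delta$-sharp pointwise control of $\Lr\phi$ and of the angular derivatives. The $k$-th null condition \eqref{null} removes the leading $O(|\p\phi|^k)$ contribution to the transversal deformation. Hence the leading pointwise sizes are
\[
\p\phi\sim \delta^{1-\varepsilon_0}t^{-1/2}\phi_1,\qquad \Lr\phi,\ \slashed d\phi=O(\delta^{2-\varepsilon_0}t^{-3/2}),
\]
with $\Lr(\sqrt{t}\,\p\phi)$ small. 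These bounds have to persist up to $t-1\sim\delta^{(k+1)(\varepsilon_0-\varepsilon_k^*)}$ in case (i), and up to $t\le S_{\mathrm{crit}}$ in case (ii). The top-order estimates may carry $\mu^{-b}$ weights, which is standard. I expect this step to be the main obstacle. Outside the global-existence regime one cannot rely on smallness of $\delta^{\varepsilon_k^*-\varepsilon_0}$ factors. Instead, every error term must be shown to be $o(1)$ relative to the main term of the $\mu$ equation uniformly in $\delta$. This requires an exact $\delta$-order for each quantity, rather than the lossy bounds that sufficed in \cite{Ding4}.

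For the second part, compute $\Lr\mu$. With $g^{00}=-1$ as in \eqref{g00} and \eqref{null}, the leading term comes from the $h$-part of \eqref{gh}:
\[
\Lr\mu=\tfrac{k+1}{2}\sum h^{\alpha\beta,\gamma_1\cdots\gamma_{k+1}}\omega_\alpha\omega_\beta\omega_{\gamma_1}\cdots\omega_{\gamma_{k+1}}(\Lr^\ast\phi)^{k}\,T\Lr^\ast\phi+\text{error},
\]
where $\Lr^\ast$ denotes the transversal derivative. Insert the asymptotics $\p\phi\approx -\omega\,\delta^{1-\varepsilon_0}t^{-1/2}\phi_1(\sigma,\omega)$, where $\sigma=-u/\delta$ and the sign is fixed by \eqref{Y-0}. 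The $\delta^{-1}$ from $T\sim\p_u$ combines with the other factors to give
\[
\Lr\mu\approx-\tfrac{k+1}{2}\,\delta^{(k+1)(\varepsilon_k^*-\varepsilon_0)}\,t^{-(k+1)/2}\,(-1)^k h_{\omega}\,\phi_1^k\p_\sigma\phi_1\,(1+o(1)),
\]
with $h_\omega$ the contraction in \eqref{XI}. Recall that $(k+1)(1-\varepsilon_0)-1=(k+1)(\varepsilon_k^*-\varepsilon_0)$. At the point $(\sigma_0,\omega_0)$ realizing $\Xi$, integrate from $t=1$ with $\mu=1+o(1)$:
\[
\mu(t)\le 1-(1+o(1))\,\delta^{(k+1)(\varepsilon_k^*-\varepsilon_0)}\,\tfrac{(k+1)\Xi}{2}\int_1^t\tau^{-(k+1)/2}d\tau .
\]

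Case (i): the relevant time scale gives $t-1\to0$, so the integral is $\approx t-1$. Hence $\mu$ vanishes once $t-1\ge\frac{2}{(k+1)\Xi}\delta^{(k+1)(\varepsilon_0-\varepsilon_k^*)}(1+o(1))$, which is \eqref{lifespan}.

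Case (ii): the prefactor is $1$, and
\[
\int_1^t\tau^{-(k+1)/2}d\tau=\tfrac{2}{k-1}\big(1-t^{-(k-1)/2}\big).
\]
So $\mu$ reaches zero when
\[
1-t^{-(k-1)/2}=\tfrac{k-1}{(k+1)\Xi}(1+o(1)),
\]
which is solvable by \eqref{critical-compression}. This gives $T_\delta\le S_{\mathrm{crit}}+o(1)$.

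Finally, at the first zero of $\mu$, $T\Lr^\ast\phi$ stays bounded away from zero while $\mu\to0$, so $\p^2\phi\sim\mu^{-1}T\p\phi$ blows up. The solution therefore ceases to be smooth, which completes the lifespan bounds.
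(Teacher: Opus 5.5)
Your endgame is the paper's. The leading part of $\mathring L\mu$ after the null condition, its evaluation at the maximizing point through the data asymptotics, the integration of $\tau^{-(k+1)/2}$, and the blowup of $\partial^2\phi$ as $\mu\to0^+$ all coincide with Proposition~\ref{Prop.5.1} and the computation leading to \eqref{supercritical-profile}--\eqref{critical-profile}, constants included.

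The gap is in the step you defer, which is not a routine transfer from \cite{Ding4}. First, the hierarchy you want to propagate is false for $\varepsilon_0>\varepsilon_k^*$, because $\mathring L$ is no longer a good derivative there.
\begin{itemize}
\item The non-vanishing $(k+1)$-th order term makes $\mathring L\phi$ of size $\delta^{(k+2)(1-\varepsilon_0)}=\delta^{2-\varepsilon_0-\varepsilon_k}\gg\delta^{2-\varepsilon_0}$, already on $\Sigma_{t_0}$ (compare \eqref{initial} and \eqref{ZmLphi}). For a plane simple wave of speed $\lambda(p)=1+cp^{k+1}+\cdots$, with $p=\partial_x\phi\sim\delta^{1-\varepsilon_0}$, one has exactly $\mathring L\phi=\int_0^p q\lambda'(q)\,dq\sim p^{k+2}$.
\item $\mathring L\mu$ is of order $\delta^{-\varepsilon_k}$.
\item The available bound on $\mathring L\varphi$ is only $\delta^{1-\varepsilon_0-\varepsilon_k}$.
\end{itemize}
A bootstrap on the bounds you list therefore cannot close. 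The paper instead charges $\delta^{-\varepsilon_k}$ to every $\varrho\mathring L$ in \eqref{bootstrap-assumptions} and runs the energy estimates for $\delta^{\varepsilon_k}E_1+\delta E_2$. It also needs the good unknown to remove $\frac12G^\gamma_{XX}\mathring L\varphi_\gamma$, which has size $\delta^{k(1-\varepsilon_0)-\varepsilon_k}$, so that \eqref{Lchi'} contains no $(\mathring L\varphi)^2$.

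For the same reason, your $\mu$-ODE cannot get the near-constancy of $\varrho^{1/2}\varphi_0$ and $\varrho^{1/2}T\varphi_0$ along $\mathring L$ from a pointwise bound on $\mathring L(\sqrt t\,\partial\phi)$. The available bound integrates to $O(\delta^{1-\varepsilon_0})$ over the relevant time interval, which is not small. The paper gets \eqref{Tvp} from the transport equation \eqref{fequation} for $\underline{\mathring L}\varphi$, and then \eqref{r0vp} by integrating in $u$ from the outermost cone.

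Second, the energy argument of \cite{Ding4} does not carry over. There, the term $G^\gamma_{\mathring L\mathring L}\slashed\triangle\varphi_\gamma$ in the transport equation for $\operatorname{tr}\check{\mathscr X}$ is handled by dividing the wave equation by $\mu$ and using a positive lower bound on $\mu$. That is exactly what fails as $\mu\to0$. The corrector $E_0$ is the replacement, and its role is not pointwise control; pointwise, $\check{\mathscr X}=O(\delta^{k(1-\varepsilon_0)})$ already follows from \eqref{Lchi'}.

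Via the decomposition \eqref{Gtrivp}, $\operatorname{tr}\check{\mathscr X}-E_0$ satisfies \eqref{transport}, where $\slashed\triangle\varphi$ appears only multiplied by $G^\gamma_{\mathring L\mathring L}\mathring L_\gamma=O(\delta^{k(1-\varepsilon_0)})$. This gives $L^2$ bounds on $R^m\operatorname{tr}\check{\mathscr X}$ and, at top order, on $\mu\slashed d Z^m\operatorname{tr}\check{\mathscr X}$, without derivative loss. Together with the transport equation for $\mu\slashed\triangle\mu$, these close the $\mu_{\min}^{2b}$-weighted energies, which are then descended to non-degenerate lower-order bounds. Without these two ingredients the step you identify as the main obstacle is not carried out, and as stated it would not close.
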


\begin{remark}\label{Rem.2}
Collecting the main result in \cite{Ding4} and
Theorem~\ref{maintheorem} in the present paper, we precisely characterize the global existence range
$0<\varepsilon_0<\varepsilon_k^*$ and finite-time blowup range
$\varepsilon_k^*\le\varepsilon_0<1$.
\end{remark}

\begin{remark}\label{Rem.3}
Consider the small data solution problem for the 2D quasilinear wave equation
\begin{equation}\label{quasi-0}
\text{$\sum_{\al,\beta=0}^2g^{\al\beta}(\p v)\p_{\al\beta}^2 v=0$ with $(v,\p_t v)(1,x)=\dl(v_0(x),v_1(x))$},
\end{equation}
where $(v_0, v_1)(x)\in C_0^{\infty}(\mathbb R^2)$ and $\dl>0$ is small.
Assume that for small $\p v$,
\begin{equation}\label{H-01}
\dps g^{\al\beta}(\p v)=m^{\al\beta}+\sum_{\kappa=0}^{2} g^{\al\beta,\kappa}\p_\kappa v+\sum_{\kappa,\varsigma=0}^2h^{\al\beta,\kappa\varsigma}\p_\kappa v\p_\varsigma v+O(|\p v|^3),
\end{equation}
where $m^{\alpha\beta}$ denotes the Minkowski metric, $g^{\al\beta,\kappa}$
and $h^{\al\beta,\kappa\varsigma}$ are constants.
It is shown in \cite{Ali3} that the problem \eqref{quasi-0} admits a global smooth solution $v$ satisfying $|\partial v|\le C\delta(1+t)^{-1/2}$ when both the first and the second null conditions hold (i.e.,
$\dps\sum_{\al,\beta,\kappa=0}^2g^{\al\beta,\kappa}\omega_\al\omega_\beta\omega_\kappa\equiv0$ and $\dps\sum_{\al,\beta,\kappa,\varsigma=0}^2 h^{\al\beta,\kappa\varsigma}\omega_\al\omega_\beta\omega_\kappa\omega_\varsigma\equiv0$
for $\omega_0=-1$ and $(\omega_1,\omega_2)\in\mathbb S$, respectively).
Otherwise, if either the first or the second null condition fails,
under the generic non-degenerate assumption on $(v_0(x),v_1(x))$, then $v$ blows up in finite time (see \cite{Ali1} and \cite{Ali2}).

For the short pulse problem considered here, no additional generic non-degeneracy assumption is needed for $\varepsilon_k^* \leq \varepsilon_0 < 1$ except the usual condition $\phi_1 \not\equiv 0$.
\end{remark}

\begin{remark}\label{Yin.4}
If the inverse foliation density $\mu$ (see its definition in \eqref{DXY-1} below), which measures the compression of the characteristics, has a positive lower bound,
through establishing uniform bounds for the relevant energies and geometric quantities,
then one can conclude that the equation in \eqref{quasi-0} equipped with short pulse data
admits a global smooth solution $v$ (see \cite{DLY}-\cite{Ding4}, \cite{MPY} and \cite{Wang}).
In addition, the authors of \cite{An-1, An-2} apply the Lorentzian geometric tools from \cite{Ch1}
to study the local well-posedness or ill-posedness of low regularity solutions to the elastic wave system.
On the other hand, based on the Lorentzian geometric methods, the authors of \cite{Luo-Yu-1, Luo-Yu-2}
have shown the structural stability in the regime of two families of rarefaction waves
for multi-dimensional compressible isentropic and irrotational Euler equations.
\end{remark}

\subsection{Sketch of the proof}

We now briefly describe the main ingredients of the proof of Theorem  \ref{maintheorem}.
Motivated by the geometric methods of D. Christodoulou \cite{Ch1},
we introduce the inverse foliation density
\begin{align}\label{DXY-1}
\mu=-\f{1}{\sum_{\al=0}^2g^{0\al}\p_{\al}u}
\end{align}
associated with equation \eqref{quasi},
which measures the squeezing of the outgoing characteristic surfaces. Here the optical function
$u$ solves, for $t\ge t_0=1+2\delta$,
\begin{equation}\label{YHCC-60}
\displaystyle\sum_{\alpha,\beta=0}^2 g^{\al\be}(\p\phi)\p_{\al}u\p_{\be}u=0\quad\text{with $u(t_0,x)=t_0-r$ and $\p_0u>0$.}
\end{equation}
If $\mu\to 0+$ can be shown for $t\le T_\delta$,
then the blowup of $\p^2\phi$ occurs  for problem \eqref{quasi}.
We next give the detailed illustrations.

{\bf $\bullet$} In order to derive the energy estimates for problem \eqref{quasi} and the transport equation of $\mu$,
as in \cite{Ch1, Luk, Sp} and so on, the null frame $\{\mathring L, \mathring{\underline L}, X\}$ is introduced for the Lorentz
metric $(g_{\alpha\beta})$ (see Section \ref{Section 2} below for details).

{\bf $\bullet$} Set
\begin{equation*}
\begin{split}
&\text{$\vp_{\kappa}=\p_{\kappa}\phi$ and
$G_{\alpha\beta}^{\kappa}= \partial_{\vp_{\kappa}} g_{\alpha\beta}$ for $\kappa=0,1,2$},\\
&\text{$G_{UV}^{\kappa} = \dps\sum_{\alpha,\beta=0}^2G_{\alpha\beta}^{\kappa} U^{\alpha} V^{\beta}$ for any vector fields
$U = \dps\sum_{\alpha=0}^2U^{\alpha} \partial_{\alpha}$ and $V = \dps\sum_{\alpha=0}^2V^{\alpha} \partial_{\alpha}$.}
\end{split}	
\end{equation*}
Let $\slashed g_{XX}=g(X, X)$ and $\varrho={t}-u$. Define $\D$ as the Levi-Civita connection of the metric $g=(g_{\alpha\beta})$ and $\chi_{XX}=g(\mathscr{D}_X \mathring{L}, X)$ as the second fundamental form.

Throughout the whole paper, we will use the following crucial
``good unknown"
\begin{equation}\label{FG-Y2}
\check{\mathscr{X}}_{XX}=\chi_{XX}- \frac{1}{2}\dps\sum_{\kappa=0}^2G_{XX}^\kappa\mathring{L}\vp_\kappa-\f{1}{\varrho}\slashed g_{XX},
\end{equation}
which was first introduced in \cite{Ding5}.
By estimating $\mathring{L}\bigl(\varrho^2\operatorname{tr}\check{\mathscr X}\bigr)$ with
$\operatorname{tr}\check{\mathscr X}=\slashed g^{XX}\check{\mathscr{X}}_{XX}$ and integrating along the integral curves
of $\mathring{L}$, we can obtain that (see Lemma \ref{YHCC-555} and Proposition \ref{prop:lower-order-Linfty-estimates}
below)
\begin{equation}\label{YHCCC-0}
\check{\mathscr{X}}_{XX}=O(\delta^{k(1-\varepsilon_0)}).
\end{equation}

By the basic property of $\check{\mathscr{X}}_{XX}$ in \eqref{YHCCC-0}
and the new decomposition of
$G_{\mathring L\mathring L}^{\gamma}\slashed{\triangle}\varphi_{\gamma}$
(see \eqref{Gtrivp} in Lemma~\ref{lem:null-transport}), through studying the delicate
structure of $\mu\Box_gZ^{m}\varphi_\kappa$ and deriving the related energy estimates for \eqref{quasi},
we prove that near the outermost characteristic cone, it holds
\begin{equation}\label{bootstrap-assumptions-Y}
\begin{split}
&\delta^{l+m\varepsilon_k} \| Z^{\alpha} \vp_{\gamma} \|_{L^{\infty}}+\|\slashed\triangle\vp_{\gamma}\|_{L^{\infty}}
\leq C\delta^{1-\varepsilon_0},\quad \gamma=0, 1, 2,
\end{split}
\end{equation}
where $\varepsilon_k=(k+1)(\ve0-\varepsilon_k^*)$, $Z \in \{\varrho\mathring{L}, T, R\}$ with $T=\mu(-\dps\sum_{\alpha=0}^2g^{\al 0}\p_\al-\mathring L)$ and
$R=\slashed\Pi\Omega$ being the projection of $\Omega=x^1\p_2-x^2\p_1$, and $l$ (resp. $m$) denotes the number of $T$ (resp. $\varrho\mathring{L}$) contained in $Z^{\alpha}$.

\vskip 0.2 true cm

{\bf $\bullet$}
Based on \eqref{bootstrap-assumptions-Y}{,}
together with delicate estimates on some related geometric quantities
(see Proposition \ref{prop:lower-order-Linfty-estimates}), we obtain (see Proposition \ref{Prop.5.1})
\begin{equation}\label{blowup-0}
\mathring L\mu
=\varrho^{-(k+1)/2}\mathscr M(s,u,\vartheta)
+O(\delta^{1-\varepsilon_0-\varepsilon_k})\sim -\delta^{-\varepsilon_k},
\end{equation}
where
$$
\mathscr{M}(s,u,\vartheta)=(-1)^k\frac{k+1}{2}\bigl(\varrho^{(k+1)/2}h^{\alpha\beta,\gamma_1\cdots\gamma_{k+1}}
\omega_\alpha\omega_\beta\omega_{\gamma_1}\cdots\omega_{\gamma_{k+1}}\vp_0^k T\varphi_0\bigr)(s,u,\vartheta).
$$
By \eqref{blowup-0}, it can be derived that
\begin{equation}\label{mathM-0}
\begin{aligned}
\mu&=1-\frac{k+1}{2}\Xi\delta^{-\varepsilon_k}(s-t_0)+o(1)\quad
&&\text{for $\varepsilon_k^*<\varepsilon_0<1$ and $s-t_0=O(\delta^{\varepsilon_k})$},\\
\mu&=1-\frac{(k+1)\Xi}{k-1}\bigl(1-s^{-(k-1)/2}\bigr)+o(1)\quad
&&\text{for $\varepsilon_0=\varepsilon_k^*$ and $s$ in any fixed bounded interval},
\end{aligned}
\end{equation}
where the uniform error estimate $o(1)$ in \eqref{mathM-0} will be established
in Sections~\ref{YCao-3}-\ref{ert}.
From \eqref{mathM-0}  together with $\Xi>0$, Theorem \ref{maintheorem} is shown.

\vskip 0.2 true cm

{\bf $\bullet$} To prove the crucial estimates in \eqref{bootstrap-assumptions-Y}, we shall
give the following bootstrap assumptions for $\vp_{\kappa}$ ($\kappa=0,1,2$), $t_0\leq t \leq T_{\delta}$ and
$|\alpha|\le N$ with a suitably large integer $N$:
\begin{equation}\label{bootstrap-assumptions-0}
\begin{split}
&\delta^{l+m\varepsilon_k} \| Z^{\alpha} \vp_{\gamma} \|_{L^{\infty}}+\|\slashed\triangle\vp_{\gamma}\|_{L^{\infty}}+\delta^{\varepsilon_k}\|\slashed d\mathring L\vp_\gamma\|_{L^\infty}+\|\slashed d\vp_\gamma\|_{L^\infty} \leq M \delta^{1-\varepsilon_0},\\
&\delta^{l+m\varepsilon_k} \|Z^{\alpha}\phi\|_{L^\infty}+\|\slashed d\phi\|_{L^\infty}\leq M\delta^{2-\varepsilon_0},
\end{split}
\end{equation}
where $\slashed{d}$ stands for the restriction of the differential operator $d$ to the circle $\Bbb S$,
$M>0$ is a suitably chosen constant, and the meanings of $l, m$ and $Z$ are as in \eqref{bootstrap-assumptions-Y}.
To close the bootstrap assumption \eqref{bootstrap-assumptions-0} up to the $N$-th order derivatives,
similarly to the treatment in \cite{Ding5}, we first establish  the precise estimates on $\operatorname{tr}\check{\mathscr{X}}$ and $\check{\mathscr{X}}$. Subsequently,  for $t_0\leq t \leq T_{\delta}$ and
$|\alpha|\le N-2$, the following estimates are derived:
\begin{equation}\label{bootstrap-assumptions-1}
\begin{split}
	&\delta^{l+m\varepsilon_k} \| Z^{\alpha} \vp_{\gamma} \|_{L^{\infty}(\Sigma_s^u)}+\|\slashed\triangle\vp_{\gamma}\|_{L^{\infty}(\Sigma_s^u)}+\delta^{\varepsilon_k}\|\slashed d\mathring L\vp_\gamma\|_{L^\infty(\Sigma_s^u)}+\|\slashed d\vp_\gamma\|_{L^\infty(\Sigma_s^u)} \leq C\delta^{1-\varepsilon_0},\\
	&\delta^{l+m\varepsilon_k} \|Z^{\alpha}\phi\|_{L^\infty(\Sigma_s^u)}+\|\slashed d\phi\|_{L^\infty(\Sigma_s^u)}\leq C\delta^{2-\varepsilon_0},
\end{split}
\end{equation}
where the positive constant $C$ is independent of $M$.
Based on \eqref{bootstrap-assumptions-1} together with some precise  smallness or largeness orders of
$\delta$ for the related quantities,
the bootstrap assumptions for the $(N-1)$-th order and $N$-th order derivatives of $\varphi_\kappa$ can be closed by the energy estimates
for \eqref{quasi} since the resulting constants only depend on $C$ rather than $M$ (see details in Sections \ref{EE}-\ref{ert}).
In this process, a new corrector $E_0$ of the good unknown $\check{\mathscr{X}}_{XX}$ is introduced and further the resulting transport equation
of $\operatorname{tr}\check{\mathscr X}-E_0$ is derived (see Lemma \ref{lem:null-transport}).
It is emphasized that although the proof procedure for \eqref{bootstrap-assumptions-0} is somewhat analogous to that
in \cite{Ding5} or \cite{L-Y, M-Y}, all the details have to be given since the null condition structures of equation \eqref{quasi},
the essential roles of $\check{\mathscr{X}}_{XX}$, the new decomposition of
$G_{\mathring L\mathring L}^{\gamma}\slashed{\triangle}\varphi_{\gamma}$
and the optimality of the critical power $\varepsilon_k^*$ should be presented to the fullest extent.

\medskip

The paper is organized as follows.
In Section \ref{section-local}, some basic properties of the local solution $\phi$ to \eqref{quasi} are established under the
$k$-th order null condition.
In Section \ref{Section 2}, the necessary geometric preliminaries including the introduction of an optical function, the inverse foliation density $\mu$
and a null frame are stated. Meanwhile, by use of the good unknown in \eqref{FG-Y2}, we derive a series of important formulas for the covariant derivatives of the null frame and
for the deformation tensors as well as the evolution equation of $\mu$.
In Section \ref{Section 3}, the bootstrap assumptions are given,
and subsequently the uniform $L^{\infty}$ bounds for the lower order derivatives of
$\vp$ are established. These crucial uniform bounds, which are independent of the constant in the bootstrap assumptions,
will be applied to the later analysis in the next sections.
In Section \ref{YCao-3}, based on the estimates in Section \ref{Section 3},
as in  \cite{Ch1, Sp}, a detailed study on $\mu$ is conducted and then shock formation is shown.
In Sections \ref{EE}-\ref{ert}, through deriving the energy estimates for the higher-order derivatives
of $\vp$ and utilizing the Sobolev embedding theorem, the bootstrap assumptions in Section \ref{Section 3} are then closed,
which means that the proof of Theorem \ref{maintheorem} is eventually completed.

\subsection{Notation}
Throughout the paper, unless stated otherwise, Greek indices $\{\al, \beta, \cdots\}$, corresponding to the
spacetime coordinates, are chosen in $\{0, 1, 2\}$; Latin indices $\{i, j, k, \cdots\}$, corresponding to the spatial
coordinates, take values in $\{1, 2\}$; and the Einstein summation convention will be used.
In addition, the convention $f\lesssim g$ means that there exists a generic positive constant $C$ such that $f\le Cg$.

For convenience, set $g^{\al\beta}=g^{\al\beta}(\p\phi)$ and let $(g_{\al\beta})$ be the inverse of the matrix $(g^{\al\beta})$.

Introduce the following notations:
\begin{align*}
&t_0=1+2\delta,\quad \varepsilon_k^*=\f{k}{k+1}, \quad\varepsilon_k=(k+1)(\ve0-\varepsilon_k^*),\\
&\omega^i=\omega_i,\quad i=1,2,\quad \omega_\perp=(-\omega^2, \omega^1),\\
&L=\p_t+\p_r,\ \underline L=\p_t-\p_r,\ \Omega=\epsilon_i^jx^i\p_j,\\
&S=t\p_{t}+r\p_r=\f{t-r}{2}\underline L+\f{t+r}{2}L,\\
&H_i=t\p_{i}+x^i\p_{t}=\omega^i\big(\f{r-t}{2}\underline L+\f{t+r}{2}L\big)+\f{t\omega^i_\perp}{r}\Omega,\quad i=1,2,\\
&\Sigma_t=\{(t,x): x\in\mathbb R^2\},\\
&\varphi=(\varphi_0, \varphi_1, \varphi_2)=(\p_0\phi, \p_1\phi, \p_2\phi),
\end{align*}
where $\epsilon_{1}^2=1$, $\epsilon_{2}^1=-1$, $\epsilon_i^i=0$.

\section{Local solution}\label{section-local}
We now focus on the construction of a local solution to \eqref{quasi} so that the better smallness properties of $\phi$
can be established near the outermost characteristic light cone.
This approach is inspired by \cite{Ding4}.

\begin{theorem}\label{Local}
Under the assumptions \eqref{gh}--\eqref{g00}, when $\delta>0$ is small, \eqref{quasi} with \eqref{id}
admits a local smooth solution $\phi\in C^\infty([1, t_0]\times\mathbb R^2)$. Moreover,
for $m,p\in\mathbb N_0$, $\kappa\in\mathbb N_0^3$, it holds that
\begin{equation}\label{initial}
|L^m\p^\kappa\Omega^p\phi(t_0,x)|\lesssim\delta^{2-\ve0-m\varepsilon_k-|\kappa|},\quad r \in[1-2\delta, 1+2\delta].
\end{equation}
In addition, at any point $(t_0, r\omega)$, we have
\begin{align}
&\delta^{\varepsilon_0-1}\underline L\phi(t_0, r\omega) = 2\phi_1\big(\tfrac{r-t_0}{\delta}, \omega\big) + O(\delta^{(k+1)(1-\ve0)}),\label{Lphi}\\
&\delta^{\varepsilon_0}\underline L\partial_0\phi(t_0, r\omega) = -2\partial_\sigma\phi_1\big(\tfrac{r-t_0}{\delta}, \omega\big) + O(\delta^{(k+1)(1-\ve0)}).\label{y0}
\end{align}
\end{theorem}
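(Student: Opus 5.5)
We work on the short time interval $[1,t_0]$, $t_0-1=2\delta$, in the rescaled variables $\sigma=(r-t)/\delta$ (up to the shift), $s=(t-1)/\delta\in[0,2]$. The plan has three steps: prove local existence with $\delta$-sharp bounds on all derivatives; then show that the good derivative $L$ gains an extra factor; then compute $\underline L\phi$ at $t=t_0$ by integrating along $L$-characteristics.

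\textbf{Local existence.} Write $\phi(t,x)=\delta^{2-\varepsilon_0}\Phi(s,\sigma,\omega)$. Then each $\partial$ applied to $\phi$ costs $\delta^{-1}$, so $\partial\phi=O(\delta^{1-\varepsilon_0})$. The nonlinear part of $g^{\alpha\beta}$ has size $O(\delta^{k(1-\varepsilon_0)})$, which is small. In the rescaled variables the equation is a perturbation of the flat wave equation on a unit-size time interval; the curvature terms from $r\approx1$ enter with extra powers of $\delta$. Standard energy estimates for $\partial^\kappa\Omega^p$ applied to the rescaled system then give a smooth solution on $s\in[0,2]$ and the bound $|\partial^\kappa\Omega^p\phi|\lesssim\delta^{2-\varepsilon_0-|\kappa|}$. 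Finite speed of propagation confines the support to $r\in[t-2+\cdot,t]$, i.e.\ to the region in \eqref{initial}.

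\textbf{Improved $L$-derivatives.} Rewrite \eqref{quasi} in the null form
\[
L\underline L\phi=\tfrac{1}{2r}(\ldots)+\text{angular terms}+(g^{\alpha\beta}-m^{\alpha\beta})\partial^2_{\alpha\beta}\phi .
\]
The outgoing condition \eqref{Y-0} gives $L\phi=O(\delta^{2-\varepsilon_0})$ at $t=1$, a gain of $\delta$ over a generic derivative. Differentiate the equation in $L$ and integrate along $\underline L$ over a length $O(\delta)$.

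The nonlinear term is the key. By the $k$-th order null condition \eqref{null}, the worst part $g^{\alpha\beta,\gamma}\omega\omega\omega\,(\underline L\phi)^k\underline L^2\phi$ vanishes. The surviving terms carry at least one good derivative, or are of order $k+1$, i.e.\ $h$-terms. The latter have size
\[
\delta^{(k+1)(1-\varepsilon_0)}\,\delta^{-\varepsilon_0}\cdot\delta .
\]
Comparing this with $\delta^{2-\varepsilon_0}$ yields precisely the loss $\delta^{-\varepsilon_k}$ per $L$ when $\varepsilon_0>\varepsilon_k^*$.

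An induction on $m$, the number of $L$'s, then gives $|L^m\partial^\kappa\Omega^p\phi|\lesssim\delta^{2-\varepsilon_0-m\varepsilon_k-|\kappa|}$. This also uses that $[L,\partial]$ and $[L,\Omega]$ produce only harmless lower-order terms.

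This step is the main obstacle. It requires carefully tracking, for every product term, which factor receives the $L$. It also requires checking that the interaction of $L$ with $\underline L^2\phi$ through the quasilinear coefficients is controlled without losing $\delta^{-1}$.

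\textbf{Formulas \eqref{Lphi}--\eqref{y0}.} Write $\underline L=2\partial_t-L$. Since $L\phi$ is small relative to $\underline L\phi$, it suffices to compute $\partial_t\phi(t_0,\cdot)$.

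Use the identity $\partial_t\phi(t_0,r\omega)=\partial_t\phi(1,\cdot)+\int_1^{t_0}\partial_t^2\phi\,dt$ along the ray $r-t=$ const. On that ray, $\partial_t=\tfrac12(L+\underline L)$. Substituting $\partial_t^2\phi$ from the equation, the deviation of $\underline L\phi$ along the $L$-direction over time $2\delta$ is $\int L\underline L\phi$. This is given by the right-hand side above, whose linear part is $O(\delta\cdot\delta^{1-\varepsilon_0})$ relative terms and whose nonlinear part, by the null condition, is $O(\delta^{(k+1)(1-\varepsilon_0)}\cdot\delta^{1-\varepsilon_0})$ after integration.

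At $t=1$ we have $\underline L\phi=-2\partial_r\phi+O(\delta^{2-\varepsilon_0})=2\delta^{1-\varepsilon_0}\phi_1$ by the choice $\phi_1=-\partial_\sigma\phi_0$. Transporting this along $L$, and noting that $\sigma$ is constant along the ray, gives \eqref{Lphi} after multiplying by $\delta^{\varepsilon_0-1}$; the linear errors $O(\delta)$ are absorbed since $(k+1)(1-\varepsilon_0)<1$.

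Applying $\partial_0$ and repeating the argument gives \eqref{y0}. In this case $\partial_0\underline L\phi\approx\tfrac12\underline L^2\phi$ and $\underline L\sigma=-2/\delta$.
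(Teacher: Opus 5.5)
Steps 1 and 3 essentially follow the paper. Step 1 is the energy estimate for $\partial^\kappa\Omega^p\phi$. Step 3 integrates $L\underline L\phi$ (resp.\ $L\underline L\partial_0\phi$) along the outgoing ray from $t=1$, where $\underline L\phi=2\delta^{1-\varepsilon_0}\phi_1$. There are small slips in Step 3:
\begin{itemize}
\item along $r-t=\mathrm{const}$ the derivative of $\partial_t\phi$ is $L\partial_t\phi$, not $\partial_t^2\phi$;
\item absorbing the linear errors needs $(k+1)(1-\varepsilon_0)\le 1$, with equality at $\varepsilon_0=\varepsilon_k^*$;
\item for \eqref{y0} you must compute $\partial_t^2\phi|_{t=1}$ from the equation, using \eqref{null} and $g^{00}=-1$ to kill the leading $k$-th order term.
\end{itemize}

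The genuine gap is Step 2. It starts with your claim that finite speed of propagation puts the support of $\phi(t_0,\cdot)$ inside $[1-2\delta,1+2\delta]$. It does not. The data live in $1-\delta<r<1$, so at $t_0=1+2\delta$ the support reaches $r\approx 1-3\delta$, and on the strip $1-3\delta<r<1-2\delta$ the bound \eqref{initial} is false for large $m$. Already on $\{t=1\}$, $L^2\phi=\underline L L\phi+2\partial_r L\phi$ contains $-\frac{1}{2r}\underline L\phi=-\frac1r\delta^{1-\varepsilon_0}\phi_1$. So $L^2\phi$ there is generically of size at least $\delta^{1-\varepsilon_0}$, not $\delta^{2-\varepsilon_0-2\varepsilon_k}$. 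The weak incoming part of the data (amplitude $\delta^{2-\varepsilon_0}$ in $L\phi$, but varying on scale $\delta$ along $L$) is carried by $\underline L$ into that strip. There it gives $|L^m\phi|\sim\delta^{3-\varepsilon_0-m}$, which violates \eqref{initial} as soon as $m>1/(1-\varepsilon_k)$.

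Hence an $\underline L$-transport seeded by \eqref{Y-0} at $t=1$ cannot carry the induction to all $m$; it already fails at $m=2$ when $\varepsilon_k<\frac12$. The missing idea is to work in $D^1=\{1\le t\le t_0,\ 2-t\le r\le t\}$. The backward $\underline L$-characteristic through any point of $D^1$ stays in $D^1$ and reaches the outer cone $\{r=t\}$, where $\phi$ and all its derivatives vanish identically, and you integrate from there. This is exactly why \eqref{initial} is asserted only for $r\in[1-2\delta,1+2\delta]=D^1\cap\{t=t_0\}$.

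Second, the step you call the main obstacle is left without a mechanism, and it is structural, not bookkeeping. In $\underline L L^m\phi=L^{m-1}(\text{right-hand side})$ the $k$-th order term produces $(\underline L\phi)^kL^{m+1}\phi$. The null condition \eqref{null} removes only the $\underline L^2$-component of $\partial^2\phi$, not the $L^2$-component; for the relativistic membrane equation the cubic term contains $(\underline L\phi)^2L^2\phi$. With only your Step 1 bounds this term exceeds the target by a factor $\delta^{-(1-\varepsilon_0)}$ already for $m=1$, so the induction does not close.

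The paper gets the needed a priori control as follows.
\begin{itemize}
\item It commutes the energy estimate with one of $S=t\partial_t+r\partial_r=\frac{t-r}{2}\underline L+\frac{t+r}{2}L$, $H_1$, $H_2$. This yields, on the whole slab, $|\partial^\kappa\Omega^pZ_g\phi|\lesssim\delta^{2-\varepsilon_0-\varepsilon_k-|\kappa|}$, i.e.\ one $L$ at the price $\delta^{-\varepsilon_k}$. This is where the outgoing condition \eqref{Y-0} is really needed for the $L$-bounds.
\item In $D^1$ it then transports twice. The first pass, without the null condition, gives $|L^2\bar\partial^\iota\Omega^p\phi|\lesssim\delta^{1-2\varepsilon_k-|\iota|}$ for $\bar\partial\in\{\partial_t,\partial_r\}$.
\item The second pass, with \eqref{null} and this crude bound, gives $\delta^{2-\varepsilon_0-2\varepsilon_k-|\iota|}$. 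It then inducts on $m$, losing a fixed number of derivatives per $L$.
\end{itemize}
Your Step 3 silently relies on the same global one-$L$ bound. You need it to control $(\underline L\phi)^kL^2\phi$ and $(\underline L\phi)^kL^2\partial_0\phi$ on the initial segment of the outgoing ray, which lies outside $D^1$.
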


\begin{remark}
Comparing \eqref{initial} with \cite[(2.28)]{Ding4}, for the action of $L^m$ on $\phi$, there appears an extra large factor $\delta^{-m\varepsilon_k}$ on the right-hand side of \eqref{initial} due to $\varepsilon_0\ge \varepsilon_k^*$,
while it holds that $|L^m\p^\kappa\Omega^p\phi(t_0,x)|\lesssim\delta^{2-\ve0-|\kappa|}$ for the latter because of $\varepsilon_0<\varepsilon_k^*$.
Therefore, here the $L^\infty$-norm of the solution $\phi$ grows under the action of $L$, which should be
paid special attention when the vector field $L$ acts on some related terms.
\end{remark}

\begin{proof}
Denote by $Z_g$ any fixed vector field in $\{S, H_1, H_2\}$.
Suppose that for $1\leq t\leq t_0$
and $N_0\in\mathbb N_0$ with $N_0\ge 6$, one has
	\begin{equation}\label{ba}
	|\p^\kappa\Omega^pZ_g^\nu\phi|\leq\delta^{2-\epsilon-|\nu|\varepsilon_k-|\kappa|}\quad (|\kappa|+p+|\nu|\leq N_0,\quad |\nu|\leq 1),
	\end{equation}
where $\epsilon\in(\ve0, 1)$ is a fixed constant.
Define the following energy for \eqref{quasi} and for $n\in\mathbb N_0$,
$$
M_n(t)=\sum_{\tiny\begin{array}{c}|\kappa|+p+|\nu|\leq n\\|\nu|\leq 1\end{array}}\delta^{2|\kappa|+2|\nu|\varepsilon_k}\|\p\p^\kappa\Omega^pZ_g^\nu\phi(t,\cdot)\|_{L^2(\mathbb R^2)}^2.
$$

Let $w=\delta^{|\kappa|+|\nu|\varepsilon_k}\p^\kappa\Omega^pZ_g^\nu\phi\quad(|\kappa|+p+|\nu|\leq 2N_0-2)$. It follows from \eqref{quasi}
and integration by parts that
\begin{equation}\label{Y-4}
\begin{split}
&\int_1^t\int_{\Sigma_\tau}(\p_twg^{\al\beta}\p_{\al\beta}^2w)(\tau,x) dxd\tau\\
=&\int_{\Sigma_{t}}\big(g^{0\al}\p_\al w\p_t w-\f12g^{\al\beta}\p_\al w\p_\beta w\big)(t,x)dx-\int_{\Sigma_{1}}\big(g^{0\al}\p_\al w\p_t w-\f12g^{\al\beta}\p_\al w\p_\beta w\big)(1,x)dx\\
&+\int_1^t\int_{\Sigma_\tau}\big(-(\p_\al g^{\al\beta})\p_\beta w\p_t w+\f12(\p_tg^{\al\beta})\p_\al w\p_\beta w\big)(\tau,x) dxd\tau
\end{split}
\end{equation}
with
\begin{equation}\label{w}
\begin{split}
|g^{\al\beta}&\p_{\al\beta}^2w|\lesssim\delta^{|\kappa|+|\nu|\varepsilon_k}\sum_{\tiny\begin{array}{c}
	|\kappa_1|+|\kappa_2|\leq|\kappa|,\ p_1+p_2\leq p,\\|\nu_1|+|\nu_2|\leq|\nu|,\\ |\kappa_2|+p_2+|\nu_2|<|\kappa|+p+|\nu|\end{array}}|\p^{\kappa_1}\Omega^{p_1}Z_g^{\nu_1}	(g^{\al\beta}-m^{\alpha\beta})||\p^{\kappa_2}\Omega^{p_2}Z_g^{\nu_2}\p^2\phi|.
\end{split}
\end{equation}
By \eqref{ba}-\eqref{w} and Gronwall's inequality, we obtain that for $1\leq t\leq t_0$,
$$
M_{2N_0-2}(t)\lesssim M_{2N_0-2}(1)\lesssim\delta^{3-2\varepsilon_0}.
$$
In addition, by the following Sobolev embedding theorem on the circle $\mathbb S_r$
	(with center at the origin and radius $r$), one has that
	$$
	|w(t,x)|\lesssim \f{1}{\sqrt{r}}\|\Omega^{\leq 1}w\|_{L^2(\mathbb S_r)}.
	$$
This, together with $r\sim 1$ for $t\in [1,t_0]$ and $(t,x)\in \textrm{supp}\ w$, yields that when $|\kappa|+p+|\nu|\leq N_0$,
	\begin{equation}\label{le}
	|\p^\kappa\Omega^pZ_g^\nu\phi(t,x)|\lesssim \|\Omega^{\leq 1}\p^\kappa\Omega^pZ_g^\nu\phi\|_{L^2(\mathbb S_r)}\lesssim\delta^{1/2}\|\p\Omega^{\leq 1}\p^\kappa\Omega^pZ_g^\nu\phi\|_{L^2(\Sigma_{t})}\lesssim\delta^{2-|\kappa|-\varepsilon_0-|\nu|\varepsilon_k},\quad
	\end{equation}
	where $N_0\geq 6$ has been used. Therefore, \eqref{ba} can be closed
	for suitably small $\delta>0$ and $\epsilon>\varepsilon_0$.

Next we improve the $L^\infty$ estimate of $\phi(t_0,x)$ on some special space domains. To this end,
one rewrites \eqref{quasi} as
\begin{equation}\label{me}
L\underline L\phi=\f{1}{2r}L\phi-\f{1}{2r}\underline L\phi+\f{1}{r^2}\Omega^2\phi+g^{\al\beta,
	\gamma_1,\cdots,\gamma_k}(\p_{\gamma_1}\phi)\cdots(\p_{\gamma_k}\phi)\p^2_{\al\beta}\phi+h^{\al\beta}(\p\phi)\p^2_{\al\beta}\phi.
\end{equation}

Applying the operator $L\bar\p^{\iota}\Omega^p$ to both sides of \eqref{me}, where $\bar\p\in\{\p_t,\p_r\}$ and $\iota\in\mathbb N_0^2$,
we can obtain an expression of $\underline LL^2\bar\p^{\iota}\Omega^p\phi$ by $\underline LL^2\bar\p^{\iota}\Omega^p=L\bar\p^{\iota}\Omega^pL\underline L$  and a direct computation.
Together with \eqref{le}, this implies that for $|\iota|+p\leq 2N_0-6$,
\begin{equation}\label{nonull}
	|\underline L(L^2\bar\p^{\iota}\Omega^p\phi)|\lesssim \delta^{-2\varepsilon_k-|\iota|},
\end{equation}
here  the null condition \eqref{null} is not used. Due to the vanishing property of $\phi$ on $\{(t, x): t\ge 1, t=r\}$,
integrating \eqref{nonull} along the integral curves of $\underline L$ yields that for $(t,r)\in D^1=\{(t,r): 1\le t\le t_0, 2-t\le r\le t\}$ and for $|\iota|+p\leq 2N_0-6$,
\begin{equation}\label{D}
|L^2\bar\p^{\iota}\Omega^p\phi(t,x)|\lesssim\delta^{1-2\varepsilon_k-|\iota|}.
\end{equation}
Using the equation for $\underline LL^2\bar\p^{\iota}\Omega^p\phi$ again, together with \eqref{null} and
\eqref{D}, gives $|\underline L(L^2\bar\p^{\iota}\Omega^p\phi) |\lesssim\delta^{1-\varepsilon_0-2\varepsilon_k-|\iota|}$ for $\varepsilon_k^*\leq\varepsilon_0<1$ and $|\iota|+p\leq2N_0-7$. This means that for $(t,x)\in D^1$,
\begin{equation}\label{D-1}
|L^2\bar\p^{\iota}\Omega^p\phi(t,x)|\lesssim\delta^{2-\varepsilon_0-2\varepsilon_k-|\iota|},\quad|\iota|+p\leq2N_0-7.
\end{equation}
Then it follows from an induction argument and \eqref{me} that for $(t,x)\in D^1$,
 \begin{equation}\label{oi}
|L^m\bar\p^\iota\Omega^p\phi(t,x)|\lesssim\delta^{2-\varepsilon_0-m\varepsilon_k-|\iota|},\quad 3m+|\iota|+p\leq 2N_0-4,
\end{equation}
which yields \eqref{initial} due to $\p_i=\omega^i\p_r+\frac{\omega_\perp^i}r\Omega$ ($i=1,2$).

Note that both the points $(1,(r_0-t_0+1)\omega)$ and $(t_0, r_0\omega)$ are on the line $\{(t,r\omega): r-t=r_0-t_0\}$.
Then, by \eqref{me}, \eqref{le} and \eqref{id}, we arrive at
\begin{equation*}
\begin{split}
&\underline{L}\phi(t_0,r_0\omega)
=\underline{L}\phi(1,(r_0-t_0+1)\omega)+\int_{(1,(r_0-t_0+1)\omega)}^{(t_0,r_0\omega)}L\underline{L}\phi\\
=&\delta^{1-\varepsilon_0}\big(\phi_1\big(\frac{r_0-t_0}{\delta}, \omega\big)-\partial_\sigma\phi_0\big(\frac{r_0-t_0}{\delta}, \omega\big)\big)+O(\delta^{(k+2)(1-\varepsilon_0)})\\
=&2\delta^{1-\varepsilon_0}\phi_1\big(\frac{r_0-t_0}{\delta}, \omega\big)+O(\delta^{(k+2)(1-\varepsilon_0)})
\end{split}
\end{equation*}
and
\begin{equation*}
\begin{split}
&\underline{L}\partial_{0}\phi(t_0,r_0\omega)
=\underline{L}\partial_{0}\phi(1,(r_0-t_0+1)\omega)+\int_{(1,(r_0-t_0+1)\omega)}^{(t_0,r_0\omega)}L\underline{L}\partial_{0}\phi\\
=&\delta^{-\varepsilon_0}\big(\partial_\sigma^2\phi_0\big(\frac{r_0-t_0}{\delta}, \omega\big)-\partial_\sigma\phi_1\big(\frac{r_0-t_0}{\delta}, \omega\big)\big)+O(\delta^{(k+1)(1-\varepsilon_0)-\varepsilon_0})\\
=&-2\delta^{-\varepsilon_0}\partial_\sigma\phi_1\big(\frac{r_0-t_0}{\delta}, \omega\big)+O(\delta^{(k+1)(1-\varepsilon_0)-\varepsilon_0}).
\end{split}
\end{equation*}
Hence, \eqref{Lphi} and \eqref{y0} are shown.
\end{proof}

\section{Some preliminaries}\label{Section 2}
In this section, we present some preliminary concepts from Lorentzian geometry, which are illustrated  in \cite{Ch1, Sp}.
For the convenience of readers, we still provide a detailed exposition.

\begin{definition}\label{Definition optical function}
	A $C^1$ function $u(t,x)$ is called the optical function of \eqref{quasi} if $u(t,x)$ satisfies the eikonal equation
	\begin{equation}\label{eikonal equation}
	g^{\al\be}\p_{\al}u\p_{\be}u=0
	\end{equation}
	with the initial data $u(t_0,x)=t_0-r$ and the condition $\p_{0}u>0$.
\end{definition}

According to the optical function $u$, one can define the inverse foliation density $\mu$ as in \cite{Ch1} and \cite{Sp}:
\begin{equation}\label{inverse foliation density}
\mu=-\f{1}{g^{0\al}\p_{\al}u}.
\end{equation}
It is pointed out that  the blowup of the solution $\phi$ will happen  when $\mu\rightarrow 0+$.

Note that
\begin{equation}\label{Lr}
\tilde L=-\na u=-g^{\al\be}\p_{\al}u\p_{\be}
\end{equation}
is a tangent vector field of the outgoing light cone $\{u=C\}$ and $\tilde L t=\mu^{-1}$ holds. Then we rescale $\tilde L$ as
\begin{equation}\label{L}
\Lr=\mu\tilde L.
\end{equation}
To obtain an approximate vector  field of the incoming light cone,
one sets $\tilde T=-g^{\al 0}\p_\al-\mathring L$, which is near $-\p_r$ for $t=t_0$. In order
to define a null frame, we now set
$$
T=\mu\tilde T,\quad\mathring{\underline L}=\mu\mathring L+2T,
$$
where $\mathring{L}$ and $\mathring{\underline L}$ are two vector fields in the null frame. In addition, the third vector field
$X$ in the null frame can be constructed by using $\mathring L$. Indeed, we extend the local coordinate $\theta$ on $\mathbb{S}$
$(x=(r cos\theta, r sin\theta))$ by $\mathring L\vartheta=0$ with $\vartheta|_{t=t_0}=\th$ and
subsequently set $X=\f{\p}{\p\vartheta}$.

\begin{lemma}\label{nullframe}
$\{\mathring L, \mathring{\underline L}, X\}$ constitutes a null frame with respect to the metric $(g_{\al\beta})$, and admits the following identities:
\begin{align*}
g(\mathring L,\mathring L)=g(\mathring{\underline L}, \mathring{\underline L})=g(\mathring L, X)=g(\mathring{\underline L}, X)=0,
g(\mathring L, \mathring{\underline L})=-2\mu.
\end{align*}
In addition,
\begin{align*}
\mathring L t=1,\quad\mathring L u=0,\quad
\mathring{\underline L}t=\mu,\quad \mathring{\underline L} u=2.
\end{align*}
And
\begin{align}\label{LTTT}
g(\mathring L, T)=-\mu,\quad g(T,T)=\mu^2,
\end{align}
\begin{align}\label{T}
Tt=0,\quad T u=1.
\end{align}
\end{lemma}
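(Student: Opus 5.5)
The plan is to obtain every identity directly from the eikonal equation \eqref{eikonal equation}, the definition \eqref{inverse foliation density} of $\mu$, and the definitions of $\tilde L$, $\mathring L$, $\tilde T$, $T$ and $X$. No estimates are needed. First, since $\tilde L^\beta=-g^{\al\beta}\p_\al u$, we have $g(\tilde L,V)=-Vu$ for every vector field $V$. Taking $V=\tilde L$ and using \eqref{eikonal equation} gives $g(\tilde L,\tilde L)=-\tilde L u=g^{\al\beta}\p_\al u\p_\beta u=0$. Hence $\mathring L u=\mu\tilde L u=0$ and $g(\mathring L,\mathring L)=0$. Next, $\tilde L t=-g^{0\al}\p_\al u=\mu^{-1}$, so $\mathring L t=1$.

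For $T$, write $\tilde T t=-g^{00}-\mathring L t$. By \eqref{g00} this equals $1-1=0$, so $Tt=0$. For $Tu$, note $-g^{\al0}\p_\al u=\mu^{-1}$ and $\mathring L u=0$, which give $\tilde T u=\mu^{-1}$ and thus $Tu=1$. The inner products follow from $g(\tilde L,V)=-Vu$ together with the observation that $-g^{\al0}\p_\al$ is the $g$-gradient of $-t$, i.e.\ $g(-g^{\al0}\p_\al,V)=-Vt$.
\begin{itemize}
\item $g(\mathring L,T)=\mu^2 g(\tilde L,\tilde T)=-\mu^2\,\tilde T u=-\mu$.
\item $g(T,T)=\mu^2\big[g(-g^{\al0}\p_\al,-g^{\beta0}\p_\beta)-2g(-g^{\al0}\p_\al,\mathring L)+g(\mathring L,\mathring L)\big]$. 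The bracket is $-g^{00}+2\mathring Lt+0=1+2\cdot 1\cdot(-1)\cdot(-1)$; more carefully, $g(-g^{\al0}\p_\al,\mathring L)=-\mathring L t=-1$ and $g(-g^{\al0}\p_\al,-g^{\beta0}\p_\beta)=g^{00}=-1$. The bracket is therefore $-1+2=1$, giving $g(T,T)=\mu^2$.
\end{itemize}
With these, $\mathring{\underline L}=\mu\mathring L+2T$ gives $g(\mathring L,\mathring{\underline L})=0+2(-\mu)=-2\mu$ and $g(\mathring{\underline L},\mathring{\underline L})=0+4\mu(-\mu)+4\mu^2=0$. Also $\mathring{\underline L}t=\mu$ and $\mathring{\underline L}u=2$.

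For $X=\p/\p\vartheta$, the coordinates are $(t,u,\vartheta)$. In these coordinates $\mathring L=\p_t$, since $\mathring L t=1$, $\mathring L u=0$ and $\mathring L\vartheta=0$; this is valid as long as the coordinate system is nondegenerate, i.e.\ $\mu>0$. Hence $Xt=Xu=0$, so $g(\mathring L,X)=-\mu^{-1}\cdot$ hmm, more directly $g(\tilde L,X)=-Xu=0$, which gives $g(\mathring L,X)=0$. For $g(T,X)$, compute $g(\tilde T,X)=-Xt-g(\mathring L,X)=0$, so $g(T,X)=0$. Then $g(\mathring{\underline L},X)=0$.

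The main subtlety is justifying $Xt=Xu=0$, that is, that $(t,u,\vartheta)$ is a legitimate coordinate system with $X$ defined as the coordinate derivative at fixed $t,u$. I would argue that $\vartheta$ is transported by $\mathring L$, that $t$ and $u$ are independent coordinates (because $T$ satisfies $Tt=0$, $Tu=1$), and that the map is a local diffeomorphism while $\mu>0$. It follows that $X$ is tangent to the circles $S_{t,u}$. Finally, $X\neq0$ and $g(X,X)>0$ because $X$ is orthogonal to the timelike plane spanned by $\mathring L,\mathring{\underline L}$. This completes the null frame claim.
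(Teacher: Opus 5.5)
Your proposal is correct and is the direct verification from the definitions of $u$, $\mu$, $\tilde L$, $\tilde T$, $X$ and the normalization \eqref{g00}; the paper states this lemma without proof (citing \cite{Ch1, Sp}), and the identities $g(\tilde L,V)=-Vu$ and $g(-g^{\al 0}\p_\al,V)=-Vt$ that you use are exactly the standard route. In a write-up, delete the garbled first evaluation of the bracket in $g(T,T)$ (as written, $-g^{00}+2\mathring L t$ gives $3$, not $1$) and the stray ``hmm'' line, and keep only your corrected computations.
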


As in \cite{Sp,M-Y}, one can perform the change of coordinates:
$(t, x^1, x^2)\longrightarrow (s, u, \vartheta)$ near $\{(t,x) : r=t\}$ with
\begin{equation}\label{H0-7}
(s, u, \vartheta)=(t, u(t,x), \vartheta(t,x)).
\end{equation}
In the new coordinate $(s, u, \vartheta)$, the following subsets are introduced:

\begin{definition}
\begin{equation*}
\begin{split}
&\Sigma_s=\{(s',u',\vartheta): s'=s, u'>0\},\\
&\Sigma_s^{u}=\{(s',u',\vartheta): s'=s,0\leq u'\leq  u\},\quad u\in [0, 4\delta],\\
&C_{u}=\{(s',u',\vartheta): s'\geq 1+2\delta, u'=u\},\\
&C_{u}^s=\{(s',u',\vartheta): 1+2\delta\leq s'\leq s, u'=u\},\\
&S_{s, u}=\Sigma_s\cap C_{u},\\
&D^{s, u}=\{(s', u',\vartheta): 1+2\delta\leq s'<s, 0\leq u'\leq u\}.
\end{split}
\end{equation*}
\end{definition}

Next, we introduce the following geometric notations which will be used frequently.
\begin{definition} For the metric $g$ on the spacetime,
\begin{itemize}
\item $\underline g=(g_{ij})$ is defined as the induced metric of $g$ on $\Sigma_s$, i.e.,
$\underline g(U,V)=g(U,V)$ for any tangent vectors $U$ and $V$ of $\Sigma_s$;
\item $\slashed \Pi_\al^\beta=\delta_\al^\beta-\delta_\al^0\mathring L^\beta+\mathring L_\al\tilde{T}^\beta$ is the
projection tensor field on $S_{s,u}$ of type $(1,1)$, where $\delta_\al^\beta$ is the Kronecker delta;
\item For any $(m,n)$-type
spacetime tensor field
$\eta$, $\slashed\eta=\slashed\Pi\eta$ is the tensor field on $S_{s,u}$  with the components $$\slashed\eta^{\al_1\cdots\al_m}_{\beta_1\cdots\beta_n}:=(\slashed\Pi\eta)^{\al_1\cdots\al_m}_{\beta_1\cdots\beta_n}
=\slashed\Pi_{\beta_1}^{\beta_1'}\cdots\slashed\Pi_{\beta_n}^{\beta_n'}
\slashed\Pi_{\al_1'}^{\al_1}\cdots\slashed\Pi_{\al_m'}^{\al_m}\eta^{\al_1'\cdots\al_m'}_{\beta_1'\cdots\beta_n'}.$$
In particular, $\slashed g=(\slashed g_{\al\beta})$ is the induced metric of $g$ on $S_{s,u}$;
\item $\slashed g^{XX}$ is defined as the inverse of $\slashed g_{XX}$ with $\slashed g_{XX}=g(X, X)$;
\item $\mathscr D$ and $\slashed\nabla$ denote the Levi-Civita connections of $g$ and $\slashed g$, respectively;
\item $\Box_g=g^{\al\beta}\mathscr{D}^2_{\al\beta}$, \quad $\slashed\triangle=\slashed g^{XX}\slashed\nabla^2_{X}$;
\item $\mathcal L_V\eta$ is the Lie derivative of $\eta$ with respect to $V$
and $\slashed{\mathcal L}_V\eta=\slashed\Pi(\mathcal L_V\eta)$ for any tensor field $\eta$ and vector $V$;
\item For any $(m,n)$-type spacetime tensor field $\eta$,
$$
|\eta|^2=g_{\al_1\al_1'}\cdots g_{\al_m\al_m'}g^{\beta_1\beta_1'}\cdots g^{\beta_n\beta_n'}\eta_{\beta_1\cdots\beta_n}^{\al_1\cdots\al_m}\eta_{\beta_1'\cdots\beta_n'}^{\al_1'\cdots\al_m'};
$$
\item $\text{div}U=\mathscr D_\al U^\al$ for any vector field $U$; $\slashed{\text{div}}Y=\slashed{\nabla}_X Y^X$ and
    $\slashed{\text{div}}\kappa=\slashed\nabla^X\kappa_X$ are angular divergences for any vector field $Y$ and one-form $\kappa$ on $S_{s,u}$, respectively;
		\item If $\eta$ is a $(0,2)$-type spacetime tensor, $\Lambda$ is a 1-form, $U$ and $V$ are vector fields, then the contraction
		of $\eta$ with respect to $U$ and $V$ is defined as
		$$
		\eta_{UV}=\eta_{\al\beta}U^{\al}V^{\beta},
		$$
		and the contraction of $\Lambda$ with respect to $U$ is
		\[
		\Lambda_U=\Lambda_{\al}U^{\al};
		\]
		\item If $\eta$ is a $(0,2)$-type tensor on $S_{s,u}$, the trace of $\eta$ is defined as
		\[
		\text{tr}\eta=\slashed g^{XX}\eta_{XX}.
		\]
	\end{itemize}
\end{definition}

Under the frame $\{\mathring{L}, \mathring{\underline{L}}, X\}$, the \textit{second fundamental forms} $\chi$ and ${\sigma}$ can be defined as
\begin{equation}\label{chith}
\chi_{XX} = g(\mathscr{D}_X \mathring{L}, X), \quad{\sigma}_{XX} = g(\mathscr{D}_X \tilde{T}, X).
\end{equation}
Define the \textit{one-form tensors} $\zeta$ and $\xi$ as
\begin{equation}\label{zetaeta}
\zeta_X = g(\mathscr{D}_X \mathring{L}, \tilde{T}), \quad \xi_X = -g(\mathscr{D}_X T, \mathring{L}).
\end{equation}
Then $\mu \zeta_X = -X\mu + \xi_X$ holds.

For any vector field $V$, denote its associated \textit{deformation tensor} by
\begin{equation}\label{dt}
{}^{(V)}\pi_{\alpha\beta} = g(\mathscr{D}_{\alpha} V, \partial_{\beta}) + g(\mathscr{D}_{\beta} V, \partial_{\alpha}).
\end{equation}

Let
\begin{equation}\label{FG}
G_{\alpha\beta}^{\gamma}=\partial_{\varphi_{\gamma}} g_{\alpha\beta}.
\end{equation}
For any vector fields $U = U^{\alpha} \partial_{\alpha}$ and $V = V^{\alpha} \partial_{\alpha}$, set
\begin{equation*}
G_{UV}^{\gamma} = G_{\alpha\beta}^{\gamma} U^{\alpha} V^{\beta}.
\end{equation*}

On the initial hypersurface $\Sigma_{t_0}^{4\delta}$, one has that
\begin{equation}\label{errorv-0}
\begin{split}
&\tilde T^i=-\f{x^i}{r}+O(\delta^{k(1-\varepsilon_0)}),\quad
\mathring L^0=1, \quad \mathring L^i=\f{x^i}{r}+O(\delta^{k(1-\varepsilon_0)}),\\
&\chi_{XX}-\f12G_{XX}^\gamma\mathring L\vp_\gamma=\f1r\slashed g_{XX}+O(\delta^{k(1-\varepsilon_0)}).
\end{split}
\end{equation}
Note that on $\Sigma_{t_0}$, $r$ is just $t_0-u$. For $s\geq t_0$, we
define the ``{\it{error vectors}}" with the components being
\begin{equation}\label{errorv}
\begin{split}
&\check{L}^0=0,\ \check{L}^i=\mathring L^i-\f{x^i}{\varrho},\ \check{T}^i=\tilde T^i+\f{x^i}{\varrho},\ \check\varrho=\f{r}{\varrho}-1.\\
\end{split}
\end{equation}
In addition, we use the following crucial ``good unknown" which was first introduced in \cite{Ding5},
\begin{equation}\label{good-unknown}
\begin{split}
&\check{\mathscr{X}}_{XX}=\chi_{XX}- \frac{1}{2}G_{XX}^\gamma\mathring{L}\vp_\gamma-\f{1}{\varrho}\slashed g_{XX},
\end{split}
\end{equation}
where $\varrho=s-u$. Note that $\check{\mathscr{X}}_{XX}$ will appear in many important relations in the paper (see \eqref{fequation}-\eqref{H},
the expression of the last term in \eqref{EI} and other places) and further $\check{\mathscr{X}}_{XX}=O(\delta^{k(1-\varepsilon_0)})$
will be expected. However, $|G_{XX}^\gamma\mathring{L}\vp_\gamma| \lesssim\delta^{k(1-\varepsilon_0)-\varepsilon_k}$ is the optimal estimate,
which means that the introduction of the good combination $\chi_{XX}- \frac{1}{2}G_{XX}^\gamma\mathring{L}\vp_\gamma$ is very necessary due to $\varepsilon_k>0$ for $\varepsilon_0\ge\varepsilon_k^*$.

Note that $\vartheta$ denotes the coordinate on $S_{s,u}$. Then, in the new coordinate system $(s, u, \vartheta)$,
one has $\mathring{L}=\frac{\partial}{\partial s}$. Additionally, it follows from \eqref{T} that
\[ T = \frac{\partial}{\partial u} - \Theta^X X \]
for some smooth function $\Theta^X$. Furthermore, an analysis analogous to that in Lemma 3.66 of \cite{Sp} yields

\begin{lemma}
	In $D^{s,u}$, the Jacobian determinant of the map $(s, u, \vartheta)\rightarrow
(x^0, x^1, x^2)$ is given by
\begin{equation}\label{Jacobian}
\det\f{\p(x^0, x^1, x^2)}{\p(s, u, \vartheta)}=\mu(\det\underline g)^{-1/2}\sqrt{\slashed g_{XX}}.
\end{equation}
\end{lemma}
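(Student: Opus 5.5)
We will prove \eqref{Jacobian} by computing the coordinate vector fields $\p_s,\p_u,\p_\vartheta$ in the Cartesian frame. The determinant $\det\f{\p(x^0,x^1,x^2)}{\p(s,u,\vartheta)}$ is the determinant of the $3\times3$ matrix whose columns are the Cartesian components of these three vectors. In the coordinates \eqref{H0-7} we have $\p_s=\mathring L$, $\p_\vartheta=X$ and $\p_u=T+\Theta^X X$. Multilinearity lets us drop the term $\Theta^X X$ from the $\p_u$-column, since it is parallel to the $\p_\vartheta$-column. Hence
\begin{equation*}
\det\f{\p(x^0,x^1,x^2)}{\p(s,u,\vartheta)}=\det\big(\mathring L,\ T,\ X\big).
\end{equation*}

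Next we use $\mathring L^0=1$ and $T^0=Tt=0$ (by \eqref{T}), together with $X^0=Xt=\p_\vartheta s=0$. Expanding along the first row, where only $\mathring L^0=1$ is nonzero, reduces the determinant to the $2\times2$ spatial determinant $\det(T^i,X^i)_{i=1,2}$. So we only need the Euclidean-coordinate area spanned by the two vectors $T$ and $X$, which are tangent to $\Sigma_s$.

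This area is computed through the induced metric $\underline g$ on $\Sigma_s$. By Gram's formula,
\begin{equation*}
\det\big(\underline g(V_a,V_b)\big)_{a,b}=\det(\underline g_{ij})\,\big(\det(V_a^i)\big)^2
\end{equation*}
for $V_1=T$ and $V_2=X$. We now evaluate the Gram matrix. From \eqref{LTTT} we get $\underline g(T,T)=g(T,T)=\mu^2$, and by definition $\underline g(X,X)=\slashed g_{XX}$. For the off-diagonal entry, recall that $T=\mu\tilde T$ with $\tilde T=-g^{\al0}\p_\al-\mathring L$. Since $g(-g^{\al0}\p_\al,X)=-X^0=0$ and $g(\mathring L,X)=0$ (Lemma \ref{nullframe}), we get $g(T,X)=0$. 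Therefore $(\det(T^i,X^i))^2=\mu^2\slashed g_{XX}(\det\underline g)^{-1}$.

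Taking square roots gives the claim, up to the sign. The sign is positive because at $s=t_0$ we have $T\approx-\p_r$ by \eqref{errorv-0} and $X\approx\p_\theta$, which produce an oriented frame with positive determinant. Positivity then persists by continuity as long as $\mu>0$ in $D^{s,u}$. The only delicate point is this sign and orientation check, together with the verification of $X^0=0$ and $g(T,X)=0$. The rest is linear algebra, so no serious obstacle is expected.
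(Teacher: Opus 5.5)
Your reduction to $\det(\mathring L,T,X)$, the expansion along the first row using $\mathring L^0=1$ and $T^0=X^0=0$, and the Gram-determinant computation with $\underline g(T,T)=\mu^2$, $\underline g(T,X)=0$, $\underline g(X,X)=\slashed g_{XX}$ are all correct. This is essentially the volume-form comparison behind Lemma 3.66 of \cite{Sp}, to which the paper defers.

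What fails is your orientation check. On $\Sigma_{t_0}$ the coordinates are $x=\big((t_0-u)\cos\vartheta,(t_0-u)\sin\vartheta\big)$, so
\begin{equation*}
\det\begin{pmatrix}\p_u x^1 & \p_\vartheta x^1\\ \p_u x^2 & \p_\vartheta x^2\end{pmatrix}
=\det\begin{pmatrix}-\cos\vartheta & -(t_0-u)\sin\vartheta\\ -\sin\vartheta & (t_0-u)\cos\vartheta\end{pmatrix}=-(t_0-u)<0 .
\end{equation*}
Equivalently, $\det(-\p_r,\p_\theta)=-r$, not $+r$. Because $u$ increases inward, $(u,\vartheta)$ is negatively oriented relative to $(x^1,x^2)$.

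Your continuity argument is fine: the determinant cannot vanish on the connected set $D^{s,u}$ while $\mu>0$ and $\slashed g_{XX}>0$. But applied to the correct initial sign, what your computation actually proves is
\begin{equation*}
\det\f{\p(x^0,x^1,x^2)}{\p(s,u,\vartheta)}=-\mu(\det\underline g)^{-1/2}\sqrt{\slashed g_{XX}} .
\end{equation*}

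So \eqref{Jacobian} holds only for the absolute value of the Jacobian. That is all that enters change-of-variables formulas. For example, since $\det(g_{\alpha\beta})=-\det\underline g$ by $g^{00}=-1$, it turns $\sqrt{|\det g|}\,dx^0dx^1dx^2$ into $\mu\,d\nu_{\slashed g}\,du\,ds$. You should drop the positivity claim and state the result as
$\bigl|\det\f{\p(x^0,x^1,x^2)}{\p(s,u,\vartheta)}\bigr|=\mu(\det\underline g)^{-1/2}\sqrt{\slashed g_{XX}}$,
or record the minus sign explicitly.
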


On $\mathbb{S}$, the vector field $\Omega=x^1\p_2-x^2\p_1$
is taken as the tangent derivative.
In order to project $\Omega$ on $S_{s, u}$, as in (3.39b) of \cite{Sp}, we define
\begin{equation*}
R=\slashed\Pi\Omega,\quad \slashed d=\slashed\Pi d
\end{equation*}
to be the rotation vectorfield and the differential of $S_{s, u}$, respectively.
In the frame $\{\mathring{L}, T, X\}$, the metric $g^{\alpha\beta}$ and $\partial_\alpha$ admit the following expressions:
\begin{align}
g^{\alpha\beta} &= -\mathring{L}^\alpha\mathring{L}^\beta - \tilde{T}^\alpha\mathring{L}^\beta - \mathring{L}^\alpha\tilde{T}^\beta + (\slashed{d}_X x^\alpha)(\slashed{d}^X x^\beta), \label{gab} \\
\partial_\alpha &= \delta_\alpha^0 \mathring{L} - \mu^{-1} \mathring{L}_\alpha \, T + g_{\alpha i} (\slashed{d}^X x^i) \, X. \label{pal}
\end{align}
In addition, it holds that
\begin{equation}\label{R}
R=(\delta_j^i+g_{\al j}\mathring L^\al\tilde T^i){\Omega}^j\p_i
=\Omega-g_{aj}\tilde{T}^a{\Omega}^j\tilde{T},
\end{equation}
where $\delta_j^i$ is the Kronecker delta symbol. For brevity, set
\begin{equation}\label{omega}
\upsilon=g_{ab}\tilde{T}^a{\Omega}^b=g_{aj}\check{T}^a\epsilon_i^jx^i-(g_{aj}-m_{aj})\f{\epsilon_i^j x^ix^a}\varrho.
\end{equation}
Then
$$R=\Omega-\upsilon\tilde{T}.$$

With the help of $\check{\mathscr{X}}$, $\sigma$, $\zeta$ and $\xi$ (as defined earlier), one can derive some basic equalities in the frame $\{\mathring{L}, \mathring{\underline{L}}, X\}$ or $\{T, \mathring{\underline{L}}, X\}$. Making full use of the expression of $\check{\mathscr{X}}$
and performing a computation analogous to that in \cite{Ding4}, we have

	\begin{lemma}\label{Lem3.3}
		The following identities hold:
		\begin{itemize}[leftmargin=*, labelsep=6pt, itemsep=10pt]
			\item the function $\mu$ satisfies
			\begin{equation}\label{lmu}
			\mathring{L}\mu = -\frac{1}{2}\mu G_{\mathring{L}\mathring{L}}^\gamma \mathring{L}\varphi_\gamma
			- \mu G_{\tilde{T}\mathring{L}}^\gamma \mathring{L}\varphi_\gamma + \frac{1}{2}G_{\mathring{L}\mathring{L}}^\gamma T\varphi_\gamma;
			\end{equation}
			
			\item the quantities $\zeta$ and $\sigma$ satisfy
			\begin{align}
			&\zeta_X = -\frac{1}{2}\big\{ G_{\tilde{T}\mathring{L}}^\gamma \slashed{d}_X\varphi_\gamma + G_{\tilde{T}\tilde{T}}^\gamma \slashed{d}_X\varphi_\gamma
			- G_{X\tilde{T}}^\gamma \mathring{L}\varphi_\gamma + G_{X\mathring{L}}^\gamma \tilde{T}\varphi_\gamma \big\},\label{zeta}\\
			&\sigma_{XX} = -G_{X\mathring{L}}^\gamma \slashed{d}_X\varphi_\gamma - G_{X\tilde{T}}^\gamma \slashed{d}_X\varphi_\gamma
			+ \frac{1}{2}G_{XX}^\gamma \tilde{T}\varphi_\gamma - \check{\mathscr{X}}_{XX} - \frac{\slashed{g}_{XX}}{\varrho};\label{theta}
			\end{align}
			
			\item the components of the vector fields $\mathring{L}$ and $\check{L}$ fulfill
			\begin{align}
			&\mathring{L}\mathring{L}^i = \frac{1}{2}G_{\mathring{L}\mathring{L}}^\gamma \mathring{L}\varphi_\gamma \tilde{T}^i
			- \big\{ G_{X\mathring{L}}^\gamma \mathring{L}\varphi_\gamma - \frac{1}{2}G_{\mathring{L}\mathring{L}}^\gamma \slashed{d}_X\varphi_\gamma \big\}\slashed{d}^X x^i,\label{LL}\\
			&T\check{L}^i = \big\{ \slashed{d}_X\mu - \frac{1}{2}\mu G_{\tilde{T}\tilde{T}}^\gamma \slashed{d}_X\varphi_\gamma
			- G_{X\mathring{L}}^\gamma T\varphi_\gamma \big\}\slashed{d}^X x^i + \frac{1}{2}G_{\mathring{L}\mathring{L}}^\gamma T\varphi_\gamma \tilde{T}^i \nonumber\\
			&\qquad\quad + \big\{ \frac{1}{2}\mu G_{\mathring{L}\mathring{L}}^\gamma \mathring{L}\varphi_\gamma + \mu G_{\tilde{T}\mathring{L}}^\gamma \mathring{L}\varphi_\gamma
			+ \frac{1}{2}\mu G_{\tilde{T}\tilde{T}}^\gamma \mathring{L}\varphi_\gamma \big\}\mathring{L}^i-\f{\mu-1}{\varrho}\tilde T^i-\f{1}{\varrho}\check T^i,\label{TL}\\
			&\mathring{L}\left( \varrho\check{L}^i \right) = \varrho\mathring{L}\mathring{L}^i,\label{LeL}\\
			&\slashed{d}_X\check{L}^i = \operatorname{tr}\check{\mathscr{X}} \slashed{d}_X x^i - \big\{ G_{\mathring{L}\tilde{T}}^\gamma \slashed{d}_X\varphi_\gamma
			+ \frac{1}{2}G_{\tilde{T}\tilde{T}}^\gamma \slashed{d}_X\varphi_\gamma \big\}\tilde{T}^i;\label{deL}
			\end{align}
			
			\item the covariant derivatives of the vector fields $\{T, \mathring{L}, X\}$ are given by
			\begin{equation}\label{cdf}
			\begin{split}
			&\mathscr{D}_{\mathring{L}}\mathring{L} = (\mu^{-1}\mathring{L}\mu)\mathring{L}, \quad
			\mathscr{D}_{T}\mathring{L} = -\mathring{L}\mu\mathring{L} + \xi^X X, \quad
			\mathscr{D}_X\mathring{L} = -\zeta_X\mathring{L} + \operatorname{tr}\chi \, X,\\
			&\mathscr{D}_{\mathring{L}}T = -\mathring{L}\mu\mathring{L} - \mu\zeta^X X, \quad
			\mathscr{D}_T T = \mu\mathring{L}\mu\mathring{L} + (\mu^{-1}T\mu + \mathring{L}\mu)T - \mu(\slashed{d}^X\mu) X,\\
			&\mathscr{D}_X T = \mu\zeta_X\mathring{L} + \mu^{-1}\xi_X T + \mu\operatorname{tr}\sigma \, X,\\
			&\mathscr{D}_X X = \slashed{\nabla}_X X + (\sigma_{XX} + \chi_{XX})\mathring{L} + \mu^{-1}\chi_{XX} T;
			\end{split}
			\end{equation}
			
			\item the covariant derivatives of the vector fields $\{\mathring{\underline{L}}, \mathring{L}, X\}$ are
			\begin{equation}\label{LuL}
			\begin{split}
			&\mathscr{D}_{\mathring{\underline{L}}}\mathring{L} = -\mathring{L}\mu\mathring{L} + 2\xi^X X, \quad
			\mathscr{D}_{\mathring{L}}\mathring{\underline{L}} = -2\mu\zeta^X X,\\
			&\mathscr{D}_{\mathring{\underline{L}}}\mathring{\underline{L}} = (\mu^{-1}\mathring{\underline{L}}\mu + \mathring{L}\mu)\mathring{\underline{L}} - (2\mu \slashed{d}^X\mu) X.
			\end{split}
			\end{equation}
		\end{itemize}
	\end{lemma}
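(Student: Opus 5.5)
The identities of Lemma~\ref{Lem3.3} are purely geometric consequences of the eikonal equation \eqref{eikonal equation}, the definitions of $\mu$, $\mathring L$, $T$, $X$ and the good unknown \eqref{good-unknown}. I would derive them in the frame $\{\mathring L,T,X\}$ by repeatedly using metric compatibility and torsion-freeness of $\mathscr D$. The only input from the equation is that $g_{\alpha\beta}$ depends on $\varphi$, so that for any vector fields
\begin{equation*}
\mathscr D_U g(V,W)\ \text{is replaced by}\ U\bigl(g(V,W)\bigr)-\tfrac12\bigl(G^\gamma_{VW}U\varphi_\gamma+\dots\bigr)
\end{equation*}
through the Christoffel symbols
\begin{equation*}
\Gamma_{\alpha\beta\nu}=\tfrac12\bigl(G^\gamma_{\alpha\nu}\partial_\beta\varphi_\gamma+G^\gamma_{\beta\nu}\partial_\alpha\varphi_\gamma-G^\gamma_{\alpha\beta}\partial_\nu\varphi_\gamma\bigr).
\end{equation*}

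\textbf{Basic geodesic relations.} Since $\tilde L=-\nabla u$ is a gradient of a null function, it is geodesic: $\mathscr D_{\tilde L}\tilde L=0$. Rescaling gives $\mathscr D_{\mathring L}\mathring L=(\mu^{-1}\mathring L\mu)\mathring L$.

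\textbf{Equation \eqref{lmu} for $\mu$.} Differentiate $\mu^{-1}=-g^{0\alpha}\partial_\alpha u=g(\tilde L,\partial_t)$ along $\mathring L$. Using \eqref{g00}, $g_{0\alpha}$ and $\partial_t$ are expressed through $\mathring L,\tilde T$, and the Christoffel terms produce exactly
\begin{equation*}
-\tfrac12\mu G^\gamma_{\mathring L\mathring L}\mathring L\varphi_\gamma-\mu G^\gamma_{\tilde T\mathring L}\mathring L\varphi_\gamma+\tfrac12 G^\gamma_{\mathring L\mathring L}T\varphi_\gamma .
\end{equation*}
This is Christodoulou's standard computation (cf.\ \cite{Sp}, \cite{Ding4}), and I would follow it term by term.

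\textbf{Connection coefficients \eqref{cdf}.} I would expand each $\mathscr D_UV$ in the frame with coefficients $-\mu^{-1}g(\cdot,T)$-type pairings, using $g(\mathring L,T)=-\mu$, $g(T,T)=\mu^2$ from \eqref{LTTT}, $[\mathring L,X]=0$, and $[\mathring L,T]=-(\mathring L\Theta^X)X$ tangent to $S_{s,u}$. Then:
\begin{itemize}
\item $g(\mathscr D_X\mathring L,\mathring L)=0$ gives the $X$-component $\operatorname{tr}\chi\,X$, and the $\mathring L$-coefficient is $-\zeta_X$ by the definition \eqref{zetaeta}.
\item $\mathscr D_T\mathring L$ and $\mathscr D_{\mathring L}T$ follow from $\mathscr D_{\mathring L}T-\mathscr D_T\mathring L=[\mathring L,T]$ together with $\mu\zeta_X=-X\mu+\xi_X$.
\item $\mathscr D_TT$ and $\mathscr D_XX$ follow analogously from differentiating $g(T,T)=\mu^2$ and $g(X,X)=\slashed g_{XX}$.
\end{itemize}
Then \eqref{LuL} follows from \eqref{cdf} by $\mathring{\underline L}=\mu\mathring L+2T$, which is pure algebra.

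\textbf{Formulas \eqref{zeta}, \eqref{theta} for $\zeta$ and $\sigma$.} I would write $\tilde T=-g^{0\alpha}\partial_\alpha-\mathring L$. The vector $g^{0\alpha}\partial_\alpha$ is the gradient of $t$, so its covariant derivative is symmetric and expressible by Christoffel symbols alone, giving $G$-terms contracted with derivatives of $\varphi$. Then
\begin{equation*}
\sigma_{XX}=-g(\mathscr D_X\nabla t,X)-\chi_{XX}.
\end{equation*}
Substituting $\chi_{XX}=\check{\mathscr X}_{XX}+\tfrac12G^\gamma_{XX}\mathring L\varphi_\gamma+\varrho^{-1}\slashed g_{XX}$, the $\tfrac12G_{XX}\mathring L\varphi$ pieces combine so that only $\tfrac12G_{XX}^\gamma\tilde T\varphi_\gamma$ survives, leaving $-\check{\mathscr X}_{XX}-\slashed g_{XX}/\varrho$. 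This cancellation is the key point where the good unknown enters. The formula for $\zeta$ is analogous, using $g(\mathscr D_X\mathring L,\tilde T)=-g(\mathring L,\mathscr D_X\tilde T)$.

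\textbf{Component identities \eqref{LL}--\eqref{deL}.} These come from applying \eqref{pal} and \eqref{gab} to express $\mathscr D_U\mathring L$ in Cartesian components, $U\mathring L^i=(\mathscr D_U\mathring L)^i-\Gamma^i_{\alpha\beta}U^\alpha\mathring L^\beta$. For \eqref{LL} I use $U=\mathring L$. Identity \eqref{LeL} is immediate from $\mathring L\varrho=1$ and $\mathring L x^i=\mathring L^i$. For \eqref{deL}, $\slashed d_X(x^i/\varrho)=\varrho^{-1}\slashed d_Xx^i$ cancels the $\varrho^{-1}\slashed g$ part of $\chi$, producing $\operatorname{tr}\check{\mathscr X}$ once more. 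For \eqref{TL}, $T(x^i/\varrho)$ with $Tu=1$, $Tt=0$ and $Tx^i=\mu\tilde T^i$ yields the $-\frac{\mu-1}{\varrho}\tilde T^i-\frac1\varrho\check T^i$ terms.

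\textbf{Main obstacle.} I expect the hardest part to be the bookkeeping in \eqref{TL} and \eqref{theta}, where many $G$-contractions must be matched exactly and signs depend on the convention $g^{00}=-1$. I would check these identities against the Minkowski case ($G\equiv0$, $\mu=1$, $\chi=\slashed g/\varrho$) as a consistency test.
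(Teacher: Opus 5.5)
Your route is the one the paper takes: the paper gives no separate argument and simply reruns the frame calculus of \cite{Ding4} with $\chi_{XX}$ rewritten through the good unknown. Your treatment of $\sigma_{XX}$, \eqref{LL}, \eqref{TL}, \eqref{LeL}, \eqref{cdf} and \eqref{LuL} checks out.

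The one step that fails as written is the derivation of \eqref{lmu}. The identity $\mu^{-1}=-g^{0\alpha}\partial_\alpha u=g(\tilde L,\partial_t)$ is false. In fact $g(\tilde L,\partial_t)=-g_{0\alpha}g^{\alpha\beta}\partial_\beta u=-\partial_tu$, which equals $-1$ in Minkowski space while $\mu^{-1}=1$. In general $\mu^{-1}=\partial_tu-g^{0i}\partial_iu$, so differentiating $g(\tilde L,\partial_t)$ along $\mathring L$ does not compute $\mathring L\mu$. The correct object is the gradient $\nabla t=g^{0\alpha}\partial_\alpha=-(\mathring L+\tilde T)$, with $\mu^{-1}=\tilde Lt=g(\tilde L,\nabla t)$. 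Most directly, take the $x^0$-component of $\mathscr D_{\mathring L}\mathring L=(\mu^{-1}\mathring L\mu)\mathring L$ and use $\mathring L^0=1$:
\[
\mu^{-1}\mathring L\mu=\Gamma^0_{\alpha\beta}\mathring L^\alpha\mathring L^\beta=-\Gamma_{\alpha\beta\nu}\mathring L^\alpha\mathring L^\beta\bigl(\mathring L^\nu+\tilde T^\nu\bigr).
\]
Your Christoffel formula expands this into exactly \eqref{lmu}, and \eqref{g00} is not needed here.

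Where \eqref{g00} does matter, besides $g(\tilde T,\tilde T)=1$ behind \eqref{LTTT} and \eqref{gab}, is the identity
\[
G^\gamma_{\mathring L\mathring L}+2G^\gamma_{\mathring L\tilde T}+G^\gamma_{\tilde T\tilde T}=G^\gamma_{\alpha\beta}g^{\alpha0}g^{\beta0}=-\partial_{\varphi_\gamma}g^{00}=0 .
\]
Your Christoffel route naturally produces
$\zeta_X=\frac12\{G^\gamma_{\mathring L\mathring L}\slashed d_X\varphi_\gamma+G^\gamma_{\mathring L\tilde T}\slashed d_X\varphi_\gamma+G^\gamma_{X\tilde T}\mathring L\varphi_\gamma-G^\gamma_{X\mathring L}\tilde T\varphi_\gamma\}$
and
$\slashed d_X\check L^i=\operatorname{tr}\check{\mathscr X}\,\slashed d_Xx^i+\frac12G^\gamma_{\mathring L\mathring L}\slashed d_X\varphi_\gamma\,\tilde T^i$.
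Only this identity turns them into the stated forms \eqref{zeta} and \eqref{deL}.

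Your proposed Minkowski consistency test cannot detect this, since all $G$-terms vanish there. It therefore gives no protection exactly in the $G$-bookkeeping you single out as the main risk; the identity above is the check you actually need.
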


Let $\leftidx{^{(V)}}{\slashed{\pi}}_{UX}= \leftidx{^{(V)}}{\pi}_{UX}$ for $U \in \{\mathring{L}, \underline{\mathring{L}}, X\}$.
By utilizing the expression of $\check{\mathscr{X}}$ and performing the computations analogous to those in \cite[Appendix A]{Ding4},
we obtain the expressions for the deformation tensors
in the frame $\{\mathring{L}, \underline{\mathring{L}}, X\}$ listed in the following lemma.
\begin{lemma}
The deformation tensors corresponding to different vector fields $V$ are given as follows:
\begin{itemize}\label{lem3.4}
	\item For $V = T$,
	\begin{equation}\label{Lpi}
	\begin{split}
	&\leftidx{^{(T)}}{\pi}_{\mathring{L}\mathring{L}} = 0,
	\quad \leftidx{^{(T)}}{\pi}_{T\tilde{T}} = 2T\mu,
	\quad \leftidx{^{(T)}}{\pi}_{\mathring{L}T} = -T\mu,
	\quad \leftidx{^{(T)}}{\slashed{\pi}}_{TX} = 0, \\
	&\leftidx{^{(T)}}{\slashed{\pi}}_{\mathring{L}X} = -2\mu\zeta_X - \slashed{d}_X\mu,
	\quad \leftidx{^{(T)}}{\slashed{\pi}}_{XX} = 2\mu\sigma_{XX};
	\end{split}
	\end{equation}
	
	\item For $V = \mathring{L}$,
	\begin{equation}\label{uLpi}
	\begin{split}
	&\leftidx{^{(\mathring{L})}}{\pi}_{\mathring{L}\mathring{L}} = 0,
	\quad \leftidx{^{(\mathring{L})}}{\pi}_{T\tilde{T}} = 2\mathring{L}\mu,
	\quad \leftidx{^{(\mathring{L})}}{\pi}_{\mathring{L}T} = -\mathring{L}\mu,
	\quad \leftidx{^{(\mathring{L})}}{\slashed{\pi}}_{\mathring{L}X} = 0, \\
	&\leftidx{^{(\mathring L)}}{\slashed\pi}_{TX}=2\mu\zeta_X+\slashed d_X\mu,
	\quad \leftidx{^{(\mathring{L})}}{\slashed{\pi}}_{XX} = 2\chi_{XX};
	\end{split}
	\end{equation}
	
	\item For $V = R$,
	\begin{equation}\label{Rpi}
	\begin{split}
	&\leftidx{^{(R)}}{\pi}_{\mathring{L}\mathring{L}} = 0,
	\quad \leftidx{^{(R)}}{\pi}_{T\tilde{T}} = 2R\mu,
	\quad \leftidx{^{(R)}}{\pi}_{\mathring{L}T} = -R\mu, \\
	&\leftidx{^{(R)}}{\slashed{\pi}}_{\mathring{L}X} = -{R}^X\check{\mathscr{X}}_{XX}
	-\upsilon\big\{G_{\mathring{L}\tilde{T}}^\gamma\slashed{d}_X\varphi_\gamma
	+ \frac{1}{2}G_{\tilde{T}\tilde{T}}^\gamma\slashed{d}_X\varphi_\gamma
	- G_{X\tilde{T}}^\gamma{\mathring{L}}\varphi_\gamma\big\}
	+ \epsilon_i^jg_{ja}\check{L}^i\slashed{d}_Xx^a, \\
	&\leftidx{^{(R)}}{\slashed{\pi}}_{TX} = \mu{R}^X\check{\mathscr{X}}_{XX}
	+ \upsilon\slashed{d}_X\mu
	+ \mu G_{X\mathring{L}}^\gamma R\varphi_{\gamma}
	+ \upsilon\big\{G_{X\tilde{T}}^\gamma T\varphi_\gamma
	- \frac{1}{2}\mu G_{\tilde{T}\tilde{T}}^\gamma\slashed{d}_X\varphi_{\gamma}\big\} \\
	&\qquad\qquad + \mu G_{X\tilde{T}}^\gamma R\varphi_{\gamma}
	+ \mu\epsilon_i^j g_{ja}\check{T}^i\slashed{d}_Xx^a, \\
	&\leftidx{^{(R)}}{\slashed{\pi}}_{XX} = 2\upsilon\big(\check{\mathscr{X}}_{XX}
	+ \frac{\slashed{g}_{XX}}{\varrho}\big)
	+ 2\upsilon\big\{G_{X\mathring{L}}^\gamma\slashed{d}_X\varphi_{\gamma}
	+ G_{X\tilde{T}}^\gamma\slashed{d}_X\varphi_{\gamma}\big\}
	+ G_{XX}^\gamma R\varphi_{\gamma} \\
	&\qquad\qquad + 2\epsilon_i^j\check{g}_{ja}(\slashed{d}_Xx^a)\slashed{d}_Xx^i,
	\end{split}
	\end{equation}
	where $\check{g}_{ia}=g_{ia}-m_{ia}$.
\end{itemize}
\end{lemma}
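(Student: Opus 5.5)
The deformation tensor identities in the lemma follow from the definition \eqref{dt}, i.e. ${}^{(V)}\pi_{UW}=g(\mathscr D_U V,W)+g(\mathscr D_W V,U)$, once the covariant derivative formulas \eqref{cdf} of Lemma~\ref{Lem3.3} and the frame identities of Lemma~\ref{nullframe} are available. The plan is to treat $V=T$ and $V=\mathring L$ first, since for these $\mathscr D V$ is fully listed in \eqref{cdf}. Then we handle $V=R$ through the decomposition $R=\Omega-\upsilon\tilde T$ from \eqref{R}.

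\emph{The cases $V=T$ and $V=\mathring L$.} For $V=T$ we use $g(\mathring L,\mathring L)=0$, $g(\mathring L,T)=-\mu$, $g(T,T)=\mu^2$ and $g(T,X)=g(\mathring L,X)=0$, and compute each component directly.
\begin{itemize}
\item ${}^{(T)}\pi_{\mathring L\mathring L}=2g(\mathscr D_{\mathring L}T,\mathring L)=0$, because $\mathscr D_{\mathring L}T=-\mathring L\mu\,\mathring L-\mu\zeta^XX$ and $\mathring L$ is null.
\item ${}^{(T)}\pi_{\mathring LT}=g(\mathscr D_{\mathring L}T,T)+g(\mathscr D_TT,\mathring L)$. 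The first term gives $\mu\mathring L\mu$. The second gives $-\mu(\mu^{-1}T\mu+\mathring L\mu)$, the $\mathring L$ component of $\mathscr D_TT$ contributing nothing. The sum is $-T\mu$.
\item ${}^{(T)}\pi_{T\tilde T}=\mu^{-1}\cdot 2g(\mathscr D_TT,T)=T(g(T,T))/\mu=2T\mu$.
\item The $X$-components come from pairing $\mathscr D_XT=\mu\zeta_X\mathring L+\mu^{-1}\xi_XT+\mu\operatorname{tr}\sigma X$ with $\mathring L$, $T$ and $X$. Together with $\mu\zeta_X=-X\mu+\xi_X$, this gives ${}^{(T)}\slashed\pi_{\mathring LX}=-\mu\zeta_X-\xi_X=-2\mu\zeta_X-\slashed d_X\mu$. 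The mixed component ${}^{(T)}\slashed\pi_{TX}$ cancels: $-\mu\,\slashed d_X\mu\,\slashed g_{XX}\slashed g^{XX}+\mu\slashed d_X\mu=0$. Finally ${}^{(T)}\slashed\pi_{XX}=2\mu\sigma_{XX}$.
\end{itemize}
For $V=\mathring L$ we repeat the same computation with the $\mathring L$-rows of \eqref{cdf}. The second fundamental form enters through $g(\mathscr D_X\mathring L,X)=\chi_{XX}$.

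\emph{The case $V=R$: strategy.} We split ${}^{(R)}\pi={}^{(\Omega)}\pi-\upsilon\,{}^{(\tilde T)}\pi-(d\upsilon\otimes\tilde T^\flat+\tilde T^\flat\otimes d\upsilon)$ and exploit the fact that $\Omega$ is a Killing field of the Minkowski metric. This gives ${}^{(\Omega)}\pi_{\alpha\beta}=\Omega(g_{\alpha\beta})+\check g$-corrections, and hence ${}^{(\Omega)}\pi_{\alpha\beta}=G^\gamma_{\alpha\beta}\Omega\varphi_\gamma+(\text{terms with }\partial\Omega^\cdot\text{ contracted with }\check g)$. It is more efficient, however, to use that $R$ is tangent to $S_{s,u}$ with $Rs=Ru=0$. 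Then ${}^{(R)}\pi_{\mathring L\mathring L}=2g([\mathring L,R],\mathring L)$, and since $R=R^XX$ with $[\mathring L,X]=0$ we get $[\mathring L,R]=(\mathring LR^X)X$, so this component vanishes. Similarly ${}^{(R)}\pi_{\mathring LT}$ and ${}^{(R)}\pi_{T\tilde T}$ reduce to $R$ acting on $g(\mathring L,T)=-\mu$ and $g(T,T)=\mu^2$, up to commutator terms that are tangent to $S_{s,u}$. This yields $-R\mu$ and $2R\mu$.

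\emph{The case $V=R$: angular components, the main obstacle.} For ${}^{(R)}\slashed\pi_{\mathring LX}$, ${}^{(R)}\slashed\pi_{TX}$ and ${}^{(R)}\slashed\pi_{XX}$ we write $R^X$ explicitly and use $\mathscr D_XR=\mathscr D_X\Omega-(X\upsilon)\tilde T-\upsilon\mathscr D_X\tilde T$. In Cartesian coordinates $\mathscr D_X\Omega$ equals $\epsilon_i^jX^i\partial_j$ plus Christoffel terms, and these Christoffel terms are expressible through $G^\gamma\slashed d\varphi_\gamma$. The term $g(\mathscr D_X\tilde T,X)=\sigma_{XX}$ is then rewritten via \eqref{theta}. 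This is precisely where $\check{\mathscr X}_{XX}+\slashed g_{XX}/\varrho$ appears. The remaining contraction $g(\mathscr D_X\Omega,\mathring L)$ is handled by splitting $\mathring L^i=x^i/\varrho+\check L^i$ and using \eqref{deL}. This produces the term $\epsilon_i^jg_{ja}\check L^i\slashed d_Xx^a$ and the $-R^X\check{\mathscr X}_{XX}$ term. The $TX$ component is analogous, using $\tilde T^i=-x^i/\varrho+\check T^i$ and $\mu\zeta_X=-X\mu+\xi_X$.

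The main obstacle is this bookkeeping for the $R$-components. One must ensure that every occurrence of $\chi_{XX}$ is reorganized into the good unknown $\check{\mathscr X}_{XX}$, so that no bare $G^\gamma_{XX}\mathring L\varphi_\gamma$ survives; this is essential because the latter is only $O(\delta^{k(1-\varepsilon_0)-\varepsilon_k})$. The exact Minkowski parts $x^i/\varrho$ cancel by the antisymmetry of $\epsilon_i^j$, leaving only $\check g$, $\check L$ and $\check T$ remainders. I would carry this out following \cite[Appendix A]{Ding4}, substituting \eqref{good-unknown} at each step.
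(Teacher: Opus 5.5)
Your proposal is correct and follows essentially the paper's route. The paper gives only the direct computation from \eqref{dt}, \eqref{cdf} and $R=\Omega-\upsilon\tilde T$, carried out as in \cite[Appendix A]{Ding4}, with $\chi_{XX}$ and $\sigma_{XX}$ rewritten through $\check{\mathscr X}_{XX}$; your $T$-components, $\mathring L$-components and non-angular $R$-components all check out.

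For the angular $R$-components, note first that the Christoffel part of $g(\mathscr D_X\Omega,W)$ is $\frac12\bigl(G^\gamma_{W\Omega}\slashed d_X\varphi_\gamma+G^\gamma_{WX}\Omega\varphi_\gamma-G^\gamma_{X\Omega}W\varphi_\gamma\bigr)$. It therefore contains $\upsilon G^\gamma\tilde T\varphi_\gamma$ (through $\Omega=R+\upsilon\tilde T$) and $G^\gamma\mathring L\varphi_\gamma$, $G^\gamma T\varphi_\gamma$, not only $\slashed d\varphi$; this is what cancels the $-\upsilon G^\gamma_{XX}\tilde T\varphi_\gamma$ produced by \eqref{theta}. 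Second, the residual terms carrying $g^{0i}$ or $g_{0a}$ cancel only because of the normalization \eqref{g00}, e.g.\ via $g^{0\alpha}G^\gamma_{\alpha\beta}g^{\beta0}=-\partial_{\varphi_\gamma}g^{00}=0$.
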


In addition, the following commutators among vector fields, covariant derivatives and Lie derivatives are
presented in \cite[Appendix A]{Ding4}.

\begin{lemma}\label{com}
	For any vector field $Z \in \{\mathring{L}, T, R\}$, it holds that
	
		\begin{equation}\label{c}
		\begin{split}
		[\mathring{L}, R] &= \leftidx{^{(R)}}{\slashed{\pi}}_{\mathring{L}}{}^{X}X, \quad
		[\mathring{L}, T] = \leftidx{^{(T)}}{\slashed{\pi}}_{\mathring{L}}{}^{X}X, \quad
		[T, R] = \leftidx{^{(R)}}{\slashed{\pi}}_{T}{}^{X}X.
		\end{split}
		\end{equation}
		
		For any smooth function $f$:
		\begin{align}
		\bigl([\slashed{\nabla}^2, \slashed{\mathcal{L}}_Z]f\bigr)_{XX}
		&= \frac{1}{2} \slashed{\nabla}_X\bigl(\operatorname{tr}\,\leftidx{^{(Z)}}{\slashed{\pi}}\bigr) \slashed{d}_Xf, \label{nZf}\\
		[\slashed{\triangle}, Z]f &= \leftidx{^{(Z)}}{\slashed{\pi}}^{XX} \slashed{\nabla}_X^2f
		+ \frac{1}{2} \bigl(\slashed{\nabla}_X \leftidx{^{(Z)}}{\slashed{\pi}}^{XX}\bigr) \slashed{d}_Xf; \label{LZf}
		\end{align}
		
		For any one-form $\eta$ on $S_{s, u}$:
		\begin{equation}\label{nZx}
		\bigl([\slashed{\nabla}_X, \slashed{\mathcal{L}}_Z]\eta\bigr)_X = \frac{1}{2} \slashed{\nabla}_X\bigl(\operatorname{tr}\,\leftidx{^{(Z)}}{\slashed{\pi}}\bigr) \eta_X;
		\end{equation}
		
		For any $(0,2)$-type tensor $\eta$ on $S_{s, u}$:
		\begin{align}
		\bigl([\slashed{\nabla}_X, \slashed{\mathcal{L}}_Z]\eta\bigr)_{XX}
		&= \slashed{\nabla}_X\bigl(\operatorname{tr}\,\leftidx{^{(Z)}}{\slashed{\pi}}\bigr) \eta_{XX}. \label{nZxi}
		\end{align}
	
\end{lemma}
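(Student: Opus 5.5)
The statement to be proved is Lemma~\ref{com}: the commutator formulas \eqref{c}--\eqref{nZxi}. These are standard geometric facts, so the plan is to derive each one directly from the definitions of the frame. The deformation-tensor expressions of Lemma~\ref{lem3.4} then identify the coefficients.

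\textbf{Step 1: the vector field commutators \eqref{c}.} In the coordinates $(s,u,\vartheta)$ we have $\mathring L=\partial_s$, $T=\partial_u-\Theta^X X$ and $X=\partial_\vartheta$. Moreover $R$ is tangent to $S_{s,u}$, since $Rs=0$ and $Ru=0$ (recall $R=\slashed\Pi\Omega$). Hence any commutator of two elements of $\{\mathring L,T,R\}$ annihilates both $s$ and $u$: for instance $[\mathring L,T]s=\mathring L(Ts)-T(\mathring Ls)=0$, and similarly for $u$ using $\mathring Lu=0$ and $Tu=1$. So each commutator is a multiple of $X$, and it suffices to compute its $X$-component by pairing with $X$. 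Since $\mathscr D$ is torsion free,
\[
g([V,W],X)=g(\mathscr D_VW,X)-g(\mathscr D_WV,X).
\]
For $[\mathring L,R]$, note that $R$ is $S_{s,u}$-tangent, so $g(\mathscr D_{\mathring L}R,X)$ can be rewritten through $\mathscr D_{\mathring L}X$. Then one uses ${}^{(R)}\pi_{\mathring LX}=g(\mathscr D_{\mathring L}R,X)+g(\mathscr D_XR,\mathring L)$ together with $g(R,\mathring L)=0$, whence $g(\mathscr D_XR,\mathring L)=-g(R,\mathscr D_X\mathring L)$. This reduces $g([\mathring L,R],X)$ to ${}^{(R)}\slashed\pi_{\mathring LX}$, and raising the index with $\slashed g^{XX}$ gives the first formula in \eqref{c}. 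The same argument, combined with \eqref{cdf}, handles $[\mathring L,T]$ and $[T,R]$. A useful cross-check is that $[\mathring L,T]=\partial_s(-\Theta^X X)$, while \eqref{Lpi} gives ${}^{(T)}\slashed\pi_{\mathring LX}=-2\mu\zeta_X-\slashed d_X\mu$, consistent with $\mathscr D_{\mathring L}T-\mathscr D_T\mathring L$ read off from \eqref{cdf}.

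\textbf{Step 2: the angular commutators \eqref{nZf}--\eqref{nZxi}.} Here $S_{s,u}$ is one-dimensional, so I would use the general variation formula for the Levi-Civita connection under a Lie derivative:
\[
(\slashed{\mathcal L}_Z\slashed\Gamma)_{XX}^X=\tfrac12\slashed g^{XX}\big(2\slashed\nabla_X\slashed{\mathcal L}_Z\slashed g_{XX}-\slashed\nabla_X\slashed{\mathcal L}_Z\slashed g_{XX}\big)=\tfrac12\slashed g^{XX}\slashed\nabla_X\,{}^{(Z)}\slashed\pi_{XX},
\]
where $\slashed{\mathcal L}_Z\slashed g={}^{(Z)}\slashed\pi$. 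By one-dimensionality this equals $\frac12\slashed\nabla_X(\operatorname{tr}{}^{(Z)}\slashed\pi)$. The commutator of $\slashed{\mathcal L}_Z$ with $\slashed\nabla$ acting on a $(0,p)$ tensor produces $-p$ terms of the form $(\slashed{\mathcal L}_Z\slashed\Gamma)\eta$. This gives each formula in turn:
\begin{itemize}
\item for $p=0$ (a function $f$) it gives \eqref{nZf};
\item for a one-form it gives \eqref{nZx};
\item for a $(0,2)$ tensor the two index contributions combine, giving the factor $1$ in \eqref{nZxi}, with sign conventions fixed by the paper's definition of the bracket.
\end{itemize}

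\textbf{Step 3: the Laplacian commutator \eqref{LZf}.} Write $\slashed\triangle f=\slashed g^{XX}\slashed\nabla^2_{XX}f$ and apply $Z$. Two contributions arise:
\begin{itemize}
\item $Z\slashed g^{XX}=-{}^{(Z)}\slashed\pi^{XX}$;
\item $Z(\slashed\nabla^2_{XX}f)$, which differs from $\slashed\nabla^2_{XX}Zf$ by \eqref{nZf}.
\end{itemize}
Collecting the terms and rewriting $\operatorname{tr}{}^{(Z)}\slashed\pi$ as ${}^{(Z)}\slashed\pi^{XX}$ yields \eqref{LZf}, after checking the overall sign against the convention $[\slashed\triangle,Z]$.

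\textbf{Main obstacle.} The main difficulty is bookkeeping rather than any new idea. One must keep track of sign and ordering conventions in the brackets, and verify that $\slashed{\mathcal L}_Z$ applied to the one-dimensional induced metric really is ${}^{(Z)}\slashed\pi_{XX}$. This last point needs care for $Z=T$, which is not $S_{s,u}$-tangent. There I would first try projecting with $\slashed\Pi$ and invoking \eqref{Lpi}, where ${}^{(T)}\slashed\pi_{XX}=2\mu\sigma_{XX}$.
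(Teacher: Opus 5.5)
Your proof is correct. The paper gives no argument of its own for this lemma; it only refers to \cite[Appendix A]{Ding4}. Your proposal supplies the standard derivation: torsion-freeness plus $S_{s,u}$-tangency for \eqref{c}, and the variation formula $\slashed{\mathcal L}_Z\slashed\Gamma=\frac12\slashed g^{-1}\slashed\nabla\,{}^{(Z)}\slashed\pi$ on the one-dimensional $S_{s,u}$ for \eqref{nZf}--\eqref{nZxi}.

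Two small fixes are needed. First, \eqref{nZf} is not a ``$p=0$'' case. It is \eqref{nZx} applied to $\eta=\slashed df$, using $\slashed\nabla^2f=\slashed\nabla\slashed df$ and $\slashed{\mathcal L}_Z\slashed df=\slashed d(Zf)$. Second, \eqref{Lpi} only gives the value of ${}^{(T)}\slashed\pi_{XX}$ and does not settle your concern about $Z=T$. What settles it is that $[T,X]$ annihilates $s$ and $u$, hence is a multiple of $X$. Therefore ${}^{(T)}\pi(X,X)=T(\slashed g_{XX})-2g([T,X],X)=(\slashed{\mathcal L}_T\slashed g)_{XX}$.
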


At the end of this section, we introduce some abbreviated integrations over different domains and a norm over $\Sigma_s^u$.

\begin{definition}\label{3.4}
	For any continuous function $f$ and tensor field $\eta$, define
	\begin{align*}
	\int_{S_{s, u}} f &= \int_{S_{s, u}} f \, d\nu_{\slashed{g}}
	= \int_{\mathbb{S}} f(s, u, \vartheta) \sqrt{\slashed{g}_{XX}(s, u, \vartheta)} \, d\vartheta, \quad
	\int_{C^s_u} f = \int_{t_0}^s \int_{S_{\tau, u}} f(\tau, u, \vartheta) \, d\nu_{\slashed{g}} \, d\tau, \\
	\int_{\Sigma_s^u} f &= \int_0^u \int_{S_{s, u'}} f(s, u', \vartheta) \, d\nu_{\slashed{g}} \, du', \quad
	\int_{D^{s, u}} f = \int_{t_0}^s \int_0^u \int_{S_{\tau, u'}} f(\tau, u', \vartheta) \, d\nu_{\slashed{g}} \, du' \, d\tau, \\[6pt]
	\|\eta\|_{s,u} &= \sqrt{\int_{\Sigma_s^u} |\eta|^2}.
	\end{align*}
\end{definition}

\section{Bootstrap assumptions and  $L^{\infty}$ estimates of lower-order derivatives}\label{Section 3}

Motivated by Theorem~2.1 and the geometric definitions
in Section~3, we impose the following bootstrap
assumptions in $D^{s,u}$:
\begin{equation}\label{bootstrap-assumptions}
\begin{split}
	&\delta^{l+m\varepsilon_k} \| Z^{\alpha} \vp_{\gamma} \|_{L^{\infty}(\Sigma_s^u)}+\|\slashed\triangle\vp_{\gamma}\|_{L^{\infty}(\Sigma_s^u)}+\delta^{\varepsilon_k}\|\slashed d\mathring L\vp_\gamma\|_{L^\infty(\Sigma_s^u)}+\|\slashed d\vp_\gamma\|_{L^\infty(\Sigma_s^u)} \leq M \delta^{1-\varepsilon_0},\\
	&\delta^{l+m\varepsilon_k} \|Z^{\alpha}\phi\|_{L^\infty(\Sigma_s^u)}
+\|\slashed d\phi\|_{L^\infty(\Sigma_s^u)}\leq M\delta^{2-\varepsilon_0},
\end{split}
\end{equation}
where $\vp_{\gamma} = \partial_{\gamma} \phi$ ($\gamma = 0, 1, 2$), $t_0 \leq s <  T_\delta$,
$|\alpha| \leq N$ with $N$ being a large positive integer, $M$ is a suitably chosen positive number,
$Z \in \{\varrho\mathring{L}, T, R\}$, and $l$ (or $m$) denotes the number of $T$ (or $\mathring{L}$)
contained in $Z^{\alpha}$.

To complete the proof of \eqref{bootstrap-assumptions}, we first derive the equations for $\vp_{\gamma}$.
By using the expression of $\check{\mathscr{X}}$ and performing the computations analogous to those in
\cite[(2.24), (2.40) and (2.41)]{Ding4}, one has
\begin{equation}\label{ge}
\begin{split}
\mu\Box_g \varphi_\gamma=&\f12(g^{\al\beta}G_{\al\beta}^
{\al'})(\mu\slashed d^X\varphi_{\al'})\slashed d_X\varphi_\gamma+f(\varphi, \mathring L^1, \mathring L^2)\vp^{k-1}\left(
\begin{array}{ccc}
T\varphi\\
\mu\mathring L\varphi\\
\mu(\slashed d_Xx)\slashed d^X\varphi\\
\end{array}
\right)
\left(
\begin{array}{ccc}
\mathring L\varphi\\
(\slashed d_Xx)\slashed d^X\varphi\\
\end{array}
\right)
\end{split}
\end{equation}
and
\begin{equation}\label{fequation}
\mathring{L}\mathring{\underline{L}}\varphi_\gamma + \frac{1}{2\varrho}\mathring{\underline{L}}\varphi_\gamma
= \mu\slashed{\triangle}\varphi_\gamma + H_\gamma,
\end{equation}
where $H_\gamma$ is given by
\begin{equation}\label{H}
\begin{split}
H_\gamma =& -\left(\operatorname{tr}\check{\mathscr X}\right) T\varphi_\gamma + \frac{1}{2\varrho}\mu\mathring{L}\varphi_\gamma
+f\big(\varphi, \mathring{L}^1, \mathring{L}^2, \slashed{d}x, \slashed{g}\big)\vp^{k-1}
\begin{pmatrix}
T\varphi \\
\mu\mathring{L}\varphi \\
\mu\slashed{d}\varphi
\end{pmatrix}
\begin{pmatrix}
\mathring{L}\varphi \\
\slashed{d}\varphi
\end{pmatrix}.
\end{split}
\end{equation}
Here and below, $f$ denotes a generic smooth function of its arguments, and the product
$
\begin{pmatrix} A_1 \\ \vdots \\ A_n \end{pmatrix}
\begin{pmatrix} B_1 \\ \vdots \\ B_m \end{pmatrix}
$
represents all terms of the form $A_iB_j$ for $1 \leq i \leq n$ and $1 \leq j \leq m$.

In order to analyze $\vp_\gamma$ from \eqref{fequation}, we need to estimate $\operatorname{tr}\check{\mathscr X}$ appearing in \eqref{H}. Thus, deriving the equation of $\operatorname{tr}\check{\mathscr X}$ is necessary. In \cite[(2.44)]{Ding4}, the authors have actually provided the equation of $\check{\mathscr X}_{XX} + \frac{1}{2}G_{XX}^\gamma\mathring{L}\vp_{\gamma}$. With the help of this result, we can obtain the equation of $\operatorname{tr}\check{\mathscr X}$.

It is worth noting that in \cite{Ding4}, $\mathring{L}\vp$ and $\slashed{d}\vp$ have the same $L^\infty$ estimate and thus are not distinguished. However, in the present paper, $\mathring{L}\vp$ and $\slashed{d}\vp$ have different magnitudes, so special attention should be paid to this distinction when deriving the equation of $\operatorname{tr}\check{\mathscr X}$. Using
the expression of $\check{\mathscr X}$  and the same method as in \cite[(2.44) and (2.45)]{Ding4}, one can arrive at

\begin{lemma}\label{YHCC-555}
The quantity $\operatorname{tr}\check\chi$
satisfies the following transport identity along $\mathring L$:
	\begin{equation}\label{Lchi'}
	\begin{split}
	\mathring{L}\big(\varrho^2\operatorname{tr}\check{\mathscr X}\big)
	= &-\big(\frac{1}{2}G_{\mathring{L}\mathring{L}}^\al\mathring{L}\varphi_\al
	+ G_{\tilde{T}\mathring{L}}^\al\mathring{L}\varphi_\al
	- G_{X\mathring{L}}^\al\slashed{d}^X\varphi_\al
	+ \slashed{g}^{XX}G_{XX}^\al\mathring{L}\vp_\al\big)
	\bigl(\varrho^2\operatorname{tr}\check{\mathscr X} + \varrho\bigr) \\
	&-\varrho^2G_{X\mathring{L}}^\al\bigl(\slashed{d}^X\mathring{L}\varphi_\al\bigr)
	+ \frac{1}{2}\varrho^2G_{\mathring{L}\mathring{L}}^\al\slashed{\triangle}\varphi_\al
	- \varrho^2\bigl(\operatorname{tr}\check{\mathscr X}\bigr)^2\\
	&+ \varrho^2f\bigl(\varphi, \slashed{d}x, \mathring{L}^1, \mathring{L}^2\bigr)\begin{pmatrix} \slashed d(\vp^{k-1}) \\
\vp^{2k-2}\slashed{d}\varphi \end{pmatrix}
	\begin{pmatrix} \mathring{L}\varphi \\ \slashed{d}\varphi \end{pmatrix}.
	\end{split}
	\end{equation}
The  derivative of $\operatorname{tr}\check\chi$ in the transversal direction $T$ is given by
	\begin{equation}\label{Tchi'}
	\begin{split}
	T\bigl(&\operatorname{tr}\check{\mathscr X}\bigr)=\slashed\triangle\mu-G_{X\mathring L}^\al\slashed d^XT\vp_\al-\f12\mu G_{\tilde T\tilde T}^\al\slashed\triangle\vp_\al-\mu(\operatorname{tr}\check{\mathscr X})^2+\f{\mu-1}{\varrho^2}\\
	&+\f12\bigr(\mu G_{\mathring L\mathring L}^\al\mathring L\vp_\al+2\mu G_{\tilde T\mathring L}^\al\mathring L\vp_\al-G_{\mathring L\mathring L}^\al T\vp_\al-G_{\tilde T\mathring L}^\al T\vp_\al-2\mu G_{X\mathring L}^\al\slashed d^X\vp_\al-2\operatorname{tr}\leftidx{^{(T)}}{\slashed{\pi}}\bigr)\operatorname{tr}\check{\mathscr X}\\
	&-\slashed d^X\mu\bigr(G_{\tilde T\mathring L}^\al\slashed d_X\vp_\al+G_{\tilde T\tilde T}^\al\slashed d_X\vp_\al+\f12G_{XX}^\al\slashed d_X\vp_\al\bigr)\\
	&+f(\varphi, \slashed dx,\slashed d\vp,\slashed\triangle x, \mathring L^1,\mathring L^2)\left(
	\begin{array}{ccc}
	\vp^{k-1}\\
	\vp^{k-2}\slashed d\varphi
	\end{array}
	\right)
	\left(
	\begin{array}{ccc}
	\mu\mathring L\varphi\\
	\mu\slashed d\varphi\\
	T{\varphi}\\
	\end{array}
	\right).
	\end{split}
	\end{equation}
\end{lemma}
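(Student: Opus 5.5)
The plan is to derive both identities of Lemma~\ref{YHCC-555} from the Raychaudhuri-type structure equation for $\chi$ along $\mathring L$ and from the Codazzi/commutator structure in the $T$ direction. Both are rewritten in terms of the good unknown \eqref{good-unknown}, reusing the computations behind \cite[(2.44), (2.45)]{Ding4}. Throughout we keep $\mathring L\vp$ and $\slashed d\vp$ as separate entries in every error term, since they have different $\delta$-sizes here.

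\textbf{The identity \eqref{Lchi'}.} First recall the transport equation for $\operatorname{tr}\chi$: from $\mathscr D_{\mathring L}\mathring L=(\mu^{-1}\mathring L\mu)\mathring L$ in \eqref{cdf} and the curvature of $g$, written via $G^\gamma$ and second derivatives of $\vp$,
\[
\mathring L\operatorname{tr}\chi=(\mu^{-1}\mathring L\mu)\operatorname{tr}\chi-(\operatorname{tr}\chi)^2-\slashed g^{XX}\mathcal R_{\mathring L X\mathring L X}.
\]
The null curvature component equals $-\frac12\slashed g^{XX}G^\gamma_{XX}\mathring L^2\vp_\gamma+G^\gamma_{X\mathring L}\slashed d^X\mathring L\vp_\gamma-\frac12G^\gamma_{\mathring L\mathring L}\slashed\triangle\vp_\gamma$ plus quadratic terms in $\p\vp$. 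Next we subtract $\mathring L\big(\frac12\slashed g^{XX}G^\gamma_{XX}\mathring L\vp_\gamma\big)$; this is exactly what cancels the dangerous term $\mathring L^2\vp$, and it is the reason for introducing $\check{\mathscr X}$. We also subtract $\mathring L(1/\varrho)=-1/\varrho^2$.

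We then insert $\mu^{-1}\mathring L\mu$ from \eqref{lmu}. Expanding $(\operatorname{tr}\chi)^2=(\operatorname{tr}\check{\mathscr X}+\frac12\slashed g^{XX}G_{XX}\mathring L\vp+\varrho^{-1})^2$, the cross term $2\varrho^{-1}\operatorname{tr}\check{\mathscr X}$ is absorbed by multiplying by $\varrho^2$, because $\mathring L\varrho=1$. We also use $\mathring L\slashed g_{XX}=2\chi_{XX}$ from \eqref{uLpi}. The remaining terms linear in $\operatorname{tr}\check{\mathscr X}+\varrho^{-1}$ then combine into the coefficient displayed in \eqref{Lchi'}. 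The $T\vp$ term in \eqref{lmu} must appear only inside $\mu^{-1}\mathring L\mu$ multiplied against other quantities; I expect it to cancel against the $\mathring L$ of $\mu$-weighted terms, in the same way as in \cite{Ding4}.

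The remaining quadratic terms in $\mathring L\vp$ and $\slashed d\vp$, such as $(G_{XX}\mathring L\vp)^2$ or $G'\,\mathring L\vp\,\mathring L\vp$, must be checked term by term. Using $G=\p_\vp g=O(\vp^{k-1})$ together with the $k$-th order null condition \eqref{null}, every such product must carry either a factor $\slashed d$, or a factor $\vp^{2k-2}$ together with one $\slashed d\vp$. A pure $(\mathring L\vp)^2$ term would be fatal, and the null condition should make the coefficient of $\mathring L\vp\,\mathring L\vp$ vanish up to $\slashed d x$ components. This bookkeeping yields the last line of \eqref{Lchi'}.

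\textbf{The identity \eqref{Tchi'}.} Apply $T$ to $\operatorname{tr}\chi$, using $\chi=\mu^{-1}(\cdots)$ and $\mathscr D_X T$ from \eqref{cdf}. The key step is the standard identity
\[
T\operatorname{tr}\chi=\slashed\triangle\mu+\ldots,
\]
which comes from the commutator $[T,\mathring L]={}^{(T)}\slashed\pi_{\mathring L}{}^XX$ in \eqref{c} and the expression ${}^{(T)}\slashed\pi_{\mathring LX}=-2\mu\zeta_X-\slashed d_X\mu$. Insert $\zeta$ and $\sigma$ from \eqref{zeta} and \eqref{theta}, then subtract $T\big(\frac12\slashed g^{XX}G_{XX}\mathring L\vp\big)$ and $T(\varrho^{-1})=\varrho^{-2}$, using $Tu=1$. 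After this subtraction, $\mathring LT\vp$ is traded for $T\mathring L\vp$ plus commutator terms. The term $\frac{\mu-1}{\varrho^2}$ arises from combining $-\mu\varrho^{-2}$ with $T\varrho^{-1}$.

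\textbf{Main obstacle.} I expect the main obstacle to be the precise cancellation in \eqref{Lchi'}: verifying that no term of the form $\vp^{k-1}(\mathring L\vp)^2$ or $T\vp\,\mathring L\vp$ survives outside the displayed structure. This requires carefully using the null condition on $G_{\mathring L\mathring L}$ and the fact that $\mathring L\vp$ is larger than $\slashed d\vp$ by $\delta^{-\varepsilon_k}$.
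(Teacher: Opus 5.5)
Your overall route matches the paper's in spirit. The paper takes the transport equation for $\check{\mathscr X}_{XX}+\frac12G^\gamma_{XX}\mathring L\vp_\gamma$ from \cite{Ding4}, subtracts the $\mathring L$-derivative of the corrector, and re-sorts the errors keeping $\mathring L\vp$ and $\slashed d\vp$ apart; your Raychaudhuri-plus-corrector computation would do the same. The gap is that the whole content of \eqref{Lchi'} is that no $T\vp$ and no $(\mathring L\vp)^2$ survive, and for both you offer only an expectation, based on the wrong mechanism. The null condition \eqref{null} cannot remove $(\mathring L\vp)^2$ terms. It only controls $\omega$-contractions, i.e.\ quantities like $G^\gamma_{\mathring L\mathring L}\mathring L_\gamma$ in \eqref{gLLL}. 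It says nothing about $-\frac14(\slashed g^{XX}G^\gamma_{XX}\mathring L\vp_\gamma)^2$, produced by $-(\operatorname{tr}\chi)^2$, or about $-\frac14(G^\gamma_{\mathring L\mathring L}\mathring L\vp_\gamma)(\slashed g^{XX}G^\nu_{XX}\mathring L\vp_\nu)$, produced by $(\mu^{-1}\mathring L\mu)\operatorname{tr}\chi$. These must cancel identically, and they do: \eqref{Lchi'} and \eqref{Tchi'} are exact identities valid without \eqref{null}. The form \eqref{gh} of the metric ($G=O(\vp^{k-1})$, $\p_\vp G=O(\vp^{k-2})$) is all that is needed to write the remainder; the null condition is used elsewhere in the paper (\eqref{gLLL}, \eqref{GT}, \eqref{hLLL}), not here. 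Likewise, the term $\frac12\mu^{-1}G^\gamma_{\mathring L\mathring L}T\vp_\gamma\operatorname{tr}\chi$ coming from \eqref{lmu} is not cancelled by ``$\mathring L$ of $\mu$-weighted terms''. It is cancelled by $T$-derivative terms inside $\slashed g^{XX}\mathcal R_{\mathring LX\mathring LX}$, which appear once the spacetime Hessian of $\vp_\gamma$ is written in the frame (e.g.\ via the $\mu^{-1}\chi_{XX}T$ component of $\mathscr D_XX$ in \eqref{cdf}). Your description of the curvature as three principal terms ``plus quadratic terms in $\p\vp$'' hides exactly these contributions: they have the form $\operatorname{tr}\chi\cdot G\cdot(\mu^{-1}T\vp,\mathring L\vp)$, which is not quadratic.

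The cleanest fix is to bypass the curvature. Since $[\mathring L,X]=0$, $2\chi_{XX}=\mathring L\slashed g_{XX}$; together with $X^0=X\mathring L^0=0$ and $\mathring L^i=\check L^i+x^i/\varrho$ this gives $\check{\mathscr X}_{XX}=g_{ab}(\slashed d_Xx^a)(\slashed d_X\check L^b)$, consistent with \eqref{deL}. Apply $\mathring L$ and use \eqref{LeL}, \eqref{LL}, \eqref{deL}, $g(X,\tilde T)=0$, $g(\tilde T,\tilde T)=1$ and $\tilde T^b=-g^{0b}-\mathring L^b$. Then $\mathring L^2\vp$ and $T\vp$ never appear, and $\mathring L\vp$ enters only through $\mathring Lg_{ab}$, $\mathring L\slashed g^{XX}=-2\slashed g^{XX}\operatorname{tr}\chi$ and \eqref{LL}. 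Each occurrence is multiplied either by $\operatorname{tr}\check{\mathscr X}+\varrho^{-1}$ or by a factor $\slashed d\vp$, as follows. The $\tilde T^b$-part of \eqref{LL} enters through $g_{ab}(\slashed d_Xx^a)X\tilde T^b=-\slashed g_{XX}(\operatorname{tr}\check{\mathscr X}+\varrho^{-1})+O(\vp^{k-1}\slashed d\vp)$, which gives the $\frac12G_{\mathring L\mathring L}\mathring L\vp$ coefficient. In $-\slashed{\text{div}}(G^\gamma_{X\mathring L}\mathring L\vp_\gamma)$, the $\tilde T$-component $g_{ab}\tilde T^aXX^b$ equals $\slashed g_{XX}(\operatorname{tr}\check{\mathscr X}+\varrho^{-1})$ modulo $\vp^{k-1}\slashed d\vp$, giving the $G_{\tilde T\mathring L}\mathring L\vp$ coefficient. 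Also $X\mathring L^a=(\operatorname{tr}\check{\mathscr X}+\varrho^{-1})X^a+O(\vp^{k-1}\slashed d\vp)$ feeds the $\slashed g^{XX}G_{XX}\mathring L\vp$ coefficient. Finally, $\frac12\slashed d^XG^\gamma_{\mathring L\mathring L}\,\slashed d_X\vp_\gamma$ gives the $G_{X\mathring L}\slashed d^X\vp$ coefficient. The same bookkeeping with \eqref{TL} gives \eqref{Tchi'}: $\slashed\triangle\mu$ comes from its $\slashed d_X\mu\,\slashed d^Xx^i$ term, and $\frac{\mu-1}{\varrho^2}$ from its $-\frac{\mu-1}{\varrho}\tilde T^i$ term. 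Your sign there is off: $T\varrho=-1$ gives $T(\varrho^{-1})=+\varrho^{-2}$, so you need $+\mu\varrho^{-2}$, not $-\mu\varrho^{-2}$.
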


\begin{remark}
We point out that in the expression of \eqref{Lchi'}, the last term consists of the combinations of $(\slashed{d}\vp)(\mathring{L}\vp)$ and $\slashed{d}\vp \cdot \slashed{d}\vp$, while the troublesome term $(\mathring{L}\vp)^2$ is absent. This property
will play a crucial role in estimating  $\operatorname{tr}\check{\mathscr{X}}$.
\end{remark}

It is emphasized that different from the global existence argument in \cite{Ding4}, we will prove
$\mu\rightarrow 0+$ in this paper, which leads to the shock formation.
For this purpose, we carefully check the transport equation \eqref{lmu} of $\mu$ and
make use of the null condition \eqref{null} to get
\begin{equation}\label{GT}
G_{\mathring L\mathring L}^\gamma T\varphi_\gamma
=\mu G_{\mathring L\mathring L}^0\tilde T^i(\mathring L\varphi_i)
-\underline{G_{\mathring L\mathring L}^\gamma {\mathring L}_\gamma}\tilde T^i T\varphi_i
+\mu g_{\gamma i}G_{\mathring L\mathring L}^\gamma (\slashed d^Xx^i)\tilde T^j\slashed d_X\varphi_j
\end{equation}
with
\begin{equation}\label{gLLL}
G_{\mathring L\mathring L}^\gamma {\mathring L}_\gamma=f(\varphi, \check L^1,\check L^2,\f{x}\varrho,\check\varrho)\left(
\begin{array}{ccc}
\varphi^k\\
\check L^{1}\varphi^{k-1}\\
\check L^{2}\varphi^{k-1}\\
\check\varrho\varphi^{k-1}\\
(\mathring L\phi)\varphi^{k-2}\\
(\slashed d^Xx)(\slashed d_X\phi)\varphi^{k-2}
\end{array}
\right),
\end{equation}
which has been derived in \cite[(2.35)]{Ding4}.

With the above preparation, we now derive rough $L^{\infty}$ estimates for some lower-order derivatives.

\begin{proposition}\label{prop:lower-order-Linfty-estimates}
Under the bootstrap assumptions \eqref{bootstrap-assumptions},
for fixed $M\geq 1$ and small $\delta>0$,
the following estimates hold that for $t_0\leq s<T_\delta$
and $i=1,2$:
\begin{equation}\label{eq:lower-order-Linfty-estimates}
\begin{split}
&\|\check{\mathscr{X}}\|_{L^\infty(\Si_s^u)}
+\|\check{L}^i\|_{L^\infty(\Si_s^u)}
+\|\check{T}^i\|_{L^\infty(\Si_s^u)}
+\|\check{\varrho}\|_{L^\infty(\Si_s^u)}
+\|\upsilon\|_{L^\infty(\Si_s^u)}
\lesssim M^k\delta^{k(1-\varepsilon_0)},\\
&\|\mu\|_{L^\infty(\Si_s^u)}\lesssim M^{k+1},
\quad
\|\slashed{d}x^i\|_{L^\infty(\Si_s^u)}\lesssim 1,
\end{split}
\end{equation}
where the implicit constants are independent of $M$ and $\delta$.
\end{proposition}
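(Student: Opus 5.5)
The plan is to close the estimates by integrating transport equations along the integral curves of $\mathring L=\partial_s$, starting from the data on $\Sigma_{t_0}^{4\delta}$ given by \eqref{errorv-0} and Theorem~\ref{Local}. Two structural facts drive every step. First, each coefficient $G^\gamma_{\alpha\beta}$ is $O(|\varphi|^{k-1})$, because the leading term of $g^{\alpha\beta}$ has degree $k$; by \eqref{bootstrap-assumptions} this gives $|G|\lesssim M^{k-1}\delta^{(k-1)(1-\varepsilon_0)}$. Second, the $s$-interval has length $T_\delta-t_0$. This length is $O(\delta^{\varepsilon_k})$ in the supercritical case. It is $O(1)$ in the critical case, since then $\varepsilon_k=0$ and $T_\delta$ is bounded (or, if it is not, the decay in $\varrho$ is used instead).

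\textbf{Step 1: $\operatorname{tr}\check{\mathscr X}$ and $\check{\mathscr X}$.} Use \eqref{Lchi'}. The coefficient multiplying $\varrho^2\operatorname{tr}\check{\mathscr X}+\varrho$ is $O(|G|(|\mathring L\varphi|+|\slashed d\varphi|))$. Since $|\mathring L\varphi|\le M\delta^{1-\varepsilon_0-\varepsilon_k}$, it is bounded by $M^k\delta^{k(1-\varepsilon_0)-\varepsilon_k}$, and its integral over the $s$-interval is $O(M^k\delta^{k(1-\varepsilon_0)})$. The source terms $G_{X\mathring L}\slashed d\mathring L\varphi$ and $G_{\mathring L\mathring L}\slashed\triangle\varphi$ are bounded the same way, using the $\delta^{\varepsilon_k}$-weighted bound on $\slashed d\mathring L\varphi$ and the unweighted bound on $\slashed\triangle\varphi$ in \eqref{bootstrap-assumptions}. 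The remaining terms are $(\slashed d\varphi)(\mathring L\varphi)$ and $(\slashed d\varphi)^2$ multiplied by $\varphi^{k-1}$-type factors; here the absence of $(\mathring L\varphi)^2$ noted in the Remark is essential. The quadratic term $-\varrho^2(\operatorname{tr}\check{\mathscr X})^2$ is handled by a continuity (Riccati) argument: assume $|\operatorname{tr}\check{\mathscr X}|\le 2CM^k\delta^{k(1-\varepsilon_0)}$, so that this term contributes only a higher power of $\delta$, and then integrate in $s$ together with the initial value from \eqref{errorv-0} to improve the assumed bound. Since $S_{s,u}$ is one-dimensional, $\check{\mathscr X}_{XX}=\operatorname{tr}\check{\mathscr X}\,\slashed g_{XX}$, so the bound on $\check{\mathscr X}$ follows directly.

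\textbf{Step 2: $\check L^i$, $\check\varrho$, $\slashed dx^i$, $\check T^i$, $\upsilon$.} For $\check L^i$, integrate \eqref{LeL} using \eqref{LL}. The right-hand side of \eqref{LL} is $G\cdot(\mathring L\varphi,\slashed d\varphi)$, so its $s$-integral is again $O(M^k\delta^{k(1-\varepsilon_0)})$. For $\check\varrho$, note that $\mathring L r=\mathring L^i x^i/r$, which gives $\mathring L(r-\varrho)=O(|\check L|)$; integrating yields the bound. For $\slashed g_{XX}$ and $\slashed dx^i$, use $\mathring L\slashed g_{XX}=2\chi_{XX}$ together with \eqref{good-unknown}. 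This gives $\slashed g_{XX}\sim\varrho^2$ up to a factor $1+O(\delta^{k(1-\varepsilon_0)})$, and therefore $|\slashed dx|\lesssim1$. The bound on $\check T^i$ follows algebraically from \eqref{gab}, which expresses $\tilde T$ through $\mathring L$ and $g$ with errors $O(|\check L|+|g-m|)$, where $|g-m|\lesssim M^k\delta^{k(1-\varepsilon_0)}$. The bound on $\upsilon$ then follows from \eqref{omega}.

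\textbf{Step 3 (main obstacle): $\mu$.} Use \eqref{lmu}. The first two terms are $\mu\cdot O(M^k\delta^{k(1-\varepsilon_0)-\varepsilon_k})$, and Gronwall's inequality absorbs them. The last term, $\tfrac12G^\gamma_{\mathring L\mathring L}T\varphi_\gamma$, is the dangerous one: it is naively of size $\delta^{(k-1)(1-\varepsilon_0)}\cdot\delta^{-\varepsilon_0}$. To handle it, rewrite it via the null structure \eqref{GT}--\eqref{gLLL}. The factor $G_{\mathring L\mathring L}^\gamma\mathring L_\gamma$ is $O(M^k\delta^{k(1-\varepsilon_0)})$, using $\check L,\check\varrho$ from Step 2 and $|\mathring L\phi|$ from \eqref{bootstrap-assumptions}. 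This yields $|\mathring L\mu|\lesssim M^{k+1}\delta^{(k+1)(1-\varepsilon_0)-1}(1+\mu)=M^{k+1}\delta^{-\varepsilon_k}(1+\mu)$. Integrating over the interval of length $O(\delta^{\varepsilon_k})$ (or, in the critical case, using the $\varrho$-decay $\varrho^{-(k+1)/2}$ implicit in the pointwise bounds) gives $\mu\lesssim M^{k+1}$.

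The delicate point throughout is the ordering of the steps: Step 1 must use only the bounds already available from \eqref{bootstrap-assumptions}, and Steps 2 and 3 must feed back into Step 1 only through lower-order, continuity-controlled terms.
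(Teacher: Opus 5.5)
Your strategy is essentially the paper's. You integrate \eqref{LeL}, \eqref{Lchi'} and \eqref{lmu} along $\mathring L$, handling the dangerous term $\tfrac12G^\gamma_{\mathring L\mathring L}T\varphi_\gamma$ through \eqref{GT}--\eqref{gLLL}. The pointwise rates $\delta^{k(1-\varepsilon_0)-\varepsilon_k}=\delta^{2k(1-\varepsilon_0)-\varepsilon_0}$ (resp.\ $\delta^{-\varepsilon_k}$ for $\mathring L\mu$) then become $\delta^{k(1-\varepsilon_0)}$ (resp.\ $O(1)$) over an interval of length $O(\delta^{\varepsilon_k})$.

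What needs repair is the order of the auxiliary bounds. Your Step 1 already uses $|\mathring L^i|,|\tilde T^i|,|\slashed dx^i|\lesssim1$, to bound $G_{\tilde T\mathring L}$, $G_{X\mathring L}\slashed d^X\varphi$, $\slashed g^{XX}G_{XX}$ and the factor $f(\varphi,\slashed dx,\mathring L^1,\mathring L^2)$. These bounds are not in \eqref{bootstrap-assumptions}. Your Step 2 then gets $|\slashed dx|$ from $\slashed g_{XX}\sim\varrho^2$, i.e.\ from Step 1. That inference is also incomplete: $|\slashed dx^i|^2=(X^i)^2/\slashed g_{XX}$ is controlled by $g_{ij}X^iX^j\gtrsim|X|^2$, not by the size of $\slashed g_{XX}$.

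The paper avoids the circularity by starting with purely algebraic bounds:
\begin{itemize}
\item $g_{ij}\tilde T^i\tilde T^j=1$ with $g_{ij}=\delta_{ij}+O(M^k\delta^{k(1-\varepsilon_0)})$ bounds $\tilde T^i$;
\item $\mathring L^i=-g^{0i}-\tilde T^i$ then bounds $\mathring L^i$;
\item \eqref{gab} gives $|\slashed dx^i|^2=g^{ii}+(g^{0i})^2-(\tilde T^i)^2$;
\item $\check T^i=-g^{0i}-\check L^i$ follows directly from the definition of $\tilde T$.
\end{itemize}
For $\check\varrho$ the paper solves the quadratic $g(\tilde T,\tilde T)=1$ explicitly, see \eqref{rrho}. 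Your transport route also works, but the identity is $\mathring L\check\varrho=\omega_i\check L^i/\varrho$ with $\check\varrho|_{s=t_0}=0$. Your $\mathring L(r-\varrho)$ also contains the term $\check\varrho$, so it is not just $O(|\check L|)$.

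In Step 3, keep the split you first wrote, $|\mathring L\mu|\lesssim M^{k+1}\delta^{-\varepsilon_k}+M^k\delta^{k(1-\varepsilon_0)-\varepsilon_k}\mu$. With the lumped form $M^{k+1}\delta^{-\varepsilon_k}(1+\mu)$, Gronwall over an interval of length $\sim\delta^{\varepsilon_k}$ only gives $\mu\lesssim e^{CM^{k+1}}$, not $M^{k+1}$ with $M$-independent constants.

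The fallback you mention for the critical case (``$\varrho$-decay implicit in the pointwise bounds'') is not available. The bootstrap assumptions \eqref{bootstrap-assumptions} carry no $\varrho$-weights, and the $\varrho^{-(k+1)/2}$ structure of $\mathring L\mu$ is derived only later, from this very proposition. As in the paper, one works on $s-t_0\lesssim\delta^{\varepsilon_k}$, which is a bounded interval when $\varepsilon_0=\varepsilon_k^*$. This is legitimate if that restriction is made part of the bootstrap region, since the blowup is then shown to occur inside it.
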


\begin{proof}
	From \eqref{LTTT}, one has $g(\tilde{T}, \tilde{T}) = 1$, which implies
	$1 = g_{ij}\tilde{T}^i\tilde{T}^j = \big(1 + O(M^k\delta^{k(1-\varepsilon_0)})\big)\sum_{i=1}^2|\tilde{T}^i|^2$.
	This leads to
	\begin{equation}\label{eq:LT}
	|\tilde{T}^i|, \, |\mathring{L}^i| \leq 1 + O(M^k\delta^{k(1-\varepsilon_0)}).
	\end{equation}
	
	Moreover, using the identity $|\slashed{d}x^i|^2 = \slashed{g}^{ab}\slashed{d}_a x^i\slashed{d}_b x^i = g^{ii} + (g^{0i})^2 - (\tilde{T}^i)^2$, together with \eqref{bootstrap-assumptions} and \eqref{eq:LT}, we arrive at
	\[
	|\slashed{d}x^i| \lesssim 1.
	\]
	
	In addition, by \eqref{LeL} and \eqref{bootstrap-assumptions}, one has $|\mathring{L}(\varrho\check{L}^i)| \lesssim M^k\delta^{k(1-\varepsilon_0)}$. Integrating $\mathring{L}(\varrho\check{L}^i)$ along the integral curves of $\mathring{L}$ and using the relation $\check{T}^i = -g^{0i} - \check{L}^i$, we further obtain
	\[
	|\check{L}^i| + |\check{T}^i| \lesssim M^k\delta^{k(1-\varepsilon_0)}.
	\]
	
	From the condition $g_{ij}\big(\check{T}^i - \frac{x^i}{\varrho}\big)\big(\check{T}^j - \frac{x^j}{\varrho}\big)=1$, one gets
	\[
	\left(g_{ij}\omega^i\omega^j\right)\frac{r^2}{\varrho^2} - \left(2g_{ij}\check{T}^i\omega^j\right)\frac{r}{\varrho} + g_{ij}\check{T}^i\check{T}^j - 1 = 0.
	\]
	Thus,
	\begin{equation}\label{rrho}
	\check{\varrho}= \frac{r}{\varrho} - 1 = \frac{1 - g_{ij}\omega^i\omega^j - g_{ij}\check{T}^i\check{T}^j + 2g_{ij}\check{T}^i\omega^j}{\sqrt{g_{ij}\omega^i\omega^j - (g_{ij}\omega^i\omega^j)(g_{ab}\check{T}^a\check{T}^b) + (g_{ij}\check{T}^i\omega^j)^2} + g_{ij}\omega^i\omega^j - g_{ij}\check{T}^i\omega^j}.
	\end{equation}
	This implies $|\check{\varrho}| \lesssim M^k\delta^{k(1-\varepsilon_0)}$,
	and hence, by the definition of $\upsilon$ in \eqref{omega}, it holds that
	\[
	|\upsilon| \lesssim M^k\delta^{k(1-\varepsilon_0)}.
	\]
On the other hand, from \eqref{lmu}, \eqref{GT} and \eqref{Lchi'}, we have
	\begin{equation}\label{YCao-1}
 \begin{split}
	&|\mathring{L}\mu| \lesssim M^{k+1}\delta^{-\varepsilon_k} + M^k\delta^{2k(1-\varepsilon_0)-\varepsilon_0}\mu, \\
	&|\mathring{L}(\varrho^2\operatorname{tr}\check{\mathscr{X}})| \lesssim M^k\delta^{2k(1-\varepsilon_0)-\varepsilon_0} + (\varrho^2\operatorname{tr}\check{\mathscr{X}})^2 + M^k\delta^{2k(1-\varepsilon_0)-\varepsilon_0}|\varrho^2\operatorname{tr}\check{\mathscr{X}}|,
\end{split}
	\end{equation}
	which lead to $|\mu| \lesssim M^{k+1}$ and $|\operatorname{tr}\check{\mathscr{X}}| \lesssim M^k\delta^{k(1-\varepsilon_0)}$.
	\end{proof}

Note that $S_{s,u}$ is one-dimensional and the projected rotation
field $R$ has a uniform bounded length which is also away from zero.
By direct computation, one has

\begin{corollary}\label{cor:12form}
	Under the bootstrap assumptions \eqref{bootstrap-assumptions}, for small $\delta>0$, the following estimates hold:
	\begin{enumerate}
		\item If $\eta$ is a $1$-form on $S_{s, u}$, then
		\begin{align}
		(\eta_a R^a)^2 &\sim |\eta|^2, \label{eq:1-f}\\
		|\slashed{\mathcal L}_{R}\eta|^2 &\sim |\slashed{\nabla}\eta|^2 + O(M^k\delta^{k(1-\varepsilon_0)})|\eta|^2; \label{eq:1f}
		\end{align}
		
		\item If $\eta$ is a $2$-form on $S_{s, u}$, then
		\begin{equation}\label{eq:2-f}
		|\slashed{\mathcal L}_{R}\eta|^2 \sim |\slashed{\nabla}\eta|^2 + O(M^k\delta^{k(1-\varepsilon_0)})|\eta|^2
		\end{equation}
		and
		\begin{equation}\label{eq:2f}
		|\slashed{\nabla}^2\eta \lesssim |\slashed{\mathcal L}_R^{\leq 2}\eta|.
		\end{equation}
	\end{enumerate}
\end{corollary}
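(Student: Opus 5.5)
The plan is to reduce everything to the one-dimensional geometry of $S_{s,u}$, where $X=\partial_\vartheta$ spans the tangent space. Write $R=R^X X$. Then a 1-form $\eta$ on $S_{s,u}$ is determined by the single component $\eta_X$, and
\[
|\eta|^2=\slashed g^{XX}\eta_X^2,\qquad \eta_aR^a=R^X\eta_X .
\]
Consequently $(\eta_aR^a)^2=(R^X)^2\slashed g_{XX}\,|\eta|^2=|R|^2_{\slashed g}|\eta|^2$. So \eqref{eq:1-f} follows once I show $|R|_{\slashed g}\sim1$.

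For that bound I use $R=\Omega-\upsilon\tilde T$ from \eqref{R}. Here $|\Omega|\sim r$ and $r=\varrho(1+\check\varrho)\sim1$ on the region, because $\check\varrho,\upsilon=O(M^k\delta^{k(1-\varepsilon_0)})$ by Proposition~\ref{prop:lower-order-Linfty-estimates}. Moreover $g=m+O(M^k\delta^{k(1-\varepsilon_0)})$ by \eqref{gh} and \eqref{bootstrap-assumptions}, and $g(\tilde T,\tilde T)=1$. Hence
\[
|R|^2_{\slashed g}=g(R,R)=r^2+O(M^k\delta^{k(1-\varepsilon_0)}),
\]
which is $\sim1$ for small $\delta$.

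For \eqref{eq:1f}, I use $\slashed{\mathcal L}_R\eta=\slashed\nabla_R\eta+\eta\cdot\slashed\nabla R$. In one dimension this reads
\[
(\slashed{\mathcal L}_R\eta)_X=R^X\slashed\nabla_X\eta_X+\eta_X\slashed\nabla_XR^X
=R^X\slashed\nabla_X\eta_X+\tfrac12\,\mathrm{tr}\,{}^{(R)}\slashed\pi\,\eta_X ,
\]
since $\slashed\nabla_XR^X=\tfrac12\slashed g^{XX}\,{}^{(R)}\slashed\pi_{XX}$. From \eqref{Rpi}, $\mathrm{tr}\,{}^{(R)}\slashed\pi$ is built from $\upsilon(\mathrm{tr}\check{\mathscr X}+\varrho^{-1})$, $G\,\slashed d\varphi$ terms, $G_{XX}R\varphi$ and $\check g$. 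Each of these is $O(M^k\delta^{k(1-\varepsilon_0)})$ by Proposition~\ref{prop:lower-order-Linfty-estimates} and \eqref{bootstrap-assumptions}. The key point is that $R\varphi$ carries no $\delta^{-1}$ loss, since $l=m=0$, and that $G$ is $O(\varphi^{k-1})$. Squaring, and using $|R|\sim1$ together with Cauchy--Schwarz on the cross term, gives $|\slashed{\mathcal L}_R\eta|^2\sim|\slashed\nabla\eta|^2+O(M^k\delta^{k(1-\varepsilon_0)})|\eta|^2$.

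The 2-form case \eqref{eq:2-f} is identical. The only change is $(\slashed{\mathcal L}_R\eta)_{XX}=R^X\slashed\nabla_X\eta_{XX}+\mathrm{tr}\,{}^{(R)}\slashed\pi\,\eta_{XX}$, where the coefficient doubles. For \eqref{eq:2f} I apply the 1-form identity twice. I express $\slashed\nabla\eta$ through $(R^X)^{-1}\bigl(\slashed{\mathcal L}_R\eta-c\,\mathrm{tr}\,{}^{(R)}\slashed\pi\,\eta\bigr)$, and then differentiate once more using the commutator \eqref{nZxi}. This produces $\slashed\nabla\,\mathrm{tr}\,{}^{(R)}\slashed\pi$. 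I control that term as $(R^X)^{-1}R(\mathrm{tr}\,{}^{(R)}\slashed\pi)$, which is bounded by differentiating the expression \eqref{Rpi} once in $R$ and invoking \eqref{bootstrap-assumptions} for $R^{\le2}\varphi$, $R\check L$, $R\upsilon$ and $R\,\mathrm{tr}\check{\mathscr X}$.

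The main obstacle is exactly this last ingredient: a uniform bound on $R$-derivatives of $\check{\mathscr X}$, $\upsilon$ and $\check L$, which Proposition~\ref{prop:lower-order-Linfty-estimates} does not directly state. I would obtain it by commuting $R$ into the transport equations \eqref{LeL} and \eqref{Lchi'}, using \eqref{c} for $[\mathring L,R]$, and integrating along $\mathring L$ as in the proof of that proposition. Since only boundedness (not smallness) is needed for \eqref{eq:2f}, a crude Gronwall bound suffices.
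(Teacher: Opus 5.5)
Your proposal is correct and is essentially the paper's argument written out in full. The paper's proof consists only of the remark that $S_{s,u}$ is one-dimensional and $|R|\sim1$, followed by ``direct computation,'' which is exactly your reduction via $(\slashed{\mathcal L}_R\eta)_X=R^X\slashed\nabla_X\eta_X+\tfrac12\operatorname{tr}{}^{(R)}\slashed\pi\,\eta_X$ with $\operatorname{tr}{}^{(R)}\slashed\pi=O(M^k\delta^{k(1-\varepsilon_0)})$. The extra input you correctly flag for \eqref{eq:2f}, a bound on $R\operatorname{tr}{}^{(R)}\slashed\pi$, is the first-order case of Proposition~\ref{prop:higher-order-L-infty-estimates}; its Step~1 uses only \eqref{eq:1-f}, gets $R\check L^i$ and $R\upsilon$ directly from \eqref{deL}, and needs \eqref{Lchi'} only for $R\operatorname{tr}\check{\mathscr X}$, so there is no circularity.
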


To close the bootstrap argument, we first establish the higher-order
derivative estimates for some important geometric quantities. Although the
related proofs are somewhat analogous to these in  \cite[Proposition 6.3 and Corollary 6.3]{Ding3}
and \cite[Proposition 4.1]{L-Y}, we still give the details since the precise weights of $\delta$
should be given.

\begin{proposition}\label{prop:higher-order-L-infty-estimates}
Under the bootstrap assumptions \eqref{bootstrap-assumptions},
for small $\delta>0$, the following estimates
hold that for $t_0\leq s<T_\delta$,
$|\alpha|\leq N-2$ and $|\beta|\leq N-1$:
\begin{equation}\label{eq:higher-order-L-infty-estimates}
\begin{split}
&\delta^{l+m\varepsilon_k}
\big(
|\Lies_Z^\alpha\check{\mathscr X}|
+|\Lies_Z^\alpha{}^{(R)}\pis|
\big)
\lesssim M^k\delta^{k(1-\varepsilon_0)},\\
&\delta^{l+m\varepsilon_k}
|\Lies_Z^\alpha{}^{(R)}\pis_{\mathring L}|
\lesssim M^k\delta^{k(1-\varepsilon_0)}
\big(1+M^k\delta^{2k(1-\varepsilon_0)-\varepsilon_0}\big),\\
&\delta^{l+m\varepsilon_k}
\big(
|\Lies_Z^\alpha{}^{(T)}\pis|
+|\Lies_Z^\alpha{}^{(T)}\pis_{\mathring L}|
\big)
\lesssim M^k\delta^{k(1-\varepsilon_0)-1},\\
&\delta^{l+m\varepsilon_k}
|\Lies_Z^\alpha{}^{(R)}\pis_T|
\lesssim M^{2k}\delta^{2k(1-\varepsilon_0)-1},\\
&\delta^{l+m\varepsilon_k}|Z^\beta\mu|
\lesssim M^{k+1},
\quad
\delta^{l+m\varepsilon_k}
|\Lies_Z^\beta\slashed dx^j|
\lesssim 1,\\
&\delta^{l+m\varepsilon_k}
\big(
|Z^\beta\check L^j|
+|Z^\beta\check T^j|
+|Z^\beta\upsilon|
+|Z^\beta\check\varrho|
\big)
\lesssim M^k\delta^{k(1-\varepsilon_0)}.
\end{split}
\end{equation}
In addition, for $1\leq|\beta|\leq N-1$,
\begin{equation}\label{YHCC-A20}
\begin{split}
\delta^{l+m\varepsilon_k}|\Lies_Z^\beta R|
\lesssim
M^{2k}\delta^{2k(1-\varepsilon_0)}
+M^k\delta^{\varepsilon_0},
\end{split}
\end{equation}
where $Z\in\{\mathring L,T,R\}$, $\Lies_Z$
denotes the corresponding projection of Lie derivative, $l$ and $m$
are the numbers of $T$ and $\mathring L$ in the related terms,
respectively.
\end{proposition}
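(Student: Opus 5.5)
The plan is an induction on the number $|\alpha|$ (resp. $|\beta|$) of vector fields $Z\in\{\mathring L,T,R\}$. Proposition \ref{prop:lower-order-Linfty-estimates} supplies the base case $|\alpha|=0$. At each order we first bound $\Lies_Z^\alpha\check{\mathscr X}$ through its transport equation. The deformation tensors, $Z^\beta\mu$ and the error vectors then follow algebraically or by integrating along $\mathring L$. Throughout, every $T$ costs a factor $\delta^{-1}$ and every $\mathring L$ costs $\delta^{-\varepsilon_k}$, in accordance with the weights $\delta^{l+m\varepsilon_k}$ in \eqref{bootstrap-assumptions}. We also use repeatedly that $\mathring L$ commutes with $\varrho$ up to harmless terms, since $\mathring L\varrho=1$.

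\emph{Step 1: the good unknown.} We commute $\Lies_Z^\alpha$ through the identity \eqref{Lchi'} for $\mathring L(\varrho^2\operatorname{tr}\check{\mathscr X})$. Commutators are handled by \eqref{c}, where $[\mathring L,Z]$ is expressed through ${}^{(Z)}\pis_{\mathring L}$. The result is
\[
\mathring L\bigl(\varrho^2\Lies_Z^\alpha\operatorname{tr}\check{\mathscr X}\bigr)=\Lies_Z^\alpha(\text{RHS of }\eqref{Lchi'})+\text{commutator terms}.
\]
The dangerous source terms are $\varrho^2G_{X\mathring L}\slashed d\mathring L\vp$ and $\varrho^2G_{\mathring L\mathring L}\slashed\triangle\vp$.
\begin{itemize}
\item By the $k$-th order null condition \eqref{null}, $G_{\mathring L\mathring L}$ and $G_{X\mathring L}$ are of size $\vp^{k-1}$ times small factors, through \eqref{gLLL}-type expansions.
\item Hence these terms are bounded by $M^k\delta^{(k-1)(1-\varepsilon_0)}\cdot\delta^{1-\varepsilon_0}=M^k\delta^{k(1-\varepsilon_0)}$.
\item For this we use the bootstrap bounds on $\slashed\triangle\vp$ and $\slashed d\mathring L\vp$ in \eqref{bootstrap-assumptions}. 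The extra $\delta^{-\varepsilon_k}$ for $\slashed d\mathring L\vp$ is compensated because $\slashed d$ of $\vp$-factors does not lose, while the product $\mathring L\vp\cdot\mathring L\vp$ is absent, as stressed in the Remark after Lemma \ref{YHCC-555}.
\end{itemize}
Integrating from $t_0$ along $\mathring L$, with initial data bounded by \eqref{errorv-0} and Theorem \ref{Local}, and using Gronwall for the linear terms in $\operatorname{tr}\check{\mathscr X}$, gives the first bound. On the one-dimensional $S_{s,u}$ the full tensor satisfies $\check{\mathscr X}=\operatorname{tr}\check{\mathscr X}\,\slashed g$, so this also bounds $\check{\mathscr X}$.

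\emph{Step 2: algebraic consequences.} Once $\Lies_Z^{\le|\alpha|}\check{\mathscr X}$ is controlled, the deformation tensors are read off from \eqref{Lpi}, \eqref{uLpi} and \eqref{Rpi}, together with \eqref{zeta} and \eqref{theta}.
\begin{itemize}
\item ${}^{(R)}\pis$ inherits the order $\delta^{k(1-\varepsilon_0)}$ from $\upsilon$, $\check L$, $\check T$ and $G\,R\vp\sim\vp^{k-1}R\vp$.
\item ${}^{(T)}\pis$ contains $\slashed d\mu$ and $\mu\zeta\sim\vp^{k-1}T\vp$, so it loses $\delta^{-1}$.
\item The product $\upsilon\,\slashed d_X\mu$ in ${}^{(R)}\pis_T$ gives $\delta^{2k(1-\varepsilon_0)-1}$.
\item In ${}^{(R)}\pis_{\mathring L}$ the extra term $M^k\delta^{2k(1-\varepsilon_0)-\varepsilon_0}$ comes from $\upsilon\,G_{X\tilde T}\mathring L\vp$.
\end{itemize}

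\emph{Step 3: $\mu$ and the error vectors.} Differentiating \eqref{lmu}, with \eqref{GT} inserted, gives $\mathring L Z^\beta\mu$. This is bounded by $M^{k+1}\delta^{-\varepsilon_k}$ times the weights, plus linear terms in $Z^\beta\mu$, so integration over $s-t_0\lesssim$ bounded ranges gives $\delta^{l+m\varepsilon_k}|Z^\beta\mu|\lesssim M^{k+1}$. If the range is not bounded, we instead use the $\varrho^{-(k+1)/2}$ decay of the source. Commuting \eqref{LeL}, \eqref{TL} and \eqref{deL} gives $Z^\beta\check L,Z^\beta\check T$; then \eqref{rrho} and \eqref{omega} give $\check\varrho$ and $\upsilon$. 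For $\Lies_Z^\beta\slashed dx$ and $\Lies_Z^\beta R$ we use $R=\Omega-\upsilon\tilde T$, so \eqref{YHCC-A20} follows from the $\upsilon$ and $\check T$ bounds. The $\delta^{\varepsilon_0}$ there comes from $\Omega$ acting on $\vp$-dependent metric coefficients of size $\delta^{1-\varepsilon_0}$, after adjusting by the weights.

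\emph{Main obstacle.} The hardest step is Step 1 at top order $|\alpha|=N-2$. There, $\Lies_Z^\alpha$ applied to $\slashed\triangle\vp$ and $\slashed d\mathring L\vp$ consumes up to $N$ derivatives of $\vp$, which is exactly the limit of \eqref{bootstrap-assumptions}. The delicate point is tracking that every $\mathring L$ hitting $\vp$ costs only $\delta^{-\varepsilon_k}$, and that no $(\mathring L\vp)^2$ term appears. I would first try to organize the commuted RHS as sums of products $Z^{\alpha_1}(\vp^{k-1}\cdot\text{frame})\,Z^{\alpha_2}(\slashed d\vp,\slashed\triangle\vp,\slashed d\mathring L\vp)$ and verify the weight counting term by term, using Corollary \ref{cor:12form} to convert $\slashed\nabla$ to $\Lies_R$.
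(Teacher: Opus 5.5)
Your single induction commutes every $Z\in\{\mathring L,T,R\}$ through \eqref{Lchi'}. The paper proceeds differently: it first treats pure $R$-strings by transport. It then reads off $T$-derivatives of $\operatorname{tr}\check{\mathscr X}$ algebraically from \eqref{Tchi'}, which is why $\mu$ is needed to order $N-1$, and gets $\mathring L$-derivatives directly from \eqref{Lchi'}. At the $L^\infty$ level your variant looks workable: data for $T^\alpha\check{\mathscr X}$ on $\Sigma_{t_0}$ come from Theorem~\ref{Local}, and the same-order terms produced by $[\mathring L,T]$ and $[\slashed\triangle,T]$ enter with harmless coefficients. However, two load-bearing steps are argued incorrectly.

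\textbf{The $\delta^{-\varepsilon_k}$ losses.} The sources in \eqref{Lchi'} are not pointwise $O(M^k\delta^{k(1-\varepsilon_0)})$. The terms $\varrho^2G^\alpha_{X\mathring L}\slashed d^X\mathring L\vp_\alpha$ and $\varrho\,G^\alpha_{\mathring L\mathring L}\mathring L\vp_\alpha$ are of size $M^k\delta^{2k(1-\varepsilon_0)-\varepsilon_0}=M^k\delta^{k(1-\varepsilon_0)-\varepsilon_k}$, exactly as in \eqref{YCao-1}. Likewise, after \eqref{GT} one still has $\mathring L\mu\sim\vp^kT\vp\sim\delta^{-\varepsilon_k}$. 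No structure can improve this, because by \eqref{blowup} it is the blowup mechanism itself.

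Your stated reason (``$\slashed d$ of $\vp$-factors does not lose'') is not a mechanism. Integrating over an $O(1)$ range gives only $|\mu|\lesssim M^{k+1}\delta^{-\varepsilon_k}$. The $\varrho^{-(k+1)/2}$ decay you offer as a fallback does nothing to the $\delta^{-\varepsilon_k}$ amplitude.

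What closes the argument is that the bootstrap region satisfies $s-t_0\lesssim\delta^{\varepsilon_k}$. This is the time scale on which $\mu$ reaches $0$, cf. \eqref{supercritical-profile}; in the critical case $\varepsilon_k=0$ and the range is $O(1)$. Every time integral along $\mathring L$ therefore gains exactly $\delta^{\varepsilon_k}$, and this must be stated and used in each commuted transport equation. It is also why the absence of $(\mathring L\vp)^2$ matters: such a term carries $\delta^{-2\varepsilon_k}$, and the interval length absorbs only one factor.

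Relatedly, the null condition plays no role in the $\check{\mathscr X}$ bound, since $G^\gamma_{\mathring L\mathring L}$ and $G^\gamma_{X\mathring L}$ are merely $O(|\vp|^{k-1})$. It enters only via \eqref{GT}--\eqref{gLLL} in the $\mu$ equation. There, without it, $\mathring L\mu\sim\vp^{k-1}T\vp\sim\delta^{k(1-\varepsilon_0)-1}$ would be fatal even on the short interval.

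\textbf{The bound \eqref{YHCC-A20}.} Deriving it from $R=\Omega-\upsilon\tilde T$ and the bounds on $Z^\beta\upsilon$, $Z^\beta\check T$ loses. Term by term, $(\mathring L\upsilon)\tilde T$ is only controlled by $\delta^{\varepsilon_k}|\mathring L\upsilon|\lesssim M^k\delta^{k(1-\varepsilon_0)}$; indeed $\mathring L g_{ab}\sim\vp^{k-1}\mathring L\vp$. For $\varepsilon_0>\varepsilon_k^*$ one has $k(1-\varepsilon_0)<\varepsilon_0$, so this is strictly weaker than $M^k\delta^{\varepsilon_0}$. The $\tilde T$-components must cancel, because $[\mathring L,R]$ is tangent to $S_{s,u}$.

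Use \eqref{c} instead: $\Lies_{\mathring L}R={}^{(R)}\pis_{\mathring L}^{X}X$, $\Lies_TR={}^{(R)}\pis_T^{X}X$, and $\Lies_RR=0$. Then \eqref{YHCC-A20} follows from the deformation-tensor bounds already proved. Since $k(1-\varepsilon_0)+\varepsilon_k=\varepsilon_0$, one gets $\delta^{\varepsilon_k}\cdot M^k\delta^{k(1-\varepsilon_0)}=M^k\delta^{\varepsilon_0}$. Both $\delta^{\varepsilon_k}\cdot M^{2k}\delta^{3k(1-\varepsilon_0)-\varepsilon_0}$ (from the second bound on ${}^{(R)}\pis_{\mathring L}$) and $\delta\cdot M^{2k}\delta^{2k(1-\varepsilon_0)-1}$ (from ${}^{(R)}\pis_T$) equal $M^{2k}\delta^{2k(1-\varepsilon_0)}$. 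That, not $\Omega$ acting on metric coefficients, is the source of $\delta^{\varepsilon_0}$.

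Similarly, the $\delta^{2k(1-\varepsilon_0)-1}$ in ${}^{(R)}\pis_T$ comes from $\upsilon\,G^\gamma_{X\tilde T}T\vp_\gamma$. The term $\upsilon\slashed d\mu$ is only $M^{2k+1}\delta^{k(1-\varepsilon_0)}$.
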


\begin{proof}
\textbf{Step 1.} The case of $Z = R$ for all $Z$

\vskip 0.1 true cm

When $\alpha = 0$, the estimate $|\check{\mathscr{X}}|\lesssim M^k\de^{k(1-\varepsilon_0)}$ directly follows from \eqref{eq:lower-order-Linfty-estimates}.
Using \eqref{bootstrap-assumptions}, Lemma \ref{lem3.4}, and Proposition \ref{prop:lower-order-Linfty-estimates}, one can obtain
the estimates for ${}^{(R)}\pis$, ${}^{(R)}\pis_L$, ${}^{(R)}\pis_T$, ${}^{(T)}\pis$, and ${}^{(T)}\pis_L$.

Additionally, from \eqref{eq:1-f} and \eqref{R}, we know $|R\check{L}^i| \sim |\slashed{d}\check{L}^i|$. The estimate of $R\check{L}^i$ then follows from \eqref{deL}, which in turn allows us to treat $R\check{T}^i$, $R\upsilon$ and $R\check{\varrho}$. Hence, $\Lies_{R} \slashed{d} x^j = \slashed{d}(\Omega^i - g_{aj}\tilde{T}^a\Omega^j\tilde{T}^i)$ can be estimated.

For $R\mu$, it satisfies the equation
\begin{equation*}
\mathring{L}(R\mu) = {}^{(R)}\pis_L^X \slashed{d}_X\mu + R\big(-\frac{1}{2}\mu G_{\mathring{L}\mathring{L}}^\gamma \mathring{L}\varphi_\gamma - \mu G_{\tilde{T}\mathring{L}}^\gamma \mathring{L}\varphi_\gamma + \frac{1}{2}G_{\mathring{L}\mathring{L}}^\gamma T\varphi_\gamma\big)
\end{equation*}
by \eqref{lmu}, which implies $|\mathring{L}R\mu| \lesssim M^k\delta^{2k(1-\varepsilon_0)-\varepsilon_0}|R\mu|+M^{k+1}\delta^{-\varepsilon_k}$ with the help of \eqref{GT}. Integrating along the integral curves of $\mathring{L}$ yields $|R\mu| \lesssim M^{k+1}$.

Therefore, \eqref{eq:higher-order-L-infty-estimates} holds for $|\alpha|=0$ and $|\beta|=1$.

For general $\alpha$ and $\beta$, one can use the induction argument. Assume that \eqref{eq:higher-order-L-infty-estimates} holds 
for all vector fields $R$ and $|\alpha|=|\beta|-1=p \leq N-3$, then it follows from \eqref{Lchi'} that
\begin{equation*}
\begin{split}
|\mathring{L}R^{p+1}(\varrho^2\operatorname{tr}\check{\mathscr{X}})| &\lesssim |[\mathring{L}, R^{p+1}](\varrho^2\operatorname{tr}\check{\mathscr{X}})| + |R^{p+1}\mathring{L}(\varrho^2\operatorname{tr}\check{\mathscr{X}})| \\
&\lesssim M^k\delta^{2k(1-\varepsilon_0)-\varepsilon_0} + M^k\delta^{2k(1-\varepsilon_0)-\varepsilon_0}|R^{p+1}(\varrho^2\operatorname{tr}\check{\mathscr{X}})|,
\end{split}
\end{equation*}
which yields $|R^{p+1}(\operatorname{tr}\check{\mathscr{X}})| \lesssim M^k\delta^{k(1-\varepsilon_0)}$. Thus, the estimates of $\Lies_{R}^{p+1} {}^{(R)}\pis$, $\Lies_{R}^{p+1} {}^{(R)}\pis_{\mathring{L}}$, $\Lies_{R}^{p+1} {}^{(T)}\pis$, $R^{p+2}\check{L}^i$, $R^{p+2}\check{T}^i$, $R^{p+2}\check{\varrho}$, $R^{p+2}\upsilon$ and $\Lies_{R}^{p+1}\slashed{d}x^i$ are obtained by Lemma \ref{lem3.4}, \eqref{deL}, 
and the definitions of $\check{T}^i$, $\check{\varrho}$ and $\upsilon$.

Analogously,
$|\mathring{L}R^{p+2}\mu| \lesssim M^k\delta^{2k(1-\varepsilon_0)-\varepsilon_0}|R^{p+2}\mu|+M^{k+1}\delta^{-\varepsilon_k}$
holds by \eqref{lmu} and \eqref{GT}, which  allows us to further estimate $R^{p+2}\mu$, $\Lies_{R}^{p+1} {}^{(R)}\pis_T$ and $\Lies_{R}^{p+1} {}^{(T)}\pis_{\mathring{L}}$.

\vskip 0.1 true cm
	
\textbf{Step 2.} The general case of $Z \in \{\mathring{L}, T, R\}$

\vskip 0.1 true cm

In this case, one can use \eqref{Lchi'}, \eqref{Tchi'}, \eqref{TL}, \eqref{LeL}, \eqref{lmu} and the estimates from Step 1 to prove the results
in \eqref{eq:higher-order-L-infty-estimates}. Since the proof procedure is rather analogous that in \cite[Propositions 6.2 and 6.3]{Ding3},
we omit the details here.
\end{proof}

From the identity $T=\p_u-\Theta^XX$, one could estimate $\Theta^X$ as follows.

\begin{lemma}\label{eXi}
	Under the assumptions \eqref{bootstrap-assumptions}, for small $\delta>0$, it holds
	\begin{equation}\label{Xi}
	|\Theta|\lesssim  M^k\delta^{\varepsilon_0-1}\quad\text{for}\ t_0\leq s<T_\delta.
	\end{equation}
\end{lemma}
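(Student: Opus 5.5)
The plan is to derive a transport equation for $\Theta^X$ along $\mathring L$ and integrate it from $\Sigma_{t_0}$, where $\Theta$ is essentially explicit. Since $\mathring L=\partial_s$ in the coordinates $(s,u,\vartheta)$ and $X=\partial_\vartheta$, we have $[\mathring L,\partial_u]=0$ and $[\mathring L,X]=0$. Hence
$$[\mathring L,T]=[\mathring L,\partial_u-\Theta^XX]=-(\mathring L\Theta^X)X .$$
Comparing with \eqref{c}, which gives $[\mathring L,T]={}^{(T)}\slashed\pi_{\mathring L}{}^XX$, we obtain
$$\mathring L\Theta^X=-{}^{(T)}\slashed\pi_{\mathring L}{}^X.$$
It is more convenient to work with the invariant length $|\Theta|=\sqrt{\slashed g_{XX}}\,|\Theta^X|$. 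Along $\mathring L$,
$$\mathring L\slashed g_{XX}={}^{(\mathring L)}\slashed\pi_{XX}=2\chi_{XX}.$$
This yields
$$\mathring L|\Theta|\le |\operatorname{tr}\chi|\,|\Theta|+|{}^{(T)}\slashed\pi_{\mathring L}|.$$

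Next I bound the coefficients. By \eqref{good-unknown}, $\operatorname{tr}\chi=\operatorname{tr}\check{\mathscr X}+\frac12\slashed g^{XX}G^\gamma_{XX}\mathring L\varphi_\gamma+\varrho^{-1}$. The first piece is $O(M^k\delta^{k(1-\varepsilon_0)})$ by Proposition \ref{prop:lower-order-Linfty-estimates}, and the middle piece is $O(M^k\delta^{k(1-\varepsilon_0)-\varepsilon_k})$. Since $k(1-\varepsilon_0)-\varepsilon_k=2k-(2k+1)\varepsilon_0$, it is integrable on the relevant time scale. Proposition \ref{prop:higher-order-L-infty-estimates} with $\alpha=0$ gives $|{}^{(T)}\slashed\pi_{\mathring L}|\lesssim M^k\delta^{k(1-\varepsilon_0)-1}$.

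A cleaner route multiplies by $\varrho^{-1}$ to cancel the $\varrho^{-1}$ term in $\operatorname{tr}\chi$. Setting $w=\varrho^{-1}|\Theta|$, Gronwall gives
$$w(s)\lesssim e^{\int|\operatorname{tr}\chi-\varrho^{-1}|}\Big(w(t_0)+\int_{t_0}^s\varrho^{-1}M^k\delta^{k(1-\varepsilon_0)-1}\Big).$$
The main obstacle is making this time integral consistent with the claimed bound $M^k\delta^{\varepsilon_0-1}$ over the whole interval $[t_0,T_\delta)$. In the supercritical case $T_\delta-1=O(\delta^{\varepsilon_k})$, so the integral is of size $M^k\delta^{k(1-\varepsilon_0)-1+\varepsilon_k}=M^k\delta^{\varepsilon_0-1}$, since $k(1-\varepsilon_0)+\varepsilon_k=\varepsilon_0$. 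This matches the claim exactly. In the critical case $\varepsilon_k=0$ and $T_\delta$ is bounded, so $\int\varrho^{-1}\lesssim 1$ and $k(1-\varepsilon_0)=\varepsilon_0$ again. The exponential factor is $1+o(1)$ because $\int\delta^{k(1-\varepsilon_0)-\varepsilon_k}\,ds\lesssim\delta^{k(1-\varepsilon_0)}$. I would take these lifespan bounds from the bootstrap setting, i.e.\ $s\le t_0+C\delta^{\varepsilon_k}$ on the bootstrap region. If that is unavailable, I would instead use the sharper decay $\varrho^{-(k+1)/2}$ in the $\delta$-weighted bootstrap quantities to keep the integrand time-integrable.

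Finally, on $\Sigma_{t_0}$ we have $u=t_0-r$ and $\vartheta=\theta$, so $\partial_u=-\partial_r$ there. Then $T=\mu\tilde T$ and \eqref{errorv-0} give $T=-\partial_r+O(\delta^{k(1-\varepsilon_0)})$, hence $|\Theta(t_0)|\lesssim M^k\delta^{k(1-\varepsilon_0)}$. This is well within the bound, which completes \eqref{Xi}.
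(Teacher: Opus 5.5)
Your proposal is correct and is essentially the paper's argument: the paper also writes $\mathring L\Theta^X=[T,\mathring L]^X=\slashed d^X\mu+2\mu\zeta^X$ (which equals $-{}^{(T)}\slashed\pi_{\mathring L}{}^X$ by \eqref{Lpi}), transports $\varrho^{-1}|\Theta|$ along $\mathring L$ with coefficient $O(M^k\delta^{2k(1-\varepsilon_0)-\varepsilon_0})$ and source $O(M^k\delta^{k(1-\varepsilon_0)-1})$, and integrates. Like you, it implicitly needs $s-t_0\lesssim\delta^{\varepsilon_k}$ (a bound it also uses elsewhere) to turn the source integral into $M^k\delta^{\varepsilon_0-1}$, so your fallback via $\varrho$-decay is unnecessary.
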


\begin{proof}
	Note that $\Theta$ is a vector field on $S_{s, u}$ and one has
	\begin{equation}\label{LXi}
	\mathring L\Theta^X=[T,\mathring L]^X=\big(\mathscr{D}_T\mathring L-\mathscr{D}_{\mathring L}T\big)^X=\slashed d^X\mu+2\mu\zeta^X.
	\end{equation}
	Then
	\[
	\mathring L(\varrho^{-2}|\Theta|^2)=2\varrho^{-2}|\Theta|^2|(\operatorname{tr}\check{\mathscr X}+\f12\slashed g^{XX}G_{XX}^\gamma\mathring L\vp_\gamma)+2\varrho^{-2}(\slashed d_X\mu
	+2\mu\zeta_X)\Theta^X.
	\]
	This, together with the estimates \eqref{eq:higher-order-L-infty-estimates}, \eqref{bootstrap-assumptions} and \eqref{zeta}, yields
	\begin{equation}\label{Y1}
	|\mathring L(\varrho^{-1}|\Theta|)|\lesssim  M^{k}\delta^{2k(1-\varepsilon_0)-\varepsilon_0} \varrho^{-1}|\Theta|	+ M^k\delta^{k(1-\varepsilon_0)-1}.
	\end{equation}
	Thus \eqref{Xi} is obtained through integrating \eqref{Y1} along the integral curves of $\mathring L$.
\end{proof}

Next we improve the bootstrap assumptions in \eqref{bootstrap-assumptions}
up to orders $N-2$ and study the corresponding estimates
for $\phi$.

\begin{proposition}\label{prop:bootstrap-higher-derivatives}
Under the bootstrap assumptions \eqref{bootstrap-assumptions},
for small $\delta>0$, the following estimates
hold that for $t_0\leq s<T_\delta$ and $|\alpha|\leq N-2$:
\begin{equation}\label{eq:bootstrap-higher-derivatives}
\delta^{l+m\varepsilon_k}|Z^\alpha\phi|
\lesssim\delta^{2-\varepsilon_0},
\quad
\delta^{l+m\varepsilon_k}|Z^\alpha\varphi_\gamma|
\lesssim\delta^{1-\varepsilon_0},
\quad \gamma=0,1,2,
\end{equation}
where $Z \in \{ \mathring{L}, T, R \}$, $l$ and $m$
denote the numbers of $T$  and $\mathring{L}$ in $Z^\alpha$, respectively.
Moreover,
\begin{align}
\bigl|
\varrho^{1/2}T\varphi_\gamma(s,u,\vartheta)
-\varrho_0^{1/2}T\varphi_\gamma(t_0,u,\vartheta)
\bigr|
&\lesssim M^{k+1}\delta^{-\varepsilon_k},
\label{Tvp}\\
\bigl|
\varrho^{1/2}\varphi_\gamma(s,u,\vartheta)
-\varrho_0^{1/2}\varphi_\gamma(t_0,u,\vartheta)
\bigr|
&\lesssim M^{k+1}\delta^{(k+1)(1-\varepsilon_0)}
\label{r0vp}
\end{align}
with $\varrho=s-u$ and $\varrho_0=t_0-u$.
\end{proposition}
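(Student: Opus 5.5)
The proof will integrate the wave equation in the form \eqref{fequation} along the integral curves of $\mathring L$ (that is, in the $s$-variable at fixed $(u,\vartheta)$). The data at $s=t_0$ are those of Theorem \ref{Local}. The key observation is that \eqref{fequation} can be rewritten as
\begin{equation*}
\mathring L\bigl(\varrho^{1/2}\mathring{\underline L}\varphi_\gamma\bigr)=\varrho^{1/2}\bigl(\mu\slashed\triangle\varphi_\gamma+H_\gamma\bigr),
\end{equation*}
since $\mathring L\varrho=1$. Because $\mathring{\underline L}=\mu\mathring L+2T$, this is essentially a transport equation for $\varrho^{1/2}T\varphi_\gamma$ with a source. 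So we first prove \eqref{Tvp}, then \eqref{r0vp}, and finally the higher-order bounds \eqref{eq:bootstrap-higher-derivatives} by commuting $Z^\alpha$ through the equation.

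\textbf{Estimate \eqref{Tvp}.} Write
\begin{equation*}
2\varrho^{1/2}T\varphi_\gamma=\varrho^{1/2}\mathring{\underline L}\varphi_\gamma-\varrho^{1/2}\mu\mathring L\varphi_\gamma .
\end{equation*}
The right-hand side of the transport identity is bounded as follows. The term $\mu\slashed\triangle\varphi_\gamma$ is $\lesssim M^{k+1}\cdot M\delta^{1-\varepsilon_0}$ by Proposition \ref{prop:lower-order-Linfty-estimates} and \eqref{bootstrap-assumptions}. In $H_\gamma$, the term $\operatorname{tr}\check{\mathscr X}\,T\varphi$ is $\lesssim M^{k+1}\delta^{k(1-\varepsilon_0)-\varepsilon_0}$. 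The nonlinear terms are $\varphi^{k-1}\,T\varphi\,\mathring L\varphi\lesssim M^{k+1}\delta^{(k-1)(1-\varepsilon_0)}\delta^{-\varepsilon_0}\delta^{1-\varepsilon_0-\varepsilon_k}$, which is $\lesssim M^{k+1}\delta^{-\varepsilon_k}\cdot\delta^{k(1-\varepsilon_0)-\varepsilon_0}$.

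The integration time is the main issue. In the supercritical case the relevant time scale before $\mu$ vanishes is $s-t_0\lesssim\delta^{\varepsilon_k}$, but the bootstrap region is all of $s<T_\delta$. I would use $T_\delta-t_0\lesssim$ a constant (the a priori bound $|\mathring L\mu|\lesssim M^{k+1}\delta^{-\varepsilon_k}$ together with $\mu>0$ only gives a lower bound, so this is not automatic). Alternatively I would simply bound the integral of the source by $\int(\cdots)\,ds$, using the $\varrho^{-1/2}$ decay and finiteness of the lifespan, which is implicit here. In either case the integral of the source is $\lesssim M^{k+1}\delta^{-\varepsilon_k}$, since $k(1-\varepsilon_0)-\varepsilon_0\ge -\varepsilon_k$ is equivalent to $k-(k+1)\varepsilon_0\ge -(k+1)\varepsilon_0+k$, with equality.

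The correction $\varrho^{1/2}\mu\mathring L\varphi_\gamma$ is $\lesssim M^{k+2}\delta^{1-\varepsilon_0-\varepsilon_k}$, which is smaller. This gives \eqref{Tvp}.

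\textbf{Estimate \eqref{r0vp}.} Use $T=\partial_u-\Theta^XX$ and integrate in $u$ from $u=0$, where $\varphi$ vanishes (the outermost cone). Combining \eqref{Tvp} with Lemma \ref{eXi} gives
\begin{equation*}
\bigl|\varrho^{1/2}\varphi_\gamma(s)-\varrho_0^{1/2}\varphi_\gamma(t_0)\bigr|\lesssim \delta\cdot M^{k+1}\delta^{-\varepsilon_k}+\delta\,|\Theta|\,|\slashed d\varphi| .
\end{equation*}
Here $\delta^{1-\varepsilon_k}=\delta^{(k+1)(1-\varepsilon_0)}$. The $\Theta$ term needs care: $|\Theta||\slashed d\varphi|\lesssim M^{k+1}\delta^{2\varepsilon_0-2}\cdot\delta$, which is too big as written. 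So I would instead differentiate $\varrho^{1/2}\varphi_\gamma$ in $\mathring L$ and estimate $\mathring L(\varrho^{1/2}\varphi_\gamma)$ directly. Using \eqref{pal} and the structure of \eqref{fequation}, I would show that $\mathring L\varphi+\frac{1}{2\varrho}\varphi$ has the better bound $\delta^{(k+1)(1-\varepsilon_0)}$, obtained from \eqref{Tvp} integrated in $u$ along $\partial_u$ at fixed $s$.

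\textbf{Higher-order bounds \eqref{eq:bootstrap-higher-derivatives}.} Apply $Z^\alpha$ with $|\alpha|\le N-2$ to the transport identity and use Lemma \ref{com} and Proposition \ref{prop:higher-order-L-infty-estimates} to control the commutators. Each $T$ costs $\delta^{-1}$ and each $\mathring L$ costs $\delta^{-\varepsilon_k}$, matching the data in \eqref{initial}. Integrating along $\mathring L$ for the pure $T,R$ strings, and then integrating in $u$ for $\varphi$ and $\phi$, closes with constants independent of $M$ because every $M$-dependent term carries an extra positive power of $\delta$.

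Strings containing $\mathring L$ are handled by reading $\mathring L\varphi$ off the equation. The worst term is $\mathring L\mu\,T\varphi$, which costs exactly $\delta^{-\varepsilon_k}$ and is therefore consistent with the weights.

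\textbf{Main obstacle.} The hardest step is showing that the $M$-dependent contributions do not appear at leading order. In particular this requires the bound $\mathring L\mu\sim\delta^{-\varepsilon_k}$ with an $M$-independent main part, which comes from \eqref{GT} and Theorem \ref{Local}.
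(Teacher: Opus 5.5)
\textbf{The time-integration step fails as written.} Your transport of $\varrho^{1/2}\mathring{\underline L}\varphi_\gamma$ along $\mathring L$, then $\partial_u$-integration from $u=0$, then induction on the number of $\mathring L$'s, follows the paper's route. The problem is in \eqref{Tvp}. Your own bound for the nonlinear source is $M^{k+1}\delta^{-\varepsilon_k}\cdot\delta^{k(1-\varepsilon_0)-\varepsilon_0}$. Since $k(1-\varepsilon_0)-\varepsilon_0=-\varepsilon_k$ exactly, the second factor is a second $\delta^{-\varepsilon_k}$, not an $O(1)$ factor. The source is therefore $M^{k+1}\delta^{-2\varepsilon_k}$, which is exactly the paper's $M^{k+1}\delta^{-2\varepsilon_k-l}$.

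Your ``either case'' conclusion is false. Over an $O(1)$ interval (the ``finite lifespan'' or ``$\varrho^{-1/2}$ decay'' route) you only get $M^{k+1}\delta^{-2\varepsilon_k}$, so \eqref{Tvp} fails whenever $\varepsilon_k>0$. For $\varepsilon_0>\frac{2k}{2k+1}$ this increment even exceeds the data size $\delta^{-\varepsilon_0}$ of $T\varphi_\gamma$, which would also destroy the $M$-independent bounds in \eqref{eq:bootstrap-higher-derivatives}.

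The bound holds only on windows with $s-t_0\lesssim\delta^{\varepsilon_k}$. The paper uses this restriction implicitly here and explicitly in \eqref{YCao-2} and \eqref{r2F}. You have to build it into the bootstrap: run it on $t_0\le s<\min\{T_\delta,\,t_0+C\delta^{\varepsilon_k}\}$ with $C>2/A_0$. Nothing is lost, because \eqref{supercritical-profile} forces $\mu$ to vanish inside that window. In the critical case $\varepsilon_k=0$, a fixed bounded interval suffices.

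\textbf{For \eqref{r0vp} you discarded the correct argument because of an arithmetic slip.} By Lemma \ref{eXi} and \eqref{bootstrap-assumptions},
$|\Theta||\slashed d\varphi|\lesssim M^k\delta^{\varepsilon_0-1}\cdot M\delta^{1-\varepsilon_0}=M^{k+1}\le M^{k+1}\delta^{-\varepsilon_k}$, so the $\Theta$ term is harmless. Write $\partial_u=T+\Theta^XX$ and use \eqref{Tvp}. Also keep the term coming from $\partial_u\varrho^{1/2}=-\frac12\varrho^{-1/2}$; it produces a multiple of the difference itself, which Gronwall in $u$ absorbs as in \eqref{YHCC-A1}. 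Integrating over $0\le u\le 4\delta$ then gives $M^{k+1}\delta^{1-\varepsilon_k}=M^{k+1}\delta^{(k+1)(1-\varepsilon_0)}$.

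Your substitute claim $\mathring L\varphi+\frac{1}{2\varrho}\varphi=O(\delta^{(k+1)(1-\varepsilon_0)})$ is unproved. It would pass through the same $\partial_u$-integration, and from the bootstrap bounds the term $\varphi^{k-1}T\varphi\,\mathring L\varphi$ only gives $O(\delta^{1-2\varepsilon_k})$.

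\textbf{The ``main obstacle'' is misidentified.} No $M$-independent control of $\mathring L\mu$ is needed. Seeking it here would be circular, since Proposition \ref{Prop.5.1} is itself derived from \eqref{Tvp}--\eqref{r0vp}. $M$-independence comes from the gap $\delta^{\varepsilon_0-\varepsilon_k}=\delta^{k(1-\varepsilon_0)}$ between the data size and all $M$-dependent contributions:
\begin{itemize}
\item The data give $|\bar Z^\alpha T\varphi|\sim\delta^{-\varepsilon_0-l}$, while the transport increment is only $M^{k+1}\delta^{-\varepsilon_k-l}$.
\item The angular term satisfies $|\Theta||\slashed d\bar Z^\alpha\varphi|\lesssim M^{k+1}\delta^{-l}$.
\item In the $\mathring L$-induction, the $M$-dependent part $M^{k+1}\delta^{-2\varepsilon_k}$ is dominated by $\delta^{-\varepsilon_0-\varepsilon_k}$.
\end{itemize}
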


\begin{proof}
From \eqref{fequation} and Proposition~\ref{prop:higher-order-L-infty-estimates}, we have that for all $|\alpha| \leq N-2$,
\begin{equation*}
\begin{split}
&\bigl| \mathring{L}{\bar Z}^\alpha \underline{\mathring L} \vp_\gamma + \frac{1}{2\varrho}{\bar Z}^\alpha\underline{\mathring L} \vp_\gamma \bigr| \\
&= \big|{\bar Z}^\alpha (\mu\slashed\triangle\vp_\gamma+H_\gamma )+ [\mathring{L}, {\bar Z}^\alpha] \underline{\mathring L} \vp_\gamma
- \sum_{\substack{\alpha_1 + \alpha_2 =\alpha\\ |\alpha_1| > 0}}{\bar Z}^{\alpha_1} \big( \frac{1}{2\varrho} \big){\bar Z}^{\alpha_2} \underline{\mathring L} \vp_\gamma \big| \\
&\lesssim M^{k+1}\delta^{-2\varepsilon_k-l},
\end{split}
\end{equation*}
where $\bar Z\in\{T, R\}$ and $l$ is the number of $T$ operators in $\bar Z^\alpha$.
Consequently,
\[
\bigl| \mathring{L} \bigl( \varrho^{1/2}{\bar Z}^\alpha \underline{\mathring L} \vp_\gamma \bigr) \bigr|
\lesssim M^{k+1}\delta^{-2\varepsilon_k-l},
\]
which implies
\[
\bigl| \varrho^{1/2} {\bar Z}^\alpha \underline{\mathring L} \vp_\gamma(s,u,\vartheta) -\varrho_0^{1/2}{\bar Z}^\alpha \underline{\mathring L} \vp_\gamma(t_0,u,\vartheta) \bigr|
\lesssim M^{k+1}\delta^{-\varepsilon_k-l}.
\]
From this, \eqref{Tvp} and $|{\bar Z}^\alpha T \vp_\gamma| \lesssim \delta^{-\varepsilon_0-l}$ can be obtained.
Then commuting $T$ with $\bar Z^\alpha$ yields $|T\bar{Z}^\alpha \vp_\gamma| \lesssim \delta^{-\varepsilon_0-l}$.
In addition, by \eqref{Xi} and \eqref{bootstrap-assumptions}, we deduce
\begin{equation}\label{barzvp}
\Bigl| \frac{\partial}{\partial u}{\bar Z}^\alpha \vp_\gamma \Bigr|
\lesssim \bigl| T{\bar Z}^\alpha \vp_\gamma \bigr| + \bigl| \Theta^X \slashed{d}_X {\bar Z}^\alpha \vp_\gamma \bigr|
\lesssim \delta^{-\varepsilon_0-l}
\end{equation}
and
\begin{equation}\label{YHCC-A1}
\begin{split}
	&\Bigr|\f{\p}{\p u}\bigl(\varrho^{1/2}\vp_\gamma(s, u, \vartheta)-\varrho_0^{1/2}\vp_\gamma(t_0, u, \vartheta)\bigr)\Bigr|\\
\lesssim &|\varrho^{1/2}\vp_\gamma(s, u, \vartheta)-\varrho_0^{1/2}\vp_\gamma(t_0, u, \vartheta)\Bigr|+O(M^{k+1}\delta^{-\varepsilon_k}).
\end{split}
\end{equation}
Integrating \eqref{barzvp} and \eqref{YHCC-A1} from $0$ to $u$ gives $|{\bar Z}^\alpha \vp_\gamma| \lesssim \delta^{1-\varepsilon_0-l}$ and \eqref{r0vp}, respectively.

We now prove \eqref{eq:bootstrap-higher-derivatives} for $\vp_\gamma$ by induction on the number of $\mathring{L}$ operator. 
The case of $m=0$ follows by integrating \eqref{barzvp}. For the induction argument, assume that for some integer $p\ge 0$ 
and for all $|\alpha|\leq N-2$ containing at most $p$  $\mathring{L}$'s, it holds
\[
|Z^\alpha\vp_\gamma|\lesssim\delta^{1-\varepsilon_0-l-m\varepsilon_k},
\]
where $l$ and $m$ are the numbers of $T$ and $\mathring{L}$ operators in $Z^\alpha$, respectively. Let the multi-index $\beta$ with $|\beta|\leq N-3$ contain exactly $p$ $\mathring{L}$'s. By \eqref{fequation}, we get
\begin{equation*}
\begin{split}
|Z^\beta\mathring L\underline{\mathring L}\vp_\gamma|
&=\bigl|Z^\beta\bigl(\mu\slashed\triangle\vp_\gamma+H_\gamma-\frac{1}{2\varrho}\underline{\mathring L}\vp_\gamma\bigr)\bigr|\\
&\lesssim\delta^{-l-p\varepsilon_k}\bigl(\delta^{-\varepsilon_0}+\delta^{-2\varepsilon_k}M^{k+1}\bigr),
\end{split}
\end{equation*}
which implies
\[
|Z^\beta\mathring L T\vp_\gamma|\lesssim\delta^{-l-p\varepsilon_k}\bigl(\delta^{-\varepsilon_0}+\delta^{-2\varepsilon_k}M^{k+1}\bigr).
\]
By commuting $T$ with $Z^\beta\mathring L$, one then has
\[
|T Z^\beta\mathring L\vp_\gamma|\lesssim\delta^{-l-p\varepsilon_k}\bigl(\delta^{-\varepsilon_0}+\delta^{-2\varepsilon_k}M^{k+1}\bigr).
\]
As in~\eqref{barzvp}, we can find
\[
\Bigl|\frac{\partial}{\partial u}Z^\beta\mathring L\vp_\gamma\Bigr|
\lesssim\delta^{-l-p\varepsilon_k}\bigl(\delta^{-\varepsilon_0}+\delta^{-2\varepsilon_k}M^{k+1}\bigr).
\]
Integrating this with respect to $u$ yields
\[
|Z^\beta\mathring L\vp_\gamma|\lesssim\delta^{1-\varepsilon_0 -l-(p+1)\varepsilon_k}.
\]
This completes the induction argument and obtains the second bound in~\eqref{eq:bootstrap-higher-derivatives}.

Finally, we turn to the estimate of $\phi$. From the relation $T\phi = \mu \tilde T^i \vp_i$, one has
$Z^\alpha T\phi = Z^\alpha(\mu \tilde T^i \vp_i)$. Based on the already obtained bound for $\vp_i$, we have
$|Z^\alpha T\phi| \lesssim \delta^{1-\varepsilon_0-l-m\varepsilon_k}$.
By commuting $T$ with $Z^\alpha$ and using the induction hypothesis for the lower-order terms of $\phi$, one deduces
$|T Z^\alpha \phi| \lesssim \delta^{1-\varepsilon_0-l-m\varepsilon_k}$.
Then, as in~\eqref{barzvp}, it holds that
\[
\Bigl|\frac{\partial}{\partial u} Z^\alpha \phi\Bigr|
\lesssim \delta^{1-\varepsilon_0-l-m\varepsilon_k}.
\]
Integrating this from $0$ to $u$ yields
$|Z^\alpha \phi| \lesssim \delta^{2-\varepsilon_0-l-m\varepsilon_k}$,
which is just the first estimate in~\eqref{eq:bootstrap-higher-derivatives}.
\end{proof}

\begin{remark}\label{Rem4.2}
We emphasize that under the bootstrap assumptions \eqref{bootstrap-assumptions}, the estimates in
Proposition~\ref{prop:higher-order-L-infty-estimates} involve the bounds depending on $M$.
However, Proposition~\ref{prop:bootstrap-higher-derivatives} provides the improved estimates whose bounds are independent of $M$.
Consequently, starting from Proposition~\ref{prop:bootstrap-higher-derivatives} and repeating the arguments as in Proposition~\ref{prop:higher-order-L-infty-estimates}, one can refine the estimates in \eqref{eq:higher-order-L-infty-estimates}
for all $|\alpha|\le N-4$ and $|\beta|\le N-3$ so that the related $L^\infty$ bounds do not depend on $M$.
\end{remark}

\section{Detailed analysis of $\mu$ and blowup mechanism}\label{YCao-3}
In this section, we aim to prove $\mu \to 0^+$ as $s \to T_\delta-$. Once $\mu$ approaches $0$, the coordinate transformation
from $(t, x^1, x^2)$ to $(s, u, \vartheta)$ is no longer regular. In this case, the blowup of the second order
derivatives of $\phi$ will be shown.

Set
\[
\mathscr{M}(s,u,\vartheta)
=
(-1)^k\frac{k+1}{2}
\big(
\varrho^{(k+1)/2}
h^{\alpha\beta,\gamma_1\cdots\gamma_{k+1}}
\omega_\alpha\omega_\beta
\omega_{\gamma_1}\cdots\omega_{\gamma_{k+1}}
\varphi_0^k T\varphi_0
\big)(s,u,\vartheta).
\]
We first establish the following asymptotic formula
for $\mu$ and a quantitative estimate for its decreasing rate
when $\mu$ is small.

\begin{proposition}\label{Prop.5.1}
Under the bootstrap assumptions \eqref{bootstrap-assumptions},
for small $\delta>0$ and
$t_0\le s<T_\delta$, one has
\begin{equation}\label{mathM}
\mu(s,u,\vartheta)
=
1-\frac{2\mathscr M(t_0,u,\vartheta)}{k-1}
\big(
\varrho^{-(k-1)/2}
-\varrho_0^{-(k-1)/2}
\big)
+O(\delta^{1-\varepsilon_0}),
\end{equation}
where $\varrho=s-u$ and $\varrho_0=t_0-u$.
Furthermore, at any point where $\mu(s,u,\vartheta)<1/10$,
\begin{equation}\label{blowup}
\mathring L\mu(s,u,\vartheta)
=
\varrho^{-(k+1)/2}\mathscr M(s,u,\vartheta)
+O(\delta^{1-\varepsilon_0-\varepsilon_k})
\sim -\delta^{-\varepsilon_k}.
\end{equation}
\end{proposition}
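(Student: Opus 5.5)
The starting point is the transport equation \eqref{lmu}. Under \eqref{bootstrap-assumptions} and Propositions \ref{prop:lower-order-Linfty-estimates}--\ref{prop:bootstrap-higher-derivatives}, the first two terms $-\frac12\mu G^\gamma_{\mathring L\mathring L}\mathring L\varphi_\gamma-\mu G^\gamma_{\tilde T\mathring L}\mathring L\varphi_\gamma$ are bounded by $\mu\,\delta^{k(1-\varepsilon_0)}\delta^{1-\varepsilon_0-\varepsilon_k}=\mu\,\delta^{(k+1)(1-\varepsilon_0)-\varepsilon_k}$. Since $(k+1)(1-\varepsilon_0)-\varepsilon_k=1-\varepsilon_0$, this is $O(\mu\,\delta^{1-\varepsilon_0})$, which is harmless. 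The main term is $\frac12 G^\gamma_{\mathring L\mathring L}T\varphi_\gamma$, which I would expand with \eqref{GT}.

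In \eqref{GT}, the pieces $\mu G^0_{\mathring L\mathring L}\tilde T^i\mathring L\varphi_i$ and $\mu g_{\gamma i}G^\gamma_{\mathring L\mathring L}\slashed d^Xx^i\tilde T^j\slashed d_X\varphi_j$ contribute $O(\mu\delta^{1-\varepsilon_0})$ and $O(\mu\delta^{(k+1)(1-\varepsilon_0)})$ respectively. The key piece is $-G^\gamma_{\mathring L\mathring L}\mathring L_\gamma\tilde T^iT\varphi_i$. To handle it I would expand $G^\gamma_{\mathring L\mathring L}\mathring L_\gamma$ beyond \eqref{gLLL}, keeping the exact order-$\varphi^k$ part coming from $h^{\alpha\beta,\gamma_1\cdots\gamma_{k+1}}$.

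\begin{itemize}
\item By the $k$-th null condition \eqref{null}, the contribution of $g^{\alpha\beta,\gamma_1\cdots\gamma_k}$ with all indices replaced by $\omega$ vanishes. Its remaining corrections involve $\check L,\check\varrho$ (size $\delta^{k(1-\varepsilon_0)}$), $\mathring L\phi$, $\slashed d\phi$, and the deviations of $\varphi_\gamma$ from $\omega_\gamma\varphi_0$.
\item For those deviations, $\varphi_i\approx -\omega_i\varphi_0$ up to terms controlled by $\mathring L\phi$ and $\slashed d\phi$, of size $\delta^{2-\varepsilon_0-\varepsilon_k}$. These corrections are therefore $O(\delta^{(k-1)(1-\varepsilon_0)}\cdot\delta^{2-\varepsilon_0-\varepsilon_k})$.
\item The leading part of $G^\gamma_{\mathring L\mathring L}\mathring L_\gamma$ is then $(-1)^{k+1}(k+1)\,h\omega\omega\omega\cdots\omega\,\varphi_0^{k}$, up to sign bookkeeping.
\item Likewise $\tilde T^iT\varphi_i=-\omega^i T\varphi_i+\dots=T\varphi_0+O(\cdot)$, using $T\varphi_i\approx -\omega_iT\varphi_0$ (the $\varphi_\gamma$ are all derivatives of one short-pulse profile).
\end{itemize}

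Combining these gives
\[
\mathring L\mu=\varrho^{-(k+1)/2}\mathscr M(s,u,\vartheta)+O(\delta^{1-\varepsilon_0})\mu+O(\delta^{1-\varepsilon_0-\varepsilon_k}),
\]
where the last error is the product of the $T\varphi\sim\delta^{-\varepsilon_0}$ factor with the $O(\delta^{k(1-\varepsilon_0)})\cdot\delta^{1-\varepsilon_0}$ corrections. Checking that every error term is $\delta^{1-\varepsilon_0}$ smaller than the main size $\delta^{-\varepsilon_k}$ is the main obstacle. In particular, the $\varphi^{k-1}\mathring L\phi$ and $\check L\varphi^{k-1}$ terms have to be tracked with the sharp $\delta^{-\varepsilon_k}$ loss per $\mathring L$ from Proposition \ref{prop:bootstrap-higher-derivatives}.

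Next I would show that $\mathscr M(s,u,\vartheta)$ is essentially constant in $s$. By \eqref{r0vp} and \eqref{Tvp},
\[
\varrho^{1/2}\varphi_0=\varrho_0^{1/2}\varphi_0(t_0)+O(\delta^{(k+1)(1-\varepsilon_0)}),\qquad \varrho^{1/2}T\varphi_0=\varrho_0^{1/2}T\varphi_0(t_0)+O(\delta^{-\varepsilon_k}).
\]
Writing $\mathscr M=(\varrho^{1/2}\varphi_0)^k(\varrho^{1/2}T\varphi_0)\cdot\mathrm{const}$ gives $\mathscr M(s)=\mathscr M(t_0)+O(\delta^{k(1-\varepsilon_0)-\varepsilon_k})$. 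Here $T\varphi_0$ needs a sharper bound, since $\delta^{-\varepsilon_k}$ versus $\delta^{-\varepsilon_0}$ gives relative error $\delta^{\varepsilon_0-\varepsilon_k}=\delta^{k(1-\varepsilon_0)}$, which is small. I would also want the absolute error in $\mathscr M$ to be $O(\delta^{1-\varepsilon_0-\varepsilon_k})$. If \eqref{Tvp} does not deliver that, I would sharpen it by using \eqref{fequation} once more with $\underline{\mathring L}=\mu\mathring L+2T$.

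Then I integrate in $s$ from $t_0$, with $\mu(t_0)=1+O(\delta^{1-\varepsilon_0})$ from Theorem \ref{Local}. The relevant quantities are
\[
\int_{t_0}^s\varrho^{-(k+1)/2}\,d\tau=\frac{2}{k-1}\big(\varrho_0^{-(k-1)/2}-\varrho^{-(k-1)/2}\big),
\]
and the error $\int O(\delta^{1-\varepsilon_0-\varepsilon_k})\,d\tau$. To keep the latter $O(\delta^{1-\varepsilon_0})$, I use that $s-t_0\lesssim\delta^{\varepsilon_k}$ before $\mu$ vanishes, which follows a priori from $\mu\ge0$, the sign of $\mathscr M$, and the bound $\mu\lesssim M^{k+1}$. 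When $\varepsilon_k=0$ the interval is bounded. Gronwall absorbs the $O(\delta^{1-\varepsilon_0})\mu$ term. This gives \eqref{mathM}, with the sign convention that $\mathscr M(t_0)$ carries the negative value, as in \eqref{Lphi}--\eqref{y0}.

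For \eqref{blowup}: at points where $\mu<1/10$, \eqref{mathM} forces $\mathscr M(t_0,u,\vartheta)(\varrho_0^{-(k-1)/2}-\varrho^{-(k-1)/2})\lesssim -1$. The bounded factor then shows $-\mathscr M(t_0)\gtrsim$ its natural size $\delta^{-\varepsilon_k}$, using $T\varphi_0\sim\delta^{-\varepsilon_0}$ from \eqref{y0} and $\varphi_0^k\sim\delta^{k(1-\varepsilon_0)}$. In the supercritical case $\varrho^{-(k+1)/2}$ stays comparable to 1 on the short interval. Combined with the formula for $\mathring L\mu$, where $\mu$ is small and so the $\mu$-error is negligible, this yields $\mathring L\mu\sim-\delta^{-\varepsilon_k}$.
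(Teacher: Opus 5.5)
Your proposal is correct and follows the paper's proof essentially step by step. You use \eqref{lmu} with \eqref{GT}, and the $k$-th null condition to reduce $G^\gamma_{\mathring L\mathring L}\mathring L_\gamma$ to its $h$-part as in \eqref{kgh}--\eqref{hLLL}. You then replace $\tilde T^i\varphi_i$ and $\tilde T^iT\varphi_i$ by $\varphi_0$ and $T\varphi_0$; the paper does this with the exact identities $\tilde T^i\varphi_i=\varphi_0-g^{0i}\varphi_i-\mathring L\phi$ and $\tilde T^iT\varphi_i=T\varphi_0-g^{0i}T\varphi_i-\mu\tilde T^i\mathring L\varphi_i$, rather than your heuristic $T\varphi_i\approx-\omega_iT\varphi_0$. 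Finally you freeze $\mathscr M$ along $\mathring L$ via \eqref{Tvp}--\eqref{r0vp}, integrate over $s-t_0\lesssim\delta^{\varepsilon_k}$, and read off the sign from $\mu<1/10$.

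A few bookkeeping points:
\begin{itemize}
\item Since $G^\gamma_{\alpha\beta}=O(\varphi^{k-1})$, the $\mu G\mathring L\varphi$ terms in \eqref{lmu} and \eqref{GT} are $O(\mu\delta^{2k(1-\varepsilon_0)-\varepsilon_0})$. This is not $O(\mu\delta^{1-\varepsilon_0})$ when $\varepsilon_0$ is close to $1$, and your identity $(k+1)(1-\varepsilon_0)-\varepsilon_k=1-\varepsilon_0$ is false. The term is nonetheless still $O(\delta^{1-\varepsilon_0-\varepsilon_k})$, because $(k-1)(1-\varepsilon_0)\ge0$ and $\mu$ is bounded.
\item \eqref{Tvp} needs no sharpening, since $k(1-\varepsilon_0)\ge1-\varepsilon_0$.
\item You should account for the $O(\delta^{\varepsilon_0})$ drift of $\omega$ along $\mathring L$ in the coefficient of $\mathscr M$. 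The paper controls this through $\mathring L\omega_i=O(|\check L|)$.
\end{itemize}
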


\begin{proof}It follows from \cite[(2.36)]{Ding4} that
\begin{equation}\label{kgh}
G_{\mathring L\mathring L}^{\gamma}
\mathring L_\gamma=-kg^{\alpha\beta,\gamma\gamma_2\cdots\gamma_k}\vp_{\gamma_2}\cdots\vp_{\gamma_k}\mathring L_\alpha\mathring L_\beta\mathring L_\gamma-(\p_{\vp_\gamma}h^{\alpha\beta})\mathring L_\alpha\mathring L_\beta\mathring L_\gamma.
\end{equation}
Substituting
$h^{\alpha\beta}(\partial\phi)
=h^{\alpha\beta,\gamma_1\cdots\gamma_{k+1}}
\partial_{\gamma_1}\phi\cdots
\partial_{\gamma_{k+1}}\phi
+O(|\partial\phi|^{k+2})$
into \eqref{kgh} and using
the null condition \eqref{null} give
\begin{equation}\label{hLLL}
\begin{split}
G_{\mathring L\mathring L}^{\gamma}
\mathring L_\gamma
={}&
-(-1)^k(k+1)
h^{\alpha\beta,\gamma_1\cdots\gamma_{k+1}}
\omega_\alpha\omega_\beta
\omega_{\gamma_1}\cdots\omega_{\gamma_{k+1}}
\bigl(\tilde T^i\varphi_i\bigr)^k +O(\delta^{(k+1)(1-\varepsilon_0)}).
\end{split}
\end{equation}
By the identities $\tilde T^i\varphi_i
=
-g^{0i}\varphi_i-\mathring L\phi+\varphi_0$
and $\tilde T^iT\varphi_i=-g^{0i}T\varphi_i-\mu\tilde T^i\mathring L\varphi_i+T\varphi_0$,
together with \eqref{GT}, \eqref{hLLL} and the improved
estimates from Section~\ref{Section 3}, one has
\begin{equation}\label{GTh}
\begin{split}
G_{\mathring L\mathring L}^{\gamma}T\varphi_\gamma
={}&
(-1)^k(k+1)
h^{\alpha\beta,\gamma_1\cdots\gamma_{k+1}}
\omega_\alpha\omega_\beta
\omega_{\gamma_1}\cdots\omega_{\gamma_{k+1}}
\varphi_0^kT\varphi_0
+O(\delta^{1-\varepsilon_0-\varepsilon_k}).
\end{split}
\end{equation}
Hence, by \eqref{lmu} and \eqref{GTh}, we arrive at
\begin{equation}\label{rLmu}
\varrho^{(k+1)/2}\mathring L\mu(s,u,\vartheta)
=
\mathscr M(s,u,\vartheta)
+O(\delta^{1-\varepsilon_0-\varepsilon_k}),
\end{equation}
which proves the identity in \eqref{blowup}.

In addition,
the identity $\mathring L\omega_i
=\frac{\check L^i}{r}
-\frac{x^ix^j\check L^j}{r^3}$
and the bound for $\check L$ imply
\[
	\omega_i(s,u,\vartheta)=\omega_i(t_0,u,\vartheta)+\int_{t_0}^s\mathring L\omega_i(\tau,u,\vartheta)d\tau =\omega_i(t_0,u,\vartheta)+O(\delta^{\varepsilon_0}).
	\]
	This, together with \eqref{Tvp}, \eqref{r0vp} and \eqref{rLmu}, yields
	\begin{equation}\label{rhoLmu}
	\varrho^{(k+1)/2}\mathring{L}\mu(s, u, \vartheta) = \mathscr M(t_0, u, \vartheta)+O(\delta^{1-\varepsilon_0-\varepsilon_k}).
	\end{equation}
Therefore,
	\begin{equation*}
	\begin{split}
	\mu(s,u,\vartheta)
	&= \mu(t_0,u,\vartheta)
	+ \int_{t_0}^s \frac{\varrho^{(k+1)/2}\mathring{L}\mu(\tau,u,\vartheta)}{\varrho^{(k+1)/2}}d\tau \\
	&= \mu(t_0,u,\vartheta)
	- \f{2\mathscr{M}(t_0,u,\vartheta)}{k-1}\big(\varrho^{-(k-1)/2} - \varrho_0^{-(k-1)/2}\big)
	+ O(\delta^{1-\varepsilon_0}),
	\end{split}
	\end{equation*}
	which proves \eqref{mathM}.

Suppose now that $\mu(s,u,\vartheta)<1/10$.
Retaining the error term in \eqref{mathM}, we obtain that
for small $\delta$,
\[
\mathscr{M}(t_0,u,\vartheta) \leq \frac{9(k-1)}{20\big(\varrho^{-(k-1)/2} - \varrho_0^{-(k-1)/2}\big)}.
\]
Therefore, using \eqref{rhoLmu} again yields
\begin{equation*}
\begin{split}
\varrho^{(k+1)/2}\mathring{L}\mu(s,u,\vartheta)
&\leq \frac{-9}{10\int_{t_0}^s (\tau-u)^{-(k+1)/2}d\tau} + O\left(\delta^{1-\varepsilon_0-\varepsilon_k}\right) \\
&= \frac{-9(\tilde\tau-u)^{(k+1)/2}}{10(s-t_0)} + O\left(\delta^{1-\varepsilon_0-\varepsilon_k}\right)
\lesssim -\delta^{\varepsilon_k},
\end{split}
\end{equation*}
where $\tilde\tau \in (t_0,s)$ and $\tilde\tau - u \sim 1$. On the other hand, it follows from \eqref{eq:higher-order-L-infty-estimates} that $|\mathring{L}\mu(s,u,\vartheta)| \lesssim \delta^{-\varepsilon_k}$, so \eqref{blowup} holds.
\end{proof}

It is straightforward to observe that when $\mu<\f{1}{10}$, one has by \eqref{blowup},
\[
|\vp_0^kT\vp_0|  \sim \delta^{-\varepsilon_k}.
\]
Therefore,
\[
\mu \, |\nabla \partial_t \phi| \gtrsim |T\vp_0|\gtrsim\delta^{-k(1-\varepsilon_0)}|\vp_0^kT\vp_0| \sim \delta^{-\varepsilon_0},
\]
which yields $|\partial^2\phi| \sim \mu^{-1}\delta^{-\varepsilon_0}$ and hence
\begin{equation}\label{shock}
|\partial^2\phi| \rightarrow +\infty \quad \text{as} \quad \mu \rightarrow +0.
\end{equation}

As established in Proposition \ref{Prop.5.1}, if $\mu< \frac{1}{10}$ at some time, then $\mu$ will converge to $0$ in finite time along a fixed characteristics of $\mathring L$ due to $\mathring{L}\mu \sim -\delta^{-\varepsilon_k}$. Consequently, 
the solution $\phi$ will eventually blow up (see \eqref{shock}). To characterize the behavior of $\mu$ more precisely 
near the blowup time, as in \cite{L-Y, M-Y}, we define
$$\mu_{\min}(s) = \min_{(u,\vartheta)} \mu(s,u,\vartheta).$$

\begin{proposition}
Under the assumptions of Proposition~\ref{Prop.5.1}, for
small $\delta>0$ and any $s\in[t_0, T_\delta)$
such that $\mu_{\min}(s)<1/10$, one has
\begin{equation}\label{mumin}
\mu_{\min}(s)
\gtrsim\delta^{-\varepsilon_k}(T_\delta-s).
\end{equation}
Furthermore, for each
$(u,\vartheta)\in[0,4\delta]\times\mathbb S$,
\begin{equation}\label{Tmu}
(\mu^{-1}T\mu)_+(s,u,\vartheta)
\lesssim
\delta^{-1+\varepsilon_k/2}
(T_\delta-s)^{-1/2},
\end{equation}
where $(\mu^{-1}T\mu)_+$ denotes the positive part
of $\mu^{-1}T\mu$.
\end{proposition}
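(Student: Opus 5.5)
For \eqref{mumin} the plan is to use that $\mathring L\mu$ is uniformly negative of size $\delta^{-\varepsilon_k}$ wherever $\mu$ is small, and that $\mathring L\mu$ is bounded above by $C\delta^{-\varepsilon_k}$ everywhere.

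Fix $s$ with $\mu_{\min}(s)<1/10$, and pick $(u_*,\vartheta_*)$ realizing the minimum. Along the integral curve of $\mathring L=\partial_s$ through $(s,u_*,\vartheta_*)$, \eqref{blowup} gives $\mathring L\mu\le -c\,\delta^{-\varepsilon_k}$ as long as $\mu<1/10$. Since $\mu$ keeps decreasing, it stays below $1/10$, so the inequality propagates forward. Hence $\mu$ reaches $0$ along this curve no later than $s+C\delta^{\varepsilon_k}\mu_{\min}(s)$.

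By Section~\ref{YCao-3}, vanishing of $\mu$ forces $|\partial^2\phi|\to\infty$ via \eqref{shock}. Therefore $T_\delta\le s+C\delta^{\varepsilon_k}\mu_{\min}(s)$, which is exactly \eqref{mumin}.

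For \eqref{Tmu}, I would follow the argument of \cite{Sp, L-Y, M-Y}. Fix $(u,\vartheta)$ and split into two cases.

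\emph{Case $\mu\ge 1/10$.} Here $(\mu^{-1}T\mu)_+\lesssim |T\mu|\lesssim \delta^{-1}$ by Proposition~\ref{prop:higher-order-L-infty-estimates}. I also need $T_\delta-s\lesssim \delta^{\varepsilon_k}$ whenever shock formation occurs. This follows from \eqref{mathM} in the supercritical case, since $\mu$ drops by $O(1)$ over a time of order $\delta^{\varepsilon_k}$. It then gives $\delta^{-1}\lesssim \delta^{-1+\varepsilon_k/2}(T_\delta-s)^{-1/2}$. In the critical case $\varepsilon_k=0$ the claim reads $\delta^{-1}(T_\delta-s)^{-1/2}$ with $T_\delta-s\lesssim 1$, so this case is fine there too.

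\emph{Case $\mu<1/10$.} Here I use the Taylor-type inequality for nonnegative functions: if $f\ge 0$ and $|f''|\le A$, then $|f'|\le\sqrt{2Af}$. Apply it in the $u$-variable to $f(u)=\mu(s,u,\vartheta)$, or more precisely to $\mu$ restricted to a time slice extended toward the blowup time. The cleaner route is the standard one.

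\begin{itemize}
\item Write $\mu(s',u,\vartheta)=\mu(s,u,\vartheta)+(s'-s)\mathring L\mu+O(\delta^{-\varepsilon_k}\cdot\text{small})$, using \eqref{rhoLmu}, which shows $\mathring L\mu$ is essentially $s$-independent up to the factor $\varrho^{-(k+1)/2}$.
\item Differentiate in $u$, using $T\mathring L\mu$ bounded by $\delta^{-1-\varepsilon_k}$ from the $Z^\beta\mu$ bounds.
\item Positivity of $\mu$ at all $s'<T_\delta$ and all $u$ then gives the Glaeser-type bound $(T\mu)_+^2\lesssim \mu\cdot\sup|T^2\mu|$ on a $u$-interval of length comparable to $\delta$. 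This is weighted with the time direction so that $|T\mu|\lesssim \delta^{-1}\sqrt{\mu\cdot\mu/\text{(dist to zero)}}$.
\item Combining with \eqref{mumin}, $\mu\gtrsim\delta^{-\varepsilon_k}(T_\delta-s)$, yields $\mu^{-1}(T\mu)_+\lesssim \delta^{-1}\mu^{-1/2}\lesssim \delta^{-1+\varepsilon_k/2}(T_\delta-s)^{-1/2}$.
\end{itemize}

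The main obstacle is this last step: turning the second-derivative control $|T^2\mu|\lesssim\delta^{-2}$, from Proposition~\ref{prop:higher-order-L-infty-estimates} with Remark~\ref{Rem4.2} making the constants $M$-independent, into $(T\mu)_+\lesssim\delta^{-1}\mu^{1/2}$. The one-sided Glaeser inequality requires $\mu\ge0$ on a $u$-neighborhood of size $\sim\delta$ around the point. Near $u=0$ or $u=4\delta$ one cannot extend the interval in both directions. However, only the positive part is needed, and the inequality only requires extending in the direction where $T\mu>0$ pushes $\mu$ down, namely toward decreasing $u$. I would handle the boundary by using that near $u=0$ the data are trivial and $\mu\approx1$ there, so the first case applies.
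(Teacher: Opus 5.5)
Your proof of \eqref{mumin} is the paper's argument read contrapositively. Along the $\mathring L$-curve through a minimum point, $0<\mu(\tau)\le\mu_{\min}(s)-c\,\delta^{-\varepsilon_k}(\tau-s)$ for every $\tau<T_\delta$.

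For \eqref{Tmu} you take a genuinely different route. The paper straightens $T$ on $\Sigma_s$ and applies a maximum principle to $f=\mu^{-1}T\mu$ along each $T$-curve. A positive maximum cannot occur at $u=0$, where $\mu=1$ and $T\mu=0$. So at the maximum point (interior, or the right endpoint $u=4\delta$) one has $f'\ge0$, which is exactly $(T\mu)^2\le\mu\,T^2\mu$. This gives $f\lesssim\delta^{-1}\mu_{\min}(s)^{-1/2}$ on the whole curve, with no interval of prescribed length and no boundary layer.

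Your one-sided Glaeser argument uses $\mu>0$ on a backward $T$-interval of length $h\sim\delta$ together with $|T^2\mu|\lesssim\delta^{-2}$. It gives $(T\mu)_+\le\max\{\sqrt{2A\mu},\,2\mu/h\}\lesssim\delta^{-1}\mu^{1/2}$ at each point; the second alternative is harmless since $\mu\lesssim1$. This is a sharper, truly pointwise bound, in terms of $\mu$ at the point rather than $\mu_{\min}(s)$. The price is twofold:
\begin{enumerate}
\item the layer $u\lesssim\delta$, which you correctly handle: $\mu=1$ on $C_0$ and $|T\mu|\lesssim\delta^{-1}$ give $\mu\ge1/10$ for $u\le c\delta$;
\item the case $\mu\ge1/10$, where you need $T_\delta-s\lesssim\delta^{\varepsilon_k}$.
\end{enumerate}
The paper also tacitly needs that last fact whenever $\mu_{\min}(s)\ge1/10$, because \eqref{mumin} is only available when $\mu_{\min}(s)<1/10$. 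Your explicit treatment of that case fills a step the paper leaves implicit.

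Two points of execution need fixing.
\begin{enumerate}
\item The Taylor argument must be run along integral curves of $T$ in $\Sigma_s$, which are parametrized by $u$ since $Tu=1$ and $Ts=0$. It cannot be run in $\partial_u$ at fixed $\vartheta$, as you first wrote. Since $T=\partial_u-\Theta^XX$, the discrepancy $\mu^{-1}\Theta^XX\mu$ is only $O(\delta^{\varepsilon_0-1}\mu^{-1})$, which is not $O(\delta^{-1}\mu^{-1/2})$ once $\mu<\delta^{2\varepsilon_0}$.
\item The bullets comparing $\mu$ at later times $s'$ and ``weighting with the time direction'' (including the expression $\sqrt{\mu\cdot\mu/\text{dist}}$) are unnecessary and do not parse. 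Everything happens on the single slice $\Sigma_s$, so drop them.
\end{enumerate}
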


\begin{proof}
Fix $s\in[t_0, T_\delta)$ such that
$\mu_{\min}(s)<1/10$.
Choose $(u_0,\vartheta_0)\in[0,4\delta]\times\mathbb S$
such that
\[
\mu(s,u_0,\vartheta_0)=\mu_{\min}(s).
\]
By Proposition~\ref{Prop.5.1} and continuity, $\mu$ remains below
$1/10$ along this fixed characteristics for all subsequent
time  and
\[
\mathring L\mu(\tau,u_0,\vartheta_0)\sim-\delta^{-\varepsilon_k},
\qquad s\le\tau<T_\delta.
\]
Thus, for $s<S< T_\delta$, the positivity of
$\mu$ gives
\[
\begin{split}
\mu_{\min}(s)
&=
\mu(S,u_0,\vartheta_0)
-\int_s^S
\mathring L\mu(\tau,u_0,\vartheta_0)\,d\tau\ge-\int_s^{S} \mathring{L}\mu(\bar{s}, u_0, \vartheta_0) \, d\bar{s} \sim \delta^{-\varepsilon_k}(S-s).
\end{split}
\]
Letting $S\to T_\delta-$ yields \eqref{mumin}.

We next estimate the positive part of $\mu^{-1}T\mu$.
For the fixed time $s$, $T=\partial_u-\Theta^X\partial_\vartheta$ can be straighten by setting
\[
\tilde u=u,\quad
\tilde\vartheta=\vartheta+g(s,u,\vartheta),
\]
where
\[
\partial_u g-\Theta^X\partial_\vartheta g=\Theta^X,
\quad
g(s,0,\vartheta)=0.
\]
Due to $T\tilde u=1$ and $T\tilde\vartheta=0$,
we have $T=\partial_{\tilde u}$.

Fix $\tilde\vartheta$ and write
\begin{equation}\label{DXY-3}
f(\tilde u)=(\mu^{-1}T\mu)(s,\tilde u,\tilde\vartheta),
\end{equation}
where the quantity on the right hand side of \eqref{DXY-3} is expressed in
the new coordinates.
If $\max_{[0,4\delta]}f\le0$, then $f_+=0$.
Otherwise, choose $u^*\in[0,4\delta]$ at which
$f$ attains its positive maximum.
If $u^* = 0$, then $(s, u^*, \tilde{\vartheta})$ is a point on the outermost characteristic curve, which implies that $\mu(u^*) = 1$ and $T\mu(u^*) = 0$. This contradicts the assumption $f(u^*) > 0$, and hence $u^* \in (0, 4\delta]$, which yields $f'(u^*) \geq 0$. Therefore, together with the fact that $|T^2\mu| \lesssim \delta^{-2}$ by \eqref{eq:higher-order-L-infty-estimates}, one has
	\begin{equation}\label{muT}
	\bigl(\mu^{-1}T\mu\bigr)(u^*) \leq \sqrt{\dfrac{T^2\mu}{\mu}(u^*)}\lesssim\delta^{-1}\bigl(\mu_{\min}(s)\bigr)^{-1/2}.
	\end{equation}
The maximality of $f(u^*)$ therefore yields
\[
f_+(\tilde u)
\lesssim
\delta^{-1}\mu_{\min}(s)^{-1/2},
\quad 0\le\tilde u\le4\delta.
\]
Note that $\tilde\vartheta$ is arbitrary, returning to the
original coordinates gives
\[
(\mu^{-1}T\mu)_+(s,u,\vartheta)
\lesssim
\delta^{-1}\mu_{\min}(s)^{-1/2}
\]
throughout $[0,4\delta]\times\mathbb S$.
Combining this estimate with \eqref{mumin}, we conclude that
\[
(\mu^{-1}T\mu)_+(s,u,\vartheta)
\lesssim
\delta^{-1+\varepsilon_k/2}
(T_\delta-s)^{-1/2}.
\]
This completes the proof.
\end{proof}

\begin{lemma}\label{lem:higher_order_L2_estimates_mu}
Under the assumptions of Proposition~\ref{Prop.5.1}, let $E=
\left\{
\tau\in[t_0,T_\delta):
\mu_{\min}(\tau)<\frac{1}{10}
\right\}$.
For small $\delta>0$, the following
statements hold.

If $E\neq\varnothing$, set $\underline s=\inf E$.
Then, for any $a>1$ and
$\underline s<s<T_\delta$,
\begin{equation}\label{eq:mu_integrability}
\int_{\underline s}^{s}\mu_{\min}^{-a}(\tau)\,d\tau
\lesssim
\frac{\delta^{\varepsilon_k}}{a-1}
\mu_{\min}^{1-a}(s).
\end{equation}

Regardless of whether $E$ is empty, for any
$s,s'\in[t_0, T_\delta)$ with $s<s'$, one has
\begin{equation}\label{eq:mu_monotonicity}
\mu_{\min}(s')\lesssim\mu_{\min}(s).
\end{equation}
\end{lemma}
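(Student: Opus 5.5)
The plan is to derive both statements from one quantitative property: once $\mu_{\min}$ has dropped below $1/10$, it decreases at a rate comparable to $\delta^{-\varepsilon_k}$. We then compare $\mu_{\min}(\tau)$ with the linear function $\delta^{-\varepsilon_k}(T_\delta-\tau)$.

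\textbf{First step: lower bound on the slope.} Let $\tau\in E$ and let $(u_0,\vartheta_0)$ be a point where $\mu(\tau,u_0,\vartheta_0)=\mu_{\min}(\tau)$. By \eqref{blowup} in Proposition~\ref{Prop.5.1} and continuity, $\mu$ stays below $1/10$ along this $\mathring L$-characteristic for all later times, with $\mathring L\mu\le -c\delta^{-\varepsilon_k}$ there. So for $\tau<\tau'<T_\delta$,
\[
\mu_{\min}(\tau')\le \mu(\tau',u_0,\vartheta_0)\le \mu_{\min}(\tau)-c\delta^{-\varepsilon_k}(\tau'-\tau).
\]
Consequently $E$ is an interval $[\underline s,T_\delta)$ (or $(\underline s,T_\delta)$), and $\mu_{\min}$ is strictly decreasing on it. The same inequality with $\tau'\to T_\delta$, together with $\mu>0$, recovers the lower bound \eqref{mumin}. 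We also need a matching upper bound. Since $|\mathring L\mu|\lesssim\delta^{-\varepsilon_k}$ by \eqref{eq:higher-order-L-infty-estimates}, and the minimizer at a later time can be followed backwards along its own characteristic, we get $\mu_{\min}(\tau)\le \mu_{\min}(\tau')+C\delta^{-\varepsilon_k}(\tau'-\tau)$. Letting $\tau'\to T_\delta$ requires $\liminf_{\tau'\to T_\delta}\mu_{\min}(\tau')=0$, which I take from the blowup characterization: $\mu\to0$ is the mechanism that ends $T_\delta$. Combining the two bounds gives $\mu_{\min}(\tau)\sim\delta^{-\varepsilon_k}(T_\delta-\tau)$ on $E$.

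\textbf{Second step: proof of \eqref{eq:mu_integrability}.} For $\underline s<\tau<s$, the equivalence from the first step gives
\[
\int_{\underline s}^{s}\mu_{\min}^{-a}\,d\tau\lesssim\delta^{a\varepsilon_k}\int_{\underline s}^{s}(T_\delta-\tau)^{-a}d\tau\le\frac{\delta^{a\varepsilon_k}}{a-1}(T_\delta-s)^{1-a}\sim\frac{\delta^{\varepsilon_k}}{a-1}\mu_{\min}^{1-a}(s).
\]
Alternatively, one can avoid the upper bound altogether. The first step shows that $\mu_{\min}$ is Lipschitz-decreasing on $E$ with $-\frac{d}{d\tau}\mu_{\min}\gtrsim\delta^{-\varepsilon_k}$ almost everywhere. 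Then $\mu_{\min}^{-a}\lesssim -\delta^{\varepsilon_k}\mu_{\min}^{-a}\frac{d}{d\tau}\mu_{\min}$, which integrates directly to the claim. I prefer this route, since it only uses the one-sided bound proved in the first step. The derivative argument needs a little care because a minimum of a family of functions need not be differentiable. I would handle this by working with the discrete inequality $\mu_{\min}(\tau')\le\mu_{\min}(\tau)-c\delta^{-\varepsilon_k}(\tau'-\tau)$ and a Riemann-sum or change-of-variables comparison.

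\textbf{Third step: proof of \eqref{eq:mu_monotonicity}.} If $s\in E$, the first step gives $\mu_{\min}(s')\le\mu_{\min}(s)$ directly. If $s\notin E$, then $\mu_{\min}(s)\ge1/10$. Meanwhile $\mu_{\min}(s')\le\|\mu\|_{L^\infty}\lesssim1$ by Remark~\ref{Rem4.2}, the $M$-independent version of Proposition~\ref{prop:lower-order-Linfty-estimates}; alternatively one can use \eqref{mathM} with $|\mathscr M|\lesssim1$. Hence $\mu_{\min}(s')\lesssim 10\,\mu_{\min}(s)$. The main obstacle is the first step: making the slope bound rigorous for $\mu_{\min}$, a minimum over a family, and confirming that once $\mu<1/10$ the characteristic stays in the regime where \eqref{blowup} applies. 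Both follow from the continuity argument already used in the proof of \eqref{mumin}.
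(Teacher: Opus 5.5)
Your preferred route is correct and is essentially the paper's proof. The paper uses exactly your forward-characteristic inequality, written as $\mu_{\min}(\tau)\ge\mu_{\min}(s)+c\delta^{-\varepsilon_k}(s-\tau)$ for $\underline s\le\tau<s$, and then just integrates the explicit right-hand side; this is simpler than your plan, since it needs no differentiability of $\mu_{\min}$, Riemann sums or change of variables. Your monotonicity argument (split on whether $\mu_{\min}(s)<1/10$, then use the uniform bound on $\mu$) is the same as the paper's.

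You should drop the two-sided route, for two reasons. First, it assumes $\mu_{\min}\to0$ as $\tau\to T_\delta$, which is not available at this stage: it is part of what the bootstrap, and hence this lemma, is used to establish. Second, it yields a constant of size roughly $(C/c)^a$. That destroys the uniform-in-$a$ factor $1/(a-1)$, which is needed later when $a$ is taken large.
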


\begin{proof}
At first, suppose $E\neq\varnothing$.
By Proposition~\ref{Prop.5.1}, once $\mu<1/10$ along a fixed
characteristics, it remains below $1/10$ and decreases.
It follows from the definition of $\underline s$ that $
\mu_{\min}(\tau)<1/10$ when
$\underline s<\tau<T_\delta$.
Fix $\underline s<\tau<s< T_\delta$, and choose
$(u_\tau,\vartheta_\tau)\in[0,4\delta]\times\mathbb S$
such that
\[
\mu(\tau,u_\tau,\vartheta_\tau)=\mu_{\min}(\tau).
\]
Integrating along this fixed characteristics and using
Proposition~\ref{Prop.5.1}, we obtain
\[
\begin{split}
\mu_{\min}(\tau)
&=
\mu(s,u_\tau,\vartheta_\tau)
-\int_\tau^s
\mathring L\mu(\xi,u_\tau,\vartheta_\tau)\,d\xi\ge
\mu_{\min}(s)
+c\delta^{-\varepsilon_k}(s-\tau)
\end{split}
\]
for some positive constant $c$.
By continuity of $\mu_{\min}$, the same inequality
holds at $\tau=\underline s$.
Therefore, for $a>1$,
\[
\begin{split}
\int_{\underline s}^s\mu_{\min}^{-a}(\tau)\,d\tau
&\le
\int_{\underline s}^s
\left(
\mu_{\min}(s)
+c\delta^{-\varepsilon_k}(s-\tau)
\right)^{-a}\,d\tau\\
&=
\frac{\delta^{\varepsilon_k}}{c(a-1)}
\big[
\mu_{\min}^{1-a}(s)
-
\left(
\mu_{\min}(s)
+c\delta^{-\varepsilon_k}(s-\underline s)
\right)^{1-a}
\big]\\
&\le
\frac{\delta^{\varepsilon_k}}{c(a-1)}
\mu_{\min}^{1-a}(s),
\end{split}
\]
which proves \eqref{eq:mu_integrability}.

For any $\underline s<s<s'<T_\delta$,
choose $(u_s,\vartheta_s)$ such that
$\mu(s,u_s,\vartheta_s)=\mu_{\min}(s)$.
The decrease of $\mu$ along this characteristics gives $
\mu_{\min}(s')
\le
\mu(s',u_s,\vartheta_s)
\le
\mu(s,u_s,\vartheta_s)
=
\mu_{\min}(s)$.

For $t_0\le s\le\underline s<s'<T_\delta$,
we have $
\mu_{\min}(s')<\frac{1}{10}\le\mu_{\min}(s)$.

For $t_0\le s<s'\le\underline s$, the uniform upper
bound for $\mu$ from
\eqref{eq:lower-order-Linfty-estimates} and Remark~\ref{Rem4.2},
together with $\mu_{\min}(s)\ge1/10$, yields $
\mu_{\min}(s')\lesssim1\lesssim\mu_{\min}(s)$.
Thus \eqref{eq:mu_monotonicity} holds when $E\neq\varnothing$.

If $E=\varnothing$, then $\mu_{\min}\ge1/10$
throughout $[t_0,T_\delta)$.
The same uniform upper bound therefore gives
\[
\mu_{\min}(s')\lesssim1\lesssim\mu_{\min}(s),
\quad
t_0\le s<s'< T_\delta.
\]
This completes the proof.
\end{proof}

\begin{remark}
We shall also use the following consequence of
\eqref{eq:mu_integrability}:

Set $\underline{\underline{s}}
=\inf\big\{\tau\geq t_0:
\mu_{\min}(\tau)<\frac{1}{100}\big\}$.
When $s\leq\underline{\underline{s}}$, one has
$\mu_{\min}(s)\geq\frac{1}{100}$ and then the negative powers of
$\mu_{\min}$ are bounded from above;
When $s>\underline{\underline{s}}$, decomposing $[t_0, s]=[t_0, \underline{s})\cup [\underline{s}, s]$ and applying
\eqref{eq:mu_integrability} on $[\underline{s},s]$, one can obtain that for
all sufficiently large $a$,
\begin{align}
\int_{t_0}^{s}\mu_{\min}^{-a}(\tau)\,d\tau
&\lesssim
10^a\delta^{\varepsilon_k}
+\frac{\delta^{\varepsilon_k}}{a-1}
\mu_{\min}^{1-a}(s) \nonumber\\
&\lesssim
\delta^{\varepsilon_k}\mu_{\min}^{1-a}(s)
\big(\frac{1}{10^{a-2}}+\frac{1}{a-1}\big) \nonumber\\
&\lesssim
\frac{\delta^{\varepsilon_k}}{a-1}
\mu_{\min}^{1-a}(s).
\label{eq:mu-integrability-full}
\end{align}
\end{remark}

Until now, we have obtained a series of results under the assumption that ``$\mu < \frac{1}{10}$''. A natural question arises:
does there exist some point $(s, u, \vartheta)$ such that $\mu < \frac{1}{10}$ holds? We next answer this problem.
Set $A(\sigma,\omega)=\partial_\sigma\mathcal H(\sigma,\omega)$.
Then $\max A=(k+1)\Xi$. Choosing a maximum point
$(\sigma_0,\omega_0)$ such that $A(\sigma_0,\omega_0)=\max A$. Let $u_0=-\sigma_0\delta$ and
$\vartheta_0=\omega_0\in \Sigma_{t_0}$.
By Theorem~\ref{Local}, in particular, by \eqref{Lphi} and \eqref{y0},
we have
\[
\vp_0(t_0,u_0,\vartheta_0)=\frac{L\phi+\underline L\phi}{2}(t_0,u_0,\vartheta_0)
=\delta^{1-\varepsilon_0}\phi_1(\sigma_0,\omega_0)
+O(\delta^{2-\varepsilon_0-\varepsilon_k}),
\]
\[
T\vp_0(t_0,u_0,\vartheta_0)=\frac{L\vp_0-\underline L\vp_0}{2}(t_0,u_0,\vartheta_0)
=-\delta^{-\varepsilon_0}
\partial_\sigma\phi_1(\sigma_0,\omega_0)
+O(\delta^{-\varepsilon_k}).
\]
Consequently, for $A_0=A(\sigma_0,\omega_0)=(k+1)\Xi$,
\[
\mathscr M(t_0,u_0,\vartheta_0)
=-\frac12\varrho_0^{(k+1)/2}\delta^{-\varepsilon_k}A_0
+O(\delta^{1-\varepsilon_0-\varepsilon_k}).
\]
Note that
\[
\varrho^{-(k-1)/2}-\varrho_0^{-(k-1)/2}
=-\varrho_0^{-(k-1)/2}
\big[
1-\big(1+\frac{s-t_0}{\varrho_0}\big)^{-(k-1)/2}
\big].
\]
Together with Proposition~\ref{Prop.5.1} and
$s-t_0\lesssim \delta^{\varepsilon_k}$, we obtain
that for $t_0\le s< T_\delta$,
\begin{equation}\label{YCao-2}
\mu(s,u_0,\vartheta_0)
=1-\frac{\varrho_0A_0}{k-1}\delta^{-\varepsilon_k}
\big[
1-\big(1+\frac{s-t_0}{\varrho_0}\big)^{-(k-1)/2}
\big]
+O(\delta^{1-\varepsilon_0}).
\end{equation}

When $\varepsilon_0\in(\varepsilon_k^*,1)$, one has
$0<\varepsilon_k<1$.
Set $s=t_0+\tau\delta^{\varepsilon_k}$.
For any bounded $\tau\ge0$ such that $s<T_\delta$,
it follows from the Taylor expansion and \eqref{YCao-2} that
\begin{equation}\label{supercritical-profile}
\mu(t_0+\tau\delta^{\varepsilon_k},u_0,\vartheta_0)
=1-\frac{A_0}{2}\tau
+O(\delta^{\varepsilon_k}+\delta^{1-\varepsilon_0}).
\end{equation}
Due to $A_0>0$, $\mu<\frac{1}{10}$ holds for some time $s$ and small $\delta$.

When $\varepsilon_0=\varepsilon_k^*$, $\varepsilon_k=0$ holds.
Due to $\varrho_0=t_0+\sigma_0\delta=1+O(\delta)$,
it follows from \eqref{YCao-2} that for $t_0\le s<T_\delta$,
\begin{equation}\label{critical-profile}
\mu(s,u_0,\vartheta_0)
=1-\frac{A_0}{k-1}\bigl(1-s^{-(k-1)/2}\bigr)
+O(\delta^{1/(k+1)}).
\end{equation}
Note that the condition \eqref{critical-compression} means $A_0>k-1$.
Since the leading profile in \eqref{critical-profile} is strictly decreasing for $s\ge1$,
$\mu(s,u_0,\vartheta_0)$ has a unique zero point $s_0$ which is close to $s_*=\big(1-\frac{k-1}{A_0}\big)^{-2/(k-1)}
=S_{\mathrm{crit}}$ for small $\delta$.

\section{Energy estimates and some higher-order $L^2$ estimates}\label{EE}

As stated in Remark \ref{Rem4.2}, the bootstrap assumptions in \eqref{bootstrap-assumptions} have been improved up to the index $N-4$. 
However, the closure of the bootstrap assumptions  for higher-order derivatives remains unsolved. Drawing inspiration from the techniques in \cite{Ding2,Ding3,Ding4,M-Y,Sp}, we first establish the energy estimates for $\vp_\gamma$ up to orders $2N$. 
We then utilize the embedding theorem to achieve the closure for all $L^\infty$ estimates in \eqref{bootstrap-assumptions}, 
thereby the proof on the existence of smooth solutions before $T_\delta$ is completed.

Note that $\varphi_\gamma$ satisfies \eqref{ge},
and its derivatives satisfy similar equations with
the same principal operator and additional source terms.
We now establish energy estimates for any smooth
function $\Psi$ that solves the following linear equation:
\begin{equation}\label{gel}
\mu\Box_g\Psi=\Phi,
\end{equation}
where $\Phi$ is a given smooth function, $\Psi$ and its
derivatives vanish on $u=0$.

Following \cite{L-Y,M-Y}, we use the two multipliers
$V_1\Psi=\mathring{L}\Psi$ and
$V_2\Psi=\mathring{\underline{L}}\Psi$.
Integrating $\mu(\Box_g\Psi)(V_i\Psi)$ over the domain
$D^{s,u}$ and using the integration by parts lead to the
corresponding energies $E_i[\Psi](s,u)$ and fluxes
$F_i[\Psi](s,u)$ $(i=1,2)$ which are defined as
\begin{align}
E_1[\Psi](s, u)&= \frac{1}{2} \int_{\Sigma_s^{u}} \mu
\left\{ (\mathring{L}\Psi)^2 + |\slashed{d}\Psi|^2 \right\},
\label{E1} \\
E_2[\Psi](s, u)&= \frac{1}{2} \int_{\Sigma_s^{u}}
\left\{ (\mathring{\underline{L}}\Psi)^2
+ \mu^2 |\slashed{d}\Psi|^2 \right\},
\label{E2} \\
F_1[\Psi](s, u)&= \int_{C_{u}^s} (\mathring{L}\Psi)^2,
\label{F1} \\
F_2[\Psi](s, u)&= \int_{C_{u}^s} \mu \, |\slashed{d}\Psi|^2.
\label{F2}
\end{align}
Thus,
\begin{equation}\label{EI}
E_i[\Psi](s, u)-E_i[\Psi](t_0, u)+F_i[\Psi](s, u)
=-\int_{D^{s, u}}\Phi\cdot V_i\Psi
-\int_{D^{s, u}}\f12\mu Q_{\al\beta}[\Psi]
\leftidx{^{(V_i)}}\pi^{\al\beta},
\end{equation}
where
$Q_{\al\beta}[\Psi]
=(\p_\al\Psi)(\p_\beta\Psi)
-\f12 g_{\al\beta}g^{\nu\lambda}
(\p_\nu\Psi)(\p_\lambda\Psi)$,
and $\leftidx{^{(V_i)}}\pi$ is the deformation tensor of
the vector field $V_i$, as defined in \eqref{dt}.
Using the identities
\begin{align*}
-\frac{1}{2}\mu Q_{\al\beta}[\Psi]
\leftidx{^{(V_1)}}\pi^{\al\beta}
&=\frac{1}{2}\big(
-\varrho^{-1}\mu+\mathring L\mu
-\mu(\operatorname{tr}\check{\mathscr X}
+\f12\slashed g^{XX}G_{XX}^{\gamma}\mathring L\vp_\gamma)
\big)|\slashed d\Psi|^2
-\f12\mathring L\mu|\mathring L\Psi|^2\\
&\quad +(2\mu\zeta^X+\slashed d^X\mu)
(\mathring L\Psi)(\slashed d_X\Psi)
-\f12\operatorname{tr}\chi
(\mathring L\Psi)(\underline L\Psi),\\
-\frac{1}{2}\mu Q_{\al\beta}[\Psi]
\leftidx{^{(V_2)}}\pi^{\al\beta}
&=\frac{1}{2}\left(
\underline L\mu+\mu\mathring L\mu
-\mu^2\operatorname{tr}\chi
-\mu\operatorname{tr}\leftidx{^{(T)}}{\slashed\pi}
\right)|\slashed d\Psi|^2
-\mu\slashed d^X\mu
(\mathring L\Psi)(\slashed d_X\Psi)\\
&\quad -(2\mu\zeta^X+\slashed d^X\mu)
(\underline L\Psi)(\slashed d_X\Psi)
-\f12\big(
\mu\operatorname{tr}\chi
+\operatorname{tr}\leftidx{^{(T)}}{\slashed\pi}
\big)(\mathring L\Psi)(\underline L\Psi),
\end{align*}
together with \eqref{eq:higher-order-L-infty-estimates},
\eqref{blowup} and \eqref{Tmu}, we obtain
\begin{align}
&\delta^{\varepsilon_k}\int_{D^{s, u}}
-\frac{1}{2}\mu Q_{\al\beta}[\Psi]
\leftidx{^{(V_1)}}\pi^{\al\beta}\nonumber\\
\leq{}&
\frac12\delta^{\varepsilon_k}\int_{D^{s,u}}
(\mathring L\mu)|\slashed d\Psi|^2
+C\delta^{\varepsilon_k}\int_{D^{s,u}}
\Big\{
\delta^{k(1-\varepsilon_0)}
(1+\delta^{2k(1-\varepsilon_0)-\varepsilon_0})
\mu|\slashed d\Psi|^2
+\delta^{-\varepsilon_k}|\mathring L\Psi|^2
\Big\}\nonumber\\
&+C\delta^{\varepsilon_k}\int_{D^{s,u}}
\Big\{
\delta^{k(1-\varepsilon_0)-1}
|\mathring L\Psi||\slashed d\Psi|
+(1+\delta^{3k(1-\varepsilon_0)-\varepsilon_0})
|\mathring L\Psi||\mathring{\underline L}\Psi|
\Big\}\nonumber\\
\leq{}&
C\delta^{-\varepsilon_k}\int_{t_0}^s
\delta^{\varepsilon_k}E_1[\Psi](\tau,u)\,d\tau
-c\int_{D^{s,u}\cap\{\mu<\f{1}{10}\}}
|\slashed d\Psi|^2
+C\delta^{-1}\int_0^u
\delta^{\varepsilon_k}F_1[\Psi](s,u')\,du'
\label{QV1}\\
&+C\big(
\delta^{\varepsilon_k}
+\delta^{5k(1-\varepsilon_0)-\varepsilon_0}
\big)\int_{t_0}^s
\delta E_2[\Psi](\tau,u)\,d\tau\nonumber
\end{align}
for some constants $c,C>0$, and
\begin{align}
&\delta\int_{D^{s,u}}
-\frac{1}{2}\mu Q_{\al\beta}[\Psi]
\leftidx{^{(V_2)}}\pi^{\al\beta}\nonumber\\
\lesssim{}&
\delta\int_{D^{s,u}}
\Big\{
(\mu^{-1}T\mu)_+(\mu|\slashed d\Psi|^2)
+\delta^{k(1-\varepsilon_0)-1}
(\mu|\slashed d\Psi|^2)
+\delta^{k(1-\varepsilon_0)-1}
|\underline L\Psi||\slashed d\Psi|
+\mu|\mathring L\Psi||\slashed d\Psi|\nonumber\\
&\qquad\quad
+\delta^{k(1-\varepsilon_0)-1}
|\mathring L\Psi||\underline L\Psi|
\Big\}\nonumber\\
\lesssim{}&
\delta^{\varepsilon_k/2}\int_{t_0}^s
(T_\delta-\tau)^{-1/2}
E_1[\Psi](\tau,u)\,d\tau
+\delta^{-1}\int_0^u
\delta F_2[\Psi](s,u')\,du'
+\delta^{-\varepsilon_k}\int_{t_0}^s
\delta E_2[\Psi](\tau,u)\,d\tau
\label{QV2}\\
&+\delta^{(k-1)(1-\varepsilon_0)}
\int_{D^{s,u}\cap\{\mu<\f{1}{10}\}}
|\slashed d\Psi|^2
+\delta\int_{t_0}^sE_1[\Psi](\tau,u)\,d\tau
+\delta^{(k-1)(1-\varepsilon_0)}
\int_0^uF_1[\Psi](s,u')\,du'.\nonumber
\end{align}
It follows from \eqref{EI}, \eqref{QV1}, \eqref{QV2}
and Gronwall's inequality that
\begin{equation}\label{e}
\begin{split}
&\delta E_2[\Psi](s,u)+\delta F_2[\Psi](s,u)
+\delta^{\varepsilon_k}E_1[\Psi](s,u)
+\delta^{\varepsilon_k}F_1[\Psi](s,u)
+\int_{D^{s,u}\cap\{\mu<\tfrac{1}{10}\}}
|\slashed d\Psi|^2\\
\lesssim{}&
\delta E_2[\Psi](t_0,u)
+\delta^{\varepsilon_k}E_1[\Psi](t_0,u)+\sup_{\substack{t_0\leq s'\leq s\\0\leq u'\leq u}}
\big\{
\big|
\int_{D^{s',u'}}
\Phi\cdot\delta\mathring{\underline L}\Psi
\big|
+\big|
\int_{D^{s',u'}}
\Phi\cdot\delta^{\varepsilon_k}\mathring L\Psi
\big|
\big\}.
\end{split}
\end{equation}
Choose $\Psi=\Psi_\gamma^{n+1}=Z\Psi_\gamma^n$ and
$\Phi=\Phi_\gamma^{n+1}=\mu\Box_g\Psi_\gamma^{n+1}$,
where $Z\in\{\varrho\mathring L,T,R\}$ and $0\leq n\leq 2N-6$,
so that \eqref{gel} holds. Note that
\begin{equation}\label{Psi}
\begin{split}
\Phi_\gamma^{n+1}
&=\mu[\Box_g,Z]\Psi_\gamma^n
+Z\big(\mu\Box_g\Psi_\gamma^n\big)
-(Z\mu)\Box_g\Psi_\gamma^n\\
&=\mu\mathscr D^\alpha
\big({\leftidx{^{(Z)}}C_\gamma^n}_{,\alpha}\big)
+\big(Z+\leftidx{^{(Z)}}\lambda\big)\Phi_\gamma^n,
\end{split}
\end{equation}
where
\begin{equation}\label{ClP}
\begin{split}
{\leftidx{^{(Z)}}C_\gamma^n}_{,\alpha}
&=\big(\leftidx{^{(Z)}}\pi_{\nu\alpha}
-\frac12 g_{\nu\alpha}
\big(g_{\kappa\lambda}\leftidx{^{(Z)}}\pi^{\kappa\lambda}\big)\big)
g^{\nu\beta}\partial_\beta\Psi_\gamma^n,\\
\leftidx{^{(Z)}}\lambda
&=-\mu^{-1}\leftidx{^{(Z)}}\pi_{\mathring LT}
+\frac12\operatorname{tr}\leftidx{^{(Z)}}{\slashed\pi}
-\mu^{-1}Z\mu,\\
\Psi_\gamma^0&=\varphi_\gamma,\quad
\Phi_\gamma^0=\mu\Box_g\varphi_\gamma,
\end{split}
\end{equation}
and $\Phi_\gamma^0$ equals the right-hand side of \eqref{ge}.

Consequently, for
$\Psi_\gamma^{n+1}=Z_{n+1}Z_n\cdots Z_1\varphi_\gamma$
with $Z_j\in\{\varrho\mathring L,T,R\}$ and $0\leq n\leq 2N-6$,
the induction argument using \eqref{Psi} gives
\begin{equation}\label{Phik}
\begin{split}
\Phi_\gamma^{n+1}
&= \sum_{j=1}^{n}
\big(Z_{n+1}+\leftidx{^{(Z_{n+1})}}\lambda\big)\cdots
\big(Z_{n+2-j}+\leftidx{^{(Z_{n+2-j})}}\lambda\big)
\big(\mu\mathscr{D}^\alpha
{\leftidx{^{(Z_{n+1-j})}}C_{\gamma}^{n-j}}_{,\alpha}\big)\\
&\quad +\mu\mathscr{D}^\alpha
{\leftidx{^{(Z_{n+1})}}C_\gamma^{n}}_{,\alpha}
+\big(Z_{n+1}+\leftidx{^{(Z_{n+1})}}\lambda\big)\cdots
\big(Z_1+\leftidx{^{(Z_1)}}\lambda\big)\Phi_\gamma^0\\
&=J_1^{n+1}+J_2^{n+1}, \quad n\geq1,\\
\Phi_\gamma^1
&=\big(Z_1+\leftidx{^{(Z_1)}}\lambda\big)\Phi_\gamma^0
+\mu\mathscr{D}^\alpha
{\leftidx{^{(Z_1)}}C_\gamma^0}_{,\alpha},
\end{split}
\end{equation}
where $J_1^{n+1}$ denotes the sum over $j$ in \eqref{Phik},
and $J_2^{n+1}$ denotes the remaining two terms in the
expression for $\Phi_\gamma^{n+1}$.

By \eqref{Lpi}--\eqref{Rpi}, one has
\begin{equation}\label{lamda}
\begin{split}
\leftidx{^{(T)}}\lambda
&=\frac{1}{2}\operatorname{tr}
\leftidx{^{(T)}}{\slashed{\pi}}, \quad
\leftidx{^{(\varrho\mathring L)}}\lambda
=\varrho\big\{
\operatorname{tr}_{\slashed g}\check{\mathscr X}
+\f12\slashed g^{XX}G_{XX}^\al\mathring L\vp_\al
\big\}+2, \quad
\leftidx{^{(R)}}\lambda
=\frac{1}{2}\operatorname{tr}
\leftidx{^{(R)}}{\slashed{\pi}}.
\end{split}
\end{equation}

In addition, as in \cite[(5.1)--(5.6) and (5.10)--(5.12)]{Ding4},
the term
$\mu\mathscr{D}^\alpha
{\leftidx{^{(Z)}}C_\gamma^n}_{,\alpha}$
can be written as
\begin{equation}\label{muZC}
\mu\mathscr{D}^\alpha
{\leftidx{^{(Z)}}C_\gamma^n}_{,\alpha}
=\leftidx{^{(Z)}}D_{\gamma,1}^n
+\leftidx{^{(Z)}}D_{\gamma,2}^n
+\leftidx{^{(Z)}}D_{\gamma,3}^n,
\end{equation}
where
\begin{align}
\leftidx{^{(T)}}D_{\gamma,1}^{n}
={}&(T\mu)\mathring L^2\Psi_\gamma^{n}
+\mu(\slashed d_X\mu+2\mu\zeta_X)
\slashed d^X\mathring L\Psi_\gamma^{n}
+\f12\operatorname{tr}\leftidx{^{(T)}}{\slashed\pi}
\big(\mathring L\mathring{\underline L}\Psi_\gamma^{n}
+\f12\operatorname{tr}\chi
\mathring{\underline L}\Psi_\gamma^{n}\big)\no\\
&+(\slashed d_X\mu+2\mu\zeta_X)
\slashed d^X\mathring{\underline L}\Psi_\gamma^{n}
-(T\mu)\slashed\triangle\Psi_\gamma^{n}
+\f12\mu\operatorname{tr}\leftidx{^{(T)}}{\slashed\pi}
\slashed\triangle\Psi_\gamma^{n},\label{T1}\\
\leftidx{^{(T)}}D_{\gamma,2}^{n}
={}&\big\{
\mathring L T\mu
+\f14\uwave{\mathring{\underline L}
(\operatorname{tr}\leftidx{^{(T)}}{\slashed\pi})}
+\boxed{\slashed\nabla_X\big(\f12\mu\slashed d^X\mu}
+\mu^2\zeta^X\big)
\big\}\mathring L\Psi_\gamma^n
+\big\{
\f14\mathring L
(\operatorname{tr}\leftidx{^{(T)}}{\slashed\pi})\no\\
&+\f12\boxed{\slashed\nabla_X(\slashed d^X\mu}
+2\mu\zeta^X)
\big\}\mathring{\underline L}\Psi_\gamma^n
+\big\{
\f12\slashed{\mathcal L}_{\mathring L}
(2\mu^2\zeta_X+\mu\slashed d_X\mu)
-\uline{\slashed d_XT\mu}\no\\
&+\underline{\f12\slashed{\mathcal L}_{\mathring{\underline L}}
(\slashed d_X\mu}
+2\mu\zeta_X)
+\underbrace{\f12\slashed d_X
(\mu\operatorname{tr}\leftidx{^{(T)}}{\slashed\pi})}
\big\}\slashed d^X\Psi_\gamma^n,\label{T2}\\
\leftidx{^{(T)}}D_{\gamma,3}^{n}
={}&\big\{
\operatorname{tr}\chi T\mu
+\f14\big(\mu\operatorname{tr}\chi
+\operatorname{tr}\leftidx{^{(T)}}{\slashed\pi}\big)
\operatorname{tr}\leftidx{^{(T)}}{\slashed\pi}
-\f12|\slashed d\mu|^2
-\mu\zeta_X(\slashed d^X\mu)
\big\}\mathring L\Psi_\gamma^{n}\no\\
&+\big(\f12\mathring L\mu-\mu\operatorname{tr}\chi\big)
(\slashed d_X\mu+2\mu\zeta_X)
\slashed d^X\Psi_\gamma^{n},\label{T3}\\
\leftidx{^{(\varrho\mathring L)}}D_{\gamma,1}^{n}
={}&(2-\mu+\varrho\mathring L\mu)
\mathring L^2\Psi_\gamma^{n}
-2\varrho(\slashed d_X\mu+2\mu\zeta_X)
\slashed d^X\mathring L\Psi_\gamma^{n}
+\varrho\operatorname{tr}\chi
\big(\mathring L\mathring{\underline L}\Psi_\gamma^{n}
+\f12\operatorname{tr}\chi
\mathring{\underline L}\Psi_\gamma^{n}\big)\no\\
&-\varrho\big(
\mathring L\mu-\mu\operatorname{tr}\check{\mathscr X}
-\f12\mu\slashed g^{XX}G_{XX}^\al\mathring L\vp_\al
\big)\slashed\triangle\Psi_\gamma^{n},\label{rL1}\\
\leftidx{^{(\varrho\mathring L)}}D_{\gamma,2}^{n}
={}&\big\{
\mathring L\big(\varrho\mathring L\mu-\mu\big)
+\uwave{\f12\mathring{\underline L}
\big(\varrho\operatorname{tr}\check{\mathscr X}
+\f12\varrho\slashed g^{XX}G_{XX}^\al\mathring L\vp_\al\big)}
-\varrho\boxed{\slashed\nabla_X\big(\slashed d^X\mu}
+2\mu\zeta^X\big)
\big\}\mathring L\Psi_\gamma^n\no\\
&+\f12\mathring L
\big(\varrho\operatorname{tr}\check{\mathscr X}
+\f12\varrho\slashed g^{XX}G_{XX}^\al\mathring L\vp_\al\big)
\mathring{\underline L}\Psi_\gamma^n\no\\
&-\big\{
\slashed{\mathcal L}_{\mathring L}
\big(\varrho\slashed d_X\mu+2\varrho\mu\zeta_X\big)
+\slashed d_X\big(\mu+\varrho\mathring L\mu\big)
-\varrho\underbrace{\slashed d_X(\mu\operatorname{tr}\chi)}
\big\}\slashed d^X\Psi_\gamma^n,\label{rL2}\\
\leftidx{^{(\varrho\mathring L)}}D_{\gamma,3}^{n}
={}&\operatorname{tr}\chi
\big\{
2-\mu+\varrho\mathring L\mu
+\f12\varrho\operatorname{tr}\leftidx{^{(T)}}{\slashed\pi}
+\f12\varrho\mu\operatorname{tr}\chi
\big\}\mathring L\Psi_\gamma^{n}
+2\varrho\operatorname{tr}\chi
(2\mu\zeta^X+\slashed d^X\mu)
\slashed d_X\Psi_\gamma^n,\label{rL3}\\
\leftidx{^{(R)}}D_{\gamma,1}^{n}
={}&(R\mu)\mathring L^2\Psi_\gamma^{n}
-\leftidx{^{(R)}}{\slashed\pi}_{\mathring{\underline L}X}
\slashed d^X\mathring L\Psi_\gamma^{n}
+\f12\operatorname{tr}\leftidx{^{(R)}}{\slashed\pi}
\big(\mathring L\mathring{\underline L}\Psi_\gamma^{n}
+\f12\operatorname{tr}\chi
\mathring{\underline L}\Psi_\gamma^{n}\big)
-\leftidx{^{(R)}}{\slashed\pi}_{\mathring LX}
\slashed d^X\mathring{\underline L}\Psi_\gamma^{n}\no\\
&-(R\mu)\slashed\triangle\Psi_\gamma^{n}
+\f12\mu\operatorname{tr}\leftidx{^{(R)}}{\slashed\pi}
\slashed\triangle\Psi_\gamma^{n},\label{R1}\\
\leftidx{^{(R)}}D_{\gamma,2}^n
={}&\big\{
\mathring LR\mu
-\f12\underbrace{\slashed\nabla^X
\leftidx{^{(R)}}{\slashed\pi}_{\mathring{\underline L}X}}
+\f14\uwave{\mathring{\underline L}
(\operatorname{tr}\leftidx{^{(R)}}{\slashed\pi})}
\big\}\mathring L\Psi_\gamma^n
+\big\{
\f14\mathring L
(\operatorname{tr}\leftidx{^{(R)}}{\slashed\pi})\no\\
&-\f12\underbrace{\slashed\nabla^X
\leftidx{^{(R)}}{\slashed\pi}_{\mathring LX}}
\big\}\mathring{\underline L}\Psi_\gamma^n
+\big\{
-\f12\slashed{\mathcal L}_{\mathring L}
\leftidx{^{(R)}}{\slashed\pi}_{\mathring{\underline L}X}
-\f12\uwave{\slashed{\mathcal L}_{\mathring{\underline L}}
\leftidx{^{(R)}}{\slashed\pi}_{\mathring LX}}
-\boxed{\slashed d_XR\mu}\no\\
&+\f12\underbrace{\slashed d_X
(\mu\operatorname{tr}\leftidx{^{(R)}}{\slashed\pi})}
\big\}\slashed d^X\Psi_\gamma^n,\label{R2}\\
\leftidx{^{(R)}}D_{\gamma,3}^{n}
={}&\big\{
\operatorname{tr}\chi R\mu
+\f14\big(
\operatorname{tr}\leftidx{^{(T)}}{\slashed\pi}
+\mu\operatorname{tr}\chi
\big)\operatorname{tr}\leftidx{^{(R)}}{\slashed\pi}
+\f12\slashed d^X\mu
\leftidx{^{(R)}}{\slashed\pi}_{\mathring LX}
\big\}\mathring L\Psi_\gamma^{n}
+\big\{
\operatorname{tr}\chi
\leftidx{^{(R)}}{\slashed\pi}_{TX}\no\\
&+\operatorname{tr}\leftidx{^{(R)}}{\slashed\pi}
(\mu\zeta_X+\f12\slashed d_X\mu)
+\big(
-\f12\mathring L\mu+\mu\operatorname{tr}\chi
+\f12\operatorname{tr}\leftidx{^{(T)}}{\slashed\pi}
\big)\leftidx{^{(R)}}{\slashed\pi}_{\mathring LX}
\big\}\slashed d^X\Psi_\gamma^{n}.\label{R3}
\end{align}
Note that all the terms in $\leftidx{^{(Z)}}D_{\gamma,1}^m$ are products of the deformation tensor and the second-order
derivatives of $\Psi_\gamma^m$ except the first term which contains a factor of the form
$\mathring{L}\mathring{\underline{L}}\Psi_\gamma^m + \frac{1}{2}\operatorname{tr}{\chi}\mathring{\underline{L}}\Psi_\gamma^m$
(see \eqref{T1}, \eqref{rL1} and \eqref{R1}).
The structure of $\mathring{L}\mathring{\underline{L}}\Psi_\gamma^m + \frac{1}{2}\operatorname{tr}{\chi}\mathring{\underline{L}}\Psi_\gamma^m$
is important for our analysis. By \eqref{fequation}, it holds that
\[
\mathring{L}\mathring{\underline{L}}\varphi_\gamma
+\frac{1}{2}\operatorname{tr}\chi
\mathring{\underline{L}}\varphi_\gamma
=\mu\slashed\triangle\vp_\gamma+H_\gamma
+\frac{1}{2}\big(
\operatorname{tr}\check{\mathscr X}
+\f12\slashed g^{XX}G_{XX}^\al\mathring L\vp_\al
\big)\mathring{\underline{L}}\varphi_\gamma.
\]
This identity exhibits the better smallness than either $\mathring{L}\mathring{\underline{L}}\varphi_\gamma$ or $\frac{1}{2}\operatorname{tr}{\chi}\mathring{\underline{L}}\varphi_\gamma$ individually.
The term $\leftidx{^{(Z)}}D_{\gamma,2}^m$ collects all products of the first-order derivatives of the deformation tensor and the first-order derivatives of $\Psi_\gamma^m$, while $\leftidx{^{(Z)}}D_{\gamma,3}^m$ contains the remaining terms.

The explicit expressions for $\Phi_\gamma^{n+1}$ given in
\eqref{Phik} and \eqref{muZC}--\eqref{R3} are crucial for estimating
the two source integrals in \eqref{e}.
To close our energy estimates, we introduce the following energies
and fluxes, following \cite{Ch1,M-Y,Sp}:
\begin{equation}\label{higher order energy and flux}
\begin{split}
\mathcal{E}_{i,\leq n+1}(s,u)
&=\sup_{t_0\leq s'\leq s}\big\{\sum_{\gamma=0}^2\sum_{|\alpha|\leq n}
\delta^{2l+2p\varepsilon_k}
E_i\bigl[Z^\alpha\vp_\gamma\bigr](s',u)\big\},
\quad i=1,2, \\[8pt]
\mathcal{F}_{i,\leq n+1}(s,u)
&=\sum_{\gamma=0}^2\sum_{|\alpha|\leq n}
\delta^{2l+2p\varepsilon_k}
F_i\bigl[Z^\alpha\vp_\gamma\bigr](s,u),
\quad i=1,2.\\
\widetilde{\mathcal{E}}_{i,\leq n+1}(s,u)
&=\sup_{t_0\leq s'\leq s}
\bigl\{
\mu_{\min}^{2b_{n+1}}(s')
\mathcal{E}_{i,\leq n+1}(s',u)
\bigr\},
\quad i=1,2, \\[8pt]
\widetilde{\mathcal{F}}_{i,\leq n+1}(s,u)
&=\sup_{t_0\leq s'\leq s}
\bigl\{
\mu_{\min}^{2b_{n+1}}(s')
\mathcal{F}_{i,\leq n+1}(s',u)
\bigr\},
\quad i=1,2,
\end{split}
\end{equation}
where $l$ (respectively $p$) denotes the number of $T$
(respectively $\varrho\mathring{L}$) operators in each $Z^\alpha$,
and the exponents $\{b_{n+1}\}$ will be determined subsequently.

To establish higher-order energy estimates for \eqref{ge}, we next derive the
higher-order $L^2$ estimates for certain associated quantities.
These estimates allow us to control the two source integrals in
\eqref{e} and thereby close the energy estimates.
For this purpose, we shall employ two lemmas from
\cite[Lemma 7.3]{M-Y} and \cite[Lemma 12.57]{Sp}, respectively.

\begin{lemma}\label{L2T}
Under the assumptions \eqref{bootstrap-assumptions}, let
$\psi\in C^1(\overline{D^{s,u}})$ vanish on $C_0^s$.
For small $\delta>0$, the following estimates hold:
\begin{align}
\int_{S_{s,u}}\psi^2
&\lesssim \delta\int_{\Sigma_s^u}|T\psi|^2
\lesssim \delta\int_{\Sigma_s^u}
\bigl(|\mathring{\underline L}\psi|^2
+\mu^2|\mathring L\psi|^2\bigr)
\lesssim \delta\bigl(E_2[\psi](s,u)+E_1[\psi](s,u)\bigr),
\label{SSi}\\
\int_{\Sigma_s^u}\psi^2
&\lesssim \delta^2\int_{\Sigma_s^u}|T\psi|^2
\lesssim \delta^2\int_{\Sigma_s^u}
\bigl(|\mathring{\underline L}\psi|^2
+\mu^2|\mathring L\psi|^2\bigr)
\lesssim \delta^2\bigl(E_2[\psi](s,u)+E_1[\psi](s,u)\bigr).
\label{SiSi}
\end{align}
\end{lemma}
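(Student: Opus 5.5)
Both inequality chains are Poincaré-type estimates in the transversal variable $u$, which ranges over the short interval $[0,u]\subset[0,4\delta]$, combined with the vanishing of $\psi$ on $C_0^s$. The plan is to work in the coordinates $(s,u,\vartheta)$, where $T=\partial_u-\Theta^X X$. The $\Theta$-term is awkward, so, as in the proof of \eqref{Tmu}, I will use the straightened coordinates $(\tilde u,\tilde\vartheta)$ on the fixed slice $\Sigma_s$, in which $T=\partial_{\tilde u}$. Since $\Theta^X$ is smooth and $\tilde\vartheta=\vartheta+g$ with $g(s,0,\vartheta)=0$, the change of variables is a diffeomorphism of $\Sigma_s^u$, and $\tilde u=u$, so it preserves the leaves $S_{s,u'}$. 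For fixed $\tilde\vartheta$ I write $\psi(s,u',\tilde\vartheta)=\int_0^{u'}T\psi\,du''$. Cauchy--Schwarz then gives $\psi^2(s,u',\tilde\vartheta)\le u'\int_0^{u'}|T\psi|^2\,du''\le 4\delta\int_0^{u}|T\psi|^2\,du''$.

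Next I integrate in $\tilde\vartheta$. The measure on $S_{s,u'}$ is $\sqrt{\slashed g_{XX}}\,d\vartheta$, and I need to compare it with $d\tilde\vartheta$ uniformly in $u'\in[0,u]$. By Proposition \ref{prop:lower-order-Linfty-estimates}, $\slashed g_{XX}$ is comparable to $\varrho^2\sim s^2$, uniformly in $u'$ at fixed $s$, because $|\slashed dx|\lesssim1$ and $\check\varrho$ is small. The Jacobian $\partial\tilde\vartheta/\partial\vartheta=1+\partial_\vartheta g$ needs a bound of order one. For this I will integrate the transport equation for $\partial_\vartheta g$ in $u$ over an interval of length $\lesssim\delta$, using $|\Theta|\lesssim\delta^{\varepsilon_0-1}$ from Lemma \ref{eXi}, which gives a factor $\delta\cdot\delta^{\varepsilon_0-1}=\delta^{\varepsilon_0}$. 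I also need the analogous bound for $\partial_\vartheta\Theta$, which should follow from Proposition \ref{prop:higher-order-L-infty-estimates} after differentiating \eqref{LXi}. The volume form $d\nu_{\slashed g}\,du'$ of $\Sigma_s^u$ (Definition \ref{3.4}) is then comparable to $d\tilde\vartheta\,d\tilde u$ up to constants. This gives the first inequality of \eqref{SSi}, and integrating once more in $u'\in[0,u]$ gives the first inequality of \eqref{SiSi} with the extra factor $\delta$.

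For the remaining inequalities I will use $T=\frac12(\mathring{\underline L}-\mu\mathring L)$, which follows from $\mathring{\underline L}=\mu\mathring L+2T$. This gives $|T\psi|^2\lesssim|\mathring{\underline L}\psi|^2+\mu^2|\mathring L\psi|^2$. The $\mathring{\underline L}$ term is bounded by $2E_2$ by definition \eqref{E2}. For the other term, $\mu^2|\mathring L\psi|^2\le\|\mu\|_{L^\infty}\,\mu|\mathring L\psi|^2$, and $\|\mu\|_{L^\infty}\lesssim1$ independently of $M$ by Remark \ref{Rem4.2}. So this term is controlled by $E_1$ from \eqref{E1}.

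I expect the main obstacle to be the uniform equivalence of measures under the straightening map, that is, controlling $\partial_\vartheta g$ given only the large bound on $\Theta$ from Lemma \ref{eXi}. It works only because the $u$-interval has length $O(\delta)$. If that route proves too delicate, my fallback is to avoid straightening altogether. I would compute $\frac{d}{du}\int_{S_{s,u}}\psi^2$ directly via the Lie derivative along $\partial_u=T+\Theta^XX$. The $\Theta$ contribution can be integrated by parts on the circle, producing $\slashed{\mathrm{div}}\,\Theta$ times $\psi^2$. Gronwall in $u$ over length $\delta$ then absorbs this term, provided $\delta\,|\slashed\nabla\Theta|\lesssim1$.
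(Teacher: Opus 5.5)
Your outline is correct, but your main route differs from the argument the paper relies on. The paper gives no proof here and cites \cite[Lemma 7.3]{M-Y}; that argument is essentially your fallback, and it needs no condition on $\slashed\nabla\Theta$.

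\textbf{The cited argument (your fallback).} In the coordinates $(s,u,\vartheta)$, $\partial_u=T+\Theta^XX$ gives $\partial_u\sqrt{\slashed g_{XX}}=\tfrac12\operatorname{tr}{}^{(T)}\slashed\pi\,\sqrt{\slashed g_{XX}}+\partial_\vartheta\bigl(\Theta^X\sqrt{\slashed g_{XX}}\bigr)$. Hence the $\Theta$-terms in $\frac{d}{du}\int_{S_{s,u}}\psi^2$ combine into $\int_{\mathbb S}\partial_\vartheta\bigl(\psi^2\Theta^X\sqrt{\slashed g_{XX}}\bigr)d\vartheta=0$. In other words, the term $\slashed{\text{div}}\,\Theta\,\psi^2$ you get from integrating $\Theta(\psi^2)$ by parts is cancelled exactly by the variation of the area element. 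What remains is
\[
\frac{d}{du}\int_{S_{s,u}}\psi^2=\int_{S_{s,u}}\bigl(2\psi\,T\psi+\tfrac12\operatorname{tr}{}^{(T)}\slashed\pi\,\psi^2\bigr).
\]
By Proposition~\ref{prop:higher-order-L-infty-estimates}, $|{}^{(T)}\slashed\pi|\lesssim M^k\delta^{k(1-\varepsilon_0)-1}$. So Gronwall on $[0,4\delta]$ costs only a factor $1+O(M^k\delta^{k(1-\varepsilon_0)})$, and Cauchy--Schwarz in $u$ gives the first inequality of \eqref{SSi}. Integrating in $u$ then gives \eqref{SiSi}.

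\textbf{Your main route.} Straightening $T$, applying the one-dimensional Poincar\'e inequality on each $T$-curve, and comparing measures is the same estimate in coordinates, and it works. However, the measure comparison as you set it up is under-justified and harder than necessary.

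\emph{The Jacobian factor.} Splitting the density $\sqrt{\slashed g_{XX}}\,J^{-1}$, with $J=1+\partial_\vartheta g$, forces a bound on $\partial_\vartheta\Theta^X$. This is because $J$ solves $TJ=(\partial_\vartheta\Theta^X)J$ with $J|_{u=0}=1$. The bound on $|\Theta|$ from Lemma~\ref{eXi} does not control this ODE. The needed estimate $|\partial_\vartheta\Theta^X|\lesssim\delta^{-1}$ would have to come from a $\vartheta$-differentiated version of \eqref{LXi}, which the paper never derives.

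\emph{The metric factor.} $|\slashed dx|\lesssim1$ and small $\check\varrho$ do not imply $\slashed g_{XX}\sim\varrho^2$. They only compare $\slashed g_{XX}$ with the Euclidean speed $\sum_i(Xx^i)^2$ of the map $\vartheta\mapsto x$. Controlling that speed would require integrating $\mathring L\log(\varrho^{-2}\slashed g_{XX})=2\operatorname{tr}\check{\mathscr X}+\slashed g^{XX}G_{XX}^\gamma\mathring L\varphi_\gamma$ from $t_0$.

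\emph{How to avoid both.} Keep the density whole. The field $\tilde X=\partial_{\tilde\vartheta}$ commutes with $T=\partial_{\tilde u}$, so $T\log\slashed g(\tilde X,\tilde X)=\operatorname{tr}{}^{(T)}\slashed\pi$. Hence the density with respect to $d\tilde\vartheta$ changes only by a factor $1+O(M^k\delta^{k(1-\varepsilon_0)})$ along each $T$-curve. That is all your Poincar\'e step needs.

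The remaining inequalities in each chain, via $T=\tfrac12(\mathring{\underline L}-\mu\mathring L)$ and $\mu^2\le\|\mu\|_{L^\infty}\mu$, are fine as you wrote them.
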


\begin{lemma}\label{L2L}
For a given function $f\in C(\overline{D^{s,u}})$, define
\[
F(s,u,\vartheta)=\int_{t_0}^s f(\tau,u,\vartheta)\,d\tau.
\]
Under the assumptions \eqref{bootstrap-assumptions}, the following
estimate holds for small $\delta>0$:
\begin{equation}\label{Ff}
\|F\|_{s,u}\lesssim\int_{t_0}^s\|f\|_{\tau,u}\,d\tau,
\end{equation}
where $\|F\|_{s,u}$ denotes the $L^2$ norm of $F$ over
$\Sigma_s^u$, as defined in Definition~\ref{3.4}.
\end{lemma}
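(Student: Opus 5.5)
The plan is to work in the coordinates $(s,u,\vartheta)$, where $\mathring L=\partial_s$, and to reduce \eqref{Ff} to Minkowski's integral inequality. The only obstruction is that the measure on $\Sigma_s^u$, namely $\sqrt{\slashed g_{XX}}\,d\vartheta\,du'$, depends on $s$. So the first step is to compare $\sqrt{\slashed g_{XX}}(s,u',\vartheta)$ with $\sqrt{\slashed g_{XX}}(\tau,u',\vartheta)$ for $t_0\le\tau\le s$.

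\textbf{Step 1 (evolution of the area density).} By \eqref{uLpi}, $\mathring L\slashed g_{XX}={}^{(\mathring L)}\slashed\pi_{XX}=2\chi_{XX}$, hence
\[
\mathring L\log\sqrt{\slashed g_{XX}}=\operatorname{tr}\chi=\operatorname{tr}\check{\mathscr X}+\tfrac12\slashed g^{XX}G^\gamma_{XX}\mathring L\vp_\gamma+\varrho^{-1}.
\]
Equivalently, $\mathring L\log(\varrho^{-1}\sqrt{\slashed g_{XX}})=\operatorname{tr}\check{\mathscr X}+\tfrac12\slashed g^{XX}G_{XX}^\gamma\mathring L\vp_\gamma$. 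Under \eqref{bootstrap-assumptions} and Proposition~\ref{prop:lower-order-Linfty-estimates}, the right-hand side is bounded by $M^k\delta^{k(1-\varepsilon_0)}+M^{k}\delta^{k(1-\varepsilon_0)-\varepsilon_k}$. Since $\varepsilon_k<1$ (and the time interval has length $\lesssim\delta^{\varepsilon_k}$ in the supercritical case, or is bounded in the critical case $\varepsilon_k=0$), integrating over $[\tau,s]$ gives $\sqrt{\slashed g_{XX}}(s)\le C\frac{\varrho(s)}{\varrho(\tau)}\sqrt{\slashed g_{XX}}(\tau)$. Because $\varrho\sim1$ on the relevant region (Proposition~\ref{prop:lower-order-Linfty-estimates} gives $\check\varrho$ small, and $s\lesssim1$), this is simply
\[
\sqrt{\slashed g_{XX}}(s,u',\vartheta)\lesssim\sqrt{\slashed g_{XX}}(\tau,u',\vartheta).
\]
This is the only step needing care. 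I expect it to be the main obstacle: the large factor $\delta^{-\varepsilon_k}$ in $\mathring L\vp$ must be compensated by the length of the $s$-interval. If the interval is longer, I would fall back on $\varrho$-weights, using that $G^\gamma_{XX}\mathring L\vp_\gamma$ carries the factor $\vp^{k-1}$, which decays like $\varrho^{-(k-1)/2}$.

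\textbf{Step 2 (Minkowski).} Write
\[
\|F\|_{s,u}=\Bigl\|\int_{t_0}^s f(\tau,\cdot)\,d\tau\Bigr\|_{L^2(d\vartheta\,du';\sqrt{\slashed g_{XX}(s)})}\le\int_{t_0}^s\Bigl(\int_0^u\!\!\int_{\mathbb S}f^2(\tau,u',\vartheta)\sqrt{\slashed g_{XX}}(s,u',\vartheta)\,d\vartheta\,du'\Bigr)^{1/2}d\tau.
\]
Replacing $\sqrt{\slashed g_{XX}}(s)$ by $C\sqrt{\slashed g_{XX}}(\tau)$ via Step 1 yields $\int_{t_0}^s\|f\|_{\tau,u}\,d\tau$ up to a constant, which is \eqref{Ff}. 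The implicit constant depends only on the uniform bounds from Remark~\ref{Rem4.2}, so it is independent of $M$ once $\delta$ is small.
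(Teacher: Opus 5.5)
Your argument is correct and is the standard proof of the result the paper imports without proof from \cite[Lemma 12.57]{Sp}. That proof compares the area densities through $\mathring L\log\sqrt{\slashed g_{XX}}=\operatorname{tr}\chi$, where the $\delta^{-\varepsilon_k}$ size of $\frac12\slashed g^{XX}G^\gamma_{XX}\mathring L\vp_\gamma$ is offset by $s-t_0\lesssim\delta^{\varepsilon_k}$ (or by boundedness of $s$ when $\varepsilon_k=0$), and then applies Minkowski's integral inequality. Your appeal to $\check\varrho$ is unnecessary, since $\varrho=s-u\sim1$ follows directly from $u\in[0,4\delta]$ and the boundedness of $s$.
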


\subsection{$L^2$ norms of some basic quantities}

The main purpose of this section is to establish the $L^2$ estimates for
some geometric quantities related to the formation of singularities.
Comparing with the global existence problem investigated in \cite{Ding4}, where
the uniform global energy estimates are derived,
the present blowup problem requires to establish the uniform weighted energy estimates up to the
lifespan. In this process,
we have to carefully exploit the higher-order null structure to prevent
any additional regularity loss caused by negative powers of $\mu$.
On the other hand, a crucial difficulty arises from the term $
G_{\mathring L\mathring L}^{\gamma}\slashed\triangle\varphi_\gamma$
appearing in the transport equation \eqref{Lchi'} of the good unknown $\check{\mathscr X}$.
In our previous work \cite{Ding4}, this term has been treated well by solving the wave
equation divided by $\mu$ and making full use of the uniform positive lower bound of $\mu$.
However, $\mu$ becomes singular near the
blowup time in the present setting. To overcome this difficulty, we find a new decomposition of
$G_{\mathring L\mathring L}^{\gamma}\slashed\triangle\varphi_\gamma$
that preserves the good property from the higher-order null condition without
division by $\mu$. This decomposition leads to a new transport equation
of $\check{\mathscr X}$, which will  provide the essential $L^2$ control needed
for both the lower-order and top-order energy estimates in
the following subsections.

\begin{lemma}\label{lem:null-transport}
Let $\phi$ be a smooth solution of \eqref{quasi} in the region
with $\mu>0$. Then the following decomposition holds:
\begin{equation}\label{Gtrivp}
\begin{split}
G_{\mathring L\mathring L}^\gamma\slashed{\triangle}\varphi_\gamma
=&\slashed{\nabla}^X\bigl(G_{\mathring L\mathring L}^\gamma
\slashed d_X\varphi_\gamma\bigr)
-\bigl(\slashed{\nabla}^XG_{\mathring L\mathring L}^\gamma\bigr)
\slashed d_X\varphi_\gamma\\
=&G_{\mathring L\mathring L}^0\slashed d_Xx^i
\slashed d^X\mathring L\varphi_i
-\bigl(G_{\mathring L\mathring L}^\gamma\mathring L_\gamma\bigr)
\tilde T^i\slashed{\triangle}\varphi_i
-g_{\gamma j}G_{\mathring L\mathring L}^\gamma
\bigl(\mathring L^\alpha+2g^{0\alpha}\bigr)
\slashed d_Xx^j\slashed d^X\mathring L\varphi_\alpha\\
&+\slashed\nabla^X\bigl(G_{\mathring L\mathring L}^0
\slashed d_Xx^i\bigr)\mathring L\varphi_i
-\slashed\nabla^X\bigl(G_{\mathring L\mathring L}^\gamma
\mathring L_\gamma\tilde T^i\bigr)\slashed d_X\varphi_i
-\bigl(\slashed\nabla^XG_{\mathring L\mathring L}^\gamma\bigr)
\slashed d_X\varphi_\gamma\\
&-\slashed\nabla^X\bigl(g_{\gamma j}
G_{\mathring L\mathring L}^\gamma
\bigl(\mathring L^\alpha+2g^{0\alpha}\bigr)
\slashed d_Xx^j\bigr)\mathring L\varphi_\alpha.
\end{split}
\end{equation}
Set
$
E_0=-G_{X\mathring L}^\gamma\slashed d^X\varphi_\gamma
+\frac{1}{2}G_{\mathring L\mathring L}^0
\slashed d_Xx^i\slashed d^X\varphi_i
-\frac{1}{2}g_{\gamma j}G_{\mathring L\mathring L}^\gamma
\bigl(\mathring L^\alpha+2g^{0\alpha}\bigr)
\slashed d_Xx^j\slashed d^X\varphi_\alpha$.
Then $\operatorname{tr}\check{\mathscr X}-E_0$
satisfies the transport equation
\begin{equation}\label{transport}
\begin{split}
&\mathring L\bigl(\varrho^2
\bigl(\operatorname{tr}\check{\mathscr X}-E_0\bigr)\bigr)\\
=&-\frac{1}{2}\varrho^2
\bigl(G_{\mathring L\mathring L}^\gamma\mathring L_\gamma\bigr)
\tilde T^i\slashed\triangle\varphi_i
-\varrho^2\bigl(\operatorname{tr}\check{\mathscr X}\bigr)^2
+\frac{1}{2}\varrho^2\slashed\nabla^X
\bigl(G_{\mathring L\mathring L}^0\slashed d_Xx^i\bigr)
\mathring L\varphi_i
-\frac{1}{2}\varrho^2\slashed\nabla^X
\bigl(G_{\mathring L\mathring L}^\gamma
\mathring L_\gamma\tilde T^i\bigr)\slashed d_X\varphi_i\\
&-\bigl(
\frac{1}{2}G_{\mathring L\mathring L}^\alpha\mathring L\varphi_\alpha
+G_{\tilde T\mathring L}^\alpha\mathring L\varphi_\alpha
-G_{X\mathring L}^\alpha\slashed d^X\varphi_\alpha
+\slashed g^{XX}G_{XX}^\alpha\mathring L\varphi_\alpha
\bigr)
\bigl(\varrho^2\operatorname{tr}\check{\mathscr X}+\varrho\bigr)\\
&-\frac{1}{2}\varrho^2
\bigl(\slashed\nabla^XG_{\mathring L\mathring L}^\gamma\bigr)
\slashed d_X\varphi_\gamma
-\frac{1}{2}\varrho^2\slashed\nabla^X
\bigl(g_{\gamma j}G_{\mathring L\mathring L}^\gamma
\bigl(\mathring L^\alpha+2g^{0\alpha}\bigr)
\slashed d_Xx^j\bigr)\mathring L\varphi_\alpha\\
&+\mathring L\bigl(\varrho^2G_{X\mathring L}^\gamma
\slashed g^{XX}\bigr)\slashed d_X\varphi_\gamma
-\frac{1}{2}\mathring L
\bigl(\varrho^2G_{\mathring L\mathring L}^0
\slashed d^Xx^i\bigr)\slashed d_X\varphi_i+\frac{1}{2}\mathring L
\bigl(\varrho^2g_{\gamma j}G_{\mathring L\mathring L}^\gamma
\bigl(\mathring L^\alpha+2g^{0\alpha}\bigr)
\slashed d^Xx^j\bigr)\slashed d_X\varphi_\alpha\\
&+\varrho^2f\bigl(\varphi,\slashed dx,\mathring L^1,\mathring L^2\bigr)
\begin{pmatrix}
\slashed d(\vp^{k-1})\\
\vp^{2k-2}\slashed d\varphi
\end{pmatrix}
\begin{pmatrix}
\mathring L\varphi\\
\slashed d\varphi
\end{pmatrix}.
\end{split}
\end{equation}
\end{lemma}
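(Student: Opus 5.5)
The proof has two parts: an algebraic rewriting of $G_{\mathring L\mathring L}^\gamma\slashed\triangle\varphi_\gamma$ that trades $\slashed\triangle\varphi_\gamma$ for $\slashed d\mathring L\varphi$ plus terms with the good factor $G^\gamma_{\mathring L\mathring L}\mathring L_\gamma$, and then moving the total $\mathring L$-derivatives into the corrector $E_0$ in \eqref{Lchi'}.

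\textbf{Step 1: the first line of \eqref{Gtrivp}.} This is just the Leibniz rule for $\slashed\nabla^X$ on $S_{s,u}$ applied to $G^\gamma_{\mathring L\mathring L}\slashed d_X\varphi_\gamma$.

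\textbf{Step 2: the pointwise identity for $G^\gamma_{\mathring L\mathring L}\slashed d_X\varphi_\gamma$.} Since $\varphi_\gamma=\p_\gamma\phi$, mixed partials commute, so $\slashed d_X\varphi_\gamma=(\slashed d_Xx^\beta)\p_\beta\varphi_\gamma=(\slashed d_Xx^\beta)\p_\gamma\varphi_\beta$. I then expand $\p_\gamma$ using \eqref{pal}: $\p_\gamma=\delta^0_\gamma\mathring L-\mu^{-1}\mathring L_\gamma T+g_{\gamma i}(\slashed d^Xx^i)X$. Because $\slashed d_Xx^0=0$, only $\beta=i$ survives. Contracting with $G^\gamma_{\mathring L\mathring L}$ gives three pieces:
\begin{itemize}
\item $\gamma=0$ gives $G^0_{\mathring L\mathring L}\slashed d_Xx^i\,\mathring L\varphi_i$;
\item the $T$-part gives $-\mu^{-1}(G^\gamma_{\mathring L\mathring L}\mathring L_\gamma)\slashed d_Xx^i\,T\varphi_i$;
\item the angular part gives $g_{\gamma j}G^\gamma_{\mathring L\mathring L}\slashed d^Xx^j\,\slashed d_Xx^i\,X\varphi_i$.
\end{itemize}
To avoid $\mu^{-1}$, I will not keep the $T$-part in this form. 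Instead I rewrite $\slashed d_Xx^i\,\p_i=X-\dots$ and use the frame relations $\tilde T^i=-g^{0i}-\mathring L^i$ and $\tilde T^i\varphi_i=-g^{0i}\varphi_i-\mathring L\phi+\varphi_0$ (the latter already used in Prop. \ref{Prop.5.1}). With these, each remaining contraction is either
\begin{itemize}
\item of the form $(G^\gamma_{\mathring L\mathring L}\mathring L_\gamma)\tilde T^i\slashed d_X\varphi_i$, or
\item of the form $g_{\gamma j}G^\gamma_{\mathring L\mathring L}(\mathring L^\alpha+2g^{0\alpha})\slashed d_Xx^j\,\mathring L\varphi_\alpha$, where $\mathring L^\alpha+2g^{0\alpha}$ comes from $-\tilde T^\alpha-g^{0\alpha}$ and the exchange symmetry $\p_\alpha\varphi_j=\p_j\varphi_\alpha$.
\end{itemize}
The target identity is
\[
G^\gamma_{\mathring L\mathring L}\slashed d_X\varphi_\gamma=G^0_{\mathring L\mathring L}\slashed d_Xx^i\mathring L\varphi_i-(G^\gamma_{\mathring L\mathring L}\mathring L_\gamma)\tilde T^i\slashed d_X\varphi_i-g_{\gamma j}G^\gamma_{\mathring L\mathring L}(\mathring L^\alpha+2g^{0\alpha})\slashed d_Xx^j\mathring L\varphi_\alpha .
\]
I will verify it carefully using \eqref{gab}, \eqref{pal} and the null relations of Lemma \ref{nullframe}. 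Applying $\slashed\nabla^X$ to this identity and expanding by Leibniz yields the second form of \eqref{Gtrivp}; here $\slashed\nabla^X\slashed d_X\varphi_i=\slashed\triangle\varphi_i$ and $\slashed\nabla^X(\mathring L\varphi)$ is written as $\slashed d^X\mathring L\varphi$.

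\textbf{Step 3: the transport equation \eqref{transport}.} I substitute \eqref{Gtrivp} into the term $\frac12\varrho^2G^\gamma_{\mathring L\mathring L}\slashed\triangle\varphi_\gamma$ of \eqref{Lchi'}. The pieces containing $\slashed d^X\mathring L\varphi$, together with $-\varrho^2G^\alpha_{X\mathring L}\slashed d^X\mathring L\varphi_\alpha$ already present in \eqref{Lchi'}, have the form $\varrho^2 A^X\slashed d_X\mathring L\varphi$. I rewrite each as $\mathring L(\varrho^2A^X\slashed d_X\varphi)-\mathring L(\varrho^2A^X)\slashed d_X\varphi$. This uses $[\mathring L,X]=0$, which holds because $X=\p_\vartheta$ in the $(s,u,\vartheta)$ coordinates and $\mathring L=\p_s$. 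The total derivatives are exactly $\mathring L(\varrho^2E_0)$, which fixes the coefficients in $E_0$, and moving them to the left-hand side produces the remaining $\mathring L(\varrho^2\cdots)\slashed d\varphi$ terms in \eqref{transport}. The other terms from \eqref{Lchi'} and \eqref{Gtrivp} carry over unchanged.

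\textbf{Main obstacle.} The delicate point is Step 2: confirming the exact coefficients and signs, especially the combination $\mathring L^\alpha+2g^{0\alpha}$, and checking that no $\mu^{-1}T\varphi$ term remains. The absence of $\mu^{-1}$ is the whole point of the decomposition, so I would double-check that identity componentwise in the frame $\{\mathring L,T,X\}$.
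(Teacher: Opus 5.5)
\textbf{Gap in Step 2.} Frame relations and the symmetry $\p_\alpha\varphi_j=\p_j\varphi_\alpha$ alone cannot produce your target identity. After expanding $\p_\gamma$ by \eqref{pal}, the angular piece is $g_{\gamma j}G^\gamma_{\mathring L\mathring L}\slashed d_Xx^j\bigl(\slashed d^Xx^i\slashed d_X\varphi_i\bigr)$; in one dimension the $X$-indices can be rearranged. Here $\slashed d^Xx^i\slashed d_X\varphi_i=\slashed g^{XX}\slashed d_Xx^i\slashed d_Xx^j\p_i\p_j\phi$ is the angular--angular component of the Hessian. For a general function it is independent of the $\mathring L\mathring L$ and $\mathring L\tilde T$ components.

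In fact, once the $T$-piece is handled (see below), the two sides of your target identity differ by exactly $g_{\gamma j}G^\gamma_{\mathring L\mathring L}\slashed d_Xx^j\,g^{\alpha\beta}\p_\alpha\varphi_\beta$. So the identity holds only for solutions, and the missing ingredient is the equation \eqref{quasi} itself; this is why the lemma is stated for solutions. Contract $\p_\alpha\varphi_\beta$ with the frame expression \eqref{gab}, and use $g^{\alpha\beta}\p_\alpha\varphi_\beta=0$, the Hessian symmetry and $\tilde T^\alpha=-g^{0\alpha}-\mathring L^\alpha$. This gives
\[
0=-\mathring L^\alpha\mathring L\varphi_\alpha-2\tilde T^\alpha\mathring L\varphi_\alpha+\slashed d^Xx^i\slashed d_X\varphi_i,
\qquad\text{i.e.}\qquad
\slashed d^Xx^i\slashed d_X\varphi_i=-\bigl(\mathring L^\alpha+2g^{0\alpha}\bigr)\mathring L\varphi_\alpha .
\]
Inserting this into the angular piece yields exactly the third term of your target identity; this is the paper's argument. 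Your stated source for the coefficient is also wrong: $-\tilde T^\alpha-g^{0\alpha}=\mathring L^\alpha$. The correct source is $\mathring L^\alpha+2g^{0\alpha}=-\mathring L^\alpha-2\tilde T^\alpha$, coming from \eqref{gab} through the equation.

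\textbf{The $T$-piece.} This needs no detour through $\tilde T^i\varphi_i=-g^{0i}\varphi_i-\mathring L\phi+\varphi_0$, a zeroth-order relation that plays no role here. By the Hessian symmetry, $\tilde T^0=0$ and $T=\mu\tilde T$, one has $\slashed d_Xx^i\,T\varphi_i=T^\beta\slashed d_X\varphi_\beta=\mu\tilde T^i\slashed d_X\varphi_i$. Hence $-\mu^{-1}\bigl(G^\gamma_{\mathring L\mathring L}\mathring L_\gamma\bigr)\slashed d_Xx^iT\varphi_i=-\bigl(G^\gamma_{\mathring L\mathring L}\mathring L_\gamma\bigr)\tilde T^i\slashed d_X\varphi_i$ exactly, and no $\mu^{-1}$ survives.

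With these two facts Step 2 closes. Your Steps 1 and 3 are correct and coincide with the paper: Leibniz for $\slashed\nabla^X$, then $[\mathring L,X]=0$ to collect the $\slashed d\mathring L\varphi$ terms into $\mathring L(\varrho^2E_0)$.
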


\begin{remark}\label{DXY-4}
It is pointed out that the introduction of the corrector $E_0$ is motivated from \eqref{Gtrivp} and \eqref{Lchi'}.
In addition, the coefficient of the  second-order angular derivative $\slashed\triangle\varphi_i$
in \eqref{transport} contains the crucial factor
$G_{\mathring L\mathring L}^{\gamma}\mathring L_{\gamma}$,
which is regular due to the null condition (see \eqref{gLLL}).
\end{remark}

\begin{proof}
It follows from \eqref{gab} and \eqref{quasi} that
\[
\slashed d^Xx^i\slashed d_X\varphi_i
=-\bigl(\mathring L^\alpha+2g^{0\alpha}\bigr)
\mathring L\varphi_\alpha.
\]
Together with \eqref{pal}, this implies
\begin{equation*}
\begin{split}
G_{\mathring L\mathring L}^\gamma\slashed d_X\varphi_\gamma
=&G_{\mathring L\mathring L}^\gamma\slashed d_Xx^i
\bigl(\delta_\gamma^0\mathring L\varphi_i
-\mu^{-1}\mathring L_\gamma T\varphi_i
+g_{\gamma j}\slashed d^Xx^j\slashed d_X\varphi_i\bigr)\\
=&G_{\mathring L\mathring L}^0
\bigl(\mathring L\varphi_i\slashed d_Xx^i\bigr)
-\bigl(G_{\mathring L\mathring L}^\gamma\mathring L_\gamma\bigr)
\tilde T^i\slashed d_X\varphi_i
-g_{\gamma j}G_{\mathring L\mathring L}^\gamma
\bigl(\mathring L^\alpha+2g^{0\alpha}\bigr)
\mathring L\varphi_\alpha\slashed d_Xx^j.
\end{split}
\end{equation*}
Taking the angular divergence of this identity and applying the chain
rule yield \eqref{Gtrivp}. Substituting \eqref{Gtrivp} into
\eqref{Lchi'} together with the chain rule gives
\eqref{transport}.
\end{proof}

We next use the transport equation \eqref{transport}  to establish some preliminary $L^2$
estimates.

\begin{proposition}\label{EstimatesOnR}
Under the bootstrap assumptions \eqref{bootstrap-assumptions}, for each integer $m$ with
$0\leq m\leq 2N-6$, the following estimates hold for small $\delta>0$:
\begin{equation}\label{RmXE}
\begin{split}
\|R^m\bigl(\operatorname{tr}\check{\mathscr X}-E_0\bigr)\|_{s,u}
\lesssim&\ \delta^{k(1-\varepsilon_0)+1/2}
+\delta^{k(1-\varepsilon_0)}\Xi_{m+2}^1(s,u),
\end{split}
\end{equation}
\begin{equation}\label{RmcX}
\begin{split}
&\|R^m\operatorname{tr}\check{\mathscr X}\|_{s,u}
+\|R^{m+1}\check L^i\|_{s,u}
+\|R^{m+1}\upsilon\|_{s,u}
+\|R^{m+1}\check\varrho\|_{s,u}
+\bigl\|\slashed{\mathcal L}_R^{m}{}^{(R)}\slashed{\pi}\bigr\|_{s,u}\\
\lesssim&\ \delta^{k(1-\varepsilon_0)+1/2}
+\delta^{k(1-\varepsilon_0)}\Xi_{m+2}^1(s,u)
\end{split}
\end{equation}
and
\begin{equation}\label{RmPiL}
\begin{split}
\bigl\|\slashed{\mathcal L}_R^{m}{}^{(R)}
\slashed{\pi}_{\mathring L}\bigr\|_{s,u}
\lesssim&\ \bigl(1+\delta^{2k(1-\varepsilon_0)-\varepsilon_0}\bigr)
\bigl(\delta^{k(1-\varepsilon_0)+1/2}
+\delta^{k(1-\varepsilon_0)}\Xi_{m+2}^1(s,u)\bigr),
\end{split}
\end{equation}
where $E_0$ is defined in Lemma~\ref{lem:null-transport} and
\[
\begin{aligned}
\Xi_{m+2}^1(s,u)
={}&\int_{t_0}^s
\bigl\{\mu_{\min}^{-1/2}(\tau)
\sqrt{\mathcal E_{1,\leq m+2}(\tau,u)}
+\delta^{k(1-\varepsilon_0)}
\sqrt{\mathcal E_{2,\leq m+2}(\tau,u)}\bigr\}\,d\tau+\delta^{\varepsilon_0}
\sum_{j=1}^2\sqrt{\mathcal E_{j,\leq m+2}(s,u)}.
\end{aligned}
\]
\end{proposition}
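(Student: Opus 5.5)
The plan is to commute the transport identity \eqref{transport} of Lemma~\ref{lem:null-transport} with $R^m$, integrate along the integral curves of $\mathring L$, and take $L^2(\Sigma_s^u)$ norms with Lemma~\ref{L2L}. Since $\varrho^2(\operatorname{tr}\check{\mathscr X}-E_0)$ is $O(\delta^{k(1-\varepsilon_0)})$ pointwise on $\Sigma_{t_0}$ by \eqref{errorv-0} and the initial bounds of Theorem~\ref{Local}, and $|\Sigma_{t_0}^u|\lesssim\delta$, the initial contribution is $\lesssim\delta^{k(1-\varepsilon_0)+1/2}$. This produces the first term in \eqref{RmXE}.

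For the right-hand side of $\mathring L\bigl(\varrho^2 R^m(\operatorname{tr}\check{\mathscr X}-E_0)\bigr)$, I will use the commutator $[\mathring L,R^m]$, which is controlled through $[\mathring L,R]={}^{(R)}\slashed\pi_{\mathring L}{}^XX$ and Proposition~\ref{prop:higher-order-L-infty-estimates}. The terms are then sorted by the top-order factor they contain.

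\begin{enumerate}
\item Terms with $R^m$ falling on $\operatorname{tr}\check{\mathscr X}$, $\check L^i$, $\upsilon$ or ${}^{(R)}\slashed\pi$ carry coefficients of size $\delta^{2k(1-\varepsilon_0)-\varepsilon_0}$, by \eqref{eq:higher-order-L-infty-estimates} and the bootstrap assumptions. These will be absorbed by Gronwall.
\item Terms like $(G^\gamma_{\mathring L\mathring L}\mathring L_\gamma)\tilde T^iR^m\slashed\triangle\varphi_i$ are only second order in angular derivatives. The null structure \eqref{gLLL} gives the coefficient the size $\delta^{k(1-\varepsilon_0)}$. Using $|R^m\slashed\triangle\varphi|\lesssim|\slashed d R^{\le m+1}\varphi|$ via Corollary~\ref{cor:12form}, these are bounded by $\delta^{k(1-\varepsilon_0)}\mu_{\min}^{-1/2}\sqrt{\mathcal E_{1,\le m+2}}$.
\item Terms of the form $\varphi^{k-1}R^{m+1}\mathring L\varphi$ in the $\mathring L(\cdots)$ and $\slashed\nabla(\cdots)$ pieces are estimated with $\sqrt{\mathcal E_{1}}$. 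Their coefficients include the factor $\delta^{(k-1)(1-\varepsilon_0)}$ from $\varphi^{k-1}$ and a $\delta^{1-\varepsilon_0}$ from the lower factor.
\item Terms where $\mathring L$ hits $\slashed dx$ or $\mathring L^\alpha$, and terms involving $T\varphi$ through $\mathring L^\alpha+2g^{0\alpha}$, are where $\sqrt{\mathcal E_2}$ enters. Undifferentiated lower-order norms of $\varphi$ are converted to energies with Lemma~\ref{L2T}, which gives the extra $\delta^{k(1-\varepsilon_0)}\sqrt{\mathcal E_{2}}$ inside the time integral.
\end{enumerate}

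The pointwise (non-integrated) piece $\delta^{\varepsilon_0}\sum_j\sqrt{\mathcal E_{j,\le m+2}(s,u)}$ comes from estimating $R^mE_0$ itself in \eqref{RmcX}. It also comes from boundary terms, if I integrate by parts in $\tau$ for the $\mathring L(\varrho^2G\ldots)\slashed d\varphi$ terms. $E_0$ is a product of $G\sim\varphi^{k-1}$ with $\slashed d\varphi$, so $\|R^mE_0\|_{s,u}\lesssim\delta^{(k-1)(1-\varepsilon_0)}\|\slashed dR^{\le m}\varphi\|_{s,u}$. This norm is bounded by $\sqrt{\mathcal E_1}$ times $\mu^{-1/2}$. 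To avoid the $\mu^{-1/2}$, I would instead use \eqref{SiSi}, which yields $\delta\sqrt{\mathcal E_1+\mathcal E_2}$. I expect the combined gain to be at least $\delta^{k(1-\varepsilon_0)+\varepsilon_0}$.

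Collecting the pieces, Gronwall on $[t_0,s]$ gives \eqref{RmXE}, since the $(\operatorname{tr}\check{\mathscr X})^2$ term is harmless given its $L^\infty$ bound. Adding back $R^mE_0$ gives the $\operatorname{tr}\check{\mathscr X}$ bound in \eqref{RmcX}. The remaining quantities follow from there:
\begin{itemize}
\item $R^{m+1}\check L^i$ from \eqref{deL} together with \eqref{eq:1-f};
\item $R^{m+1}\check T^i$, $R^{m+1}\upsilon$ and $R^{m+1}\check\varrho$ from their algebraic expressions (\eqref{omega}, \eqref{rrho}) in terms of $\check L$ and $g$;
\item $\slashed{\mathcal L}_R^m{}^{(R)}\slashed\pi$ and \eqref{RmPiL} from the formulas \eqref{Rpi}, with the extra factor $1+\delta^{2k(1-\varepsilon_0)-\varepsilon_0}$ coming from the $\upsilon G\,\mathring L\varphi$ terms.
\end{itemize}
To close the coupled system, I will run a simultaneous induction on $m$.

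The main obstacle is the top-order bookkeeping. The $\slashed\triangle\varphi$ term must be handled without losing a derivative or a $\mu^{-1}$. This is exactly what the decomposition \eqref{Gtrivp} and the factor $G_{\mathring L\mathring L}^\gamma\mathring L_\gamma$ make possible: only $\mu^{-1/2}\sqrt{\mathcal E_1}$ appears, not $\mu^{-1}$. The other delicate point is verifying that every $\mathring L\varphi$ factor, which is large by $\delta^{-\varepsilon_k}$, is paired with enough powers of $\varphi$. I will check this term by term against the orders in \eqref{bootstrap-assumptions}.
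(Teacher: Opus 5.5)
Your overall architecture matches the paper's: commute \eqref{transport} with $R^m$, integrate along $\mathring L$ using Lemma~\ref{L2L} and Gronwall, add back $R^mE_0$, and recover $\check L^i,\upsilon,\check\varrho,{}^{(R)}\slashed\pi,{}^{(R)}\slashed\pi_{\mathring L}$ from \eqref{deL}, \eqref{omega}, \eqref{rrho}, \eqref{Rpi}.

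\textbf{The gap is in your item~3.} After the corrector is subtracted, the right-hand side of \eqref{transport} contains no $\slashed d\mathring L\varphi$ term. So $R^m$ of it produces no $R^{m+1}\mathring L\varphi$. Those top-order terms carried only the coefficient $G\sim\varphi^{k-1}$, with no extra $\delta^{1-\varepsilon_0}$, and they are exactly what $E_0$ was built to remove.

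What remains are factors of order $m+1$, namely $R^m\mathring L\varphi$ and $R^m\slashed d\varphi\approx\slashed dR^m\varphi$. Their coefficients are of three kinds:
\begin{itemize}
\item $G\varrho\sim\delta^{(k-1)(1-\varepsilon_0)}$, from the factor $(\cdots)(\varrho^2\operatorname{tr}\check{\mathscr X}+\varrho)$. Here the companion factor is $\varrho$, not $\delta^{1-\varepsilon_0}$.
\item $\partial_\varphi G\,\slashed d\varphi\sim\delta^{(k-1)(1-\varepsilon_0)}$, from the $\mathring L(\cdots)$ and $\slashed\nabla(\cdots)$ pieces, since $\partial_\varphi G\sim\varphi^{k-2}$.
\item $\partial_\varphi G\,\mathring L\varphi\sim\delta^{(k-1)(1-\varepsilon_0)-\varepsilon_k}$, from the same pieces.
\end{itemize}
Bounding these ``with $\sqrt{\mathcal E_1}$'', i.e.\ by $\mu_{\min}^{-1/2}\sqrt{\mathcal E_{1,\le m+2}}$, gives at best $\delta^{(k-1)(1-\varepsilon_0)}\mu_{\min}^{-1/2}\sqrt{\mathcal E_{1,\le m+2}}$. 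This misses the target $\delta^{k(1-\varepsilon_0)}\Xi^1_{m+2}$ by a factor $\delta^{-(1-\varepsilon_0)}$, and by $\delta^{-(1-\varepsilon_0)-\varepsilon_k}$ for the third kind. Pairing $\mathring L\varphi$ with powers of $\varphi$, as your last paragraph proposes, cannot recover this loss.

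\textbf{The missing step} is to apply Lemma~\ref{L2T} to all of these factors, since they sit one derivative below the top order controlled by $\mathcal E_{j,\le m+2}$. Up to commutators,
\[
\|R^m\slashed d\varphi\|_{s,u}\lesssim\delta\sum_{j=1}^2\sqrt{\mathcal E_{j,\le m+2}(s,u)},\qquad
\|R^m\mathring L\varphi\|_{s,u}\lesssim\delta^{1-\varepsilon_k}\sum_{j=1}^2\sqrt{\mathcal E_{j,\le m+2}(s,u)},
\]
where the $\delta^{-\varepsilon_k}$ is the weight carried by $\varrho\mathring L$ in the energies. Since $(k-1)(1-\varepsilon_0)+1-\varepsilon_k=2k(1-\varepsilon_0)$, every such term is bounded by $\delta^{2k(1-\varepsilon_0)}\bigl(\sqrt{\mathcal E_{1,\le m+2}}+\sqrt{\mathcal E_{2,\le m+2}}\bigr)$, with no $\mu_{\min}^{-1/2}$. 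This is exactly the paper's bound.

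This mechanism, not a $T\varphi$ term as your item~4 suggests, is where the $\delta^{k(1-\varepsilon_0)}\sqrt{\mathcal E_{2,\le m+2}}$ inside $\Xi^1_{m+2}$ comes from. The only genuinely top-order $\varphi$-factor is $R^m\slashed\triangle\varphi$. It alone needs $\mu_{\min}^{-1/2}\sqrt{\mathcal E_{1,\le m+2}}$, and that is affordable only because its coefficient is $G^\gamma_{\mathring L\mathring L}\mathring L_\gamma=O(\delta^{k(1-\varepsilon_0)})$, as you correctly say.

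\textbf{Order of the argument.} Make explicit that $R^{m+1}\check L^i$, $R^{m+1}\upsilon$ and $\slashed{\mathcal L}_R^{\le m}{}^{(R)}\slashed\pi_{\mathring L}$ are at the same level as $R^m\operatorname{tr}\check{\mathscr X}$. They appear on the right-hand side of the transport estimate through the commutator, $\slashed d\mathring L_\gamma$ and $\slashed\triangle x$. They must therefore first be bounded by $\|R^{\le m}\operatorname{tr}\check{\mathscr X}\|_{s,u}$ via \eqref{deL}, \eqref{omega}, \eqref{Rpi}, as in the paper's \eqref{RmLU}--\eqref{RmpiL}, and only then absorbed by Gronwall. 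An induction on $m$ alone does not close this step.
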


\begin{proof}
Applying \eqref{SiSi} directly yields
\begin{equation}\label{lowerL}
\begin{split}
\|R^{\leq N-4}\check L^i\|_{s,u}
&\lesssim\delta\|TR^{\leq N-4}\check L^i\|_{s,u}\lesssim\delta\|R^{\leq N-4}T\check L^i\|_{s,u}
+\delta\bigl\|\bigl[T,R^{\leq N-4}\bigr]\check L^i\bigr\|_{s,u}.
\end{split}
\end{equation}
Combining \eqref{lowerL} with \eqref{TL} and
Proposition~\ref{prop:higher-order-L-infty-estimates},
and then applying \eqref{SiSi} again, we arrive at
\begin{equation}\label{YHC-A9}
\begin{split}
\|R^{\leq N-4}\check L^i\|_{s,u}
\lesssim&\ \delta^{3/2}
+\delta^{2k(1-\varepsilon_0)}
\|R^{\leq N-4}\check L^i\|_{s,u}+\delta^{k(1-\varepsilon_0)+\varepsilon_0}
\sum_{j=1}^2\sqrt{\mathcal E_{j,\leq N-2}(s,u)}.
\end{split}
\end{equation}
For small $\delta>0$, the second term on the right-hand
side of \eqref{YHC-A9} can be absorbed into the left-hand side,
which gives
\begin{equation}\label{RNL}
\|R^{\leq N-4}\check L^i\|_{s,u}
\lesssim\delta^{3/2}
+\delta^{k(1-\varepsilon_0)+\varepsilon_0}
\sum_{j=1}^2\sqrt{\mathcal E_{j,\leq N-2}(s,u)}.
\end{equation}
Using \eqref{omega}, one obtains the corresponding estimate for $\upsilon$:
\begin{equation}\label{RNU}
\|R^{\leq N-4}\upsilon\|_{s,u}
\lesssim\delta^{3/2}
+\delta^{k(1-\varepsilon_0)+\varepsilon_0}
\sum_{j=1}^2\sqrt{\mathcal E_{j,\leq N-2}(s,u)}.
\end{equation}

We next deal with the related higher-order derivative estimates.
For $m\geq N-4$, applying \eqref{deL} to
$R^{m+1}\check L^i=R^m\bigl(R^X\slashed d_X\check L^i\bigr)$ yields
\begin{equation}\label{RML}
\begin{split}
\|R^{m+1}\check L^i\|_{s,u}
\lesssim&\ \delta^{k(1-\varepsilon_0)}
\bigl(\sum_{j=1}^2\|R^{\leq m}\check L^j\|_{s,u}
+\|R^{\leq m}\upsilon\|_{s,u}\bigr)
+\|R^{\leq m}\operatorname{tr}\check{\mathscr X}\|_{s,u}\\
&+\delta^{(k-1)(1-\varepsilon_0)}
\|R^{\leq m+1}\varphi\|_{s,u}.
\end{split}
\end{equation}
Note that the term $\|R^{\leq m+1}x^i\|_{s,u}$ does not appear
explicitly in \eqref{RML} since we have used
\[
R^{m+1}x^i
=R^m\Omega^i
+R^m\bigl(-\upsilon g^{0i}
-\upsilon\check L^i-\upsilon\frac{x^i}{\varrho}\bigr)
\]
to derive
\[
|R^{m+1}x^i|
\lesssim 1+|R^{\leq m}\upsilon|
+\delta^{k(1-\varepsilon_0)}|R^{\leq m}\check L^i|
+\delta^{(2k-1)(1-\varepsilon_0)}|R^{\leq m}\varphi|.
\]
Thus, for any smooth function $h$, one has
\begin{equation*}
\begin{split}
\|R^{m+1}(hx^i)\|_{s,u}
\lesssim&\ \|R^{\leq m+1}h\|_{s,u}
+\|R^{\leq\lfloor m/2\rfloor}h\|_{L^\infty}
\bigl(\|R^{\leq m}\upsilon\|_{s,u}
+\delta^{k(1-\varepsilon_0)}
\|R^{\leq m}\check L^i\|_{s,u}\\
&+\delta^{(2k-1)(1-\varepsilon_0)}
\|R^{\leq m}\varphi\|_{s,u}\bigr).
\end{split}
\end{equation*}
By induction method, it follows from \eqref{RML}, \eqref{RNL} and
\eqref{RNU} that
\begin{equation}\label{RmL}
\begin{split}
\|R^{m+1}\check L^i\|_{s,u}
\lesssim&\ \delta^{k(1-\varepsilon_0)+3/2}
+\delta^{k(1-\varepsilon_0)}\|R^{\leq m}\upsilon\|_{s,u}
+\|R^{\leq m}\operatorname{tr}\check{\mathscr X}\|_{s,u}\\
&+\delta^{k(1-\varepsilon_0)+\varepsilon_0}
\sum_{j=1}^2\sqrt{\mathcal E_{j,\leq m+2}(s,u)}.
\end{split}
\end{equation}
Similarly, using \eqref{omega} and \eqref{RNU}, we have
\begin{equation}\label{RmU}
\begin{split}
\|R^{m+1}\upsilon\|_{s,u}
\lesssim&\ \delta^{k(1-\varepsilon_0)+3/2}
+\sum_{j=1}^2\|R^{\leq m+1}\check L^j\|_{s,u}
+\|R^{\leq m}\operatorname{tr}\check{\mathscr X}\|_{s,u}\\
&+\delta^{k(1-\varepsilon_0)+\varepsilon_0}
\sum_{j=1}^2\sqrt{\mathcal E_{j,\leq m+2}(s,u)}.
\end{split}
\end{equation}
It follows from \eqref{RmL}, \eqref{RmU} and the induction argument that
\begin{equation}\label{RmLU}
\begin{split}
\|R^{m+1}\check L^i\|_{s,u}
+\|R^{m+1}\upsilon\|_{s,u}
\lesssim&\ \delta^{k(1-\varepsilon_0)+3/2}
+\|R^{\leq m}\operatorname{tr}\check{\mathscr X}\|_{s,u}+\delta^{k(1-\varepsilon_0)+\varepsilon_0}
\sum_{j=1}^2\sqrt{\mathcal E_{j,\leq m+2}(s,u)}.
\end{split}
\end{equation}
By \eqref{Rpi}, \eqref{rrho} and \eqref{RmLU}, we conclude that
\begin{equation}\label{Rmrpi}
\begin{split}
\|R^{m+1}\check\varrho\|_{s,u}
+\bigl\|\slashed{\mathcal L}_R^{m}{}^{(R)}
\slashed{\pi}\bigr\|_{s,u}
\lesssim&\ \delta^{k(1-\varepsilon_0)+3/2}
+\|R^{\leq m}\operatorname{tr}\check{\mathscr X}\|_{s,u}+\delta^{k(1-\varepsilon_0)+\varepsilon_0}
\sum_{j=1}^2\sqrt{\mathcal E_{j,\leq m+2}(s,u)}
\end{split}
\end{equation}
and
\begin{equation}\label{RmpiL}
\begin{split}
&\bigl\|\slashed{\mathcal L}_R^{m}{}^{(R)}
\slashed{\pi}_{\mathring L}\bigr\|_{s,u}\\
\lesssim &
\bigl(1+\delta^{2k(1-\varepsilon_0)-\varepsilon_0}\bigr)
\bigl(\delta^{k(1-\varepsilon_0)+3/2}
+\|R^{\leq m}\operatorname{tr}\check{\mathscr X}\|_{s,u}
+\delta^{k(1-\varepsilon_0)+\varepsilon_0}
\sum_{j=1}^2\sqrt{\mathcal E_{j,\leq m+2}(s,u)}\bigr).
\end{split}
\end{equation}

Next, we estimate
$\|R^{\leq m}\operatorname{tr}\check{\mathscr X}\|_{s,u}$.
In this process, the transport equation \eqref{transport} in
Lemma~\ref{lem:null-transport} will be used.
Note that the first term on the right-hand side of \eqref{transport}
contains the factor
$G_{\mathring L\mathring L}^\gamma\mathring L_\gamma$.
By \eqref{gLLL} and
Proposition~\ref{prop:bootstrap-higher-derivatives}, we have
$\bigl|R^{\leq N-3}
\bigl(G_{\mathring L\mathring L}^\gamma\mathring L_\gamma\bigr)\bigr|
\lesssim\delta^{k(1-\varepsilon_0)}$.
Consequently,
\begin{equation}\label{RGLLLvp}
\begin{split}
&\bigl\|R^m\bigl(\varrho^2
\bigl(G_{\mathring L\mathring L}^\gamma\mathring L_\gamma\bigr)
\tilde T^i\slashed\triangle\varphi_i\bigr)\bigr\|_{s,u}\\
\lesssim&\ \delta^{k(1-\varepsilon_0)}
\bigl\{\|R^{\leq m}\slashed\triangle\varphi\|_{s,u}
+\|R^{\leq m}\upsilon\|_{s,u}
+\sum_{i=1}^2\|R^{\leq m}\check L^i\|_{s,u}\bigr\}
+\delta^{(k-1)(1-\varepsilon_0)}
\|R^{\leq m}\varphi\|_{s,u}.
\end{split}
\end{equation}
Substituting \eqref{RGLLLvp} into \eqref{transport} and using
\eqref{RmLU}--\eqref{RmpiL} yield
\begin{align*}
&\bigl\|\mathring LR^m
\bigl(\varrho^2
\bigl(\operatorname{tr}\check{\mathscr X}-E_0\bigr)\bigr)
\bigr\|_{s,u}\\
\lesssim&
\bigl\|\bigl[\mathring L,R^m\bigr]
\bigl(\varrho^2
\bigl(\operatorname{tr}\check{\mathscr X}-E_0\bigr)\bigr)
\bigr\|_{s,u}
+\bigl\|R^m\mathring L
\bigl(\varrho^2
\bigl(\operatorname{tr}\check{\mathscr X}-E_0\bigr)\bigr)
\bigr\|_{s,u}\\
\lesssim&
\delta^{k(1-\varepsilon_0)}
\bigl(\|\slashed dR^{m+1}\varphi\|_{s,u}
+\bigl\|\slashed{\mathcal L}_R^{\leq m}
\leftidx{^{(R)}}{\slashed\pi}_{\mathring L}\bigr\|_{s,u}\bigr)
+\delta^{2k(1-\varepsilon_0)-\varepsilon_0}
\bigl\|R^{\leq m}
\bigl(\varrho^2
\bigl(\operatorname{tr}\check{\mathscr X}-E_0\bigr)\bigr)
\bigr\|_{s,u}\\
&+\delta^{2k(1-\varepsilon_0)-\varepsilon_0}
\bigl\{\bigl\|\slashed{\mathcal L}_R^{\leq m}
\leftidx{^{(R)}}{\slashed\pi}\bigr\|_{s,u}
+\sum_{i=1}^2\|R^{\leq m+1}\check L^i\|_{s,u}
+\|R^{\leq m+1}\upsilon\|_{s,u}\bigr\}\\
&+\delta^{2k(1-\varepsilon_0)}
\bigl(\sqrt{\mathcal E_{1,\leq m+2}(s,u)}
+\sqrt{\mathcal E_{2,\leq m+2}(s,u)}\bigr)\\
\lesssim&
\delta^{3k(1-\varepsilon_0)-\varepsilon_0+3/2}
+\delta^{k(1-\varepsilon_0)}
\mu_{\min}^{-1/2}(s)
\sqrt{\mathcal E_{1,\leq m+2}(s,u)}
+\delta^{2k(1-\varepsilon_0)}
\sqrt{\mathcal E_{2,\leq m+2}(s,u)}\\
&+\delta^{2k(1-\varepsilon_0)-\varepsilon_0}
\bigl\|R^{\leq m}
\bigl(\varrho^2
\bigl(\operatorname{tr}\check{\mathscr X}-E_0\bigr)\bigr)
\bigr\|_{s,u}.
\end{align*}
Summing the above inequalities over the derivative orders up to $m$,
using the initial data estimates and \eqref{Ff}, and applying Gronwall's
inequality, we can obtain \eqref{RmXE}.
In addition, by the definition of $E_0$,
$\|R^{\leq m}E_0\|_{s,u}$ may be estimated. This, together with
\eqref{RmLU} and \eqref{Rmrpi}, yields \eqref{RmcX}.
Finally, \eqref{RmPiL} follows from \eqref{RmpiL}.
\end{proof}

\begin{corollary}\label{YHC-A3}
Under the assumptions of Proposition~\ref{EstimatesOnR},
for any $s$ with $t_0\leq s<T_\delta$ and $u\in[0,4\delta]$,
the following estimates hold:
\begin{equation}\label{RMMu}
\begin{split}
\|R^{m+1}\mu\|_{s,u}
\lesssim{}&
\delta^{k(1-\varepsilon_0)+\varepsilon_0-1/2}
+\delta^{(k-1)(1-\varepsilon_0)}\Xi^2_{m+2}(s,u)
\end{split}
\end{equation}
and
\begin{equation}\label{RMPIRT}
\begin{split}
\|\slashed{\mathcal L}_R^m{}^{(R)}\slashed\pi_T\|_{s,u}
\lesssim{}&
\delta^{2k(1-\varepsilon_0)-1/2}
+\delta^{2k(1-\varepsilon_0)-1}\Xi_{m+2}^1(s,u),
\end{split}
\end{equation}
where $\Xi_{m+2}^1(s,u)$ is defined in
Proposition~\ref{EstimatesOnR} and
\[
\Xi_{m+2}^2(s,u)
=\int_{t_0}^s
\bigl\{\mu_{\min}^{-1/2}(\tau)
\sqrt{\mathcal E_{1,\leq m+2}(\tau,u)}
+\sqrt{\mathcal E_{2,\leq m+2}(\tau,u)}\bigr\}\,d\tau.
\]
\end{corollary}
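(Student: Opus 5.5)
We prove \eqref{RMMu} by commuting $R^{m+1}$ through the transport equation \eqref{lmu} for $\mu$ and integrating along $\mathring L$ with Lemma~\ref{L2L}. The estimate \eqref{RMPIRT} then follows algebraically from the expression of ${}^{(R)}\slashed\pi_{TX}$ in \eqref{Rpi}.

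\textbf{Step 1: the commuted transport equation.} Since $[\mathring L,R]={}^{(R)}\slashed\pi_{\mathring L}{}^XX$, we have
\[
\mathring L R^{m+1}\mu=R^{m+1}\mathring L\mu+\sum_{j}R^{j}\bigl({}^{(R)}\slashed\pi_{\mathring L}{}^X\slashed d_X R^{m-j}\mu\bigr).
\]
We substitute \eqref{lmu} into $R^{m+1}\mathring L\mu$ and rewrite $\tfrac12 G^\gamma_{\mathring L\mathring L}T\varphi_\gamma$ using \eqref{GT}. This replaces the dangerous factor $G_{\mathring L\mathring L}^\gamma T\varphi_\gamma$, of size $\delta^{-\varepsilon_k}$, by three pieces: $\mu G^0_{\mathring L\mathring L}\tilde T^i\mathring L\varphi_i$, $(G^\gamma_{\mathring L\mathring L}\mathring L_\gamma)\tilde T^iT\varphi_i$ and $\mu g_{\gamma i}G^\gamma_{\mathring L\mathring L}\slashed d^Xx^i\tilde T^j\slashed d_X\varphi_j$.

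\textbf{Step 2: sorting the terms.} When $R^{m+1}$ falls on the top-order factor $\varphi$:
\begin{itemize}
\item $R^{m+1}\mathring L\varphi$ and $R^{m+1}\slashed d\varphi$ are controlled in $\|\cdot\|_{\tau,u}$ by $\mu_{\min}^{-1/2}(\tau)\sqrt{\mathcal E_{1,\le m+2}}$, using \eqref{E1} and \eqref{eq:1-f}.
\item $R^{m+1}T\varphi$ is controlled by $\sqrt{\mathcal E_{2,\le m+2}}$ through $\mathring{\underline L}=\mu\mathring L+2T$.
\end{itemize}
The coefficients carry $\vp^{k-1}$ or $G^\gamma_{\mathring L\mathring L}\mathring L_\gamma$. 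By \eqref{gLLL} and Proposition~\ref{prop:bootstrap-higher-derivatives} these are $O(\delta^{(k-1)(1-\varepsilon_0)})$ or better, which produces the prefactor $\delta^{(k-1)(1-\varepsilon_0)}$ in front of $\Xi^2_{m+2}$.

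When $R^{m+1}$ falls on a geometric coefficient, that is on $\check L^i$, $\upsilon$, $\check\varrho$, ${}^{(R)}\slashed\pi$, or on $G^\gamma_{\mathring L\mathring L}\mathring L_\gamma$ through the factors $\check L$, $\check\varrho$ in \eqref{gLLL}, we invoke \eqref{RmcX} and \eqref{RmPiL}. These contribute $\delta^{k(1-\varepsilon_0)+1/2}+\delta^{k(1-\varepsilon_0)}\Xi^1_{m+2}$, multiplied by $L^\infty$ bounds of size at most $\delta^{-\varepsilon_0}$, which is the size of $T\varphi$. Since $\Xi^1_{m+2}\lesssim\Xi^2_{m+2}+\delta^{\varepsilon_0}\sum_j\sqrt{\mathcal E_{j,\le m+2}}$, these terms are absorbed into the right side of \eqref{RMMu}. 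Terms of the form $\mu\cdot(\dots)R^{m+1}\mu$ and the commutator terms $R^j\mu$ appear linearly with coefficients $O(\delta^{2k(1-\varepsilon_0)-\varepsilon_0})$.

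\textbf{Step 3: integration and Gronwall.} Lemma~\ref{L2L} gives
\[
\|R^{m+1}\mu\|_{s,u}\lesssim\|R^{m+1}\mu\|_{t_0,u}+\int_{t_0}^s\|\mathring LR^{m+1}\mu\|_{\tau,u}\,d\tau.
\]
We run Gronwall on the linear $R^{\le m+1}\mu$ terms, which is harmless because their coefficients integrate to $O(\delta^{\varepsilon_k}\cdot\delta^{2k(1-\varepsilon_0)-\varepsilon_0})\ll1$ on $s-t_0\lesssim\delta^{\varepsilon_k}$, or on a bounded interval in the critical case. For the initial term, \eqref{initial} together with $\mu|_{t_0}$ computed from the local solution gives $|R^{m+1}\mu(t_0)|\lesssim\delta^{k(1-\varepsilon_0)-1+\varepsilon_0}$ pointwise. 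Indeed $R\mu\sim\vp^k\cdot\Omega\vp$-type quantities after using the null condition. Multiplying by the area factor $\delta^{1/2}$ yields the first term of \eqref{RMMu}.

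\textbf{Step 4: the estimate \eqref{RMPIRT}.} We expand ${}^{(R)}\slashed\pi_{TX}$ via \eqref{Rpi} and apply $\slashed{\mathcal L}_R^m$. The pieces are:
\begin{itemize}
\item $\mu R^X\check{\mathscr X}_{XX}$, $\mu\epsilon g\check T\slashed dx$, and $\upsilon\cdot(\dots)$: these are handled by \eqref{RmcX}, with $L^\infty$ factors of order $\delta^{k(1-\varepsilon_0)-1}$ coming from $G T\varphi$.
\item $\upsilon\slashed d_X\mu$: this uses \eqref{RMMu} with the small factor $|\upsilon|\lesssim\delta^{k(1-\varepsilon_0)}$.
\item $\mu G R\varphi$-type terms: these are bounded by energies with coefficient $\delta^{(k-1)(1-\varepsilon_0)}$.
\end{itemize}
Collecting the largest products gives $\delta^{2k(1-\varepsilon_0)-1/2}+\delta^{2k(1-\varepsilon_0)-1}\Xi^1_{m+2}$.

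\textbf{Main obstacle.} The main difficulty is Step 2: keeping the $\delta$-weights sharp when $R^{m+1}$ hits $T\varphi$ multiplied by $\vp^k$. This is where the null structure \eqref{GT}, and not the naive bound $\delta^{-\varepsilon_k}$, must be used. It is also where we must check that the $\mu_{\min}^{-1/2}$ loss appears only inside the time integral, as in $\Xi^2_{m+2}$.
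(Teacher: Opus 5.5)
Your route is the paper's: commute $R^{m+1}$ through \eqref{lmu}, bound the pieces by Proposition~\ref{EstimatesOnR} and the energies, integrate with Lemma~\ref{L2L}, apply Gronwall, then read \eqref{RMPIRT} off \eqref{Rpi}. However, two steps fail as written.

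\textbf{Weights in Step~2.} Consider the terms where $R^{m+1}$ lands on $\check L^i,\check\varrho,\upsilon$; these are among the terms that saturate \eqref{RMMu}. You use only that their coefficient is at most $\delta^{-\varepsilon_0}$, the size of $T\varphi$. That gives $\delta^{-\varepsilon_0}\bigl(\delta^{k(1-\varepsilon_0)+1/2}+\delta^{k(1-\varepsilon_0)}\Xi^1_{m+2}\bigr)=\delta^{-\varepsilon_k}\bigl(\delta^{1/2}+\Xi^1_{m+2}\bigr)$. Integrating over $s-t_0\lesssim\delta^{\varepsilon_k}$ then yields only a bound of the form $\delta^{1/2}+\Xi^2_{m+2}$. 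You lose precisely the factor $\delta^{(k-1)(1-\varepsilon_0)}$ that the corollary asserts. You must keep the $\varphi^{k-1}$: the coefficient is $\varphi^{k-1}T\varphi=O(\delta^{k(1-\varepsilon_0)-1})$. Then $\delta^{k(1-\varepsilon_0)-1}\cdot\delta^{k(1-\varepsilon_0)}\cdot\delta^{\varepsilon_k}=\delta^{(k-1)(1-\varepsilon_0)}$ and $\delta^{k(1-\varepsilon_0)-1}\cdot\delta^{k(1-\varepsilon_0)+1/2}\cdot\delta^{\varepsilon_k}=\delta^{k(1-\varepsilon_0)+\varepsilon_0-1/2}$.

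That second product is where the first term of \eqref{RMMu} actually comes from. The initial data give only $\|R^{m+1}\mu\|_{t_0,u}\lesssim\delta^{k(1-\varepsilon_0)+1/2}$, since $\mu|_{t_0}-1=O(|\varphi|^k)$; no null condition is involved, and $R\mu$ is of type $\varphi^{k-1}\Omega\varphi$, not $\varphi^k\Omega\varphi$. The same issue appears in Step~4. There the $\mu G R\varphi$ terms fit under $\delta^{2k(1-\varepsilon_0)-1}\Xi^1_{m+2}$ only after you gain a factor $\delta$ via \eqref{SiSi}, namely $\|R^{m+1}\varphi\|_{s,u}\lesssim\delta\sum_j\sqrt{\mathcal E_{j,\leq m+2}(s,u)}$. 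A bare coefficient $\delta^{(k-1)(1-\varepsilon_0)}$ in front of the energies is too large.

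\textbf{Circular ordering.} To control the top-order $R^{m+1}T\varphi$ by the energies, you must commute to $TR^{m+1}\varphi$ before using $\mathring{\underline L}=\mu\mathring L+2T$. The commutator $[T,R]={}^{(R)}\slashed\pi_T{}^XX$ produces $\slashed{\mathcal L}_R^{\leq m}{}^{(R)}\slashed\pi_T$. By \eqref{Rpi} this contains $\upsilon\,\slashed dR^m\mu$, which is the quantity you are estimating. Yet you bound $\slashed{\mathcal L}_R^m{}^{(R)}\slashed\pi_T$ only afterwards, from \eqref{RMMu}. The paper breaks this loop in four steps. It first proves \eqref{DXY-7}, which bounds $\slashed{\mathcal L}_R^m{}^{(R)}\slashed\pi_T$ with $\delta^{k(1-\varepsilon_0)}\|R^{\leq m+1}\mu\|_{s,u}$ on the right. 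It inserts this into the inequality for $\|\mathring LR^{m+1}\mu\|_{s,u}$ and runs a single Gronwall. Only then does it substitute \eqref{RMMu} back into \eqref{DXY-7} to obtain \eqref{RMPIRT}. Your Step~3 Gronwall can absorb this coupling, since its coefficient is small, but it has to be set up this way.

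\textbf{The main obstacle is misplaced.} The paper deliberately does not use \eqref{GT} here (Remark~\ref{rem6.1}). It uses only $|G^\gamma_{\mathring L\mathring L}T\varphi_\gamma|\lesssim|\varphi|^{k-1}|T\varphi|$, which already produces exactly the prefactor $\delta^{(k-1)(1-\varepsilon_0)}$. Your decomposition is harmless, provided you use crude bounds $\mathring L\phi=\mathring L^\alpha\varphi_\alpha$ and $\slashed d\phi=\slashed dx^i\varphi_i$ for the corresponding entries of \eqref{gLLL}. But it gains nothing for this estimate. Note also that the naive size of $G^\gamma_{\mathring L\mathring L}T\varphi_\gamma$ is $\delta^{k(1-\varepsilon_0)-1}$; the size $\delta^{-\varepsilon_k}$ is what you get after using the null condition.
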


\begin{proof}
Note that even without the null condition \eqref{null}, one still has
\[
|G_{\mathring L\mathring L}^\gamma T\vp_\gamma|
\lesssim |\vp|^{k-1}|T\vp|.
\]
This, together with the estimate of
$[\mathring L,R^{m+1}]\mu$ and \eqref{lmu}, yields
\begin{equation}\label{RMmu}
\begin{split}
\|\mathring L R^{m+1}\mu\|_{s,u}
\lesssim{}&
\delta^{2k(1-\varepsilon_0)-\varepsilon_0}
\|R^{\leq m+1}\mu\|_{s,u}
+\delta^{(k-1)(1-\varepsilon_0)-1}
\|R^{\leq m+1}\vp\|_{s,u}\\
&+\delta^{(k-1)(1-\varepsilon_0)}
\bigl(\|R^{\leq m+1}T\vp\|_{s,u}
+\|\mu R^{m+1}\mathring L\vp\|_{s,u}
+\|R^{\leq m}\mathring L\vp\|_{s,u}\bigr)\\
&+\delta^{k(1-\varepsilon_0)-1}
\bigl(\sum_{i=1}^2\|R^{\leq m+1}\check L^i\|_{s,u}
+\|R^{\leq m+1}\upsilon\|_{s,u}\bigr)
+\|\slashed{\mathcal L}_R^m{}^{(R)}
\slashed\pi_{\mathring L}\|_{s,u}.
\end{split}
\end{equation}
In addition, we have
\begin{equation}\label{YHC-A2}
\begin{split}
R^{\leq m+1}T\vp
&=TR^{\leq m+1}\vp+[R^{\leq m+1},T]\vp,\\
\mu R^{m+1}\mathring L\vp
&=\mu\mathring L R^{m+1}\vp
+\mu[R^{m+1},\mathring L]\vp.
\end{split}
\end{equation}
It follows from \eqref{Rpi}, \eqref{YHC-A2}, \eqref{RmLU} and the estimates
in Proposition~\ref{EstimatesOnR} that
\begin{equation}\label{DXY-7}
\begin{split}
\|\slashed{\mathcal L}_R^m{}^{(R)}\slashed\pi_T\|_{s,u}
\lesssim{}&
\|R^{\leq m}\operatorname{tr}\check{\mathscr X}\|_{s,u}
+\delta^{k(1-\varepsilon_0)}\|R^{\leq m+1}\mu\|_{s,u}
+\delta^{k(1-\varepsilon_0)-1}\|R^{\leq m}\upsilon\|_{s,u}\\
&+\delta^{(2k-1)(1-\varepsilon_0)-1}
\|R^{\leq m+1}\vp\|_{s,u}
+\delta^{(2k-1)(1-\varepsilon_0)}
\|R^{\leq m}T\vp\|_{s,u}\\
&+\bigl(1+\delta^{2k(1-\varepsilon_0)-1}\bigr)
\sum_{i=1}^2\|R^{\leq m}\check L^i\|_{s,u}\\
\lesssim{}&
\delta^{2k(1-\varepsilon_0)-1/2}
+\delta^{2k(1-\varepsilon_0)-1}\Xi_{m+2}^1(s,u)
+\delta^{k(1-\varepsilon_0)}
\|R^{\leq m+1}\mu\|_{s,u}\\
&+\delta^{(2k-1)(1-\varepsilon_0)}
\sum_{i=1}^2\sqrt{\mathcal E_{i,\leq m+2}(s,u)}.
\end{split}
\end{equation}
Substituting \eqref{YHC-A2} into \eqref{RMmu}, using
Proposition~\ref{EstimatesOnR}, and applying \eqref{SiSi}
to treat $\|R^{\leq m}\mathring L\vp\|_{s,u}$, we arrive at
\[
\begin{split}
\|\mathring L R^{m+1}\mu\|_{s,u}
\lesssim{}&
\delta^{2k(1-\varepsilon_0)-1/2}
+\delta^{2k(1-\varepsilon_0)-1}\Xi_{m+2}^1(s,u)
+\delta^{2k(1-\varepsilon_0)-\varepsilon_0}
\|R^{\leq m+1}\mu\|_{s,u}\\
&+\delta^{(k-1)(1-\varepsilon_0)}
\sum_{i=1}^2\sqrt{\mathcal E_{i,\leq m+2}(s,u)}.
\end{split}
\]
By
$\mu|_{t_0}
=\frac{1}{\sqrt{(g^{0i}\omega_i)^2+g_{ij}\omega_i\omega_j}}$
and the initial data estimates, it holds that
$$\|R^{m+1}\mu\|_{t_0,u}
\lesssim\delta^{k(1-\varepsilon_0)+1/2}.$$
Therefore, applying the above estimates up to $(m+1)-$order derivatives, the uniform
bound of $\mu$, \eqref{Ff}, the definitions of
$\Xi_{m+2}^1$ and $\Xi_{m+2}^2$ and Gronwall's inequality,
\eqref{RMMu} can be achieved.

Finally, substituting \eqref{RMMu} into \eqref{DXY-7} yields \eqref{RMPIRT}.
\end{proof}

\begin{remark}\label{rem6.1}
Although the null condition \eqref{null} is not used directly
to estimate $G_{\mathring L\mathring L}^{\gamma}T\vp_\gamma$
in the proof of Corollary~\ref{YHC-A3}, it will be used
to establish the top-order $L^2$ estimates for
$\check{\mathscr X}$ and $\mu$ in Section~\ref{Section 6.2} below,
which leads to the closure of the corresponding energy estimates
when $\varepsilon_0\in[\varepsilon_k^*,1)$.
\end{remark}

To derive the highest-order $L^2$ estimates for $\check{\mathscr X}$
in Section~\ref{Section 6.2} when
$\varepsilon_0\in[\varepsilon_k^*,1)$,
we need to establish the following estimate.

\begin{corollary}\label{Cor6.2}
Under the assumptions of Proposition~\ref{EstimatesOnR},
the following estimate holds for sufficiently large $b_{m+2}$:
\begin{equation}\label{cortop}
\begin{split}
&\delta^{\varepsilon_k}
\bigl|\int_{\Sigma_s^u}
(R^m\operatorname{tr}\check{\mathscr X})
\mathring{\underline L}\vp_\gamma
(R^{m+2}\vp_\gamma)\bigr|\\
\lesssim{}&
\delta^{2-2\varepsilon_0}
\mu_{\min}^{-2b_{m+2}}(s)b_{m+2}
+\delta^{-(k+1)(1-\varepsilon_0)}
\int_0^u\mathcal F_{1,\leq m+2}(s,u')\,du'\\
&+\frac{\mu_{\min}^{-2b_{m+2}}(s)}{b_{m+2}}
\bigl(
\delta^{\varepsilon_k}
\widetilde{\mathcal E}_{1,\leq m+2}(s,u)
+\delta\widetilde{\mathcal E}_{2,\leq m+2}(s,u)
\bigr).
\end{split}
\end{equation}
\end{corollary}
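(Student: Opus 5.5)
The plan is to estimate the integral by Cauchy--Schwarz in the form $\|R^m\operatorname{tr}\check{\mathscr X}\|_{s,u}\,\|\mathring{\underline L}\vp_\gamma\|_{L^\infty}\,\|R^{m+2}\vp_\gamma\|_{s,u}$. The two $L^2$ factors are then controlled by Proposition~\ref{EstimatesOnR} and by the energies, respectively.

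\textbf{Step 1: the pointwise factor.} By Proposition~\ref{prop:bootstrap-higher-derivatives} and Remark~\ref{Rem4.2}, $|\mathring{\underline L}\vp_\gamma|\lesssim \delta^{-\varepsilon_0}$, with a constant independent of $M$.

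\textbf{Step 2: the top-order factor.} We have $R^{m+2}\vp_\gamma=R^X\slashed d_X R^{m+1}\vp_\gamma$, and $R$ has bounded length, so
\[
\|R^{m+2}\vp_\gamma\|_{s,u}\lesssim \|\slashed dR^{m+1}\vp_\gamma\|_{s,u}.
\]
We split the domain $\Sigma_s^u$ into two pieces.
\begin{itemize}
\item On the part where $\mu\ge 1/10$, this is bounded by $\sqrt{\mathcal E_{1,\le m+2}(s,u)}$.
\item On the region $\{\mu<1/10\}$, we cannot divide by $\mu$. Instead we write $|\slashed d\Psi|^2=\mu^{-2}\cdot\mu^2|\slashed d\Psi|^2$, which gives the bound $\mu_{\min}^{-1}\sqrt{\mathcal E_{2,\le m+2}}$.
\end{itemize}
Alternatively, the integration along $u$ of the flux $F_1$ together with Lemma~\ref{L2T} may give a better bound. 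This is presumably the source of the term $\delta^{-(k+1)(1-\varepsilon_0)}\int_0^u\mathcal F_{1,\le m+2}\,du'$: writing $\|\cdot\|^2_{s,u}=\int_0^u\int_{S_{s,u'}}$ lets one trade $\mathring L$ flux quantities on $C_{u'}$ for the $\Sigma_s$ norm.

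\textbf{Step 3: the $\operatorname{tr}\check{\mathscr X}$ factor.} By \eqref{RmcX},
\[
\|R^m\operatorname{tr}\check{\mathscr X}\|_{s,u}\lesssim \delta^{k(1-\varepsilon_0)+1/2}+\delta^{k(1-\varepsilon_0)}\Xi^1_{m+2}(s,u).
\]
The key step is to bound the time integral in $\Xi^1_{m+2}$ by the weighted energies. We write $\mathcal E_{i,\le m+2}(\tau)\le \mu_{\min}^{-2b_{m+2}}(\tau)\widetilde{\mathcal E}_{i,\le m+2}(s)$ and then use \eqref{eq:mu-integrability-full}:
\[
\int_{t_0}^s\mu_{\min}^{-1/2-b_{m+2}}(\tau)\,d\tau\lesssim \frac{\delta^{\varepsilon_k}}{b_{m+2}}\,\mu_{\min}^{1/2-b_{m+2}}(s),
\]
which is where the factor $1/b_{m+2}$ comes from. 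The term $\delta^{\varepsilon_0}\sqrt{\mathcal E}$ in $\Xi^1_{m+2}$ is treated directly, since it is small.

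\textbf{Step 4: assembling the estimate.} Multiply the three factors and insert the weight $\delta^{\varepsilon_k}$. Then apply Young's inequality $ab\le b_{m+2}a^2+b_{m+2}^{-1}b^2$, choosing the split so that
\begin{itemize}
\item the data part yields $\delta^{2-2\varepsilon_0}\mu_{\min}^{-2b_{m+2}}b_{m+2}$;
\item the products of energies produce $\mu_{\min}^{-2b_{m+2}}(s)\,b_{m+2}^{-1}\bigl(\delta^{\varepsilon_k}\widetilde{\mathcal E}_1+\delta\widetilde{\mathcal E}_2\bigr)$.
\end{itemize}
For the powers of $\delta$ to match, we use $\varepsilon_k=(k+1)\varepsilon_0-k$: for example, $\delta^{\varepsilon_k}\cdot\delta^{k(1-\varepsilon_0)}\cdot\delta^{-\varepsilon_0}=\delta^{0}$, which balances against the normalisation of $\widetilde{\mathcal E}_1$ by $\delta^{\varepsilon_k}$ and of $\widetilde{\mathcal E}_2$ by $\delta$.

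\textbf{Main obstacle.} The hardest part will be Step 2 near $\mu\to0$. The energy $E_1$ carries the degenerate weight $\mu$ in front of $|\slashed d\Psi|^2$, and $E_2$ carries $\mu^2$. Using $E_2$ therefore costs a full $\mu_{\min}^{-1}$, which is too much to absorb.

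To avoid this loss, I would first try the coarea-type bound $\int_{\Sigma^u_s}\mu|\slashed d\Psi|^2$ together with the flux $F_2$. If that fails, I would use the spacetime term $\int_{D^{s,u}\cap\{\mu<1/10\}}|\slashed d\Psi|^2$ from \eqref{e}.

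Otherwise, one could integrate by parts in $R$, moving one $R$ from $R^{m+2}\vp_\gamma$ onto $R^m\operatorname{tr}\check{\mathscr X}\,\mathring{\underline L}\vp_\gamma$. This costs $R^{m+1}\operatorname{tr}\check{\mathscr X}$, which would have to be controlled via \eqref{RmXE} at one order higher, with $E_0$ absorbing the top derivative. I expect the final bound to need the flux term in $\mathcal F_1$, as in the statement.
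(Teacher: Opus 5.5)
\textbf{There is a genuine gap: the $E_0$ part of $\operatorname{tr}\check{\mathscr X}$.} Your Cauchy--Schwarz scheme works for the time-integrated part of $\Xi^1_{m+2}$. Bound $\|R^{m+2}\vp_\gamma\|_{s,u}\lesssim\mu_{\min}^{-1/2}(s)\sqrt{\mathcal E_{1,\le m+2}(s,u)}$ directly from $E_1$; your $E_2$ split on $\{\mu<1/10\}$ needlessly costs $\mu_{\min}^{-1}$. Then \eqref{eq:mu-integrability-full} returns the half power of $\mu_{\min}$ together with the factor $1/b_{m+2}$. With \eqref{RmXE} in place of \eqref{RmcX}, this is exactly how the paper handles the part containing $\operatorname{tr}\check{\mathscr X}-E_0$.

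The problem is the term $\delta^{k(1-\varepsilon_0)+\varepsilon_0}\sum_j\sqrt{\mathcal E_{j,\le m+2}(s,u)}$ in \eqref{RmcX}, which you dismiss as small. It is the contribution of $R^mE_0$ (from $\|R^mE_0\|_{s,u}\lesssim\delta^{(k-1)(1-\varepsilon_0)}\|R^{m+1}\vp\|_{s,u}+\cdots$ and \eqref{SiSi}). It is evaluated at the fixed time $s$, and inserted into your product it yields
\[
\delta^{\varepsilon_0}\,\mu_{\min}^{-2b_{m+2}-1/2}(s)\bigl(\widetilde{\mathcal E}_{1,\le m+2}+\sqrt{\widetilde{\mathcal E}_{1,\le m+2}\,\widetilde{\mathcal E}_{2,\le m+2}}\bigr)(s,u).
\]
This is not dominated by the right-hand side of \eqref{cortop}. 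The surplus factor $\mu_{\min}^{-1/2}(s)$ is unbounded because $\mu_{\min}\to0$, and no power of $\delta$ can compensate for it, so the term cannot be absorbed in the energy inequality. The obvious remedy, writing $R^{m+1}\vp$ as a time integral of $\mathring LR^{m+1}\vp$, does not help. It trades the $\mu$-loss for a loss of $\delta^{-(1-\varepsilon_0)}$, because integration in $s$ gains only $\delta^{\varepsilon_k}$ where \eqref{SiSi} gained $\delta$.

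\textbf{The missing idea is the corrector splitting} $R^m\operatorname{tr}\check{\mathscr X}=R^m(\operatorname{tr}\check{\mathscr X}-E_0)+R^mE_0$.
\begin{itemize}
\item The first piece is estimated by \eqref{RmXE}. Its proof through the transport equation \eqref{transport} produces only time integrals of the energies, so your argument goes through.
\item The $E_0$ piece is never estimated at fixed time. The paper writes its $\Sigma_s^u$ integral as the $\Sigma_{t_0}^u$ integral plus $\int_{D^{s,u}}(\mathring L+\operatorname{tr}\chi)\bigl((R^mE_0)\mathring{\underline L}\vp_\gamma\, R^{m+2}\vp_\gamma\bigr)$. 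In the term with $\mathring LR^{m+2}\vp_\gamma$ it commutes $\mathring L$ past one $R$, producing ${}^{(R)}\slashed\pi_{\mathring L}$, and integrates that $R$ by parts on $S_{s,u}$ onto $(R^mE_0)\mathring{\underline L}\vp_\gamma$.
\item Since $E_0$ is first order in $\slashed d\vp$, everything stays at order $m+2$. The main integrand becomes $|\mathring LR^{\le m+1}\vp|\,|\slashed dR^{m+1}\vp|$ over $D^{s,u}$.
\item Now $\int_{D^{s,u}}|\mathring LR^{m+1}\vp|^2=\int_0^uF_1\,du'$ carries no $\mu$-weight. This is the origin of $\delta^{-(k+1)(1-\varepsilon_0)}\int_0^u\mathcal F_{1,\le m+2}\,du'$; note $\delta^{-(k+1)(1-\varepsilon_0)}=\delta^{-1}\delta^{\varepsilon_k}$, which is later absorbed by Gronwall in $u$.
\item The $\slashed d$ factor gives $\int_{t_0}^s\mu_{\min}^{-1-2b_{m+2}}\widetilde{\mathcal E}_{1,\le m+2}\,d\tau$. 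By \eqref{eq:mu-integrability-full} this becomes $b_{m+2}^{-1}\delta^{\varepsilon_k}\mu_{\min}^{-2b_{m+2}}(s)\widetilde{\mathcal E}_{1,\le m+2}$.
\end{itemize}

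So the flux does not arise by trading $C_{u'}$ fluxes for a $\Sigma_s$ norm, as you suggest. None of the spacetime quantities you list (the fluxes, or the bulk term on $\{\mu<1/10\}$) can control a fixed-time integral until it has been converted into a spacetime integral along $\mathring L$.

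Your fallback of integrating by parts in $R$ directly on $\Sigma_s^u$ fails for a different reason. It produces $R^{m+1}\operatorname{tr}\check{\mathscr X}$, whose bound via \eqref{RmXE} or \eqref{RmcX} requires $\mathcal E_{\le m+3}$. Moreover, \eqref{cortop} is precisely what the paper uses to dispose of the boundary term generated in \eqref{RmX} when $R^{m+1}\operatorname{tr}\check{\mathscr X}$ is eliminated, so that route is circular.
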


\begin{proof}
By the definition of $E_0$ in
Lemma~\ref{lem:null-transport}, decompose the integral
on the left-hand side of \eqref{cortop} as
\begin{equation}\label{I+II}
\delta^{\varepsilon_k}
\int_{\Sigma_s^u}
(R^m\operatorname{tr}\check{\mathscr X})
\mathring{\underline L}\vp_\gamma
(R^{m+2}\vp_\gamma)
=I+II,
\end{equation}
where
\begin{align*}
I&=\delta^{\varepsilon_k}
\int_{\Sigma_s^u}
\bigl(R^m(\operatorname{tr}\check{\mathscr X}-E_0)\bigr)
\mathring{\underline L}\vp_\gamma
(R^{m+2}\vp_\gamma),\\
II&=\delta^{\varepsilon_k}
\int_{\Sigma_s^u}
(R^mE_0)\mathring{\underline L}\vp_\gamma
(R^{m+2}\vp_\gamma).
\end{align*}
Invoking the identity \cite[(10.1a)]{Sp},
we rewrite the term $II$ as
\begin{equation*}
\begin{split}
II={}&\delta^{\varepsilon_k}
\int_{\Sigma_{t_0}^u}
(R^mE_0)\mathring{\underline L}\vp_\gamma
(R^{m+2}\vp_\gamma)+\delta^{\varepsilon_k}
\int_{D^{s,u}}(\mathring L+\operatorname{tr}\chi)
\bigl(
(R^mE_0)\mathring{\underline L}\vp_\gamma
(R^{m+2}\vp_\gamma)
\bigr).
\end{split}
\end{equation*}
Then
\begin{equation}\label{YHC-A4}
\begin{split}
|II|\lesssim{}&
\delta^{2-\varepsilon_0}
+\delta^{\varepsilon_k}
\bigl|\int_{D^{s,u}}
(\mathring LR^mE_0)\mathring{\underline L}\vp_\gamma
(R^{m+2}\vp_\gamma)\bigr|+\delta^{\varepsilon_k}
\bigl|\int_{D^{s,u}}
(R^mE_0)\mathring L\mathring{\underline L}\vp_\gamma
(R^{m+2}\vp_\gamma)\bigr|\\
&+\delta^{\varepsilon_k}
\bigl(1+\delta^{2k(1-\varepsilon_0)-\varepsilon_0}\bigr)
\int_{D^{s,u}}
\bigl|
(R^mE_0)\mathring{\underline L}\vp_\gamma
(R^{m+2}\vp_\gamma)
\bigr|\\
&+\delta^{\varepsilon_k}
\bigl|\int_{D^{s,u}}
(R^mE_0)\mathring{\underline L}\vp_\gamma
(\mathring LR^{m+2}\vp_\gamma)\bigr|.
\end{split}
\end{equation}
Note that
\begin{equation*}
\begin{split}
&\int_{D^{s,u}}
(R^mE_0)\mathring{\underline L}\vp_\gamma
(\mathring LR^{m+2}\vp_\gamma)=\int_{D^{s,u}}
(R^mE_0)\mathring{\underline L}\vp_\gamma
(\slashed d_XR^{m+1}\vp_\gamma)
{}^{(R)}\slashed{\pi}_{\mathring L}^X\\
&\qquad-\int_{D^{s,u}}
R\bigl((R^mE_0)\mathring{\underline L}\vp_\gamma\bigr)
(\mathring LR^{m+1}\vp_\gamma)-\int_{D^{s,u}}
\tfrac12(R^mE_0)\mathring{\underline L}\vp_\gamma
(\mathring LR^{m+1}\vp_\gamma)
\operatorname{tr}{}^{(R)}\slashed{\pi}.
\end{split}
\end{equation*}
Substituting this into \eqref{YHC-A4} yields
\begin{equation*}
\begin{split}
|II|\lesssim{}&
\delta^{2-\varepsilon_0}
+\delta^{\varepsilon_0-1}
\int_{D^{s,u}}
|\mathring LR^{\leq m+1}\vp|
\Bigl(
|\slashed dR^{m+1}\vp|
+\delta^{1-\varepsilon_0}
\bigl(
\sum_{i=1}^2|R^{\leq m+1}\check L^i|
+|R^{\leq m+1}\upsilon|
\bigr)
\Bigr)\\
&+\delta^{k(1-\varepsilon_0)-1}
\int_{D^{s,u}}
\bigl(
|R^{\leq m+1}\vp|
+\delta^{\varepsilon_0}|R^{\leq m}\mathring L\vp|
\bigr)|R^{m+2}\vp|\\
&+\int_{D^{s,u}}
\Bigl(
|\slashed{\mathcal L}_R^{\leq m}
{}^{(R)}\slashed{\pi}_{\mathring L}|
+\delta^{-\varepsilon_k}
\bigl(
\sum_{i=1}^2|R^{\leq m+1}\check L^i|
+|R^{\leq m+1}\upsilon|
\bigr)
\Bigr)|R^{m+2}\vp|.
\end{split}
\end{equation*}
Together with the estimates in
Proposition~\ref{EstimatesOnR}, we further deduce
\begin{align}
|II|\lesssim{}&
\delta^{2-\varepsilon_0}
+\delta^{-(k+1)(1-\varepsilon_0)}
\int_0^u\mathcal F_{1,\leq m+2}(s,u')\,du'
+\delta^{(3k+2)(1-\varepsilon_0)-\varepsilon_0}
\int_{t_0}^s\mu_{\min}^{-2b_{m+2}}(\tau)\,d\tau
\nonumber\\
&+\delta^{(k-1)(1-\varepsilon_0)}
\int_{t_0}^s
\mu_{\min}^{-1-2b_{m+2}}(\tau)
\widetilde{\mathcal E}_{1,\leq m+2}(\tau,u)\,d\tau
\nonumber\\
&+\delta^{(k+1)(1-\varepsilon_0)}
\int_{t_0}^s
\mu_{\min}^{-2b_{m+2}}(\tau)
\widetilde{\mathcal E}_{2,\leq m+2}(\tau,u)\,d\tau
\nonumber\\
\lesssim{}&
\delta^{2-2\varepsilon_0}
\mu_{\min}^{-2b_{m+2}}(s)
+\delta^{-(k+1)(1-\varepsilon_0)}
\int_0^u\mathcal F_{1,\leq m+2}(s,u')\,du'
\label{II}\\
&+\frac{\mu_{\min}^{-2b_{m+2}}(s)}{b_{m+2}}
\bigl(
\delta^{2\varepsilon_0-1}
\widetilde{\mathcal E}_{1,\leq m+2}(s,u)
+\delta\widetilde{\mathcal E}_{2,\leq m+2}(s,u)
\bigr),
\nonumber
\end{align}
where \eqref{eq:mu-integrability-full} is used and
$b_{m+2}$ is chosen to be sufficiently large.

For the term $I$, it follows from \eqref{RmXE} that
\begin{align}
|I|\lesssim{}&
\delta^{-k(1-\varepsilon_0)}
\|R^m(\operatorname{tr}\check{\mathscr X}-E_0)\|_{s,u}
\|\slashed dR^{m+1}\vp\|_{s,u}
\nonumber\\
\lesssim{}&
\Bigl\{
\delta^{1/2}
+\frac{\delta^{\varepsilon_k}}{b_{m+2}}
\mu_{\min}^{-b_{m+2}+1/2}(s)
\sqrt{\widetilde{\mathcal E}_{1,\leq m+2}(s,u)}
\nonumber\\
&\qquad
+\frac{\delta^{\varepsilon_0}}{b_{m+2}}
\mu_{\min}^{1-b_{m+2}}(s)
\sqrt{\widetilde{\mathcal E}_{2,\leq m+2}(s,u)}
\Bigr\}
\mu_{\min}^{-b_{m+2}-1/2}(s)
\sqrt{\widetilde{\mathcal E}_{1,\leq m+2}(s,u)}
\nonumber\\
\lesssim{}&
\delta^{(k+1)(1-\varepsilon_0)}
b_{m+2}\mu_{\min}^{-1}(s)
+\frac{\mu_{\min}^{-2b_{m+2}}(s)}{b_{m+2}}
\Bigl(
\delta^{\varepsilon_k}
\widetilde{\mathcal E}_{1,\leq m+2}(s,u)
\nonumber\\
&\hspace{7cm}
+\delta^{\varepsilon_0+k(1-\varepsilon_0)}
\widetilde{\mathcal E}_{2,\leq m+2}(s,u)
\Bigr).
\label{I}
\end{align}
Combining \eqref{II} and \eqref{I} with \eqref{I+II}
yields \eqref{cortop}.
\end{proof}

Note that the $L^2$ estimates for certain quantities under the action of  $R$ have been given in Proposition~\ref{EstimatesOnR}. We
next establish the remaining $L^2$ estimates for all derivatives.

\begin{proposition}\label{L2chi}
	Under the assumptions \eqref{bootstrap-assumptions}, for all \(0\leq m\leq 2N-6\) and small \(\delta>0\), the following estimates hold:
	\begin{align*}
		&\delta^{l+p\varepsilon_k}\bigl(\|Z^m\operatorname{tr}\check{\mathscr X}\|_{s,u}+\|Z^{m+1}\check L^i\|_{s,u}+\|Z^{m+1}\upsilon\|_{s,u}+\|Z^{m+1}\check\varrho\|_{s,u}+\bigl\|\slashed{\mathcal L}_Z^{m}{}^{(R)}\slashed{\pi}\bigr\|_{s,u}\bigr)\\
		&\qquad\qquad\lesssim\ \delta^{k(1-\varepsilon_0)+1/2}+\delta^{k(1-\varepsilon_0)}\Xi^1_{m+2}(s,u),\\
		&\delta^{l+p\varepsilon_k}\|Z^{m+2}x^i\|_{s,u}\lesssim\delta^{1/2}+\delta^{k(1-\varepsilon_0)}\Xi^1_{m+2}(s,u),\\
		&\delta^{l+p\varepsilon_k}\|\slashed{\mathcal L}_Z^{m+1}\slashed g\|_{s,u}\lesssim\delta^{k(1-\varepsilon_0)}\bigl(\delta^{1/2}\bigl(1+\delta^{\varepsilon_0-2k(1-\varepsilon_0)}\bigr)+\Xi^1_{m+2}(s,u)\bigr),\\
		&\delta^{l+p\varepsilon_k}\|\slashed{\mathcal L}_Z^{m}{}^{(R)}\slashed{\pi}_{\mathring L}\|_{s,u}\lesssim\delta^{k(1-\varepsilon_0)}\bigl(1+\delta^{2k(1-\varepsilon_0)-\varepsilon_0}\bigr)\bigl(\delta^{1/2}+\Xi_{m+2}^1(s,u)\bigr),\\
		&\delta^{l+p\varepsilon_k}\|\slashed{\mathcal L}_Z^{m+1}R\|_{s,u}\lesssim \delta^{\varepsilon_0}\bigl(1+\delta^{2k(1-\varepsilon_0)-\varepsilon_0}\bigr)\bigl(\delta^{1/2}+\Xi_{m+2}^1(s,u)\bigr),\\
		&\delta^{l+p\varepsilon_k}\|\slashed{\mathcal L}_Z^m{}^{(R)}\slashed\pi_T\|_{s,u}
		\lesssim  \delta^{2k(1-\varepsilon_0)-1/2} + \delta^{2k(1-\varepsilon_0)-1}\Xi^1_{m+2}(s,u),\\
		&\delta^{l+p\varepsilon_k}\|Z^{m+1}\mu\|_{s,u} \lesssim \delta^{k(1-\varepsilon_0)+\varepsilon_0-1/2} + \delta^{(k-1)(1-\varepsilon_0)}\Xi^2_{m+2}(s,u)+\delta^{2\varepsilon_0-1}\sum_{i=1}^2\sqrt{\mathcal E_{i,\leq m+2}(s,u)}
	\end{align*}
	and
	\begin{equation*}
		\begin{split}
			&\delta^{l+p\varepsilon_k}\bigl(\|\slashed{\mathcal L}_Z^m{}^{(T)}\slashed\pi_{\mathring L}\|_{s,u}+\|\slashed{\mathcal L}_Z^m{}^{(T)}\slashed\pi\|_{s,u}\bigr)\\
			\lesssim &\delta^{k(1-\varepsilon_0)-1/2} + \delta^{2k(1-\varepsilon_0)-1}\int_{t_0}^s\mu_{\min}^{-1/2}(\tau)\sqrt{\mathcal E_{1,\leq m+2}(\tau,u)}\,d\tau \\
			&+\delta^{(k-1)(1-\varepsilon_0)}\bigl(\bigl(1+\delta^{2k(1-\varepsilon_0)-\varepsilon_0}\bigr)\int_{t_0}^s\sqrt{\mathcal E_{2,\leq m+2}(\tau,u)}\,d\tau+ \sum_{i=1}^2\sqrt{\mathcal E_{i,\leq m+2}(s,u)}\bigr),
		\end{split}
	\end{equation*}
	where \(l\) and \(p\) denote the numbers of \(T\) and \(\varrho\mathring L\) in the corresponding derivative strings, respectively.
\end{proposition}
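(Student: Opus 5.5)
The plan is to extend Proposition~\ref{EstimatesOnR} and Corollary~\ref{YHC-A3} from pure rotations $R^m$ to general strings $Z^m$ with $Z\in\{\varrho\mathring L,T,R\}$. I would argue by induction on the number of non-rotational factors, and inside that on $l+p$. The base case $l=p=0$ is exactly \eqref{RmXE}--\eqref{RmPiL}, \eqref{RMMu} and \eqref{RMPIRT}. The estimates for $\|Z^{m+2}x^i\|$, $\|\slashed{\mathcal L}_Z^{m+1}\slashed g\|$ and $\|\slashed{\mathcal L}_Z^{m+1}R\|$ will be derived along the way from the already controlled quantities. For this I use $\slashed g_{XX}=g_{ij}\slashed d_Xx^i\slashed d_Xx^j$, $R=\Omega-\upsilon\tilde T$, and the expansions $\tilde T^i=-x^i/\varrho+\check T^i$, $\mathring L^i=x^i/\varrho+\check L^i$. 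Each application of $Z$ to these expressions produces either an error vector ($\check L$, $\check T$, $\upsilon$, $\check\varrho$) or a factor $Z\varphi$. The latter is weighted by $\delta^{l+p\varepsilon_k}$ exactly as in the bootstrap assumptions \eqref{bootstrap-assumptions} and Proposition~\ref{prop:bootstrap-higher-derivatives}. The lower-order factors in each product are put in $L^\infty$ via Proposition~\ref{prop:higher-order-L-infty-estimates} and Remark~\ref{Rem4.2}.

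\emph{Strings containing $\varrho\mathring L$.} Commute $\varrho\mathring L$ to the front, using \eqref{c} together with the bounds on ${}^{(R)}\slashed\pi_{\mathring L}$ and ${}^{(T)}\slashed\pi_{\mathring L}$. Then use the transport identities: \eqref{Lchi'} for $\mathring L\operatorname{tr}\check{\mathscr X}$, \eqref{LeL}/\eqref{LL} for $\mathring L\check L^i$, and \eqref{lmu} for $\mathring L\mu$. This rewrites $Z^{m}\varrho\mathring L(\cdot)$ as $Z^{m}$ applied to algebraic expressions in $\varphi,\mathring L\varphi,\slashed d\varphi,T\varphi$ and the geometric quantities. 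For instance, in $\mathring L\operatorname{tr}\check{\mathscr X}$ the dangerous term is $\frac12 G^{\alpha}_{\mathring L\mathring L}\slashed\triangle\varphi_\alpha$. I would treat it with the decomposition \eqref{Gtrivp}, so that either the null factor $G^\gamma_{\mathring L\mathring L}\mathring L_\gamma=O(\delta^{k(1-\varepsilon_0)})$ appears or a derivative lands on $\mathring L\varphi$. Each $\mathring L$ costs $\delta^{-\varepsilon_k}$, which the weight $\delta^{p\varepsilon_k}$ absorbs. The $\varphi$-terms are converted into energies through \eqref{SiSi}, giving the $\delta^{\varepsilon_0}\sum_j\sqrt{\mathcal E_{j,\le m+2}}$ contributions.

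\emph{Strings containing $T$.} Commute $T$ to the front and use \eqref{Tchi'} for $T\operatorname{tr}\check{\mathscr X}$ and \eqref{TL} for $T\check L^i$. In \eqref{Tchi'} the leading terms are $\slashed\triangle\mu$ and $G_{X\mathring L}\slashed d^XT\varphi$. I control $\slashed\triangle\mu$ by $\|R^{\le 2}Z^{m-1}\mu\|$, which comes from the $\mu$ estimate one order lower, so the induction has to run on $\mu$ and $\operatorname{tr}\check{\mathscr X}$ together. This is why the $\mu$ bound carries the extra term $\delta^{2\varepsilon_0-1}\sum_i\sqrt{\mathcal E_{i,\le m+2}}$. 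For $Z^{m+1}\mu$ with a $T$ or $\mathring L$ inside, I use \eqref{lmu} directly for $\mathring L$. For $T$, I move $T$ outward and bound $\|TZ^m\mu\|$ by integrating $\mathring L TZ^m\mu$, which is controlled via \eqref{lmu} and the commutator $[\mathring L,T]={}^{(T)}\slashed\pi_{\mathring L}{}^XX$, and then apply Gronwall and \eqref{Ff} as in Corollary~\ref{YHC-A3}. The ${}^{(T)}\slashed\pi$ and ${}^{(T)}\slashed\pi_{\mathring L}$ bounds then follow from \eqref{Lpi} ($2\mu\sigma$, $-2\mu\zeta-\slashed d\mu$) and \eqref{theta}, \eqref{zeta}. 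That is the source of the mixed integrals of $\mu_{\min}^{-1/2}\sqrt{\mathcal E_1}$ and $\sqrt{\mathcal E_2}$.

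\textbf{Main obstacle.} I expect the hardest step to be keeping the $\delta$-powers sharp in the $T$-commuted transport equation for $\operatorname{tr}\check{\mathscr X}$. There the top-order terms $T\mu$, $\slashed\triangle\mu$ and $\slashed dT\varphi$ are each a factor $\delta^{-1}$ larger. One must check that the weight $\delta^{l}$ exactly compensates for this, and that no uncompensated $\mu^{-1}$ is introduced; the $\mu_{\min}^{-1/2}$ weight should enter only through $\sqrt{\mu}\,|\slashed d\Psi|\le\sqrt{E_1}$. My first attempt will be to estimate $\mathring L(\varrho^2 Z^m(\operatorname{tr}\check{\mathscr X}-E_0))$ with the corrector $E_0$ kept, exactly as in the proof of \eqref{RmXE}, and to treat the commutator terms from $[\mathring L,Z^m]$ using the already established lower-order $L^2$ bounds. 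I would then close with Gronwall in $s$ over the finite interval $[t_0,T_\delta)$, whose length is $\lesssim\delta^{\varepsilon_k}$ or $O(1)$.
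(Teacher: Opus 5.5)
Your plan follows the paper's proof in outline, and the points below are corrections rather than a fatal gap. The shared steps are:
\begin{itemize}
\item the pure-$R$ base case from Proposition~\ref{EstimatesOnR} and Corollary~\ref{YHC-A3};
\item reduction of $\check L^i,\upsilon,\check\varrho,x,\slashed g,R$ and the deformation tensors to $\operatorname{tr}\check{\mathscr X}$, $\mu$ and the energies;
\item pointwise use of \eqref{Tchi'} for strings containing $T$, and of \eqref{Lchi'}, \eqref{lmu} for strings containing $\varrho\mathring L$;
\item an $\mathring L$-transport estimate with \eqref{Ff} and Gronwall for $\bar Z^{m+1}\mu$.
\end{itemize}

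\textbf{(i) Level of the $\mu$-bound.} In \eqref{Tchi'}, the term $\bar Z^{m-1}\slashed\triangle\mu$ carries $m+1$ derivatives of $\mu$. It is therefore the $\mu$-bound of the \emph{same} index $m$, with one fewer $T$, not the bound ``one order lower.'' Conversely, the transport estimate for $\bar Z^{m+1}\mu$ contains $\|\bar Z^{\le m}\operatorname{tr}\check{\mathscr X}\|$ with the same number of $T$'s. This enters through $[\mathring L,R]={}^{(R)}\slashed\pi_{\mathring L}{}^XX$ and through $Z^{m+1}\check L^i$ via \eqref{deL}, with coefficient $\delta^{k(1-\varepsilon_0)-1}$ under $\int_{t_0}^s$. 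There are two ways to close this:
\begin{itemize}
\item Your outer induction is genuinely on the number $l+p$ of non-rotational factors, with all lengths up to $2N-6$ treated at each stage. Then the scheme is triangular.
\item As the paper does, you induct on derivative order and combine \eqref{barZTchi} with \eqref{m}. The loop gain is $\delta\cdot\delta^{k(1-\varepsilon_0)-1}\int_{t_0}^s=\delta^{k(1-\varepsilon_0)}\int_{t_0}^s$, which Gronwall absorbs.
\end{itemize}
The term $\delta^{2\varepsilon_0-1}\sum_i\sqrt{\mathcal E_{i,\le m+2}}$ in the $\mu$-bound does not come from this coupling. It comes from $\varrho\mathring L$-strings, where $\frac12G^\gamma_{\mathring L\mathring L}T\varphi_\gamma$ in \eqref{lmu} is used pointwise. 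That contributes $\delta^{\varepsilon_k+(k-1)(1-\varepsilon_0)}\sqrt{\mathcal E}=\delta^{2\varepsilon_0-1}\sqrt{\mathcal E}$ with no time integral.

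\textbf{(ii) The corrector $E_0$ is not needed here.} Your ``first attempt'' of transporting $\varrho^2Z^m(\operatorname{tr}\check{\mathscr X}-E_0)$ for general strings is unnecessary, and it is at odds with your pointwise use of \eqref{Tchi'}. The same goes for invoking \eqref{Gtrivp} on $\varrho\mathring L$-strings.
\begin{itemize}
\item Once one factor is $T$ or $\varrho\mathring L$, the remaining top $\varphi$-terms carry at most $m+1$ derivatives. These are $G_{X\mathring L}\slashed dT\varphi$ in \eqref{Tchi'}, and $G_{\mathring L\mathring L}\slashed\triangle\varphi$, $G_{X\mathring L}\slashed d\mathring L\varphi$ in \eqref{Lchi'}.
\item So \eqref{SiSi} bounds them by $\delta\sqrt{\mathcal E_{\le m+2}}$, with no $\mu_{\min}^{-1/2}$. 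This gives exactly the $\delta^{(k-1)(1-\varepsilon_0)+1}=\delta^{k(1-\varepsilon_0)+\varepsilon_0}$ part of $\delta^{k(1-\varepsilon_0)}\Xi^1_{m+2}$.
\item The corrector is needed only for pure $R$-strings. There \eqref{Lchi'} must be integrated and $\slashed\triangle R^m\varphi$ is top order, so \eqref{SiSi} would cost a derivative.
\end{itemize}

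\textbf{(iii) The $\slashed g$ and $R$ estimates.} For $\slashed{\mathcal L}_Z^{m+1}\slashed g$ and $\slashed{\mathcal L}_Z^{m+1}R$, your claim that each $Z$ produces only an error vector or a $Z\varphi$ is false. Both $Rx^i=\Omega^i-\upsilon\tilde T^i$ and $Tx^i=-\mu x^i/\varrho+\cdots$ contain $O(1)$ pieces, and the smallness comes from exact cancellations: the antisymmetry of $\epsilon$ against $m_{ij}$, and $g(\tilde T,X)=0$. A product estimate on $g_{ij}\slashed dx^i\slashed dx^j$ using $\|Z^{m+2}x\|\lesssim\delta^{1/2}$ gives only $O(\delta^{1/2})$, which misses the stated bound when $\varepsilon_0>\varepsilon_k^*$. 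Use instead the identities that encode these cancellations, together with \eqref{Lpi}--\eqref{Rpi}:
\begin{itemize}
\item $\slashed{\mathcal L}_{\varrho\mathring L}\slashed g=2\varrho\chi$, $\slashed{\mathcal L}_T\slashed g={}^{(T)}\slashed\pi$, $\slashed{\mathcal L}_R\slashed g={}^{(R)}\slashed\pi$;
\item $\slashed{\mathcal L}_{\varrho\mathring L}R=\varrho\,{}^{(R)}\slashed\pi_{\mathring L}$, $\slashed{\mathcal L}_TR={}^{(R)}\slashed\pi_T$.
\end{itemize}
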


\begin{proof}
By Lemma~\ref{Lem3.3}, the quantities \(\varrho\mathring{L}\check{L}^i\), \(T\check{L}^i\) and \(R\check{L}^i\) can be expressed as
\[
\varrho\mathring{L}\check{L}^i = \varrho\mathring{L}\mathring{L}^i - \check{L}^i,
\quad
T\check{L}^i = T\mathring{L}^i + \frac{\mu}{\varrho}g^{0i} + \frac{\mu}{\varrho}\check{L}^i + \frac{\mu-1}{\varrho^2}y^i,
\quad
R\check{L}^i = {R^X}\slashed{d}_X\check{L}^i.
\]
Then we have
\begin{equation}\label{ZkL}
\begin{split}
&\delta^{l+p\varepsilon_k} \| Z^{m+1}\check{L}^i \|_{s,u} \\
\lesssim&\delta^{1/2} \bigl( \delta^{2k(1-\varepsilon_0)} + \delta^{\varepsilon_0} \bigr)
+ \delta^{l_1+p_1\varepsilon_k} \| Z^{m_1} \operatorname{tr}\check{\mathscr{X}} \|_{s,u} + \delta^{k(1-\varepsilon_0)} \delta^{l_0+p_0\varepsilon_k} \| Z^{m_0}x \|_{s,u}\\
&+ \delta(1+\delta^{2k(1-\varepsilon_0)-\varepsilon_0}) \delta^{l_0+p_0\varepsilon_k} \| Z^{m_0}\mu \|_{s,u} + \delta^{k(1-\varepsilon_0)} \delta^{l_1+p_1\varepsilon_k} \| \slashed{\mathcal L}_Z^{m_1}\slashed{g} \|_{s,u} \\
&+ \delta^{k(1-\varepsilon_0)+\varepsilon_0} \bigl( \sqrt{\mathcal E_{1,\leq m+2}(s,u)} + \sqrt{\mathcal E_{2,\leq m+2}(s,u)} \bigr),
\end{split}
\end{equation}
where the bound \(\|1\|_{s,u}\lesssim\delta^{1/2}\) is used. Here \(l\) and \(p\) (resp.\ \(l_n\) and \(p_n\), \(n=0,1\)) denote the numbers of \(T\) and \(\varrho\mathring{L}\) in the string \(Z^{m+1}\) (resp.\ \(Z^{m_n}\)) with \(0 \leq m_n \leq m+1-n\).

Similarly, from the identities \(\varrho\mathring Lx^i=\varrho\check L^i+x^i\), \(Tx^i=\mu(-g^{0i}-\check L^i-\frac{x^i}{\varrho})\) and \(Rx^i=\Omega^i+\upsilon(g^{0i}+\check L^i+\frac{x^i}{\varrho})\), one has
\begin{equation}\label{Zkx}
\begin{split}
&\delta^{l+p\varepsilon_k}\|Z^{m+2}x^i\|_{s,u}\\
\lesssim&\delta^{1/2}+\delta^{l_0+p_0\varepsilon_k}\|Z^{m_0}\upsilon\|_{s,u}
+(\delta^{k(1-\varepsilon_0)}+\delta^{\varepsilon_k})\sum_{j=1}^2\delta^{l_0+p_0\varepsilon_k}\|Z^{m_0}\check L^j\|_{s,u}\\
&+\delta\delta^{l_{0}+p_0\varepsilon_k}\|Z^{m_{0}}\mu\|_{s,u}
+\delta^{2k(1-\varepsilon_0)+\varepsilon_0}\bigl( \sqrt{\mathcal E_{1,\leq m+2}(s,u)} + \sqrt{\mathcal E_{2,\leq m+2}(s,u)} \bigr).
\end{split}
\end{equation}
In addition, by \(\slashed{\mathcal L}_{\varrho\mathring L}R=\varrho\leftidx{^{(R)}}{\slashed\pi}_{\mathring L}\), \(\slashed{\mathcal L}_TR=\leftidx{^{(R)}}{\slashed\pi}_T\) and \eqref{Rpi}, it holds that
\begin{equation}\label{ZR}
\begin{split}
&\delta^{l+p\varepsilon_k}\|\slashed{\mathcal L}_Z^{m+1}R\|_{s,u}\\
\lesssim&\delta^{\varepsilon_0+1/2}+(\delta^{2k(1-\varepsilon_0)}+\delta^{\varepsilon_0})\delta^{l_0
+p_0\varepsilon_k}\|Z^{m_0}x\|_{s,u}+\delta^{k(1-\varepsilon_0)}\delta^{l_1+p_1\varepsilon_k}\|Z^{m_1}\upsilon\|_{s,u}\\
&+\delta^{\varepsilon_k}\delta^{l_1+p_1\varepsilon_k}\|Z^{m_1}\operatorname{tr}\check{\mathscr X}\|_{s,u}+(\delta^{2k(1-\varepsilon_0)}+\delta^{\varepsilon_k})\sum_{j=1}^2\delta^{l_1+p_1\varepsilon_k}\|Z^{m_1}\check L^j\|_{s,u}\\
&+(\delta^{\varepsilon_0}+\delta^{2k(1-\varepsilon_0)})\delta^{l_1+p_1\varepsilon_k}\|\slashed{\mathcal L}_Z^{m_1}\slashed g\|_{s,u}+\delta^{1+k(1-\varepsilon_0)}\delta^{l_{0}+p_0\varepsilon_k}\|Z^{m_{0}}\mu\|_{s,u}\\
&+\delta^{\varepsilon_0+2k(1-\varepsilon_0)}\bigl(\sqrt{\mathcal E_{1,\leq m+2}(s,u)} + \sqrt{\mathcal E_{2,\leq m+2}(s,u)}\bigr).
\end{split}
\end{equation}
On the other hand, from \eqref{omega}, \eqref{rrho} and the relations \(\slashed{\mathcal L}_{\varrho\mathring L}\slashed g=2\varrho\chi\), \(\slashed{\mathcal L}_T\slashed g=\leftidx{^{(T)}}{\slashed\pi}\), \(\slashed{\mathcal L}_R\slashed g=\leftidx{^{(R)}}{\slashed\pi}\), we have
\begin{equation}\label{Zku}
\begin{split}
\delta^{l+p\varepsilon_k}\|Z^{m+1}\upsilon\|_{s,u}
\lesssim& \delta^{k(1-\varepsilon_0)}\delta^{l_0+p_0\varepsilon_k}\|Z^{m_0}x\|_{s,u}+\sum_{i=1}^2\delta^{l_0+p_0\varepsilon_k}\|Z^{m_0}\check L^i\|_{s,u}\\
&+\delta^{k(1-\varepsilon_0)+\varepsilon_0}\bigl( \sqrt{\mathcal E_{1,\leq m+2}(s,u)} + \sqrt{\mathcal E_{2,\leq m+2}(s,u)} \bigr),
\end{split}
\end{equation}
\begin{equation}\label{Zkr}
\begin{split}
\delta^{l+p\varepsilon_k}\|Z^{m+1}\check\varrho\|_{s,u}
\lesssim & \delta^{k(1-\varepsilon_0)}\delta^{l_0+p_0\varepsilon_k}\|Z^{m_0}x\|_{s,u}
+\sum_{i=1}^2\delta^{l_0+p_0\varepsilon_k}\|Z^{m_0}\check L^i\|_{s,u}\\
&+\delta^{2k(1-\varepsilon_0)+1/2}+ \delta^{k(1-\varepsilon_0)+\varepsilon_0}\bigl(\sqrt{\mathcal E_{1,\leq m+2}(s,u)} + \sqrt{\mathcal E_{2,\leq m+2}(s,u)}\bigr),
\end{split}
\end{equation}
\begin{equation}\label{Zkg}
\begin{split}
\delta^{l+p\varepsilon_k}\|\slashed{\mathcal L}_Z^{m+1}\slashed g\|_{s,u}\lesssim&(\delta^{k(1-\varepsilon_0)}+\delta^{\varepsilon_k})\bigl(\delta^{1/2}
+\delta^{l_1+p_1\varepsilon_k}\|Z^{m_1}\operatorname{tr}\check{\mathscr X}\|_{s,u}\bigr)\\
&+\delta^{l_1+p_1\varepsilon_k}\|Z^{m_1}\upsilon\|_{s,u}+\delta\delta^{l_{0}+p_0\varepsilon_k}\|Z^{m_{0}}\mu\|_{s,u}\\
&+\delta^{2k(1-\varepsilon_0)}\sum_{i=1}^2\delta^{l_1+p_1\varepsilon_k}\|Z^{m_1}\check L^i\|_{s,u}+\delta^{k(1-\varepsilon_0)}\delta^{l_0+p_0\varepsilon_k}\|Z^{m_0}x\|_{s,u}\\
&+ \delta^{k(1-\varepsilon_0)+\varepsilon_0} \bigl(\sqrt{\mathcal E_{1,\leq m+2}(s,u)} + \sqrt{\mathcal E_{2,\leq m+2}(s,u)}\bigr).
\end{split}
\end{equation}
Combining \eqref{ZkL}--\eqref{Zkg} and using induction on the derivative orders yield
\begin{align}
\delta^{l+p\varepsilon_k}\|Z^{m+2}x^i\|_{s,u}\lesssim&\delta^{1/2}+\Pi(s,u),\label{Zk1x}\\
\delta^{l+p\varepsilon_k}\|\slashed{\mathcal L}_Z^{m+1}\slashed g\|_{s,u}\lesssim&\delta^{1/2}(\delta^{k(1-\varepsilon_0)}+\delta^{\varepsilon_k})+\Pi(s,u),\label{Zk1g}\\
\delta^{l+p\varepsilon_k}\bigl(\|Z^{m+1}\check L^i\|_{s,u}&+\|Z^{m+1}\upsilon\|_{s,u}+\|Z^{m+1}\check\varrho\|_{s,u}\bigr)\lesssim\delta^{k(1-\varepsilon_0)+1/2}+\Pi(s,u),\label{Zk1L}\\
\delta^{l+p\varepsilon_k}\|\slashed{\mathcal L}_Z^{m+1}R\|_{s,u}\lesssim& \delta^{1/2}(\delta^{\varepsilon_0}+\delta^{2k(1-\varepsilon_0)})+(\delta^{\varepsilon_k}
+\delta^{k(1-\varepsilon_0)})\delta^{l_{1}+p_1\varepsilon_k}\|Z^{m_1}\operatorname{tr}\check{\mathscr X}\|_{s,u}\no\\
&+(\delta^{(3k+1)(1-\varepsilon_0)}+\delta^{1+k(1-\varepsilon_0)}+\delta^{1+\varepsilon_k})\delta^{l_{0}+p_0\varepsilon_k}\|Z^{m_{0}}\mu\|_{s,u}\no\\
&+(\delta^{2k(1-\varepsilon_0)+\varepsilon_0}+\delta^{2\varepsilon_0})\bigl( \sqrt{\mathcal E_{1,\leq m+2}(s,u)} + \sqrt{\mathcal E_{2,\leq m+2}(s,u)} \bigr),\label{Zk1R}
\end{align}
where
\begin{align*}
\Pi(s,u)=&\delta^{l_1+p_1\varepsilon_k}\|Z^{m_1}\operatorname{tr}\check{\mathscr X}\|_{s,u}+\bigl( \delta + \delta^{(2k+1)(1-\varepsilon_0)} \bigr) \delta^{l_0+p_0\varepsilon_k} \| Z^{m_0}\mu \|_{s,u}\\
&+\delta^{k(1-\varepsilon_0)+\varepsilon_0}\bigl(\sqrt{\mathcal E_{1,\leq m+2}(s,u)} + \sqrt{\mathcal E_{2,\leq m+2}(s,u)}\bigr).
\end{align*}
Using \eqref{Lpi}--\eqref{Rpi}, one arrives at
\begin{align}
\delta^{l+p\varepsilon_k}\|\slashed{\mathcal L}_Z^{m}\leftidx{^{(R)}}{\slashed\pi}\|_{s,u}\lesssim&\delta^{k(1-\varepsilon_0)+ 1/2}+\Pi(s,u),\label{ZkRp}\\
\delta^{l+p\varepsilon_k}\|\slashed{\mathcal L}_Z^{m}\leftidx{^{(R)}}{\slashed\pi}_{\mathring L}\|_{s,u}\lesssim&(1+\delta^{2k(1-\varepsilon_0)-\varepsilon_0})\bigl(\delta^{k(1-\varepsilon_0)+1/2}+\Pi(s,u)\bigr),\label{ZkRLp}\\
\delta^{l+p\varepsilon_k}\|\slashed{\mathcal L}_Z^{m}\leftidx{^{(R)}}{\slashed\pi}_T\|_{s,u}\lesssim&\delta^{k(1-\varepsilon_0)-1}\bigl(\delta^{k(1-\varepsilon_0)+1/2}+\Pi(s,u)\bigr),\label{ZkRLpi}\\
\delta^{l+p\varepsilon_k}\|\slashed{\mathcal L}_Z^m\leftidx{^{(T)}}{\slashed\pi}\|_{s,u}\lesssim&\delta^{k(1-\varepsilon_0)-1}\delta^{l_1+p_1\varepsilon_k}\|Z^{m_1}\operatorname{tr}\check{\mathscr X}\|_{s,u}+(1+\delta^{3k(1-\varepsilon_0)-\varepsilon_0})\delta^{l_0+p_0\varepsilon_k} \| Z^{m_0}\mu \|_{s,u}\no\\
&+\delta^{k(1-\varepsilon_0)-1/ 2}
+\delta^{(k-1)(1-\varepsilon_0)}\bigl(\sqrt{\mathcal E_{1,\leq m+2}(s,u)} + \sqrt{\mathcal E_{2,\leq m+2}(s,u)}\bigr) ,\label{ZkTpi}\\
\delta^{l+p\varepsilon_k}\|\slashed{\mathcal L}_Z^m\leftidx{^{(T)}}{\slashed\pi}_{\mathring L}\|_{s,u}\lesssim&\delta^{k(1-\varepsilon_0)-1}\delta^{l_1+p_1\varepsilon_k}\|Z^{m_1}\operatorname{tr}\check{\mathscr X}\|_{s,u}+(1+\delta^{2k(1-\varepsilon_0)-\varepsilon_0})\delta^{l_0+p_0\varepsilon_k} \| Z^{m_0}\mu \|_{s,u}\no\\
&+\delta^{k(1-\varepsilon_0) -1/2}
+\delta^{(k-1)(1-\varepsilon_0)}\bigl(\sqrt{\mathcal E_{1,\leq m+2}(s,u)} + \sqrt{\mathcal E_{2,\leq m+2}(s,u)}\bigr).\label{ZkTLpi}
\end{align}

Note that all the terms on the left-hand sides of \eqref{Zk1x}--\eqref{ZkTLpi} are controlled by the \(L^2\)-norms of the derivatives of \(\operatorname{tr}{\check{\mathscr X}}\) and \(\mu\).
We now estimate \(\|Z^{m}\operatorname{tr}{\check{\mathscr X}}\|_{s,u}\) and \(\|Z^{m+1}\mu\|_{s,u}\).

It follows from \eqref{Tchi'} and \eqref{Zk1x}--\eqref{ZkTLpi} that when \(n_1+n_2=m-1\),
\begin{align}
&\delta^{1+l}\|\bar Z^{n_1} T R^{n_2}\operatorname{tr}{\check{\mathscr X}}\|_{s,u}\leq\delta^{1+l}\|\bar Z^{n_1}[T,R^{n_2}]\operatorname{tr}{\check{\mathscr X}}\|_{s,u}+\delta^{1+l}\|\bar Z^{n_1}R^{n_2}T\operatorname{tr}{\check{\mathscr X}}\|_{s,u}\nonumber\\
\lesssim & \delta^{k(1-\varepsilon_0)+1}\delta^{l_2}\bigl(\|\slashed{\mathcal L}_{\bar Z}^{m_2}\leftidx{^{(R)}}{\slashed\pi}_T\|_{s,u}+\|\slashed{\mathcal L}_{\bar Z}^{m_2}\leftidx{^{(T)}}{\slashed\pi}\|_{s,u}\bigr)+\delta^{(k-1)(1-\varepsilon_0)}\delta^{l_0}\|\bar Z^{m_0}\vp\|_{s,u}\nonumber\\
&+\delta^{k(1-\varepsilon_0)}\bigl(\delta^{l_2}\|\bar Z^{m_2}\operatorname{tr}{\check{\mathscr X}}\|_{s,u}+\delta^{l_1}\|\slashed{\mathcal L}_{\bar Z}^{m_1}\slashed g\|_{s,u}+\sum_{i=1}^2\bigl(\delta^{l_1}\|\bar Z^{m_1}\check L^i\|_{s,u}+\delta^{l_0}\|\bar Z^{m_0}x^i\|_{s,u}\bigr)\bigr)\nonumber\\
&+\delta(1+\delta^{2k(1-\varepsilon_0)-\varepsilon_0})\delta^{l_0}\|\bar Z^{m_0}\mu\|_{s,u}\nonumber\\
\lesssim & \delta^{k(1-\varepsilon_0)+1/2}+\delta^{k(1-\varepsilon_0)}\delta^{l_2}\|\bar Z^{m_2}\operatorname{tr}{\check{\mathscr X}}\|_{s,u}+\delta(1+\delta^{2k(1-\varepsilon_0)-\varepsilon_0})\delta^{l_0}\|\bar Z^{m_0}\mu\|_{s,u}\label{barZTchi}\\
&+\delta^{k(1-\varepsilon_0)+\varepsilon_0} \bigl( \sqrt{\mathcal E_{1,\leq m+2}(s,u)} + \sqrt{\mathcal E_{2,\leq m+2}(s,u)}\bigr),\nonumber
\end{align}
where \(\bar Z\in\{T,R\}\) and \(l\) is the number of \(T\) in \(\bar Z^{n_1}\), \(0\leq m_2\leq m-1\),
and \(l_j\) counts the occurrences of \(T\) in \(\bar Z^{m_j}\). In what follows, \(l\) counts the occurrences of \(T\)
in the full string on the left-hand side.

In addition, commuting \eqref{lmu} with $\bar Z^{m+1}$ and estimating the resulting commutator yield
\begin{align}
&\delta^{l}\|\mathring L\bar Z^{m+1}\mu\|_{s,u}\nonumber\\
\lesssim & \delta^{2k(1-\varepsilon_0)-\varepsilon_0}\delta^{l_0}\|\bar Z^{m_0}\mu\|_{s,u}+\delta^{(k-1)(1-\varepsilon_0)}\delta^{l_0}\bigl(\|\mu\mathring L\bar Z^{m_0}\varphi\|_{s,u}+\|T\bar Z^{m_0}\varphi\|_{s,u}\bigr)\nonumber\\
&+\delta^{l_1}\bigl(\delta^{k(1-\varepsilon_0)}\|\slashed{\mathcal L}_{\bar Z}^{m_1}\leftidx{^{(R)}}{\slashed\pi}_{T}\|_{s,u}+\|\slashed{\mathcal L}_{\bar Z}^{m_1}\leftidx{^{(R)}}{\slashed\pi}_{\mathring L}\|_{s,u}+\delta\|\slashed{\mathcal L}_{\bar Z}^{m_1}\leftidx{^{(T)}}{\slashed\pi}_{\mathring L}\|_{s,u}\bigr)\nonumber\\
&+\delta^{k(1-\varepsilon_0)-1}\delta^{l_0}\bigl(\sum_{i=1}^2\|\bar Z^{m_0}\check L^i\|_{s,u}+\|\bar Z^{m_0}\upsilon\|_{s,u}\bigr)\nonumber\\
&+\delta^{3k(1-\varepsilon_0)-1}\delta^{l_1}\|\slashed{\mathcal L}_{\bar Z}^{m_1}\slashed g\|_{s,u}
+\delta^{(k-1)(1-\varepsilon_0)-1}\delta^{l_0}\|\bar Z^{m_0}\vp\|_{s,u}\nonumber\\
\lesssim & \delta^{2k(1-\varepsilon_0)-\varepsilon_0}\delta^{l_0}\|\bar Z^{m_0}\mu\|_{s,u}+\delta^{2k(1-\varepsilon_0)-1/2}+\delta^{k(1-\varepsilon_0)-1}\delta^{l_1}\|\bar Z^{m_1}\operatorname{tr}{\check{\mathscr X}}\|_{s,u}\label{Lmu}\\
&+\delta^{(k-1)(1-\varepsilon_0)}\bigl(\sqrt{\mathcal E_{1,\leq m+2}(s,u)}+\sqrt{\mathcal E_{2,\leq m+2}(s,u)}\bigr).\nonumber
\end{align}
By the initial data estimates, applying \eqref{Ff} to \eqref{Lmu}, and then utilizing Gronwall's inequality, we obtain
\begin{equation}\label{m}
\begin{split}
    \delta^{l} \|\bar Z^{m+1} \mu \|_{s,u} \lesssim &\delta^{k(1-\varepsilon_0)+\varepsilon_0-1/2} + \delta^{(k-1)(1-\varepsilon_0)}\int_{t_0}^s \bigl(\sqrt{\mathcal E_{1,\leq m+2}(\tau,u)}+\sqrt{\mathcal E_{2,\leq m+2}(\tau,u)}\bigr)\,d\tau\\
    + & \delta^{k(1-\varepsilon_0)-1}\int_{t_0}^s\delta^{l_1}\|\bar Z^{m_1}\operatorname{tr}{\check{\mathscr X}}\|_{\tau,u}\,d\tau.
    \end{split}
\end{equation}
In addition, it follows from \eqref{barZTchi}, \eqref{m}, induction argument and Gronwall's inequality that for \(l\geq1\),
\begin{align}
    \delta^{l}\|\bar Z^{m}\operatorname{tr}{\check{\mathscr X}}\|_{s,u}\lesssim &\delta^{k(1-\varepsilon_0)+1/2}+\delta^{k(1-\varepsilon_0)+\varepsilon_0}\sum_{i=1}^2\sqrt{\mathcal E_{i,\leq m+2}(s,u)}\no\\
    &+\delta^{k(1-\varepsilon_0)+\varepsilon_0}(1+\delta^{2k(1-\varepsilon_0)-\varepsilon_0})\sum_{i=1}^2\int_{t_0}^s\sqrt{\mathcal E_{i,\leq m+2}(\tau,u)}\,d\tau,\label{1lchi}\\
    \delta^{l}\|\bar Z^{m+1}\mu\|_{s,u}\lesssim & \delta^{k(1-\varepsilon_0)+\varepsilon_0-1/2} +  \delta^{(k-1)(1-\varepsilon_0)}\sum_{i=1}^2\int_{t_0}^s\sqrt{\mathcal E_{i,\leq m+2}(\tau,u)}\,d\tau.\label{1lmu}
\end{align}

For the remaining case, the derivative string contains at least one
$\varrho\mathring L$. In order to treat this case, we will use \eqref{Lchi'} and \eqref{lmu} directly,
and then apply the induction argument on the derivative orders and on the number of
occurrences of $\varrho\mathring L$.
Based on this, together with \eqref{1lchi}--\eqref{1lmu} and the estimates for
pure $R$ derivatives, one has that for $l+p\geq1$,
\begin{align}
    \delta^{l+p\varepsilon_k}\|Z^{m}\operatorname{tr}{\check{\mathscr X}}\|_{s,u}\lesssim &\delta^{k(1-\varepsilon_0)+1/2}+\delta^{k(1-\varepsilon_0)+\varepsilon_0}\sum_{i=1}^2\sqrt{\mathcal E_{i,\leq m+2}(s,u)}\no\\
    &+\delta^{k(1-\varepsilon_0)+\varepsilon_0}(1+\delta^{2k(1-\varepsilon_0)-\varepsilon_0})\sum_{i=1}^2\int_{t_0}^s\sqrt{\mathcal E_{i,\leq m+2}(\tau,u)}\,d\tau,\label{1lpchi}\\
    \delta^{l+p\varepsilon_k}\|Z^{m+1}\mu\|_{s,u}\lesssim &\delta^{k(1-\varepsilon_0)+\varepsilon_0-1/2}
    +\delta^{(k-1)(1-\varepsilon_0)}\Xi^2_{m+2}(s,u)\no\\
    &+\delta^{2\varepsilon_0-1}\sum_{i=1}^2
    \sqrt{\mathcal E_{i,\leq m+2}(s,u)}.\label{1lpmu}
\end{align}

Collecting all the estimates from \eqref{RmcX}, \eqref{RMMu}, \eqref{1lpchi}--\eqref{1lpmu} and \eqref{Zk1x}--\eqref{ZkTLpi},
we finish the proof of Proposition \ref{L2chi}.
\end{proof}

Next, we estimate the $L^2$ norms of $\mathring L\phi$ and $\slashed d\phi$ involving all the related
derivatives that will be used in the next subsection.

\begin{corollary}\label{Cor6.3}
	Under the assumptions of Proposition~\ref{L2chi}, let
	$0\leq m\leq 2N-6$, $\bar Z\in\{T,R\}$ and $l$ denote the number of occurrences
	of $T$ in the corresponding derivative strings. Then
	\begin{align}
	\delta^l\|\bar Z^{m+1}\phi\|_{s,u}\lesssim & \delta^{5/2-\varepsilon_0}+\delta^{k(1-\varepsilon_0)+1}\Xi_{m+2}^2(s,u)+\delta^2\sum_{i=1}^2\sqrt{\mathcal E_{i,\leq m+2}(s,u)},\label{Zphi}\\
	\delta^l\|\bar Z^{m+1}\mathring L\phi\|_{s,u}\lesssim &\delta^{5/2-\varepsilon_0-\varepsilon_k}+\delta^{(2k+1)(1-\varepsilon_0)}\Xi^2_{m+2}(s,u)+\delta\sqrt{\mathcal E_{1,\leq m+2}(s,u)}\nonumber\\
	&+\delta^{(k+1)(1-\varepsilon_0)+1}\sqrt{\mathcal E_{2,\leq m+2}(s,u)}+\delta^{2-\varepsilon_0+k(1-\varepsilon_0)}\|R^m\slashed\triangle\mu\|_{s,u},\label{ZLphi}\\
	\delta^{l}\|\slashed{\mathcal L}_{\bar Z}^{m+1}\slashed{d}\phi\|_{s,u}\lesssim & \delta^{5/2-\varepsilon_0}+\delta\sqrt{\mathcal E_{1,\leq m+2}(s,u)}+\delta^2\sqrt{\mathcal E_{2,\leq m+2}(s,u)}\nonumber\\
	&+\delta^{2-\varepsilon _0}\|R^m\slashed\triangle\mu\|_{s,u}+\delta^{1+k(1-\varepsilon_0)}\Xi^2_{m+2}(s,u),\label{Zdphi}\\
	\delta^l\|\bar Z^m\mathring L^2\phi\|_{s,u}\lesssim & \delta^{5/2-\varepsilon_0-2\varepsilon_k}+\delta^{2-\varepsilon_0+2k(1-\varepsilon_0)}\|R^m\slashed\triangle\mu\|_{s,u}
+\delta^{1-\varepsilon_k}\sqrt{\mathcal E_{1,\leq m+2}(s,u)}\nonumber\\
&+\delta^{2(k+1)(1-\varepsilon_0)} \sqrt{\mathcal E_{2,\leq m+2}(s,u)}+\delta^{3k(1-\varepsilon_0)-2\varepsilon_0+1}\Xi_{m+2}^2(s,u).\label{ZLLphi}
\end{align}
Moreover,
\begin{align}
\delta^l\|\bar Z^{m+1}(G_{\mathring L\mathring L}^\gamma\mathring L_\gamma)\|_{s,u}
\lesssim&\delta^{k(1-\varepsilon_0)}\Bigl\{\delta^{1/2}+\delta^{\varepsilon_0}\|R^m\slashed\triangle\mu\|_{s,u}+\delta^{2\varepsilon_0-1}\sqrt{\mathcal E_{1,\leq m+2}(s,u)}\nonumber\\
&\qquad\qquad+\delta^{\varepsilon_0}\sqrt{\mathcal E_{2,\leq m+2}(s,u)}
+\delta^{(k-1)(1-\varepsilon_0)}\Xi^1_{m+2}(s,u)\Bigr\},\label{ZGL}\\
\delta^l\|\bar Z^m\mathring L\phi\|_{s,u}
\lesssim&\delta^{(k+2)(1-\varepsilon_0)+1/2}+\delta\sqrt{\mathcal E_{1,\leq m+2}(s,u)}\nonumber\\
&+\delta^{(k+1)(1-\varepsilon_0)+1}\sqrt{\mathcal E_{2,\leq m+2}(s,u)}+\delta^{(2k+1)(1-\varepsilon_0)}\Xi^2_{m+2}(s,u).\label{ZmLphi}
\end{align}
\end{corollary}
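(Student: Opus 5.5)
The plan is to express every quantity on the left-hand sides through $\varphi_\gamma$, geometric quantities already controlled in Proposition~\ref{L2chi}, and energies, and then to use Lemma~\ref{L2T} to convert $\bar Z$-derivatives of functions vanishing on $C_0$ into $T$-derivatives. The basic identities are
\[
T\phi=\mu\tilde T^i\varphi_i,\qquad \mathring L\phi=\varphi_0+\mathring L^i\varphi_i,\qquad \slashed d_X\phi=\slashed d_Xx^i\,\varphi_i .
\]
For \eqref{Zphi} we apply \eqref{SiSi} to get $\|\bar Z^{m+1}\phi\|_{s,u}\lesssim\delta\|T\bar Z^{m+1}\phi\|_{s,u}$. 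After commuting $T$ through $\bar Z^{m+1}$, using \eqref{c} and the bounds on ${}^{(R)}\slashed\pi_T$, the main term becomes $\delta\|\bar Z^{m+1}(\mu\tilde T^i\varphi_i)\|_{s,u}$. Distributing derivatives, the top-order pieces are $\mu\bar Z^{m+1}\varphi$, which is controlled by $\delta\sqrt{\mathcal E}$ via \eqref{SiSi}, and $\varphi\,\bar Z^{m+1}\mu$, which is controlled by Proposition~\ref{L2chi}. This accounts for the factors $\delta^{1-\varepsilon_0}\cdot\delta^{k(1-\varepsilon_0)+\varepsilon_0-1/2}\cdot\delta$ and $\delta^{k(1-\varepsilon_0)+1}\Xi^2$. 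Lower-order terms use the $L^\infty$ bounds of Propositions~\ref{prop:higher-order-L-infty-estimates}--\ref{prop:bootstrap-higher-derivatives}. The angular estimate \eqref{Zdphi} is handled the same way using $\slashed d\phi=\slashed dx^i\varphi_i$ and the bound for $\slashed{\mathcal L}_Z^{m+1}\slashed dx$ coming from \eqref{Zk1x}.

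For \eqref{ZmLphi} and \eqref{ZLphi} the key is to avoid the crude bound $|\mathring L\phi|\sim\delta^{1-\varepsilon_0}$. Instead we use the null structure: by \eqref{gab} and $\check L^i=\mathring L^i-x^i/\varrho$,
\[
\mathring L\phi=\varphi_0+\omega^i\varphi_i+\check\varrho\,\omega^i\varphi_i+\check L^i\varphi_i ,
\]
and $\varphi_0+\omega^i\varphi_i$ is the "good" derivative. A cleaner route is to apply \eqref{SiSi} to $\bar Z^m\mathring L\phi$ itself, which reduces the problem to $\delta\|T\bar Z^m\mathring L\phi\|$, and then to use
\[
T\mathring L\phi=\mathring LT\phi+[T,\mathring L]\phi=\mathring L(\mu\tilde T^i\varphi_i)-{}^{(T)}\slashed\pi_{\mathring L}^X\slashed d_X\phi .
\]
Here $\mu\mathring L\varphi$ is an $E_1$ term (hence $\delta\sqrt{\mathcal E_1}$), $\mathring L\mu\,\varphi$ is controlled by $\bar Z^m\mathring L\mu$ through \eqref{lmu} and \eqref{GT}, and $\mathring L\tilde T^i$ is controlled by $\check L$, $\check T$. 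In \eqref{ZLphi} we take one extra $\bar Z$. When $\bar Z=R$ hits the $\slashed dx$, $\slashed\pi$ factors at top order, we need $\slashed{\mathcal L}_R^{m+1}$ of $\zeta$ and $\slashed d\mu$, and these produce $\|R^m\slashed\triangle\mu\|$; such terms are kept explicitly, which is why they appear on the right. Estimate \eqref{ZLLphi} follows in the same way after substituting \eqref{lmu} and \eqref{fequation} once more, each $\mathring L$ costing $\delta^{-\varepsilon_k}$.

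For \eqref{ZGL} we differentiate the expansion \eqref{gLLL}. Every term there is a product of $\varphi^{k-1}$ with one of $\varphi$, $\check L^i$, $\check\varrho$, $\mathring L\phi$, or $\slashed dx\,\slashed d\phi$. The Leibniz rule together with the previous estimates then gives the bound, with the worst contributions coming from $\bar Z^{m+1}\mathring L\phi$ and $\bar Z^{m+1}\slashed d\phi$ multiplied by $\delta^{(k-1)(1-\varepsilon_0)}$; this yields the stated $\delta^{2\varepsilon_0-1}\sqrt{\mathcal E_1}$ and $\delta^{\varepsilon_0}\|R^m\slashed\triangle\mu\|$ terms.

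The main obstacle will be the bookkeeping of $\delta$-powers in \eqref{ZLphi}, in particular ensuring that the $\check L$, $\check\varrho$ contributions genuinely come with the extra factor $\delta^{k(1-\varepsilon_0)}$. I would first verify this at $m=0$ using \eqref{r0vp} for the initial-data contribution, and then induct on $m$ and on the number of $T$'s.
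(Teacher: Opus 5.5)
Your treatment of \eqref{Zphi} follows the paper. So does your handling of $\mathring L\phi$ via \eqref{SiSi} and $T\mathring L\phi=\mathring L(\mu\tilde T^i\varphi_i)-{}^{(T)}\slashed\pi_{\mathring L}^X\slashed d_X\phi$, which is exactly \eqref{TL} contracted with $\varphi_i$, i.e.\ the paper's \eqref{TLp}. Your treatment of \eqref{ZGL} by differentiating \eqref{gLLL} is also the paper's.

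The genuine gap is \eqref{Zdphi}, and the real obstacle lies there, not in the $\check L$, $\check\varrho$ bookkeeping. Suppose you expand $\slashed{\mathcal L}_{\bar Z}^{m+1}(\slashed dx^i\varphi_i)$ by Leibniz and bound $\slashed{\mathcal L}_{\bar Z}^{m+1}\slashed dx$ by \eqref{Zk1x}. This destroys the cancellation that makes $\slashed d\phi=O(\delta^{2-\varepsilon_0})$, even though $\slashed dx^i=O(1)$ and $\varphi_i=O(\delta^{1-\varepsilon_0})$ separately. Concretely:
\begin{itemize}
\item $(\slashed{\mathcal L}_{\bar Z}^{m+1}\slashed dx^i)\varphi_i$ is genuinely of size $\delta^{3/2-\varepsilon_0}$ in $\|\cdot\|_{s,u}$;
\item $\slashed dx^i\,\bar Z^{m+1}\varphi_i$ only gives $\delta(\sqrt{\mathcal E_{1,\le m+2}}+\sqrt{\mathcal E_{2,\le m+2}})$ after \eqref{SiSi}.
\end{itemize}
Both are a full factor $\delta$ worse than \eqref{Zdphi}. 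Applying \eqref{SiSi} first does not help as long as $T$ is then distributed over $\slashed dx^i\varphi_i$, because $\slashed dx^i\,T\varphi_i$ is termwise $O(\delta^{-\varepsilon_0})$.

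The missing idea is to keep $\phi$ intact. Since $\slashed{\mathcal L}_{\bar Z}^{m+1}\slashed d\phi=\slashed d\bar Z^{m+1}\phi$, you should bound the scalar $R\bar Z^{m+1}\phi$, apply \eqref{SiSi} to it, and commute $T$ until it hits $\phi$, using
\[
TR\phi={}^{(R)}\slashed\pi_{TX}\slashed d^X\phi+(R\mu)\tilde T^i\varphi_i-\mu(Rg^{0i})\varphi_i-\mu(R\mathring L^i)\varphi_i+\mu\tilde T^iR\varphi_i .
\]
Equivalently, use the symmetry $\slashed d_Xx^i\,T\varphi_i=\mu\tilde T^i\slashed d_X\varphi_i$. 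Then:
\begin{itemize}
\item the top-order $\varphi$-term is $\mu\tilde T^iR^{m+2}\varphi_i$, bounded by $\delta\|\mu\slashed dR^{m+1}\varphi\|_{s,u}\lesssim\delta\sqrt{\mathcal E_{1,\le m+2}}$;
\item the top-order $\mu$-term $(R^{m+2}\mu)\tilde T^i\varphi_i$ lies one order beyond Proposition~\ref{L2chi}, and it is precisely the source of $\delta^{2-\varepsilon_0}\|R^m\slashed\triangle\mu\|_{s,u}$ in \eqref{Zdphi}.
\end{itemize}
That your route produces no such term is a symptom of the missing structure.

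The loss also propagates to the other estimates. In your formula for $T\mathring L\phi$, two terms feed $\slashed d\bar Z^{\le m+1}\phi$ into \eqref{ZLphi}--\eqref{ZmLphi} with coefficient $\delta^{k(1-\varepsilon_0)}$:
\begin{itemize}
\item ${}^{(T)}\slashed\pi_{\mathring L}^X\slashed d_X\phi$, where $|{}^{(T)}\slashed\pi_{\mathring L}|\lesssim\delta^{k(1-\varepsilon_0)-1}$;
\item through \eqref{GT}, the piece $(\slashed d^Xx)(\slashed d_X\phi)\varphi^{k-2}$ of $G^\gamma_{\mathring L\mathring L}\mathring L_\gamma$ inside $\mathring L\mu$.
\end{itemize}
Similarly, \eqref{ZGL} receives $\slashed d\bar Z^{m+1}\phi$ with coefficient $\delta^{(k-2)(1-\varepsilon_0)}$. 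With your weaker angular bound, each of these is off by $\delta^{-(1-\varepsilon_0)}$. For example, you get $\delta^{k(1-\varepsilon_0)+3/2-\varepsilon_0}$ in place of $\delta^{5/2-\varepsilon_0-\varepsilon_k}=\delta^{(k+2)(1-\varepsilon_0)+1/2}$.

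Likewise, the term $\delta^{2-\varepsilon_0+k(1-\varepsilon_0)}\|R^m\slashed\triangle\mu\|_{s,u}$ in \eqref{ZLphi} comes from $\delta^{k(1-\varepsilon_0)}\|\slashed dR^{m+1}\phi\|_{s,u}$ combined with the correct \eqref{Zdphi}. The term $\slashed dR^{m+1}\mu\cdot\slashed d\phi$ that you point to only contributes $\delta^{3-\varepsilon_0}\|R^m\slashed\triangle\mu\|_{s,u}$.

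Two smaller points:
\begin{itemize}
\item In \eqref{gLLL}, the factor multiplying $\mathring L\phi$ and $\slashed d\phi$ is $\varphi^{k-2}$, i.e.\ $\delta^{(k-2)(1-\varepsilon_0)}$, not $\delta^{(k-1)(1-\varepsilon_0)}$. This is what produces $\delta^{k(1-\varepsilon_0)+2\varepsilon_0-1}\sqrt{\mathcal E_1}$ and $\delta^{k(1-\varepsilon_0)+\varepsilon_0}\|R^m\slashed\triangle\mu\|$ in \eqref{ZGL}.
\item In \eqref{Zphi}, you need the pure $\{T,R\}$ bounds \eqref{RMMu}, \eqref{1lmu} for $\bar Z^{m+1}\mu$. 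The last term $\delta^{2\varepsilon_0-1}\sqrt{\mathcal E}$ in the general statement of Proposition~\ref{L2chi} would give $\delta^{1+\varepsilon_0}\sqrt{\mathcal E}$ instead of $\delta^{2}\sqrt{\mathcal E}$.
\end{itemize}
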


\begin{proof}
From \eqref{SiSi}, we have
\begin{equation}\label{lowerphi}
\begin{split}
\delta^l\|\bar Z^{m+1}\phi\|_{s,u}&\lesssim \delta^{1+l}\|T\bar Z^{m+1}\phi\|_{s,u}\\
&\lesssim \delta^{l+1}\bigl(\|[T,\bar Z^{m+1}]\phi\|_{s,u}+\|\bar Z^{m+1}(\mu\tilde T^i\varphi_i)\|_{s,u}\bigr)\\
&\lesssim \delta^{3-\varepsilon_0}\delta^{l_1}\|\slashed{\mathcal L}_{\bar Z}^{m_1}\leftidx{^{(R)}}{\slashed\pi}_T\|_{s,u}+\delta^{2k(1-\varepsilon_0)}\delta^{l_0}\|\bar Z^{m_0}\phi\|_{s,u}\\
&\quad+\delta^{(2k+1)(1-\varepsilon_0)+1}\delta^{l_1}\|\slashed{\mathcal L}_{\bar Z}^{m_1}\slashed g\|_{s,u}+\delta\delta^{l_0}\|\bar Z^{m_0}\varphi\|_{s,u}\\
&\quad+\delta^{2-\varepsilon_0}\delta^{l_0}\bigl(\|\bar Z^{m_0}\mu\|_{s,u}+\sum_{i=1}^2(\|\bar Z^{m_0}\check L^i\|_{s,u}+\|\bar Z^{m_0}x^i\|_{s,u})\bigr).
\end{split}	
\end{equation}
Absorbing the term $\delta^{2k(1-\varepsilon_0)}\delta^{l_0}\|\bar Z^{m_0}\phi\|_{s,u}$ on the right-hand side by the left-hand side
in \eqref{lowerphi}, one has
\begin{align*}
\delta^l\|\bar Z^{m+1}\phi\|_{s,u}
\lesssim & \ \delta^{3-\varepsilon_0}\delta^{l_1}\|\slashed{\mathcal L}_{\bar Z}^{m_1}\leftidx{^{(R)}}{\slashed\pi}_T\|_{s,u}
+\delta^{(2k+1)(1-\varepsilon_0)+1}\delta^{l_1}\|\slashed{\mathcal L}_{\bar Z}^{m_1}\slashed g\|_{s,u}\\
&+\delta^{2-\varepsilon_0}\delta^{l_0}\bigl(\|\bar Z^{m_0}\mu\|_{s,u}+\sum_{i=1}^2(\|\bar Z^{m_0}\check L^i\|_{s,u}+\|\bar Z^{m_0}x^i\|_{s,u})\bigr)+\delta\delta^{l_0}\|\bar Z^{m_0}\varphi\|_{s,u}\\
\lesssim & \ \delta^{5/2-\varepsilon_0}+\delta^{k(1-\varepsilon_0)+1}\Xi_{m+2}^2(s,u)+\delta^2\sum_{i=1}^2\sqrt{\mathcal E_{i,\leq m+2}(s,u)}.
\end{align*}
We next estimate $\bar Z^{m+1}\mathring L\phi$. When $\bar Z^{m+1}=R^{m+1}$, using \eqref{SiSi} gives
\begin{align}
\|R^{m+1}\mathring L\phi\|_{s,u}&\lesssim\delta\|TR^{m+1}\mathring L\phi\|_{s,u}\nonumber\\
&\lesssim \delta\|R^{m+1}T(\mathring L^\alpha\varphi_\alpha)\|_{s,u}+\delta^{1+(k+2)(1-\varepsilon_0)}\|\slashed{\mathcal L}_{R}^{m_1}\leftidx{^{(R)}}{\slashed\pi}_T\|_{s,u}\label{ZLp}\\
&\quad+\delta^{2k(1-\varepsilon_0)}\|R^{m_0}\mathring L\phi\|_{s,u}+\delta^{(3k+2)(1-\varepsilon_0)}\|\slashed{\mathcal L}_{R}^{m_1}\leftidx{^{(R)}}{\slashed\pi}\|_{s,u},\nonumber
\end{align}
where $T(\mathring L^\alpha \varphi_\alpha)=(T\mathring L^i)\varphi_i+\mu\tilde T^i(\mathring L\varphi_i)$.
Recalling that $T\mathring L^i$ satisfies \eqref{TL}, we have
\begin{equation}\label{TLp}
T(\mathring L^\alpha \varphi_\alpha)=\mu\tilde T^i(\mathring L\varphi_i)+(\slashed d_X\mu)\slashed d^X\phi-(G_{X\mathring L}^\gamma T\varphi_\gamma)\slashed d^X\phi+\frac12(G_{\mathring L\mathring L}^\gamma T\varphi_\gamma)\tilde T^i\varphi_i+\dots.
\end{equation}
With the help of \eqref{GT} and \eqref{gLLL}, substituting \eqref{TLp} into \eqref{ZLp} yields
\begin{align}
&\|R^{m+1}\mathring{L}\phi\|_{s,u}\nonumber\\
\lesssim&\ \delta^{(k+1)(1-\varepsilon_0)}\Bigl(\sum_{i=1}^2\|R^{m_0}\check{L}^i\|_{s,u}+\|R^{m_0}\check{\varrho}\|_{s,u}
+\|R^{m_0}\varphi\|_{s,u}+\delta\|TR^{m_0}\varphi\|_{s,u}\Bigr)\nonumber\\
&+\delta^{(k+2)(1-\varepsilon_0)}\bigl(\|R^{m_{-1}}x\|_{s,u}+\|R^{m_0}\mu\|_{s,u}\bigr)
+\delta^{k(1-\varepsilon_0)+1}\|\mu\slashed{d}R^{m_0}\varphi\|_{s,u}\nonumber\\
&+\delta\|\mu\mathring{L}R^{m_0}\varphi\|_{s,u}
+\delta^{(k+2)(1-\varepsilon_0)+1}\|\slashed{\mathcal{L}}_{R}^{m_1}\,{}^{(R)}\!\slashed{\pi}_T\|_{s,u}
+\delta^{k(1-\varepsilon_0)}\|\slashed{d}R^{m_0}\phi\|_{s,u}\nonumber\\
&+\delta^{3-\varepsilon_0}\|\slashed{d}R^{m+1}\mu\|_{s,u}
+\delta^{(k+1)(1-\varepsilon_0)}(\delta+\delta^{(2k+1)(1-\varepsilon_0)})\|\slashed{\mathcal L}_{R}^{m_1}\leftidx{^{(R)}}{\slashed\pi}\|_{s,u}\nonumber\\
&+\delta^{2-\varepsilon_0}\|\slashed{\mathcal{L}}_{R}^{m_1}\,{}^{(R)}\!\slashed{\pi}_{\mathring{L}}\|_{s,u}
+\delta^{3-\varepsilon_0}\|\slashed{\mathcal{L}}_{R}^{m_1}\,{}^{(T)}\!\slashed{\pi}_{\mathring{L}}\|_{s,u}\nonumber\\
\lesssim&\ \delta^{(k+2)(1-\varepsilon_0)+1/2}+\delta^{(2k+1)(1-\varepsilon_0)}\Xi^2_{m+2}(s,u)+\delta\sqrt{\mathcal E_{1,\leq m+2}(s,u)}\nonumber\\
&+\delta^{(k+1)(1-\varepsilon_0)+1}\sqrt{\mathcal E_{2,\leq m+2}(s,u)}+\delta^{3-\varepsilon_0}\|R^m\slashed\triangle\mu\|_{s,u}+\delta^{k(1-\varepsilon_0)}\|\slashed{d}R^{m_0}\phi\|_{s,u},\nonumber
\end{align}
where $1\leq m_p\leq m+1-p$ with $p=-1,0,1$. If $\bar Z^{m+1}=\bar Z^{p_1}TR^{p_2}$ with $p_1+p_2=m$, then $\bar Z^{m+1}\mathring L\phi=\bar Z^{p_1}R^{p_2}T(\mathring L^\alpha\varphi_\alpha)+\bar Z^{p_1}[T,R^{p_2}]\mathring L\phi$. Hence, for any $\bar Z^{m+1}$ containing $l$ occurrences of the vector field $T$, one has
\begin{equation}\label{bZLp}
\begin{split}
\delta^l\|\bar Z^{m+1}\mathring L\phi\|_{s,u}
\lesssim & \ \delta^{(k+2)(1-\varepsilon_0)+1/2}+\delta^{(2k+1)(1-\varepsilon_0)}\Xi^2_{m+2}(s,u)+\delta\sqrt{\mathcal E_{1,\leq m+2}(s,u)}\\
&+\delta^{(k+1)(1-\varepsilon_0)+1}\sqrt{\mathcal E_{2,\leq m+2}(s,u)}+\delta^{3-\varepsilon_0}\|R^m\slashed\triangle\mu\|_{s,u}\\
&+\delta^{k(1-\varepsilon_0)}\delta^{l_0}\|\slashed{d}\bar Z^{m_0}\phi\|_{s,u}.
\end{split}
\end{equation}

In addition, due to $|\slashed{\mathcal L}_{\bar Z}^{m+1}\slashed d\phi|\lesssim |R\bar Z^{m+1}\phi|$, it suffices to
treat the $L^2$ norm of $R\bar Z^{m+1}\phi$. As before, for $\bar Z^{m+1}=R^{m+1}$, we have
\begin{equation}\label{Rp}
\begin{split}
\|R^{m+2}\phi\|_{s,u}
\lesssim & \ \delta\|R^{m+1}TR\phi\|_{s,u}+\delta^{3-\varepsilon_0}\|\slashed{\mathcal L}_{R}^{m_1}\leftidx{^{(R)}}{\slashed\pi}_T\|_{s,u}+\delta^{2k(1-\varepsilon_0)}\|R^{m_0}R\phi\|_{s,u}\\
&+\delta^{2k(1-\varepsilon_0)+2-\varepsilon_0}\|\slashed{\mathcal L}_{R}^{m_1}\leftidx{^{(R)}}{\slashed\pi}\|_{s,u}.
\end{split}
\end{equation}
Note that $TR\phi=\leftidx{^{(R)}}{\slashed\pi}_{TX}\slashed d^X\phi+(R\mu)\tilde T^i\varphi_i-\mu(Rg^{0i})\varphi_i-\mu(R\mathring L^i)\varphi_i+\mu \tilde T^iR\varphi_i$. It follows from \eqref{Rpi} and \eqref{deL} that
\begin{equation*}
\begin{split}
TR\phi=&\ \upsilon\slashed d^X\mu\slashed d_X\phi+\mu(G_{X\mathring L}^\gamma R\varphi_\gamma)\slashed d^X\phi+\mu(G_{X\tilde T}^\gamma R\varphi_\gamma)\slashed d^X\phi+\mu\epsilon_i^a\check T^ig_{ab}(\slashed d_Xx^b\slashed d^X\phi)\\
&-\frac{\mu}{\varrho}R\phi+\upsilon(G_{X\tilde T}^\gamma T\varphi_\gamma)\slashed d^X\phi-\frac12\upsilon\mu(G_{\tilde T\tilde T}^\gamma\slashed d^X\varphi_\gamma)\slashed d_X\phi+\mu(G_{\mathring L\tilde T}^\gamma R\varphi_\gamma)\tilde T^i\varphi_i\\
&+\frac12\mu(G_{\tilde T\tilde T}^\gamma R\varphi_\gamma)\tilde T^i\varphi_i+(R\mu)\tilde T^i\varphi_i-\mu(Rg^{0i})\varphi_i+\mu \tilde T^iR\varphi_i.
\end{split}
\end{equation*}
Thus,
\begin{align}
\|R^{m+2}\phi\|_{s,u}
\lesssim&\ \delta^{2-\varepsilon_0}\bigl\{\|\slashed dR^{m+1}\mu\|_{s,u}+\|R^{m_0}\mu\|_{s,u}\bigr\}+\delta^{3-\varepsilon_0}\|\slashed{\mathcal L}_{R}^{m_1}\leftidx{^{(R)}}{\slashed\pi}_T\|_{s,u}\nonumber\\
&+\delta\|R^{m_0}\varphi\|_{s,u}+\delta \|\mu\slashed dR^{m+1}\varphi\|_{s,u}+\delta^{2k(1-\varepsilon_0)+2}\|TR^{m+1}\varphi\|_{s,u}\nonumber\\
&+\delta^{(k+1)(1-\varepsilon_0)+1}\|R^{m_0}\upsilon\|_{s,u}+\delta^{(2k+1)(1-\varepsilon_0)+1}\|\slashed{\mathcal L}_{R}^{m_1}\leftidx{^{(R)}}{\slashed\pi}\|_{s,u}\nonumber\\
&+\delta^{2-\varepsilon_0}\sum_{i=1}^2\bigl(\|R^{m_0}\check L^i\|_{s,u}+\|R^{m_{-1}}x^i\|_{s,u}\bigr)\nonumber\\
\lesssim & \ \delta^{5/2-\varepsilon_0}+\delta\sqrt{\mathcal E_{1,\leq m+2}(s,u)}+\delta^2\sqrt{\mathcal E_{2,\leq m+2}(s,u)}\label{Rmp}\\
&+\delta^{2-\varepsilon _0}\|R^m\slashed\triangle\mu\|_{s,u}+\delta^{1+k(1-\varepsilon_0)}\Xi^2_{m+2}(s,u).\nonumber
\end{align}
For $\bar Z^{m+1}=\bar Z^{p_1}TR^{p_2}$ with $p_1+p_2=m$, due to $R\bar Z^{m+1}\phi=R\bar Z^{p_1}R^{p_2}(\mu \tilde T^i\varphi_i)+R\bar Z^{p_1}[T,R^{p_2}]\phi$, one has
\begin{equation}\label{DXY-8}
\begin{split}
\delta^l\|R\bar Z^{m+1}\phi\|_{s,u}
\lesssim & \ \delta^{3-\varepsilon_0}\delta^{l_1}\|\slashed{\mathcal L}_{\bar Z}^{m_1}\leftidx{^{(R)}}{\slashed\pi}_T\|_{s,u}+\delta^{2k(1-\varepsilon_0)}\delta^{l_0}\|\bar Z^{m_0}\phi\|_{s,u}+\delta^{(2k+1)(1-\varepsilon_0)+1}\delta^{l_1}\|\slashed{\mathcal L}_{\bar Z}^{m_1}\slashed g\|_{s,u}\\
&+\delta^{2-\varepsilon_0}\delta^{l_0}\bigl(\|\bar Z^{m_0}\mu\|_{s,u}+\sum_{i=1}^2(\|\bar Z^{m_0}\check L^i\|_{s,u}+\|\bar Z^{m_0}x^i\|_{s,u})\bigr)+\delta\delta^{l_0}\|\bar Z^{m_0}\varphi\|_{s,u}.
\end{split}
\end{equation}
Note that the terms on the right-hand side in \eqref{DXY-8} are exactly the same as the ones in \eqref{lowerphi}. Therefore,
we can obtain \eqref{Zdphi}, and then \eqref{ZLphi} follows by substituting \eqref{Zdphi} into \eqref{bZLp}.

Analogously, owing to
\[
\|R^m\mathring L^2\phi\|_{s,u}
\lesssim \delta\bigl\|[T,R^m\mathring L]\mathring L\phi\bigr\|_{s,u}
+\delta\bigl\|R^m\mathring L T\bigl(\mathring L^\alpha\varphi_\alpha\bigr)\bigr\|_{s,u},
\]
one deduces from \eqref{TLp} that
\begin{align*}
&\|R^m\mathring L^2\phi\|_{s,u}\\
\lesssim &\delta^{1-\varepsilon_k}\biggl(
\sum_{i=1}^2\bigl(\delta^{-\varepsilon_k}\|R^{m_0}\check L^i\|_{s,u}
+\|R^{m_1}\mathring L\check L^i\|_{s,u}\bigr)
+\delta^{-\varepsilon_k}\|R^{m_0}\check\varrho\|_{s,u}
+\|R^{m_1}\mathring L\check\varrho\|_{s,u}
\biggr)\\
&+\delta^{(2k+2)(1-\varepsilon_0)-\varepsilon_0}\|R^{m_{0}}x\|_{s,u}
+\delta^{3-\varepsilon_0-\varepsilon_k}
\bigl\|\slashed{\mathcal L}_{R}^{m_1} \leftidx{^{(T)}}{\slashed\pi}_{\mathring L}\bigr\|_{s,u}+\delta^{3-\varepsilon_0-2\varepsilon_k}
\bigl\|\slashed{\mathcal L}_{R}^{m_1} \leftidx{^{(R)}}{\slashed\pi}_T\bigr\|_{s,u}\\
&+\delta^{k(1-\varepsilon_0)}\|R^{m_0}\mathring L\phi\|_{s,u}+\delta^{k(1-\varepsilon_0)-\varepsilon_k}\|R^{m_0}\phi\|_{s,u}
+\delta^{3-\varepsilon_0}\bigl\|\slashed d R^{m}\mathring L\mu\bigr\|_{s,u}\\
&+\delta^{2-\varepsilon_0-\varepsilon_k}\|R^{m_1}\mathring L\mu\|_{s,u}
+\delta^{2-\varepsilon_0-2\varepsilon_k}\|R^{m_0}\mu\|_{s,u}+\delta^{2-\varepsilon_0-\varepsilon_k}
\bigl\|\slashed{\mathcal L}_{R}^{m_1} \leftidx{^{(R)}}{\slashed\pi}_{\mathring L}\bigr\|_{s,u}\\
&+\delta^{1-\varepsilon_k}\sqrt{\mathcal E_{1,\leq m+2}(s,u)}+\delta^{2(k+1)(1-\varepsilon_0)} \sqrt{\mathcal E_{2,\leq m+2}(s,u)}\\
\lesssim & \delta^{5/2-\varepsilon_0-2\varepsilon_k}
+\delta^{2-\varepsilon_0+2k(1-\varepsilon_0)}\|R^m\slashed\triangle\mu\|_{s,u}+\delta^{1-\varepsilon_k}\sqrt{\mathcal E_{1,\leq m+2}(s,u)}\\
&+\delta^{2(k+1)(1-\varepsilon_0)} \sqrt{\mathcal E_{2,\leq m+2}(s,u)}+\delta^{3k(1-\varepsilon_0)-2\varepsilon_0+1}\Xi_{m+2}^2(s,u).
\end{align*}

Meanwhile, it follows from \eqref{gLLL} that
\begin{align*}
\delta^l\bigl\lVert\bar{Z}^{m+1}\bigl(G_{\mathring{L}\mathring{L}}^\gamma \mathring{L}\gamma\bigr)\bigr\rVert
&\lesssim \delta^{(k-1)(1-\varepsilon_0)}\delta^{l_0}\bigl(
\lVert\bar{Z}^{m_0}\varphi\rVert{s,u}
\sum_{i=1}^2\lVert\bar{Z}^{m_0}\check{L}^i\rVert_{s,u}
\lVert\bar{Z}^{m_0}\check{\varrho}\rVert_{s,u}
\bigr) \\
&\quad + \delta^{k(1-\varepsilon_0)}\delta^{l_{-1}}\lVert\bar{Z}^{m_{-1}}x\rVert_{s,u}
\delta^{(k-2)(1-\varepsilon_0)}\delta^{l_0}\bigl(
\lVert\bar{Z}^{m_0}\mathring{L}\phi\rVert_{s,u}
\lVert\slashed{d}\bar{Z}^{m_0}\phi\rVert_{s,u}
\bigr).
\end{align*}
Together with Proposition \ref{L2chi} and the estimates \eqref{ZLphi}--\eqref{Zdphi}, this yields \eqref{ZGL}.
In addition, \eqref{ZmLphi} follows from \eqref{ZLphi} and Proposition \ref{L2chi}.\end{proof}

\subsection{Top-order $L^2$ estimates for the derivatives of
$\check{\mathscr X}$ and $\mu$}
\label{Section 6.2}

As stated in Remark~\ref{rem6.1}, to close the energy estimates, we need the precise
bounds for $\|R^{m+1}\mathring L\phi\|_{s,u}$ and
$\|\slashed dR^{m+1}\phi\|_{s,u}$ when \eqref{GT} and \eqref{gLLL} are used to estimate $\|R^{m+1}(G_{\mathring L\mathring L}^\gamma T\vp_\gamma)\|_{s,u}$. For this purpose, it is crucial
to control the term $\|R^m\slashed\triangle\mu\|_{s,u}$ appeared
in Corollary~\ref{Cor6.3}. Note that Proposition~\ref{L2chi}
cannot be applied directly to bound \(\|R^{m}\slashed\triangle\mu\|_{s,u}\) since the resulting term \(\Xi_{m+3}^2(s,u)\)
can not be absorbed by the energies of \((m+2)\) orders.

On the other hand, as in \cite{Ch1,Ding4,M-Y,Sp}, the energy
estimate \eqref{e} and the equation \eqref{Psi} require the control
of derivatives of $\varphi$ up to orders $2N-4$.
By the expressions for $\leftidx{^{(Z)}}D_{\gamma,2}^n$
in \eqref{T2}, \eqref{rL2} and \eqref{R2}, the source terms in \eqref{e}
involve the derivatives of $\check{\mathscr X}$ and $\mu$ up to
orders $2N-5$ and $2N-4$, respectively.
It is worth noticing that the $L^2$ estimates provided by
Proposition~\ref{L2chi} involve the energies of orders $2N-3$, however,
the available energies only control the derivatives of $\varphi$
up to orders $2N-4$.
Thus, in this section, we require to take the top-order $L^2$ estimates
for the derivatives of $\check{\mathscr X}$ and $\mu$
without applying the energies of orders $2N-3$.

\subsubsection{Estimates on the derivatives of $\slashed d\operatorname{tr}\check{\mathscr{X}}$}\label{trchi}

If there is at least one vector field $\varrho\mathring L$ in $Z^m$, that is, $Z^m=Z^{n_1}(\varrho\mathring L)\bar Z^{n_2}$ and $n_1+n_2=m-1$ with $\bar Z\in\{T,R\}$, then it follows from \eqref{Lchi'} and Proposition \ref{L2chi} that
\begin{align}
&\delta^{l+(p+1)\varepsilon_k}\|\mu\slashed dZ^m(\operatorname{tr}\check{\mathscr{X}})\|_{s,u}\nonumber\\
=&\delta^{l+(p+1)\varepsilon_k}\|\mu\slashed dZ^{n_1}\bar Z^{n_2}(\varrho\mathring L) (\operatorname{tr}\check{\mathscr{X}})
+\mu\slashed dZ^{n_1}[\varrho\mathring L,\bar Z^{n_2}](\operatorname{tr}\check{\mathscr{X}})\|_{s, u}\nonumber\\
\lesssim & (\delta^{\varepsilon_k}+\delta^{k(1-\varepsilon_0)})\delta^{l_1+p_1\varepsilon_k}\|Z^{m_1}(\operatorname{tr}\check{\mathscr{X}})\|_{s, u}+\delta^{(k-1)(1-\varepsilon_0)}\delta^{l_0+p_0\varepsilon_k}\|\mu\slashed dZ^{m_0}\varphi\|_{s, u}\nonumber\\
&+\delta^{k(1-\varepsilon_0)}\Big\{\delta^{l_1+p_1\varepsilon_k}\Big(\|\slashed{\mathcal L}_Z^{m_1}\slashed g\|_{s, u}+\sum_{i=1}^2\|Z^{m_1}\check L^i\|_{s, u}\Big)+\delta^{l_0+p_0\varepsilon_k}\|Z^{m_0}x\|_{s, u}\Big\}\nonumber\\
&+\delta^{(k-1)(1-\varepsilon_0)}\delta^{l_0+p_0\varepsilon_k}\|Z^{m_0}\varphi\|_{s, u}+\delta^{\varepsilon_0}\delta^{l_1+p_1\varepsilon_k}(\|\slashed{\mathcal L}_Z^{m_1}\leftidx^{{(R)}}\slashed\pi_{\mathring L}\|_{s, u}+\delta\|\slashed{\mathcal L}_Z^{m_1}\leftidx^{{(T)}}\slashed\pi_{\mathring L}\|_{s, u})\nonumber\\
\lesssim & \delta^{k(1-\varepsilon_0)+1/2}+\delta^{(k-1)(1-\varepsilon_0)}\sqrt{{\mathcal E}_{1,\leq m+2}(s, u)}+\delta^{k(1-\varepsilon_0)+\varepsilon_0}\sqrt{{\mathcal E}_{2,\leq m+2}(s, u)}\label{ZmX}\\
&+\delta^{\varepsilon_0}(1+\delta^{2k(1-\varepsilon_0)-\varepsilon_0})\int_{t_0}^s\bigl\{\mu_{\min}^{-1/2}(\tau)\sqrt{\mathcal E_{1,\leq m+2}(\tau, u)}+\delta^{k(1-\varepsilon_0)}\sqrt{\mathcal E_{2,\leq m+2}(\tau,u)}\bigr\}d\tau,\nonumber
\end{align}
where $l$ (resp. $p$) is the number of $T$ (resp. $\varrho\mathring L$) appearing in $Z^{n_1}\bar Z^{n_2}$.

For the case \(Z^m=\bar Z^m\), we do not apply \eqref{fequation} and \eqref{Lchi'} to derive the top-order \(L^2\) 
norm as in \cite{Ch1,M-Y,Sp,Ding4}. Instead, we will rely on the more precise transport equation \eqref{transport} 
in Lemma~\ref{lem:null-transport},
whose structure is crucial for establishing the top-order estimates below.

It follows from \eqref{transport} that
\begin{align}
&\mathring{L}\bigl(\operatorname{tr}{\mathscr{X}} - E_0\bigr) \nonumber\\
&\quad = \frac{1}{\varrho^2} - \frac{2}{\varrho}\operatorname{tr}\mathscr{X}
- \frac{1}{2}\bigl(G_{\mathring{L}\mathring{L}}^\gamma \mathring{L}_\gamma\bigr)\tilde{T}^i\slashed{\triangle}\varphi_i
- \bigl(\operatorname{tr}\check{\mathscr{X}}\bigr)^2 \nonumber\\
&\quad\quad - \bigl(
\frac{1}{2}G_{\mathring{L}\mathring{L}}^\alpha\mathring{L}\varphi_\alpha
+ G_{\tilde{T}\mathring{L}}^\alpha\mathring{L}\varphi_\alpha
- G_{X\mathring{L}}^\alpha\slashed{d}^X\varphi_\alpha
+ \slashed{g}^{XX}G_{XX}^\alpha\mathring{L}\varphi_\alpha
\bigr)\operatorname{tr}\mathscr{X} \nonumber\\
&\quad\quad + \mathring{L}\bigl(G_{X\mathring{L}}^\gamma\slashed{g}^{XX}\bigr)\slashed{d}_X\varphi_\gamma
- \frac{1}{2}\mathring{L}\bigl(G_{\mathring{L}\mathring{L}}^0\slashed{d}^X x^i\bigr)\slashed{d}_X\varphi_i \label{transport2}\\
&\quad\quad + \frac{1}{2}\mathring{L}\bigl(
g_{\gamma j}G_{\mathring{L}\mathring{L}}^\gamma\bigl(\mathring{L}^\alpha + 2g^{0\alpha}\bigr)\slashed{d}^X x^j
\bigr)\slashed{d}_X\varphi_\alpha\nonumber\\
&\quad\quad + \frac{1}{2}\slashed{d}^X\bigl(G_{\mathring{L}\mathring{L}}^0\bigr)\slashed{d}_X x^i \mathring{L}\varphi_i
- \frac{1}{2}\slashed{d}^X\bigl(G_{\mathring{L}\mathring{L}}^\gamma \mathring{L}_\gamma \tilde{T}^i\bigr)\slashed{d}_X\varphi_i \nonumber\\
&\quad\quad - \frac{1}{2}\slashed{d}^X\bigl(G_{\mathring{L}\mathring{L}}^\gamma\bigr)\slashed{d}_X\varphi_\gamma
- \frac{1}{2}\slashed{d}^X\bigl(
g_{\gamma j}G_{\mathring{L}\mathring{L}}^\gamma\bigl(\mathring{L}^\alpha + 2g^{0\alpha}\bigr)
\bigr)\slashed{d}_X x^j \mathring{L}\varphi_\alpha \nonumber\\
&\quad\quad + \frac{1}{2}\bigl(G_{\mathring{L}\mathring{L}}^0 \mathring{L}\varphi_i\bigr)\slashed{\triangle} x^i
- \frac{1}{2}g_{\gamma j}G_{\mathring{L}\mathring{L}}^\gamma\bigl(\mathring{L}^\alpha + 2g^{0\alpha}\bigr)\mathring{L}\varphi_\alpha \slashed{\triangle} x^j \nonumber\\
&\quad\quad + f\bigl(\varphi, \slashed{d}x, \mathring{L}^1, \mathring{L}^2\bigr)
\begin{pmatrix}
\slashed{d}(\varphi^{k-1}) \\
\varphi^{2k-2}\slashed{d}\varphi
\end{pmatrix}
\begin{pmatrix}
\mathring{L}\varphi \\
\slashed{d}\varphi
\end{pmatrix}.\nonumber
\end{align}
Note that in view of \eqref{deL} and \eqref{LL}, we have
\begin{equation}\label{YHC-A5}
\mathring{L}\bigl(G_{X\mathring{L}}^\gamma\slashed{g}^{XX}\bigr)\slashed{d}_X\varphi_\gamma
= -\bigl(G_{X\mathring{L}}^\gamma\slashed{d}^X\varphi_\gamma\bigr)\operatorname{tr}\mathscr{X} + \dots,
\end{equation}
where and below the terms represented by the ellipsis in \eqref{YHC-A5} can be absorbed into the remainder
\[
f\bigl(\varphi, \slashed{d}x, \mathring{L}^1, \mathring{L}^2\bigr)
\begin{pmatrix}
\slashed{d}(\varphi^{k-1}) \\
\varphi^{2k-2}\slashed{d}\varphi
\end{pmatrix}
\begin{pmatrix}
\mathring{L}\varphi \\
\slashed{d}\varphi
\end{pmatrix}.
\]
Similarly, one can arrive at
\begin{align*}
&-\frac{1}{2}\mathring{L}\bigl(G_{\mathring{L}\mathring{L}}^0\slashed{d}^X x^i\bigr)\slashed{d}_X\varphi_i
=\frac{1}{2}\bigl(G_{\mathring{L}\mathring{L}}^0\slashed{d}^X x^i \slashed{d}_X\varphi_i\bigr)\operatorname{tr}\mathscr{X} + \dots, \\
&\frac{1}{2}\mathring{L}\bigl(
g_{\gamma j}G_{\mathring{L}\mathring{L}}^\gamma\bigl(\mathring{L}^\alpha + 2g^{0\alpha}\bigr)\slashed{d}^X x^j
\bigr)\slashed{d}_X\varphi_\alpha
=-\frac{1}{2}g_{\gamma j}G_{\mathring{L}\mathring{L}}^\gamma\bigl(\mathring{L}^\alpha + 2g^{0\alpha}\bigr)\bigl(\slashed{d}^X x^j \slashed{d}_X\varphi_\alpha\bigr)\operatorname{tr}\mathscr{X} + \dots, \\[6pt]
&\frac{1}{2}\slashed{d}^X\bigl(G_{\mathring{L}\mathring{L}}^0\bigr)\slashed{d}_X x^i \mathring{L}\varphi_i
= \bigl(G_{X\mathring{L}}^0 \mathring{L}^\alpha \slashed{d}^X\varphi_\alpha\bigr)\operatorname{tr}\mathscr{X} + \dots, \\
&-\frac{1}{2}\slashed{d}^X\bigl(G_{\mathring{L}\mathring{L}}^\gamma \mathring{L}_\gamma \tilde{T}^i\bigr)\slashed{d}_X\varphi_i
=-\bigl( G_{X\mathring{L}}^\gamma \mathring{L}_\gamma \tilde{T}^i \slashed{d}^X\varphi_i+\frac{1}{2}G_{\mathring{L}\mathring{L}}^\gamma g_{\gamma i}\slashed{d}^X x^i \tilde{T}^j \slashed{d}_X\varphi_j\\
&\qquad\qquad\qquad\qquad \qquad\quad  - \frac{1}{2}G_{\mathring{L}\mathring{L}}^\gamma \mathring{L}_\gamma \slashed{d}^X x^j \slashed{d}_X\varphi_j
\bigr)\operatorname{tr}\mathscr{X} + \dots, \\
&-\frac{1}{2}\slashed{d}^X\bigl(G_{\mathring{L}\mathring{L}}^\gamma\bigr)\slashed{d}_X\varphi_\gamma
= -\bigl(G_{X\mathring{L}}^\gamma\slashed{d}^X\varphi_\gamma\bigr)\operatorname{tr}\mathscr{X} + \dots, \\
&-\frac{1}{2}\slashed{d}^X\bigl(
g_{\gamma j}G_{\mathring{L}\mathring{L}}^\gamma\bigl(\mathring{L}^\alpha + 2g^{0\alpha}\bigr)
\bigr)\slashed{d}_X x^j \mathring{L}\varphi_\alpha
= -g_{\gamma j}\slashed{d}^X x^j \bigl(G_{X\mathring{L}}^\gamma\bigl(\mathring{L}^\alpha + 2g^{0\alpha}\bigr)\mathring{L}\varphi_\alpha\\
&\qquad \qquad \qquad\qquad \qquad \quad + \frac{1}{2}G_{\mathring{L}\mathring{L}}^\gamma \mathring{L}^\alpha \slashed{d}_X\varphi_\alpha
\bigr)\operatorname{tr}\mathscr{X} + \dots.
\end{align*}
Therefore, collecting the above identities yields
\begin{align*}
&\mathring{L}\bigl(\operatorname{tr}{\mathscr{X}} - E_0\bigr) \\
&\quad = \frac{1}{\varrho^2}
- \frac{1}{2}\bigl(G_{\mathring{L}\mathring{L}}^\gamma \mathring{L}_\gamma\bigr)\tilde{T}^i\slashed{\triangle}\varphi_i
- \bigl(\operatorname{tr}\check{\mathscr{X}}\bigr)^2
+ \frac{1}{2}\bigl(G_{\mathring{L}\mathring{L}}^0 \mathring{L}\varphi_i\bigr)\slashed{\triangle} x^i
- \frac{1}{2}g_{\gamma j}G_{\mathring{L}\mathring{L}}^\gamma\bigl(\mathring{L}^\alpha + 2g^{0\alpha}\bigr)\mathring{L}\varphi_\alpha \slashed{\triangle} x^j \\
&\quad\quad + \operatorname{tr}\mathscr{X} \left\{
-\frac{2}{\varrho}
- \frac{1}{2}G_{\mathring{L}\mathring{L}}^\gamma \mathring{L}\varphi_\gamma
- G_{\tilde{T}\mathring{L}}^\gamma \mathring{L}\varphi_\gamma
- \slashed{g}^{XX}G_{XX}^\gamma \mathring{L}\varphi_\gamma
+ \frac{1}{2}G_{\mathring{L}\mathring{L}}^0\slashed{d}^X x^i \slashed{d}_X\varphi_i
\right. \\
&\quad\quad\qquad \left.
+ \frac{1}{2}G_{\mathring{L}\mathring{L}}^\gamma g_{\gamma i}\bigl(\slashed{d}^X x^i \slashed{d}_X\varphi_j\bigr)\tilde{T}^j
+ G_{X\mathring{L}}^0 \mathring{L}^\alpha \slashed{d}^X\varphi_\alpha
- G_{X\mathring{L}}^\gamma \mathring{L}_\gamma \tilde{T}^j \slashed{d}^X\varphi_j
\right. \\
&\quad\quad\qquad \left.
+ \frac{1}{2}G_{\mathring{L}\mathring{L}}^\gamma \mathring{L}_\gamma \slashed{d}^X x^j \slashed{d}_X\varphi_j
- g_{\gamma j}\slashed{d}^X x^j G_{X\mathring{L}}^\gamma\bigl(\mathring{L}^\alpha + 2g^{0\alpha}\bigr)\mathring{L}\varphi_\alpha
- G_{X\mathring{L}}^\gamma\slashed{d}^X\varphi_\gamma
\right\} \\
&\quad\quad + f\bigl(\varphi, \slashed{d}x, \mathring{L}^1, \mathring{L}^2\bigr)
\begin{pmatrix}
\slashed{d}(\varphi^{k-1}) \\
\varphi^{2k-2}\slashed{d}\varphi
\end{pmatrix}
\begin{pmatrix}
\mathring{L}\varphi \\
\slashed{d}\varphi
\end{pmatrix},
\end{align*}
which immediately gives that by \eqref{GT},
\begin{equation}\label{Ltr}
\begin{split}
\mathring{L}\bigl(\mu\operatorname{tr}\mathscr{X} - \mu E_0\bigr)
= \biggl(-\frac{2\mu}{\varrho} + \mathcal{E}\biggr)\operatorname{tr}\mathscr{X}
+ \frac{\mu}{\varrho^2}
- \mu\bigl(\operatorname{tr}\check{\mathscr{X}}\bigr)^2
- \frac{1}{2}\bigl(G_{\mathring{L}\mathring{L}}^\gamma \mathring{L}_\gamma\bigr)\tilde{T}^i\bigl(\mu\slashed{\triangle}\varphi_i\bigr)
+ e,
\end{split}
\end{equation}
where
\begin{align}
\mathcal{E} =& \mathring{L}\mu + \mu \left\{
-\frac{1}{2}G_{\mathring{L}\mathring{L}}^\gamma \mathring{L}\varphi_\gamma
- G_{\tilde{T}\mathring{L}}^\gamma \mathring{L}\varphi_\gamma
- \slashed{g}^{XX}G_{XX}^\gamma \mathring{L}\varphi_\gamma
+ \frac{1}{2}G_{\mathring{L}\mathring{L}}^0\slashed{d}^X x^i \slashed{d}_X\varphi_i
\right. \nonumber\\
&\quad\qquad \left.
+ \frac{1}{2}G_{\mathring{L}\mathring{L}}^\gamma g_{\gamma i}\bigl(\slashed{d}^X x^i \slashed{d}_X\varphi_j\bigr)\tilde{T}^j
+ G_{X\mathring{L}}^0 \mathring{L}^\alpha \slashed{d}^X\varphi_\alpha
- G_{X\mathring{L}}^\gamma \mathring{L}_\gamma \tilde{T}^j \slashed{d}^X\varphi_j
\right. \nonumber\\
&\quad\qquad \left.
+ \frac{1}{2}G_{\mathring{L}\mathring{L}}^\gamma \mathring{L}_\gamma \slashed{d}^X x^j \slashed{d}_X\varphi_j
- g_{\gamma j}\slashed{d}^X x^j G_{X\mathring{L}}^\gamma\bigl(\mathring{L}^\alpha + 2g^{0\alpha}\bigr)\mathring{L}\varphi_\alpha
- G_{X\mathring{L}}^\gamma\slashed{d}^X\varphi_\gamma
\right\}, \label{Ee}\\[2mm]
e =& \frac{1}{2}\bigl(G_{\mathring{L}\mathring{L}}^\gamma \mathring{L}_\gamma\bigr)\bigl(\tilde{T}^i T\varphi_i\bigr)E_0
+ \frac{1}{2}\bigl(G_{\mathring{L}\mathring{L}}^0 \mathring{L}\varphi_i\bigr)\mu\slashed{\triangle} x^i - \frac{1}{2}g_{\gamma j}G_{\mathring{L}\mathring{L}}^\gamma\bigl(\mathring{L}^\alpha + 2g^{0\alpha}\bigr)\mathring{L}\varphi_\alpha\bigl(\mu\slashed{\triangle} x^j\bigr) \nonumber\\
&+ \mu f\bigl(\varphi, \slashed{d}x, \mathring{L}^1, \mathring{L}^2\bigr)
\begin{pmatrix}
\slashed{d}(\varphi^{k-1}) \\
\varphi^{2k-2}\slashed{d}\varphi
\end{pmatrix}
\begin{pmatrix}
\mathring{L}\varphi \\
\slashed{d}\varphi\nonumber
\end{pmatrix}.
\end{align}

Let $F^m=\mu\slashed d\bar Z^m\operatorname{tr}{\mathscr X}-\slashed d\bar Z^m (\mu E_0)$ with $\bar Z\in\{T,R\}$. Then by an induction
argument on \eqref{Ltr}, one can get
\begin{equation}\label{LFal}
\begin{split}
\slashed{\mathcal L}_{\mathring L}F^m=&(-\f{2}{\varrho}+\mu^{-1}{\mathcal E})(F^m+\slashed d\bar Z^m (\mu E_0))-\mu\slashed d\bar Z^{m}\bigl((\operatorname{tr}\check{\mathscr X})^2\bigr)\\
&-\f12\slashed d\bar Z^m\bigl((G_{\mathring{L}\mathring{L}}^\gamma \mathring{L}_\gamma)\tilde{T}^i(\mu\slashed{\triangle}\varphi_i)\bigr) +e^m,
\end{split}
\end{equation}
where
\begin{equation}\label{eal}
\begin{split}
e^m=&\slashed{\mathcal L}_{\bar Z}^m e^0+\sum_{p_1+p_2=m-1}{\tiny {\tiny {\tiny {\tiny }}}}\slashed{\mathcal L}_{\bar Z}^{p_1}\slashed{\mathcal L}_{[\mathring L,\bar Z]}F^{p_2}\\
&-\sum_{p_1+p_2=m-1}
\slashed{\mathcal L}_{\bar Z}^{p_1}
\Bigl[
\bigl\{
{}^{(\bar Z)}\slashed\pi_{\mathring L}^X\slashed d_X\mu
+\bar Z\mathring L\mu
-\frac{2\mu}{\varrho^2}\bar Z\varrho
-\bar Z\mathcal E
\bigr\}
\slashed d\bar Z^{p_2}\operatorname{tr}\mathscr X
\Bigr]\\
&-\sum_{p_1+p_2=m-1}{\tiny {\tiny {\tiny {\tiny }}}}\slashed{\mathcal L}_{\bar Z}^{p_1}\Bigl\{(\bar Z\mu)\Bigl(\slashed d\bar Z^{p_2}\bigl(\frac{1}{\varrho^2}\mathring L(\varrho^2\operatorname{tr}\check{\mathscr X})+(\operatorname{tr}\check{\mathscr X})^2\bigr)+\slashed d[\mathring L, \bar Z^{p_2}]\operatorname{tr}\check{\mathscr X}\\
&\qquad\qquad\qquad\qquad\qquad-\underbrace{\sum\limits_{\substack{q_1+q_2=p_2 \\ q_1\ge 1}}\bar Z^{q_1}(\frac{2}{\varrho})\slashed d \bar Z^{q_2}\operatorname{tr}\check{\mathscr X}}_{\text{vanish when $p_2=0$}}\Bigr)\Bigr\}
\end{split}
\end{equation}
and
\begin{equation}\label{e0}
\begin{split}
e^0=&\slashed d e+(\slashed d{\mathcal E}-\slashed d\mathring L\mu)(\operatorname{tr}\check{\mathscr X}+\varrho^{-1})-\slashed d\mu(\operatorname{tr}\check{\mathscr X})^2-\f{\slashed d\mu}{\varrho^2}\mathring L(\varrho^2\operatorname{tr}\check{\mathscr X}).
\end{split}
\end{equation}
Furthermore, utilizing identity \eqref{fequation}, we have
\[
\begin{split}
	\mu\slashed{\triangle}\varphi_i = &\mathring{L}\mathring{\underline{L}}\varphi_i + \frac{1}{2\varrho}\mathring{\underline{L}}\varphi_i - H_i\\
	=&\mathring{L}\mathring{\underline{L}}\varphi_i+\operatorname{tr}\mathscr X T\vp_i+ f\big(\varphi, \mathring{L}^1, \mathring{L}^2, \slashed{d}x, \slashed{g}\big)\vp^{k-1}
\begin{pmatrix}
T\varphi \\
\mu\mathring{L}\varphi \\
\mu\slashed{d}\varphi
\end{pmatrix}
\begin{pmatrix}
\mathring{L}\varphi \\
\slashed{d}\varphi
\end{pmatrix}.
\end{split}
\]
Hence,
\begin{align*}
	&-\f12\slashed d\bar Z^m\bigl((G_{\mathring{L}\mathring{L}}^\gamma \mathring{L}_\gamma)\tilde{T}^i(\mu\slashed{\triangle}\varphi_i)\bigr)\\
	=&-\f12\slashed{\mathcal L}_{\mathring L}\bigl(G_{\mathring L\mathring L}^\gamma\mathring L_\gamma\tilde T^i\slashed d\bar Z^m\mathring{\underline L}\vp_i\bigr)-\f12\sum_{m_1+m_2=m}\slashed d\bar Z^{m_1}(G_{\mathring L\mathring L}^\gamma\mathring L_\gamma\tilde T^i)\bar Z^{m_2}(\mu\slashed\triangle\vp_i)\\
	&-\f12\sum\limits_{\substack{m_1+m_2=m \\ m_1\ge 1}}\bar Z^{m_1}(G_{\mathring L\mathring L}^\gamma\mathring L_\gamma\tilde T^i)\slashed d\bar Z^{m_2}(\mu\slashed\triangle\vp_i)-\f12(G_{\mathring L\mathring L}^\gamma\mathring L_\gamma\tilde T^iT\vp_i)\slashed d\bar Z^m\operatorname{tr}\mathscr X\\
	&-\f12(G_{\mathring L\mathring L}^\gamma\mathring L_\gamma)\tilde T^i\bigl(\slashed d\bar Z^m(\operatorname{tr}\mathscr X T\vp_i)-(\slashed d\bar Z^m\operatorname{tr}\mathscr X)T\vp_i\bigr)\\
	&+\f12\mathring L(G_{\mathring L\mathring L}^\gamma\mathring L_\gamma\tilde T^i)\slashed d\bar Z^m\mathring{\underline L}\vp_i+\f12(G_{\mathring L\mathring L}^\gamma\mathring L_\gamma\tilde T^i)\slashed d[\mathring L,\bar Z^m]\mathring{\underline L}\vp_i\\
	&+(G_{\mathring L\mathring L}^\gamma\mathring L_\gamma)\slashed d\bar Z^m\Bigl(f\big(\varphi, \mathring{L}^1, \mathring{L}^2, \slashed{d}x, \slashed{g}\big)\vp^{k-1}
\begin{pmatrix}
T\varphi \\
\mu\mathring{L}\varphi \\
\mu\slashed{d}\varphi
\end{pmatrix}
\begin{pmatrix}
\mathring{L}\varphi \\
\slashed{d}\varphi
\end{pmatrix}\Bigr)\\
\bar{\mathcal E}=&\mathcal E-\f12\bigl(G_{\mathring{L}\mathring{L}}^\gamma \mathring{L}_\gamma\bigr)\tilde T^iT\vp_i.
	\end{align*}
Then, if we define
\begin{align*}
\bar{e}^m =& e^m -\f12\sum_{m_1+m_2=m}\slashed d\bar Z^{m_1}(G_{\mathring L\mathring L}^\gamma\mathring L_\gamma\tilde T^i)\bar Z^{m_2}(\mu\slashed\triangle\vp_i)\\
	&-\f12\sum\limits_{\substack{m_1+m_2=m \\ m_1\ge 1}}\bar Z^{m_1}(G_{\mathring L\mathring L}^\gamma\mathring L_\gamma\tilde T^i)\slashed d\bar Z^{m_2}(\mu\slashed\triangle\vp_i)\\
	&-\f12(G_{\mathring L\mathring L}^\gamma\mathring L_\gamma)\tilde T^i\bigl(\slashed d\bar Z^m(\operatorname{tr}\mathscr X T\vp_i)-(\slashed d\bar Z^m\operatorname{tr}\mathscr X)T\vp_i\bigr)\\
	&+\f12\mathring L(G_{\mathring L\mathring L}^\gamma\mathring L_\gamma\tilde T^i)\slashed d\bar Z^m\mathring{\underline L}\vp_i+\f12(G_{\mathring L\mathring L}^\gamma\mathring L_\gamma\tilde T^i)\slashed d[\mathring L,\bar Z^m]\mathring{\underline L}\vp_i\\
	&+(G_{\mathring L\mathring L}^\gamma\mathring L_\gamma)\slashed d\bar Z^m\Bigl(f\big(\varphi, \mathring{L}^1, \mathring{L}^2, \slashed{d}x, \slashed{g}\big)\vp^{k-1}
\begin{pmatrix}
T\varphi \\
\mu\mathring{L}\varphi \\
\mu\slashed{d}\varphi
\end{pmatrix}
\begin{pmatrix}
\mathring{L}\varphi \\
\slashed{d}\varphi
\end{pmatrix}\Bigr),\\
\bar{\mathcal E}=&\mathcal E-\f12\bigl(G_{\mathring{L}\mathring{L}}^\gamma \mathring{L}_\gamma\bigr)\tilde T^iT\vp_i
\end{align*}
and
\[
\bar F^m=F^m + \frac{1}{2}\bigl(G_{\mathring{L}\mathring{L}}^\gamma \mathring{L}_\gamma\tilde{T}^i\bigr)\slashed d\bar Z^m \mathring{\underline{L}}\varphi_i,
\]
then it follows from \eqref{LFal} that
\begin{equation}\label{LXE}
\begin{split}
\slashed{\mathcal L}_{\mathring L}\bar F^m=&(-\f{2}{\varrho}+\mu^{-1}{\bar{\mathcal E}})\Bigl(\bar F^m+\slashed d\bar Z^m (\mu E_0)-\f12G_{\mathring{L}\mathring{L}}^\gamma \mathring{L}_\gamma\tilde{T}^i\slashed d\bar Z^m \mathring{\underline{L}}\varphi_i\Bigr)\\
&-\mu\slashed d\bar Z^{m}\bigl((\operatorname{tr}\check{\mathscr X})^2\bigr) +\bar e^m.
\end{split}
\end{equation}

Note that for any one-form $\xi$ on $S_{s,u}$, it holds that
\begin{equation}\label{Lxi}
\mathring{L}\bigl(\varrho^2|\xi|^2\bigr)
= -2\varrho^2\bigl(\operatorname{tr}\check{\mathscr{X}}
+ \frac{1}{2}\slashed{g}^{XX}G_{XX}^\gamma\mathring{L}\vp_\gamma\bigr)|\xi|^2
+ 2\varrho^2\slashed{g}^{XX}\bigl(\slashed{\mathcal{L}}_{\mathring{L}}\xi_X\bigr)\xi_X.
\end{equation}
By taking $\xi = \varrho^2 \bar F^m$ in \eqref{Lxi} and using \eqref{LXE}, one can obtain
\begin{align*}
\mathring{L}\bigl(\varrho^6|\bar F^m|^2\bigr)
=& 2\varrho^6\bigl\{ \bigl(-\operatorname{tr}\check{\mathscr{X}}
-\frac{1}{2}\slashed{g}^{XX}G_{XX}^\gamma\mathring{L}\vp_\gamma
+\mu^{-1}\bar{\mathcal E}\bigr)|\bar F^m|^2 +\bar e^m \cdot \bar F^m \\
&\qquad + \bigl(-\frac{2}{\varrho} + \mu^{-1}\bar{\mathcal{E}}\bigr)\bigl(\slashed{d}\bar{Z}^m (\mu E_0)-\f12G_{\mathring{L}\mathring{L}}^\gamma \mathring{L}_\gamma\tilde{T}^i\slashed d\bar Z^m \mathring{\underline{L}}\varphi_i\bigr) \cdot\bar F^m \\
&\qquad
- \mu\slashed{d}\bar{Z}^m\bigl((\operatorname{tr}\check{\mathscr{X}})^2\bigr) \cdot\bar F^m \bigr\}.
\end{align*}
Due to $L\mu \sim -\delta^{-\varepsilon_k}$ when $\mu < \frac{1}{10}$ (see \eqref{blowup}), one has $\mu^{-1}\bar{\mathcal E}=2\mu^{-1}\mathring{L}\mu+O(\delta^{2k(1-\varepsilon_0)-\varepsilon_0})\lesssim \delta^{-\varepsilon_k}$. This implies
\[
\begin{split}
\mathring{L}\bigl(\varrho^3|\bar F^m|\bigr) \lesssim &\delta^{-\varepsilon_k}|\bar F^m|
+ \mu^{-1}\delta^{-\varepsilon_k}\bigl|\slashed{d}\bar{Z}^m (\mu E_0)-\f12G_{\mathring{L}\mathring{L}}^\gamma \mathring{L}_\gamma\tilde{T}^i\slashed d\bar Z^m \mathring{\underline{L}}\varphi_i\bigr| \\
&
+ \mu\bigl|\slashed{d}\bar{Z}^m\bigl((\operatorname{tr}\check{\mathscr{X}})^2\bigr)\bigr|
+ |\bar e^m|,
\end{split}
\]
and hence
\begin{equation}\label{LrhoF}
\begin{split}
\varrho^3|\bar F^m(s,u,\vartheta)|\lesssim\varrho_0^3|\bar F^m(t_0,u,\vartheta)|+\int_{t_0}^s\big\{&\delta^{-\varepsilon_k}|\bar F^m|+ \mu^{-1}\delta^{-\varepsilon_k}|\slashed{d}\bar{Z}^m (\mu E_0)
-\f12G_{\mathring{L}\mathring{L}}^\gamma \mathring{L}_\gamma\tilde{T}^i\slashed d\bar Z^m \mathring{\underline{L}}\varphi_i| \\
&+ \mu|\slashed{d}\bar{Z}^m\bigl((\operatorname{tr}\check{\mathscr{X}})^2\bigr)|
+ |\bar e^m|\}(\tau,u,\vartheta)d\tau.
\end{split}
\end{equation}
By the initial data estimate
$\delta^l\|\bar F^m\|_{t_0,u}
\lesssim\delta^{1/2-\varepsilon_k}$,
it follows from \eqref{Ff} and \eqref{LrhoF} that
\begin{equation*}
\begin{split}
\delta^l\|\bar F^m\|_{s, u}\lesssim\delta^{\f12-\varepsilon_k}+\delta^l\int_{t_0}^s\big\{&\delta^{-\varepsilon_k}\|\bar F^m\|_{\tau,u}+\mu_{\min}^{-1}(\tau)\delta^{-\varepsilon_k}\|\slashed{d}\bar{Z}^m (\mu E_0)-\f12G_{\mathring{L}\mathring{L}}^\gamma \mathring{L}_\gamma\tilde{T}^i\slashed d\bar Z^m \mathring{\underline{L}}\varphi_i\|_{\tau,u}\\
&+\|\mu\slashed d\bar Z^m\bigl((\operatorname{tr}\check{\mathscr{X}})^2\bigr)\|_{\tau,u}+\|\bar e^m\|_{\tau,u}\big\}d\tau,
\end{split}
\end{equation*}
where $l$ is the number of $T$ in $F^m$. This, together with Gronwall's inequality, yields
\begin{equation}\label{Eal}
\begin{split}
\delta^l\|\bar F^m\|_{s, u}\lesssim\delta^{-\varepsilon_k+1/2}+\delta^l\int_{t_0}^s\big\{&\mu_{\min}^{-1}(\tau)\delta^{-\varepsilon_k}\|\slashed{d}\bar{Z}^m (\mu E_0)-\f12G_{\mathring{L}\mathring{L}}^\gamma \mathring{L}_\gamma\tilde{T}^i\slashed d\bar Z^m \mathring{\underline{L}}\varphi_i\|_{\tau,u}\\
&+\|\mu\slashed d\bar Z^m\bigl((\operatorname{tr}\check{\mathscr{X}})^2\bigr)\|_{\tau,u}+\|\bar e^m\|_{\tau,u}\big\}d\tau.
\end{split}
\end{equation}
Each term in the integrand of \eqref{Eal} will be estimated as follows.

\vskip 0.1 true cm

{\bfseries (1) The estimate of $\|\slashed{d}\bar{Z}^m (\mu E_0)-\f12G_{\mathring{L}\mathring{L}}^\gamma \mathring{L}_\gamma\tilde{T}^i\slashed d\bar Z^m \mathring{\underline{L}}\varphi_i\|_{s, u}$}

\vskip 0.1 true cm

Due to $E_0=-G_{X\mathring L}^\gamma\slashed d^X\varphi_\gamma+\frac{1}{2}G_{\mathring L\mathring L}^0\slashed d_Xx^i\slashed d^X\varphi_i-\frac{1}{2}g_{\gamma j}G_{\mathring L\mathring L}^\gamma\bigl(\mathring L^\alpha+2g^{0\alpha}\bigr)\slashed d_Xx^j \slashed d^X\varphi_\alpha$, it holds that
\begin{align}
&\delta^l\|\slashed{d}\bar{Z}^m (\mu E_0)-\f12G_{\mathring{L}\mathring{L}}^\gamma \mathring{L}_\gamma\tilde{T}^i\slashed d\bar Z^m \mathring{\underline{L}}\varphi_i\|_{s, u}\nonumber\\
\lesssim&\delta^{k(1-\varepsilon_0)}\sum_{i=1}^2\big\{\delta^{l_0}\|\bar Z^{m_0}\check L^i\|_{s, u}+\delta^{l_{-1}}\|\bar Z^{m_{-1}}x^i\|_{s,u}\big\}+\delta^{(k-1)(1-\varepsilon_0)}\delta^{l_0}\|\bar Z^{m_0}\vp\|_{s,u}\nonumber\\
&+\delta^{k(1-\varepsilon_0)}\bigl\{\delta^{l_1}\|\bar Z^{m_1}\mathring L\vp\|_{s, u}+\delta^l\|\mu\mathring L\bar Z^{m+1}\vp\|_{s,u}+\delta^{l_0}\|T\bar Z^{m_0}\varphi\|_{s, u}\bigr\}\nonumber\\
&+\delta^{k(1-\varepsilon_0)}(1+\delta^{2k(1-\varepsilon_0)-\varepsilon_0})\delta^{l_0}\|\slashed{\mathcal L}_{\bar Z}^{m_0}\slashed g\|_{s, u}
+\delta^{(k-1)(1-\varepsilon_0)}\delta^{l}\|\mu\slashed d\bar Z^{m+1}\varphi\|_{s, u}\nonumber\\
&+\delta^{k(1-\varepsilon_0)}(1+\delta^{(k+1)(1-\varepsilon_0)-\varepsilon_0})\delta^{l_0}\|\bar Z^{m_0}\mu\|_{s, u}\nonumber\\
&+\de^{(k+1)(1-\varepsilon_0)}\de^{l_1}\big\{\|\slashed{\mathcal L}_{\bar Z}^{m_1}\leftidx{^{(R)}}{\slashed\pi}_T\|_{s, u}
+\|\slashed{\mathcal L}_{\bar Z}^{m_1}\leftidx{^{(R)}}{\slashed\pi}_{\mathring L}\|_{s, u}+\delta\|\slashed{\mathcal L}_{\bar Z}^{m_1}\leftidx{^{(T)}}{\slashed\pi}_{\mathring L}\|_{s, u}\big\}\nonumber\\
\lesssim&\delta^{k(1-\varepsilon_0)+1/2}(1+\delta^{2k(1-\varepsilon_0)-\varepsilon_0})+\delta^{(k-1)(1-\varepsilon_0)}\sqrt{\mathcal E_{1,\leq m+2}(s,u)}\label{dZE}\\
			&+\delta^{k(1-\varepsilon_0)} \sqrt{\mathcal E_{2,\leq m+2}(s,u)}+\delta^{(2k-1)(1-\varepsilon_0)}(1+\delta^{(k+1)(1-\varepsilon_0)-\varepsilon_0})\Xi_{m+2}^2(s,u).\nonumber
\end{align}
This, together with \eqref{eq:mu_integrability}, yields
\begin{equation}\label{sdE}
\begin{split}
&\int_{t_0}^s\mu_{\min}^{-1}(\tau)\delta^{-\varepsilon_k}\delta^l\|\slashed{d}\bar{Z}^m (\mu E_0)-\f12G_{\mathring{L}\mathring{L}}^\gamma \mathring{L}_\gamma\tilde{T}^i\slashed d\bar Z^m \mathring{\underline{L}}\varphi_i\|_{\tau,u}d\tau\\
\lesssim&\delta^{2k(1-\varepsilon_0)-\varepsilon_0 +1/2}(1+\delta^{2k(1-\varepsilon_0)-\varepsilon_0})\int_{t_0}^s\mu_{\min}^{-1}(\tau)d\tau\\
&+\f{\mu_{\min}^{-b_{m+2}}(s)}{b_{m+2}}\Big\{\delta^{(k-1)(1-\varepsilon_0)}\sqrt{\widetilde{\mathcal{E}}_{1,\leq m+2}(s, u)}+\delta^{k(1-\varepsilon_0)}\sqrt{\widetilde{\mathcal{E}}_{2,\leq m+2}(s, u)}\Big\}.
\end{split}
\end{equation}

\vskip 0.1 true cm

{\bfseries (2) The estimate of $\|\mu\slashed d\bar Z^m\bigl((\operatorname{tr}\check{\mathscr{X}})^2\bigr)\|_{s, u}$}

\vskip 0.1 true cm

The identity
\[
\begin{split}
\mu\slashed d\bar Z^m((\operatorname{tr}\check{\mathscr{X}})^2)
=&2\operatorname{tr}\check{\mathscr{X}}(\bar F^m+\slashed{d}\bar{Z}^m (\mu E_0)-\f12G_{\mathring{L}\mathring{L}}^\gamma \mathring{L}_\gamma\tilde{T}^i\slashed d\bar Z^m \mathring{\underline{L}}\varphi_i)\\
&+2\sum_{p\leq m-1}\mu\bar Z^{m-p}(\operatorname{tr}\check{\mathscr{X}})(\slashed d\bar Z^p\operatorname{tr}\check{\mathscr{X}}),
\end{split}
\]
together with Proposition \ref{L2chi} and \eqref{dZE}, yields
\begin{equation*}
\begin{split}
&\delta^l\|\mu\slashed d\bar Z^m((\operatorname{tr}\check{\mathscr{X}})^2)\|_{s, u}\\
\lesssim&\delta^{k(1-\varepsilon_0)}\big\{\delta^l\|\bar F^m\|_{s, u}+\delta^l\|\slashed{d}\bar{Z}^m (\mu E_0)-\f12G_{\mathring{L}\mathring{L}}^\gamma \mathring{L}_\gamma\tilde{T}^i\slashed d\bar Z^m \mathring{\underline{L}}\varphi_i\|_{s, u}+\delta^{l_1}\|\bar Z^{m_1}\operatorname{tr}\check{\mathscr{X}}\|_{s, u}\big\}\\
\lesssim&\delta^{k(1-\varepsilon_0)}\delta^l\|\bar F^m\|_{s, u}+\delta^{2k(1-\varepsilon_0)+1/2}(1+\delta^{2k(1-\varepsilon_0)-\varepsilon_0}) +\delta^{(2k-1)(1-\varepsilon_0)}\sqrt{\mathcal E_{1,\leq m+2}(s, u)}\\
&+\delta^{2k(1-\varepsilon_0)}\sqrt{\mathcal E_{2,\leq m+2}(s, u)}+\delta^{2k(1-\varepsilon_0)}(1+\delta^{2k(1-\varepsilon_0)-\varepsilon_0})\int_{t_0}^s\mu_{\min}^{-\f12}(\tau)\sqrt{\mathcal E_{1,\leq m+2}(\tau,u)}d\tau\\
&+\delta^{(3k-1)(1-\varepsilon_0)}(1+\delta^{(k+1)(1-\varepsilon_0)-\varepsilon_0})\int_{t_0}^s\sqrt{\mathcal E_{2,\leq m+2}(\tau,u)}d\tau,
\end{split}
\end{equation*}
and hence,
\begin{equation}\label{Y-19}
\begin{split}
&\int_{t_0}^s\delta^l\|\mu\slashed d\bar Z^m((\operatorname{tr}\check{\mathscr{X}})^2)\|_{\tau,u}d\tau\\
\lesssim&\int_{t_0}^s\delta^{k(1-\varepsilon_0)}\delta^l\|\bar F^m\|_{\tau,u}d\tau+\delta^{k(1-\varepsilon_0)+\varepsilon_0+1/2}(1+\delta^{2k(1-\varepsilon_0)-\varepsilon_0})\\
&+\delta^{(k-1)(1-\varepsilon_0)+\varepsilon_0}\mu_{\min}^{-b_{m+2}}\Big\{\sqrt{\widetilde{\mathcal{E}}_{1,\leq m+2}(s, u)}+\delta^{1-\varepsilon_0}\sqrt{\widetilde{\mathcal{E}}_{2,\leq m+2}(s, u)}\Big\}.
\end{split}
\end{equation}

\vskip 0.1 true cm

{\bfseries (3) The estimate of $\|\bar e^m\|_{s, u}$}

\vskip 0.1 true cm

It follows from \eqref{Ee} that $\slashed d\bar Z^m e$ contains the terms
\[
\tfrac12\mu \bigl(G_{\mathring L\mathring L}^0\mathring L\vp_i\bigr)\slashed d\bar Z^m\slashed\triangle x^i,\quad -\tfrac12\mu g_{\gamma j}G_{\mathring L\mathring L}^\gamma\bigl(\mathring L^\alpha+2g^{0\alpha}\bigr)\mathring L\vp_\alpha\slashed d\bar Z^m\slashed\triangle x^j.
\]
Note that the $L^2$ norms of the above two terms cannot be directly bounded by virtue of Proposition \ref{L2chi} since the
related derivative orders with respect to $x^i$ are $m+3$. The resulted derivative loss prevents the closure of the highest order energy estimates.
To overcome this difficulty, we reformulate the terms $R\slashed \triangle x^i$ and $T\slashed\triangle x^i$ by Lemma \ref{lem3.4},
which are presented as
\begin{align*}
	R\slashed\triangle x^i=&[R,\slashed\triangle]x^i+\slashed\triangle R^i\\
	=&-\operatorname{tr}\leftidx{^{(R)}}{\slashed\pi}(\slashed\triangle x^i)-\f12(\slashed\nabla_X\operatorname{tr}\leftidx{^{(R)}}{\slashed\pi})\slashed d^Xx^i+\slashed\triangle(\Omega^i-\upsilon\tilde T^i)\\
	=&-\operatorname{tr}\leftidx{^{(R)}}{\slashed\pi}(\slashed\triangle x^i)-(\slashed d^Xx^i\slashed d_X\upsilon)(\operatorname{tr}\mathscr X+G_{X\mathring L}^\gamma\slashed d^X\vp_\gamma+G_{X\tilde{T}}^\gamma\slashed d^X\vp_\gamma)\\
	&-\upsilon\slashed d^Xx^i\bigl(\slashed d_X\operatorname{tr}\mathscr X+\slashed\nabla_X(G_{X\mathring L}^\gamma\slashed d^X\vp_\gamma+G_{X\tilde{T}}^\gamma\slashed d^X\vp_\gamma)\bigr)\\
	&-\f12\slashed d^Xx^i\slashed\nabla_X(g^{XX}G_{XX}^\gamma R\vp_\gamma+2\epsilon_b^{\,j}\check g_{ja}
(\slashed d_Xx^a)(\slashed d_Xx^b))\\
	&+\slashed\triangle\Omega^i-\slashed\nabla^X(\tilde T^i\boxed{\slashed d_X\upsilon}+\upsilon\boxed{\slashed d_X\tilde T^i})
\end{align*}
and
\begin{align*}
	T\slashed\triangle x^i=&-\operatorname{tr}\leftidx{^{(T)}}{\slashed\pi}(\slashed\triangle x^i)-\f12(\slashed\nabla_X\operatorname{tr}\leftidx{^{(T)}}{\slashed\pi})\slashed d^Xx^i+\slashed\triangle(\mu\tilde T^i)\\
	=&-\operatorname{tr}\leftidx{^{(T)}}{\slashed\pi}(\slashed\triangle x^i)+(\slashed d^Xx^i)\slashed d_X\mu(\operatorname{tr}\mathscr X+G_{X\mathring L}^\gamma\slashed d^X\vp_\gamma+G_{X\tilde{T}}^\gamma\slashed d^X\vp_\gamma)\\
	&+\mu(\slashed d^Xx^i)\slashed d_X(\operatorname{tr}\mathscr X+G_{X\mathring L}^\gamma\slashed d^X\vp_\gamma+G_{X\tilde{T}}^\gamma\slashed d^X\vp_\gamma)+\tilde T^i\slashed\triangle\mu\\
	&-\f12\slashed d^Xx^i\slashed\nabla_X(g^{XX}G_{XX}^\gamma T\vp_\gamma)+2(\slashed d_X\mu)\slashed d^X\tilde T^i-\mu\slashed\triangle g^{0i}-\mu\slashed\nabla^X\boxed{\slashed d_X\mathring L^i}.
\end{align*}
Based on \eqref{omega} and \eqref{deL}, the boxed terms above can be expressed in terms of $\operatorname{tr}\mathscr X$ and
other good auxiliary quantities. Therefore,
\begin{align*}
	&\delta^l\|\slashed d\bar Z^m\bigl(\mu(G_{\mathring L\mathring L}^0\mathring L\vp_i)\slashed\triangle x^i\bigr)\|_{s,u}+\delta^l\|\slashed d\bar Z^m\bigl(\mu g_{\gamma j}G_{\mathring L\mathring L}^\gamma(\mathring L^\alpha+2g^{0\alpha})\mathring L\vp_\alpha\slashed\triangle x^j\bigr)\|_{s,u}\\
	\lesssim&\delta^{2k(1-\varepsilon_0)-\varepsilon_0}\bigl\{\delta^{l_0}\|\bar Z^{m_0}\mu\|_{s,u}+\delta^{l_{-1}}\|\bar Z^{l_{-1}}x\|_{s,u}+\delta^{l_1}\|\slashed{\mathcal L}_{\bar Z}^{m_1}\leftidx{^{(R)}}{\slashed\pi}\|_{s,u}+\delta^{l_1}\|\bar Z^{m_1}\operatorname{tr}\check{\mathscr X}\|_{s,u}\\
	&\quad+\delta^{l_0}\|\bar Z^{m_0}\upsilon\|_{s,u}+\sum_{i=1}^2\delta^{l_0}\|\bar Z^{m_0}\check L^i\|_{s,u}+\delta^{l}\|\mu\slashed d\bar Z^m\operatorname{tr}\check{\mathscr X}\|_{s,u}\bigr\}\\
	&+\delta^{(2k+1)(1-\varepsilon_0)}\bigl(\delta^{l_1}\|\slashed{\mathcal L}_{\bar Z}^{m_1}\leftidx{^{(T)}}{\slashed\pi}\|_{s,u}+\delta^l\|\mu\bar Z^m\slashed\triangle\mu\|_{s,u}\bigr)+\delta^{2k(1-\varepsilon_0)}\sum_{i=1}^2\sqrt{\mathcal E_{i,\leq m+2}(s,u)}\\
	&+\delta^{(k-1)(1-\varepsilon_0)}\delta^l\|\mathring L\bar Z^{m+1}\vp\|_{s,u}+\delta^{k(1-\varepsilon_0)}\delta^{l_1}\bigl(\|\slashed{\mathcal L}_{\bar Z}^{m_1}\leftidx{^{(R)}}{\slashed\pi}_{\mathring L}\|_{s, u}+\delta\|\slashed{\mathcal L}_{\bar Z}^{m_1}\leftidx{^{(T)}}{\slashed\pi}_{\mathring L}\|_{s, u}\bigr)\\
	&+\delta^{3k(1-\varepsilon_0)-1}\delta^l\|\mu\slashed d\bar Z^{m+1}\vp\|_{s,u}+\delta^{3k(1-\varepsilon_0)}\delta^l\|T\bar Z^{m+1}\vp\|_{s,u}+\delta^{(3k+1)(1-\varepsilon_0)}\delta^{l_1}\|\slashed{\mathcal L}_{\bar Z}^{m_1}\leftidx{^{(R)}}{\slashed\pi}_{T}\|_{s, u}\\
	\lesssim&\delta^{2k(1-\varepsilon_0)-\varepsilon_0+1/2}+\delta^{3k(1-\varepsilon_0)-1}\sqrt{\mathcal E_{1,\leq m+2}(s,u)}+\delta^{2k(1-\varepsilon_0)}\sqrt{\mathcal E_{2,\leq m+2}(s,u)}\\
	&+\delta^{2k(1-\varepsilon_0)-\varepsilon_0}\delta^{l}\|\mu\slashed d\bar Z^m\operatorname{tr}\check{\mathscr X}\|_{s,u}+\delta^{(2k+1)(1-\varepsilon_0)}\delta^l\|\mu\bar Z^m\slashed\triangle\mu\|_{s,u}+\delta^{(k-1)(1-\varepsilon_0)}\delta^l\|\mathring L\bar Z^{m+1}\vp\|_{s,u}\\
	&+\delta^{3k(1-\varepsilon_0)-1}\Xi^2_{m+2}(s,u).
\end{align*}
This, together with \eqref{ZGL}, yields that
\begin{align}
&\delta^l\|\slashed{d}\bar{Z}^m e\|_{s, u} \nonumber\\
\lesssim&\delta^l\|\slashed d\bar Z^m\bigl(\mu(G_{\mathring L\mathring L}^0\mathring L\vp_i)\slashed\triangle x^i\bigr)\|_{s,u}+\delta^l\|\slashed d\bar Z^m\bigl(\mu g_{\gamma j}G_{\mathring L\mathring L}^\gamma(\mathring L^\alpha+2g^{0\alpha})\mathring L\vp_\alpha\slashed\triangle x^j\bigr)\|_{s,u}\nonumber\\
&+\delta^{-\varepsilon_k}\delta^{l_0}\|\bar Z^{m_0}(G_{\mathring L\mathring L}^\gamma\mathring L_\gamma)\|_{s,u}+\delta^{2k(1-\varepsilon_0)}\delta^{l}\|T\bar Z^{m+1}\vp\|_{s,u}+\delta^{(2k+1)(1-\varepsilon_0)}\delta^{l_1}\|\slashed{\mathcal L}_{\bar Z}^{m_1}\leftidx{^{(R)}}{\slashed\pi}_{T}\|_{s, u}\nonumber\\
&+\delta^{2k(1-\varepsilon_0)-1}\delta^{l}\|\slashed d\bar Z^{m+1}\vp\|_{s,u}+\delta^{2k(1-\varepsilon_0)}\sum_{i=1}^2\sqrt{\mathcal E_{i,\leq m+2}(s,u)}+\delta^{(k-1)(1-\varepsilon_0)}\delta^{l}\|\slashed d\bar Z^m\mathring L\vp\|_{s,u}\nonumber\\
&+\delta^{2k(1-\varepsilon_0)-\varepsilon_0}\delta^{l_0}\bigl(\|\bar Z^{m_0}\mu\|_{s,u}+\|\slashed{\mathcal L}_{\bar Z}^{m_0}\slashed g\|_{s,u}+\sum_{i=1}^2\|\bar Z^{m_0}\check L^i\|_{s,u}\bigr)\nonumber\\
&+\sum_{i=1}^2\delta^{2k(1-\varepsilon_0)-\varepsilon_0}\delta^{l_{-1}}\|\bar Z^{m_{-1}}x^i\|_{s,u}\nonumber\\
	\lesssim&\delta^{2k(1-\varepsilon_0)-\varepsilon_0+1/2}+\delta^{2k(1-\varepsilon_0)-1}\mu_{\min}^{-1/2}(s)\sqrt{\mathcal E_{1,\leq m+2}(s,u)}+\delta^{2k(1-\varepsilon_0)}\sqrt{\mathcal E_{2,\leq m+2}(s,u)}\nonumber\\
	&+\delta^{2k(1-\varepsilon_0)-\varepsilon_0}\delta^{l}\|\mu\slashed d\bar Z^m\operatorname{tr}\check{\mathscr X}\|_{s,u}+\delta^{2k(1-\varepsilon_0)}\delta^l\|\bar Z^m\slashed\triangle\mu\|_{s,u}+\delta^{3k(1-\varepsilon_0)-1}\Xi^2_{m+2}(s,u)\nonumber\\
	\lesssim&\delta^{2k(1-\varepsilon_0)-\varepsilon_0}\delta^l\|\bar F^m\|_{s,u}+\delta^{2k(1-\varepsilon_0)-\varepsilon_0+1/2}+\delta^{5k(1-\varepsilon_0)-2\varepsilon_0+1/2}\nonumber\\
	&+\delta^{2k(1-\varepsilon_0)-1}\mu_{\min}^{-1/2}(s)\sqrt{\mathcal E_{1,\leq m+2}(s,u)}+\delta^{3k(1-\varepsilon_0)-\varepsilon_0}\sqrt{\mathcal E_{2,\leq m+2}(s,u)}\label{de}\\
	&+\delta^{2k(1-\varepsilon_0)}\delta^l\|\bar Z^m\slashed\triangle\mu\|_{s,u}+(\delta^{3k(1-\varepsilon_0)-1}+\delta^{5k(1-\varepsilon_0)-2\varepsilon_0})\Xi^2_{m+2}(s,u).\nonumber
\end{align}

In addition, \eqref{lmu}, \eqref{GT}, \eqref{Lchi'} and \eqref{Ee} give
\begin{align}
&\delta^l\Bigl\|\slashed{\mathcal{L}}_{\bar{Z}}^m\Bigl\{(\slashed{d}\mathcal{E}-\slashed{d}\mathring{L}\mu)
(\operatorname{tr}\check{\mathscr{X}}+\varrho^{-1})
-\slashed{d}\mu(\operatorname{tr}\check{\mathscr{X}})^2
-\f{\slashed{d}\mu}{\varrho^2}\mathring{L}(\varrho^2\operatorname{tr}\check{\mathscr{X}})\Bigr\}\Bigr\|_{s, u} \nonumber\\
\lesssim&\, \delta^{(k-1)(1-\varepsilon_0)}\delta^l\bigl(\|\mathring{L}R\bar{Z}^m\vp\|_{s, u}
+\|\slashed{d}\bar{Z}^{m+1}\vp\|_{s, u}\bigr)
+\delta^{2k(1-\varepsilon_0)-\varepsilon_0}\delta^{l_0}\|\bar{Z}^{m_0}\mu\|_{s, u} \nonumber\\
&+\delta^{k(1-\varepsilon_0)}\delta^{l_1}\bigl(\|\slashed{\mathcal{L}}_{\bar{Z}}^{m_1}\leftidx{^{(R)}}{\slashed{\pi}}_{\mathring{L}}\|_{s, u}
+\delta\|\slashed{\mathcal{L}}_{\bar{Z}}^{m_1}\leftidx{^{(T)}}{\slashed{\pi}}_{\mathring{L}}\|_{s, u}
+\|\slashed{\mathcal{L}}_{\bar{Z}}^{m_1}\leftidx{^{(R)}}{\slashed{\pi}}\|_{s, u}+\delta\|\slashed{\mathcal{L}}_{\bar{Z}}^{m_1}\leftidx{^{(T)}}{\slashed{\pi}}\|_{s, u} \bigr) \nonumber\\
&+\delta^{2k(1-\varepsilon_0)-\varepsilon_0}\sum_{i=1}^2\Bigl(\delta^{l_0}\|\bar{Z}^{m_0}\check{L}^i\|_{s, u} +\delta^{l_{-1}}\|\bar Z^{m_{-1}}x^i\|_{s,u}+\delta^{l_0}\|\slashed{\mathcal{L}}_{\bar{Z}}^{m_0}\slashed{g}\|_{s, u}
\Bigr) \nonumber\\
&+\delta^{2k(1-\varepsilon_0)-\varepsilon_0}\delta^{l_1}\|\bar{Z}^{m_1}\operatorname{tr}\check{\mathscr{X}}\|_{s, u} +\delta^{2k(1-\varepsilon_0)}\sum_{i=1}^2\sqrt{\mathcal E_{i,\leq m+2}(s,u)}\nonumber\\
\lesssim&\delta^{2k(1-\varepsilon_0)-\varepsilon_0+1/2}+\delta^{(k-1)(1-\varepsilon_0)}\mu_{\min}^{-1/2}(s)\sqrt{\mathcal E_{1,\leq m+2}(s,u)}\label{add-eq}\\
&+\delta^{2k(1-\varepsilon_0)}\sqrt{\mathcal E_{2,\leq m+2}(s,u)}+\delta^{3k(1-\varepsilon_0)-1}\Xi^2_{m+2}(s,u).\nonumber
\end{align}

Due to $\slashed{\mathcal L}_{[\mathring L,\bar Z]}F^{p_2}=\leftidx^{{(\bar Z)}}{\slashed\pi_{\mathring L}}^X\slashed\nabla_XF^{p_2}+F^{p_2}_X\slashed\nabla\leftidx^{{(\bar Z)}}{\slashed\pi_{\mathring L}}^X$,
it holds that
\begin{align}
&\sum_{p_1+p_2=m-1}\delta^l\|\slashed{\mathcal L}_{\bar Z}^{p_1}\slashed{\mathcal L}_{[\mathring L, \bar Z]}F^{p_2}\|_{s,u}\nonumber\\
\lesssim&\delta^{k(1-\varepsilon_0)}(1+\delta^{2k(1-\varepsilon_0)-\varepsilon_0})\bigl(\delta^l\|\bar F^m\|_{s, u}+\delta^{l_1}\|\bar{Z}^{m_1}\operatorname{tr}\check{\mathscr{X}}\|_{s, u}+\delta^{k(1-\varepsilon_0)}\delta^l\|T\bar Z^{m+1}\vp\|_{s,u}\nonumber\\
 &\quad+\delta^{k(1-\varepsilon_0)}\delta^l\|\mu\mathring L\bar Z^{m+1}\vp\|_{s,u}+(\delta^{k(1-\varepsilon_0)} +\delta^{(2k+1)(1-\varepsilon_0)-\varepsilon_0})\delta^{l_1}\|\bar Z^{m_1}\mu\|_{s, u}\nonumber\\
&\quad+\delta^{k(1-\varepsilon_0)}\delta^{l_1}\|\slashed{\mathcal{L}}_{\bar{Z}}^{m_1}\slashed{g}\|_{s, u}+\delta^{k(1-\varepsilon_0)}\sum_{i=1}^2\bigl(\delta^{l_1}\|\bar{Z}^{m_1}\check{L}^i\|_{s, u}+\delta^{l_0}\|\bar{Z}^{m_0}{x}^i\|_{s, u}\bigr)\bigr)\nonumber\\
&+\delta^{k(1-\varepsilon_0)}(1+\delta^{(3k+1)(1-\varepsilon_0)-\varepsilon_0})\delta^{l_1}
\bigl(\|\slashed{\mathcal{L}}_{\bar{Z}}^{m_1}\leftidx{^{(R)}}{\slashed{\pi}}_{\mathring{L}}\|_{s, u}
+\delta\|\slashed{\mathcal{L}}_{\bar{Z}}^{m_1}\leftidx{^{(T)}}{\slashed{\pi}}_{\mathring{L}}\|_{s, u} \bigr)\nonumber\\
&+\delta^{(2k+1)(1-\varepsilon_0)}(1+\delta^{2k(1-\varepsilon_0)-\varepsilon_0})
\delta^{l_1}\|\slashed{\mathcal{L}}_{\bar{Z}}^{m_1}\leftidx{^{(R)}}{\slashed{\pi}}_{T}\|_{s, u} \nonumber\\
&+\delta^{2k(1-\varepsilon_0)}(1+\delta^{2k(1-\varepsilon_0)-\varepsilon_0})\sum_{i=1}^2\sqrt{\mathcal E_{i,\leq m+2}(s,u)}\nonumber\\
\lesssim&\delta^{k(1-\varepsilon_0)}(1+\delta^{2k(1-\varepsilon_0)-\varepsilon_0})\big\{\delta^l\|\bar F^m\|_{s, u}+\delta^{k(1-\varepsilon_0)+1/2}(1+\delta^{2k(1-\varepsilon_0)-\varepsilon_0})\label{LF}\\
&\quad+\delta^{k(1-\varepsilon_0)}\sum_{i=1}^2 \sqrt{\mathcal E_{i,\leq m+2}(s, u)}+\delta^{k(1-\varepsilon_0)}(1+\delta^{2k(1-\varepsilon_0)-\varepsilon_0})\Xi^2_{m+2}(s,u)\big\}.\nonumber
\end{align}

For the remainder terms in $e^m$, it follows from Propositions \ref{prop:higher-order-L-infty-estimates}, \ref{L2chi} and \eqref{Lchi'} that
\begin{align}
&\sum_{p_1+p_2=m-1}\|\slashed{\mathcal L}_{\bar Z}^{p_1}
\bigl[
\bigl\{
{}^{(\bar Z)}\slashed\pi_{\mathring L}^X\slashed d_X\mu
+\bar Z\mathring L\mu
-\frac{2\mu}{\varrho^2}\bar Z\varrho
-\bar Z\mathcal E
\bigr\}
\slashed d\bar Z^{p_2}\operatorname{tr}\mathscr X
\bigr]\|_{s, u}\nonumber\\
&+\sum_{p_1+p_2=m-1}\|\slashed{\mathcal L}_{\bar Z}^{p_1}\bigl\{(\bar Z\mu)\bigl(\slashed d\bar Z^{p_2}\bigl(\frac{1}{\varrho^2}\mathring L(\varrho^2\operatorname{tr}\check{\mathscr X})+(\operatorname{tr}\check{\mathscr X})^2\bigr)+\slashed d[\mathring L, \bar Z^{p_2}]\operatorname{tr}\check{\mathscr X}\nonumber\\
&\qquad\qquad\qquad\qquad\qquad-\sum\limits_{\substack{q_1+q_2=p_2 \\ q_1\ge 1}}\bar Z^{q_1}(\frac{2}{\varrho})\slashed d \bar Z^{q_2}\operatorname{tr}\check{\mathscr X}\bigr)\bigr\}\|_{s, u}\nonumber\\
\lesssim &\delta^{k(1-\varepsilon_0)}\delta^{l_1}\bigl(\|\slashed{\mathcal{L}}_{\bar{Z}}^{m_1}\leftidx{^{(R)}}{\slashed{\pi}}_{\mathring{L}}\|_{s, u}
+\delta\|\slashed{\mathcal{L}}_{\bar{Z}}^{m_1}\leftidx{^{(T)}}{\slashed{\pi}}_{\mathring{L}}\|_{s, u} \bigr)+\delta^{2k(1-\varepsilon_0)}\sum_{i=1}^2 \sqrt{\mathcal E_{i,\leq m+2}(s, u)}\nonumber\\
&+\delta^{2k(1-\varepsilon_0)-\varepsilon_0}\delta^{l_1}\bigl(\|\bar Z^{m_1}\mu\|_{s, u}+\|\bar{Z}^{m_1}\operatorname{tr}\check{\mathscr{X}}\|_{s, u}+\|\slashed{\mathcal{L}}_{\bar{Z}}^{m_1}\slashed{g}\|_{s, u}+\sum_{i=1}^2\|\bar{Z}^{m_1}\check{L}^i\|_{s, u} \bigr)\nonumber\\
&+\delta^{2k(1-\varepsilon_0)-\varepsilon_0}\sum_{i=1}^2\delta^{l_0}\|\bar{Z}^{m_0}x^i\|_{s, u}+\delta^{(k-1)(1-\varepsilon_0)}\delta^{l}\big(\|\mathring LR\bar Z^m\vp\|_{s, u}+\|\bar Z^m\slashed\triangle\vp\|_{s, u}\big)\nonumber\\
\lesssim&\delta^{2k(1-\varepsilon_0)-\varepsilon_0+1/2}
+\delta^{(k-1)(1-\varepsilon_0)}\mu_{\min}^{-1/2}\sqrt{\mathcal{E}_{1,\leq m+2}(s, u)}\label{remainder}
\\
&+\delta^{2k(1-\varepsilon_0)}\sqrt{\mathcal{E}_{2,\leq m+2}(s, u)}
+\delta^{3k(1-\varepsilon_0)-1}\Xi^2_{m+2}(s,u).\nonumber
\end{align}
Collecting \eqref{de}--\eqref{remainder} and using \eqref{eal}, we obtain
\begin{align*}
\delta^l \|e^m\|_{s, u}
\lesssim& \delta^{2k(1-\varepsilon_0)-\varepsilon_0}\delta^l\|\bar F^m\|_{s,u}+\delta^{2k(1-\varepsilon_0)-\varepsilon_0+1/2}+\delta^{5k(1-\varepsilon_0)-2\varepsilon_0+1/2}\\
	&+\delta^{2k(1-\varepsilon_0)-1}\mu_{\min}^{-1/2}(s)\sqrt{\mathcal E_{1,\leq m+2}(s,u)}+\delta^{2k(1-\varepsilon_0)}(1+\delta^{2k(1-\varepsilon_0)-\varepsilon_0})\sqrt{\mathcal E_{2,\leq m+2}(s,u)}\\
	&+\delta^{2k(1-\varepsilon_0)}\delta^l\|\bar Z^m\slashed\triangle\mu\|_{s,u}+(\delta^{3k(1-\varepsilon_0)-1}+\delta^{5k(1-\varepsilon_0)-2\varepsilon_0})\Xi^2_{m+2}(s,u),
\end{align*}
and then,
\begin{align*}
	\delta^l\|\bar e^m\|_{s,u}\lesssim & \delta^l\|e^m\|_{s,u}+\delta^{1-\varepsilon_0}\delta^{l_0}\|\bar Z^{m_0}(G_{\mathring L\mathring L}^\gamma\mathring L_\gamma)\|_{s,u}+\delta^{k(1-\varepsilon_0)}\delta^l\|\bar Z^m\slashed\triangle\vp\|_{s,u}\\
	&+\delta^{-\varepsilon_k}\delta^{l_1}\|\bar Z^{m_1}\operatorname{tr}\check{\mathscr X}\|_{s,u}+\delta^{2k(1-\varepsilon_0)-\varepsilon_0}\delta^l\bigl(\|T\bar Z^{m+1}\vp\|_{s,u}+\|\mu\mathring L\bar Z^{m+1}\vp\|_{s,u}\bigr)\\
	&+\delta^{-\varepsilon_k}\delta^{l_1}\bigl(\|\slashed{\mathcal{L}}_{\bar{Z}}^{m_1}\leftidx{^{(R)}}{\slashed{\pi}}_{\mathring{L}}\|_{s, u}
+\delta\|\slashed{\mathcal{L}}_{\bar{Z}}^{m_1}\leftidx{^{(T)}}{\slashed{\pi}}_{\mathring{L}}\|_{s, u}\bigr)+ \delta^{(2k+1)(1-\varepsilon_0)-\varepsilon_0}\delta^{l_1}\|\slashed{\mathcal{L}}_{\bar{Z}}^{m_1}\leftidx{^{(R)}}{\slashed{\pi}}_{\mathring{T}}\|_{s, u} \\
&+\delta^{(3k+1)(1-\varepsilon_0)-2\varepsilon_0}\delta^{l_0}\|\bar Z^{m_0}\mu\|_{s,u}+\delta^{2k(1-\varepsilon_0)-\varepsilon_0}\sum_{i=1}\sqrt{\mathcal E_{i,\leq m+2}(s, u)}\\
&+\delta^{3k(1-\varepsilon_0)-2\varepsilon_0}\sum_{i=1}^2\bigl(\delta^{l_0}\|\bar Z^{m_0}\check L^i\|_{s,u}+\delta^{l_{-1}}\|\bar Z^{m_{-1}}x^i\|_{s,u}\bigr)\\
\lesssim&\delta^{2k(1-\varepsilon_0)-\varepsilon_0}\delta^l\|\bar F^m\|_{s,u}+\delta^{3k(1-\varepsilon_0)-2\varepsilon_0+1/2}+\delta^{2k(1-\varepsilon_0)-1}\mu_{\min}^{-1/2}(s)\sqrt{\mathcal E_{1,\leq m+2}(s,u)}\\
&+\delta^{2k(1-\varepsilon_0)-\varepsilon_0}\sqrt{\mathcal E_{2,\leq m+2}(s,u)}+\delta^{2k(1-\varepsilon_0)}\delta^l\|\bar Z^m\slashed\triangle\mu\|_{s,u}\\
&+\delta^{2k(1-\varepsilon_0)-\varepsilon_0}(1+\delta^{2k(1-\varepsilon_0)-\varepsilon_0})\int_{t_0}^s\mu_{\min}^{-1/2}\sqrt{\mathcal E_{1,\leq m+2}(\tau,u)}d\tau\\
&+(\delta^{3k(1-\varepsilon_0)-1}+\delta^{4k(1-\varepsilon_0)-2\varepsilon_0})\int_{t_0}^s\sqrt{\mathcal E_{2,\leq m+2}(\tau,u)}d\tau.
\end{align*}
And hence, with the help of \eqref{eq:mu_integrability}, one has
\begin{equation}\label{Y-21}
\begin{split}
&\int_{t_0}^s \delta^l \|\bar e^m\|_{\tau,u} \,\mathrm{d}\tau\\
\lesssim &\delta^{2k(1-\varepsilon_0)-\varepsilon_0} \int_{t_0}^s \delta^l \|\bar F^m\|_{\tau,u} \,\mathrm{d}\tau
+ \delta^{2k(1-\varepsilon_0)-\varepsilon_0+1/2}+\delta^{2k(1-\varepsilon_0)}\int_{t_0}^s\delta^l\|\bar Z^m\slashed\triangle\mu\|_{\tau,u}d\tau\\
& + \frac{\delta^{(k-1)(1-\varepsilon_0)}\mu_{\min}^{\frac{1}{2} - b_{m+2}}(s)}{b_{m+2}} \sqrt{\widetilde{\mathcal{E}}_{1,\leq m+2}(s, u)} + \frac{\delta^{k(1-\varepsilon_0)}}{b_{m+2}} \mu_{\min}^{1 - b_{m+2}}(s) \sqrt{\widetilde{\mathcal{E}}_{2,\leq m+2}(s, u)}.
\end{split}
\end{equation}
Inserting \eqref{sdE}, \eqref{Y-19} and \eqref{Y-21} into \eqref{Eal}, and then applying Gronwall's inequality, we arrive at
\begin{equation}\label{Fal}
\begin{split}
\delta^l \|\bar F^m \|_{s, u} \lesssim \,&\delta^{k(1-\varepsilon_0)-\varepsilon_0+1/2} + \delta^{2k(1-\varepsilon_0)-\varepsilon_0 +1/2}(1+\delta^{2k(1-\varepsilon_0)-\varepsilon_0})\int_{t_0}^s\mu_{\min}^{-1}(\tau)d\tau\\
&+ \frac{\mu_{\min}^{-b_{m+2}}(s)}{b_{m+2}} \big\{ \delta^{(k-1)(1-\varepsilon_0)}\sqrt{\widetilde{\mathcal{E}}_{1,\leq m+2}(s, u)} + \delta^{k(1-\varepsilon_0)}\sqrt{\widetilde{\mathcal{E}}_{2,\leq m+2}(s, u)} \big\}\\
&+\delta^{2k(1-\varepsilon_0)}\int_{t_0}^s\delta^l\|\bar Z^m\slashed\triangle\mu\|_{\tau,u}d\tau.
\end{split}
\end{equation}
In addition, by the definition of $\bar F^m$, the identity
\[
\mu \slashed{d} \bar{Z}^m \, \mathrm{tr}\check{\mathscr{X}} =\bar F^m + \slashed{d} \bar{Z}^m (\mu E_0)-\frac{1}{2}\bigl(G_{\mathring{L}\mathring{L}}^\gamma \mathring{L}_\gamma\tilde{T}^i\bigr)\slashed d\bar Z^m \mathring{\underline{L}}\varphi_i
\]
holds. Combining inequalities \eqref{Fal} and \eqref{dZE}, one then has
\begin{align}
&\delta^l \| \mu \slashed{d} \bar{Z}^m \, \mathrm{tr}\check{\mathscr{X}} \|_{s, u}\nonumber\\
 \lesssim \,&\delta^{k(1-\varepsilon_0)-\varepsilon_0+1/2} + \delta^{2k(1-\varepsilon_0)-\varepsilon_0 +1/2}(1+\delta^{2k(1-\varepsilon_0)-\varepsilon_0})\int_{t_0}^s\mu_{\min}^{-1}(\tau)d\tau\nonumber\\
&+ \frac{\mu_{\min}^{-b_{m+2}}(s)}{b_{m+2}} \big\{ \delta^{(k-1)(1-\varepsilon_0)}\sqrt{\widetilde{\mathcal{E}}_{1,\leq m+2}(s, u)} + \delta^{k(1-\varepsilon_0)}\sqrt{\widetilde{\mathcal{E}}_{2,\leq m+2}(s, u)} \big\}\label{d}\\
&+\delta^{2k(1-\varepsilon_0)}\int_{t_0}^s\delta^l\|\bar Z^m\slashed\triangle\mu\|_{\tau,u}d\tau.\nonumber
\end{align}

In summary, for any string $Z^m$ generated by
$\{\varrho\mathring L,T,R\}$, let $l$ and $p$ denote the total numbers of $T$ and
$\varrho\mathring L$ occurring in $Z^m$,
respectively, then we have  that by \eqref{ZmX} and \eqref{d},
\begin{equation}\label{dnnchi}
\begin{split}
&\delta^{l+p\varepsilon_k} \| \mu \slashed{d}{Z}^m \, \mathrm{tr}\check{\mathscr{X}} \|_{s, u}\\
 \lesssim \,&\delta^{k(1-\varepsilon_0)-\varepsilon_0+1/2} + \delta^{2k(1-\varepsilon_0)-\varepsilon_0 +1/2}(1+\delta^{2k(1-\varepsilon_0)-\varepsilon_0})\int_{t_0}^s\mu_{\min}^{-1}(\tau)d\tau\\
&+ \mu_{\min}^{-b_{m+2}}(s) \big\{ \delta^{(k-1)(1-\varepsilon_0)}\sqrt{\widetilde{\mathcal{E}}_{1,\leq m+2}(s, u)} + \delta^{k(1-\varepsilon_0)}\sqrt{\widetilde{\mathcal{E}}_{2,\leq m+2}(s, u)} \big\}\\
&+\delta^{2k(1-\varepsilon_0)}\int_{t_0}^s\delta^l\|\bar Z^m\slashed\triangle\mu\|_{\tau,u}d\tau,
\end{split}
\end{equation}
where the last term on the right-hand side of \eqref{dnnchi}  is only needed in the
case of $Z^m=\bar Z^m$ with $\bar Z\in\{T,R\}$.

\subsubsection{Estimates on the derivatives of $\slashed\triangle\mu$}\label{Sformu}

Inspired by \cite{M-Y, Sp}, we now derive a transport equation for $\mu\slashed\triangle\mu$
along the vector field $\mathring L$. Following the computations analogous to those in \cite[Section~4.2]{Ding4},
one has
\begin{equation*}
\begin{split}
\mathring{L}\tilde{F} &= \bigl(-2\mu\,\mathrm{tr}\check{\mathscr{X}} - \frac{2\mu}{\varrho} + \tilde{\mathcal{E}}\bigr)\slashed{\triangle}\mu+ \mu\bigl(\slashed{d}^X\mathrm{tr}\check{\mathscr{X}}\bigr)\mathcal{Q}_X + \tilde{e},
\end{split}
\end{equation*}
where
\begin{equation}\label{EEF}
\begin{split}
\tilde{F} &= \mu\slashed{\triangle}\mu - \tilde{E}, \\
\tilde{E} &= -\frac{1}{2}\mu^2 \bigl(G_{\mathring{L}\mathring{L}}^\gamma + 2G_{\tilde{T}\mathring{L}}^\gamma\bigr)\slashed{\triangle}\varphi_\gamma
+ \frac{1}{2}G_{\mathring{L}\mathring{L}}^\gamma T\underline{\mathring{L}}\varphi_\gamma, \\
\tilde{\mathcal{E}} &= -\mu G^\gamma_{\mathring{L}\mathring{L}}\mathring{L}\varphi_\gamma
- 2\mu G^\gamma_{\tilde{T}\mathring{L}}\mathring{L}\varphi_\gamma
- \mu\slashed{g}^{XX}G_{XX}^\gamma\mathring{L}\varphi_{\gamma}
+\f12 G_{\mathring{L}\mathring{L}}^\gamma T\varphi_\gamma, \\
\mathcal{Q}_X &= -\slashed{d}_X\mu
- \mu G^\gamma_{X\tilde{T}}\mathring{L}\varphi_\gamma
- \mu G^{\gamma}_{\tilde{T}\mathring{L}}\slashed{d}_X\varphi_\gamma
- \mu G^\gamma_{\mathring{L}\mathring{L}}\slashed{d}_X\varphi_\gamma
+ G^\gamma_{X\mathring{L}}T\varphi_\gamma
\end{split}
\end{equation}
and
\begin{align}\label{te}
\tilde e=&-\mu\slashed d^X\mu\big\{\slashed d_X\big(\f12\slashed g^{XX}G_{XX}^\gamma\mathring L\vp_\gamma+G_{\mathring L\mathring L}^\gamma\mathring L\vp_\gamma+2G_{\tilde T\mathring L}^\gamma\mathring L\vp_\gamma\big)\no\\
&\qquad+\f12G_{\mathring L\mathring L}^\gamma\slashed d_X\vp_\gamma\big(G_{X\mathring L}^\al\slashed d^X\vp_\al+G_{X\tilde T}^\al\slashed d^X\vp_\al+\mathrm{tr}\check{\mathscr{X}}+\varrho^{-1}
\big)
\big\}-\f12\mathring L(G_{\mathring L\mathring L}^\gamma)T\underline{\mathring L}\varphi_\gamma\no\\
&-\f12G_{\mathring L\mathring L}^\gamma\big\{T\mu(\slashed\triangle\varphi_\gamma)+\leftidx^{{(T)}}{\slashed\pi}_{\mathring L}^X\slashed d_X\underline{\mathring L}\varphi_\gamma-\f{1}{2\varrho^2}\underline{\mathring L}\varphi_\gamma
-\f{1}{2\varrho}T\underline{\mathring L}\varphi_\gamma+T H_\gamma\big\}\no\\
&+f_1(\varphi, \mathring L^1, \mathring L^2,\slashed dx,\varrho,\slashed\triangle x,\operatorname{tr}\check{\mathscr X})\mu\slashed\triangle\varphi\left(
\begin{array}{ccc}
\mu\mathring L(\varphi^{k-1})\\
\mu\slashed d(\varphi^{k-1})\\
T(\varphi^{k-1})\\
\mu\varphi^{k-1}
\end{array}
\right)\\
&+f_2(\varphi,\mathring L^1,\mathring L^2,\slashed d\mathring L^1,\slashed d\mathring L^2,\varrho,\slashed\triangle x)\mu\vp^{(k-3)_+}\left(
\begin{array}{ccc}
\vp^{k+1}\\
\slashed d\varphi\\
\vp\slashed d\mathring L^1\\
\vp\slashed d\mathring L^2
\end{array}
\right)
\left(
\begin{array}{ccc}
\slashed d\varphi\\
\vp\slashed d\mathring L^1\\
\vp\slashed d\mathring L^2
\end{array}
\right)
\left(
\begin{array}{ccc}
\mu\mathring L\varphi\\
\mu\slashed d\varphi\\
T\varphi
\end{array}
\right)\no\\
&+f_3(\varphi, \mathring L^1, \mathring L^2,\slashed dx,\varrho,\operatorname{tr}\check{\mathscr X})\mu\vp^{k-1}\slashed\triangle x\left(
\begin{array}{ccc}
\mu\mathring L\varphi\\
\mu\slashed d\varphi\\
T\varphi\\
\mu\vp
\end{array}
\right)+f_4(\varphi,\mathring L^1,\mathring L^2,\slashed dx)\left(
\begin{array}{ccc}
\slashed dT\varphi\\
\mu\slashed d\mathring L\varphi
\end{array}
\right)
\left(
\begin{array}{ccc}
\mu\slashed d(\varphi^{k-1})\\
\mu\varphi^{k-1}\slashed d\mathring L^1\\
\mu\varphi^{k-1}\slashed d\mathring L^2\\
\mu\varphi^{k-1}\slashed d\mathring x^1\\
\mu\varphi^{k-1}\slashed d\mathring x^2
\end{array}
\right).\no
\end{align}

Analogously to the introduction of $F^m$ in Subsection \ref{trchi}, we set
\[
\tilde{F}^m \;=\; \mu \bar{Z}^m \slashed{\triangle} \mu \;-\; \bar{Z}^m \tilde{E}
\]
with $\bar{Z}$ being any vector field in $\{T, R\}$. Then an induction argument yields that for $m \geq 1$,
\begin{equation}\label{LbarF}
\begin{split}
\mathring{L} \tilde{F}^m
&= \bigl( -2\,\mathrm{tr}\check{\mathscr{X}} \;-\; \frac{2}{\varrho} \;+\; \mu^{-1}\tilde{\mathcal{E}} \bigr)
\bigl( \tilde{F}^m \;+\; \bar{Z}^m \tilde{E} \bigr) + \mu \bigl( \slashed{d}^X \bar{Z}^m \,\mathrm{tr}\check{\mathscr{X}} \bigr) \mathcal{Q}_X
\;+\; \tilde{e}^m,
\end{split}
\end{equation}
where
\begin{equation}\label{bare}
\begin{split}
\tilde e^m=&\bar Z^{m}\tilde e+\sum_{m_1+m_2=m-1}{\bar Z}^{m_1}\big\{-(\bar Z\mu)\mathring L\bar Z^{m_2}\slashed\triangle\mu+(\slashed d^X{\bar Z}^{m_2}\mathrm{tr}\check{\mathscr{X}})\slashed{\mathcal L}_{\bar Z}(\mu\mathcal Q_X)\big\}\\
&+\sum_{m_1+m_2=m-1}{\bar Z}^{m_1}\big\{\leftidx^{{(\bar Z)}}{\slashed\pi}_{\mathring L}^X(\mu\slashed d_X\bar Z^{m_2}\slashed\triangle\mu-\slashed d_X\bar Z^{m_2}\tilde E)\\
&\qquad-\bar Z\bigl(\f12\mu G_{\mathring L\mathring L}^\gamma\mathring L\vp_\gamma+\mu G_{\tilde T\mathring L}^\gamma\mathring L\vp_\gamma+\mu\slashed g^{XX}G_{XX}^\gamma\mathring L\vp_\gamma+2\mu\,\mathrm{tr}\check{\mathscr{X}}+\f{2\mu}\varrho\bigr)\bar Z^{m_2}\slashed \triangle\mu\big\}.
\end{split}
\end{equation}
Since
\[\mu^{-1}\tilde{\mathcal E}=\mu^{-1}\mathring L\mu-\f12G_{\mathring L\mathring L}^\gamma\mathring L\vp_\gamma-G_{\tilde T\mathring L}^\gamma\mathring L\vp_\gamma-\slashed g^{XX}G_{XX}^\gamma\mathring L\vp_\gamma\lesssim\delta^{-\varepsilon_k}\]
holds by \eqref{blowup} and $|G_{X\mathring L}^\gamma T\vp_\gamma|\lesssim\delta^{k(1-\varepsilon_0)-1}$, it follows from \eqref{LbarF} that
\begin{equation*}
\begin{split}
\mathring{L}\bigl( \varrho^4 |\tilde{F}^m|^2 \bigr)
&= 2\varrho^2 \tilde{F}^m \mathring{L}\bigl( \varrho^2 \tilde{F}^m \bigr) \\
&\lesssim \delta^{-\varepsilon_k} \varrho^4 |\tilde{F}^m|^2 + |\varrho^2 \tilde{F}^m| \bigl(
\mu^{-1} \delta^{-\varepsilon_k} |\bar{Z}^m \tilde{E}| + \delta^{k(1-\varepsilon_0)-1} |\mu \slashed{d} \bar{Z}^m \mathrm{tr}\check{\mathscr{X}}| + |\tilde{e}^m|
\bigr).
\end{split}
\end{equation*}
Integrating this inequality along the integral curves of
$\mathring L$ and applying Gronwall's inequality, together with
$s-t_0\lesssim\delta^{\varepsilon_k}$, we obtain
\begin{equation}\label{r2F}
\begin{split}
\delta^{l} \varrho^2 |\tilde{F}^m(s)|
&\lesssim \delta^{l} \varrho_0^2 |\tilde{F}^m(t_0)| + \delta^{l} \int_{t_0}^s \bigl\{
\mu_{\min}^{-1} \delta^{-\varepsilon_k} |\bar{Z}^m \tilde{E}| + \delta^{k(1-\varepsilon_0)-1} |\mu \slashed{d} \bar{Z}^m \mathrm{tr}\check{\mathscr{X}}| + |\tilde{e}^m|
\bigr\} d\tau.
\end{split}
\end{equation}
Applying \eqref{Ff} to \eqref{r2F} yields
\begin{equation}\label{F}
\begin{split}
\delta^{l} \|\tilde{F}^m\|_{s, u}
&\lesssim \delta^{-\varepsilon_k-1/2} + \delta^{-\varepsilon_k} \int_{t_0}^s \mu_{\min}^{-1}(\tau) \delta^{l} \|\bar{Z}^m \tilde{E}\|_{\tau,u} d\tau \\
&\quad + \delta^{k(1-\varepsilon_0)-1}\int_{t_0}^s \delta^{l}  \|\mu \slashed{d} \bar{Z}^m \mathrm{tr}\check{\mathscr{X}}\|_{\tau,u} d\tau +  \int_{t_0}^s \delta^{l}\|\tilde{e}^m\|_{\tau,u} d\tau,
\end{split}
\end{equation}
where
\begin{align}
 &\delta^{k(1-\varepsilon_0)-1}\int_{t_0}^s  \delta^{l}\|\mu \slashed{d} \bar{Z}^m \mathrm{tr}\check{\mathscr{X}}\|_{\tau,u} d\tau\nonumber\\
\lesssim & \delta^{k(1-\varepsilon_0)-1} \Big\{\delta^{1/2}+\delta^{k(1-\varepsilon_0)+ 1/2}(1+\delta^{2k(1-\varepsilon_0)-\varepsilon_0})\int_{t_0}^s\mu_{\min}^{-1}(\tau)d\tau\nonumber\\
&\qquad\qquad+\f{\mu_{\min}^{1-b_{m+2}}}{b_{m+2}}\bigl(\delta^{2\varepsilon_0-1}\sqrt{\widetilde{\mathcal{E}}_{1,\leq m+2}(s, u)} + \delta^{\varepsilon_0}\sqrt{\widetilde{\mathcal{E}}_{2,\leq m+2}(s, u)} \bigr)
\Big\}\label{chi}\\
&+\delta^{3k(1-\varepsilon_0)-1}\int_{t_0}^s\int_{t_0}^\tau\delta^l\|\bar Z^m\slashed\triangle\mu\|_{\bar\tau, u}d\bar\tau d\tau.\nonumber
\end{align}
In addition, since the term $G_{\mathring L\mathring L}^\gamma T^2\vp_\gamma$ appears in $\tilde E$, it follows from \cite[(4.46)]{Ding4} that
\begin{equation*}
G_{\mathring L\mathring L}^\gamma  T^2\varphi_\gamma=f(\varphi, \mathring L^1,\mathring L^2,\f{x}\varrho,\check\varrho)\left(
\begin{array}{ccc}
\varphi^kT^2\varphi\\
\varphi^{k-1}T(\mu\tilde T^a\mathring L\varphi_a)\\
\varphi^{k-1}T(\mathring L_\gamma\tilde T^a)T\varphi_a\\
\varphi^{k-1}T(\mu\tilde T^a g_{\gamma j}\slashed d^Xx^j\slashed d_X\varphi_a)\\
\check L^{1}\varphi^{k-1}T^2\varphi\\
\check L^{2}\varphi^{k-1}T^2\varphi\\
\check\varrho\varphi^{k-1}T^2\varphi\\
\varphi^{k-2}(\mathring L\phi)T^2\varphi\\
\varphi^{k-2}(\slashed d^Xx)(\slashed d_X\phi)T^2\varphi
\end{array}
\right),
\end{equation*}
and hence, with the help of Proposition \ref{L2chi} and Corollary \ref{Cor6.3}, we have
\begin{align*}
	&\delta^l\|\bar Z^m(G_{\mathring L\mathring L}^\gamma  T^2\varphi_\gamma)\|_{s,u}\\
	\lesssim & \delta^{k(1-\varepsilon_0)-1}\sum_{i=1}^2\sqrt{\mathcal E_{i,\leq m+2}(s,u)}+\delta^{2k(1-\varepsilon_0)-\varepsilon_0-1}\delta^{l_0}\|\bar Z^{m_0}\mu\|_{s,u}+\delta^{-\varepsilon_k}\delta^{l_1}\|\slashed{\mathcal{L}}_{\bar{Z}}^{m_1}\leftidx{^{(R)}}{\slashed{\pi}}_{{T}}\|_{s, u}\\
	&+\delta^{k(1-\varepsilon_0)-2}(\sum_{i=1}^2\delta^{l_0}\|\bar Z^{m_0}\check L^i\|_{s,u}+\delta^{l_0}\|\bar Z^{m_0}\check\varrho\|_{s,u})+\delta^{k(1-\varepsilon_0)-1-\varepsilon_0}\delta^{l_0}\|\bar Z^{m_0}x\|_{s,u}\\
	&+\delta^{(k-1)(1-\varepsilon_0)-2}\delta^{l_1}(\|\bar Z^{m_1}\mathring L\phi\|_{s,u}+\|\slashed d\bar Z^{m_1}\phi\|_{s,u})+\delta^{k(1-\varepsilon_0)-1}(1+\delta^{2k(1-\varepsilon_0)-\varepsilon_0})\delta^{l_1}\|\slashed{\mathcal L}_{\bar Z}^{m_1}\slashed g\|_{s,u}\\
	\lesssim & \delta^{-\varepsilon_k-1/2}+\delta^{(k-1)(1-\varepsilon_0)-1}\sqrt{\mathcal{E}_{1,\leq m+2}(s, u)}+\delta^{k(1-\varepsilon_0)-1}\sqrt{\mathcal{E}_{2,\leq m+2}(s, u)}\\
	&+\delta^{2k(1-\varepsilon_0)-2}\int_{t_0}^s\bigl\{\mu_{\min}^{-1/2}(\tau)\sqrt{\mathcal E_{1,\leq m+2}(\tau, u)}+\delta^{k(1-\varepsilon_0)}\sqrt{\mathcal E_{2,\leq m+2}(\tau,u)}\bigr\}d\tau,
\end{align*}
which gives
\begin{align}
&\delta^l\|\bar Z^m\tilde E\|_{s, u}\nonumber\\
\lesssim & \delta^{(k-1)(1-\varepsilon_0)}\de^{l}\bigl(\|\mu\slashed\triangle\bar Z^{m}\vp\|_{s, u}+\|T\bar Z^{m}\mathring{L}\vp\|_{s, u}\bigr)+\delta^l\|\bar Z^m(G_{\mathring L\mathring L}^\gamma  T^2\varphi_\gamma)\|_{s,u}\nonumber\\
&+\delta^{k(1-\varepsilon_0)}\delta^{l_1}\bigl(\|\slashed{\mathcal{L}}_{\bar{Z}}^{m_1}\leftidx{^{(R)}}{\slashed{\pi}}\|_{s, u}
+\delta\|\slashed{\mathcal{L}}_{\bar{Z}}^{m_1}\leftidx{^{(T)}}{\slashed{\pi}}\|_{s, u}+\delta^{-\varepsilon_k}\|\slashed{\mathcal{L}}_{\bar{Z}}^{m_1}\leftidx{^{(R)}}{\slashed{\pi}}_T\|_{s, u}\bigr)\nonumber\\
&+\delta^{2k(1-\varepsilon_0)-\varepsilon_0-1}\Bigl(\sum_{i=1}^2\|\bar\delta^{l_1} \bar Z^{m_1}\check L^i\|_{s, u}+\delta^{l_1}\|\bar Z^{m_1}x\|_{s, u}+\delta^{l_0}\|\bar Z^{m_0}\mu\|_{s, u}\Bigr)\nonumber\\
&+\delta^{2k(1-\varepsilon_0)-1}\sum_{i=1}^2\sqrt{\mathcal{E}_{i,\leq m+2}(s, u)}\nonumber \\
\lesssim &\delta^{-\varepsilon_k-1/2}+\delta^{(k-1)(1-\varepsilon_0)-1}\sqrt{\mathcal{E}_{1,\leq m+2}(s, u)}+\delta^{k(1-\varepsilon_0)-1}\sqrt{\mathcal{E}_{2,\leq m+2}(s, u)}\label{ZtE}\\
&+\delta^{2k(1-\varepsilon_0)-2}\int_{t_0}^s\bigl\{\mu_{\min}^{-1/2}(\tau)\sqrt{\mathcal E_{1,\leq m+2}(\tau, u)}+\delta^{k(1-\varepsilon_0)}\sqrt{\mathcal E_{2,\leq m+2}(\tau,u)}\bigr\}d\tau.\nonumber
\end{align}
Therefore, it holds that
\begin{equation}\label{ZmE}
\begin{split}
\delta^{-\varepsilon_k}\int_{t_0}^s\mu_{\min}^{-1}&(\tau)\delta^l\|\bar Z^m\tilde E\|_{\tau,u}\,d\tau
\lesssim \delta^{-2\varepsilon_k-\frac12}\int_{t_0}^s\mu_{\min}^{-1}(\tau)\,d\tau \\
& + \frac{\delta^{(k-1)(1-\varepsilon_0)-1}}{b_{m+2}}\mu_{\min}^{-b_{m+2}}(s)
\bigl(\sqrt{\widetilde{\mathcal{E}}_{1,\leq m+2}(s, u)}
+ \delta^{1-\varepsilon_0}\sqrt{\widetilde{\mathcal{E}}_{2,\leq m+2}(s, u)}\bigr).
\end{split}
\end{equation}

Next, we estimate $\tilde e^m$. By \eqref{bare}, we start to deal with $\bar Z^m\tilde e$. Similarly to \cite[(4.52), (4.53)]{Ding4}, $\tilde e$ in \eqref{te} contains
the following term
\begin{equation}\label{LGT}
\begin{split}
&\mathring L(G_{\mathring L\mathring L}^\gamma)T^2\varphi_\gamma
=2G_{i\beta}^\gamma(\underbrace{\mathring L\mathring L^i}_{\eqref{LL}})\mathring L^\beta(T^2\varphi_\gamma)+(\p_{\varphi_\nu}G_{\al\beta}^\gamma)(\mathring L\varphi_\nu)\mathring L^\al\mathring L^\beta(T^2\varphi_\gamma)
\end{split}
\end{equation}
with
\begin{equation}\label{pGLT}
(\p_{\varphi_\nu}G_{\al\beta}^\g)(\mathring L\varphi_\nu)\mathring L^\al\mathring L^\beta(T^2\varphi_\g)=f(\varphi, \check L^1,\check L^2,\f{x}\varrho,\check\varrho)\left(
\begin{array}{ccc}
\varphi^{k-1}(\mathring L\varphi)T^2\varphi\\
\varphi^{k-2}(\mathring L\varphi)T(\mu\tilde T^a\mathring L\varphi_a)\\
\varphi^{k-2}(\mathring L\varphi)T(\mathring L_\g\tilde T^a)T\varphi_a\\
\varphi^{k-2}(\mathring L\varphi)T(\mu\tilde T^a g_{\g j}\slashed d^Xx^j\slashed d_X\varphi_a)\\
\check L^{1}\varphi^{k-2}(\mathring L\varphi)T^2\varphi\\
\check L^{2}\varphi^{k-2}(\mathring L\varphi)T^2\varphi\\
\check\varrho\varphi^{k-2}(\mathring L\varphi)T^2\varphi\\
\varphi^{(k-3)_+}(\mathring L\phi)(\mathring L\varphi)T^2\varphi\\
\varphi^{(k-3)_+}(\slashed d^Xx)(\slashed d_X\phi)(\mathring L\varphi)T^2\varphi\\
\varphi^{k-2}(\mathring L^\al\mathring L\varphi_\al)T^2\varphi\\
\varphi^{k-2}\slashed d^Xx(\mathring L^\al\slashed d_X\varphi_\al)T^2\varphi
\end{array}
\right).
\end{equation}
By the facts $\mathring L^\al\mathring L\varphi_\al=\mathring L^2\phi+f(\varphi,\mathring L^1,\mathring L^2)\varphi^{k}\left(
\begin{array}{ccc}\mathring L\varphi\\
\slashed d^X\varphi\slashed d_Xx\end{array}
\right)$ and $\mathring L^\al\slashed d_X\varphi_\al=\slashed d_X\mathring L\phi-\textrm{tr}\mathscr X\slashed d_X\phi+f(\varphi,\mathring L^1,\mathring L^2)\varphi^{k}\slashed d_X\varphi$, one has
\begin{align*}
&\delta^l\|\bar Z^m\big(\mathring L(G_{\mathring L\mathring L}^\g)T\underline{\mathring L}\varphi_\g\big)\|_{s,u}\lesssim\delta^l\|\bar Z^m\big(\mathring L(G_{\mathring L\mathring L}^\g)T(\mu{\mathring L}\varphi_\g)\big)\|_{s,u}+\delta^l\|\bar Z^m\big(\mathring L(G_{\mathring L\mathring L}^\g)T^2\varphi_\g\big)\|_{s,u}\\
\lesssim & \delta^{3k(1-\ve0)-1-2\ve0}\delta^{l_0}\|\bar Z^{m_0}\mu\|_{s,u}+\delta^{2k(1-\ve0)-1-\varepsilon_0}\delta^l\|T\bar Z^{m+1}\varphi\|_{s,u}+\delta^{2k(1-\varepsilon_0)-1}\delta^l\|T\bar Z^m\mathring L\vp\|_{s,u}\\
&+\delta^{-2\varepsilon_k}\delta^{l_1}\|\slashed{\mathcal L}_{\bar Z}^{m_1}\leftidx{^{(R)}}{\slashed\pi}_T\|_{s,u}+\delta^{2k(1-\ve0)-1-\varepsilon_0}\sum_{i=1}^2\sqrt{\mathcal{E}_{i,\leq m+2}(s, u)}\\
&+\delta^{k(1-\ve0)-2-\varepsilon_k}\delta^{l_0}\big(\sum_{i=1}^2\|\bar Z^{m_0}\check L^i\|_{s,u}+\|\bar Z^{m_0}\check\varrho\|_{s,u}\big)+\delta^{k(1-\ve0)-1}\delta^{l_1}\|\slashed{\mathcal L}_{\bar Z}^{m_1}\check{\mathscr X}\|_{s,u}\\
&+\delta^{(k-1)(1-\ve0)-2}\bigl(\delta^{l_0}\|Z^{m_0}\phi\|_{s,u}+\delta^{l_1}\|\bar Z^{m_1}\mathring L\phi\|_{s,u}\bigr)+\delta^{(k+1)(1-\ve0)-2-\varepsilon_k}\delta^{l_{-1}}\|\bar Z^{m_{-1}}x\|_{s,u}\\
&+\delta^{(k-1)(1-\ve0)-2}\delta^{l_1}\big(\|\bar Z^{m_1}\mathring L^2\phi\|_{s,u}+\|\slashed d\bar Z^{m_1}\mathring L\phi\|_{s,u}\big)+\delta^{2k(1-\ve0)-2}\delta^{l_0}\|\slashed{\mathcal L}_{\bar Z}^{m_0}\slashed g\|_{s,u}\\
\lesssim & \delta^{2k(1-\ve0)-1/2-2\ve0}+\delta^{2k(1-\ve0)-2}\bigl(\sqrt{\mathcal E_{1,\leq m+2}}+\delta^{1-\varepsilon_0}\sqrt{\mathcal E_{2,\leq m+2}}\bigr)+\delta^{3k(1-\varepsilon_0)-1}\|R^m\slashed\triangle\mu\|_{s,u}\\
&+\delta^{3k(1-\ve0)-2-\varepsilon_0}\int_{t_0}^s\bigl(\mu_{\min}^{-1/2}\sqrt{\mathcal E_{1,\leq m+2}(\tau,u)}+\delta^{k(1-\varepsilon_0)}\sqrt{\mathcal E_{2,\leq m+2}(\tau,u)}\bigr)d\tau\no\\
\lesssim & \delta^{3k(1-\varepsilon_0)-1}\mu_{\min}^{-1}(s)\delta^l\|\tilde{F}^m\|_{s,u}+\delta^{2k(1-\ve0)-1/2-2\ve0}\mu_{\min}^{-1}(s)\\
&+\delta^{2k(1-\ve0)-2}\mu_{\min}^{-1}(s)\bigl(\sqrt{\tilde{\mathcal E}_{1,\leq m+2}(s,u)}+\delta^{1-\varepsilon_0}\sqrt{\tilde{\mathcal E}_{2,\leq m+2}(s,u)}\bigr).
\end{align*}

Note that \eqref{te} and \eqref{H} imply that $\bar{Z}^m \tilde{e}$ contains the term
$\frac{1}{2} \big( G_{\mathring{L}\mathring{L}}^\gamma T\varphi_\gamma \big)\bar{Z}^m T\operatorname{tr}\check{\mathscr{X}}$,
which equals
\[
\frac{1}{2} \big(G_{\mathring{L}\mathring{L}}^\gamma T\varphi_\gamma \big) \bar{Z}^m \slashed{\triangle}\mu
+ \cdots=\frac{1}{2} \mu^{-1}\big( G_{\mathring{L}\mathring{L}}^\gamma T\varphi_\gamma \big)(\tilde F^m+\bar Z^m\tilde E)+\cdots
\]
by \eqref{Tchi'}.
Thus, it follows from Proposition \ref{L2chi} and \eqref{ZtE} that
\begin{align}
&\delta^l\|\bar Z^m\tilde e\|_{s, u}\nonumber\\
\lesssim & \delta^{k(1-\varepsilon_0)-1}(1+\delta^{2k(1-\varepsilon_0)-\varepsilon_0})\delta^{l_0}\|\bar Z^{m_0}\mu\|_{s, u}+\delta^{2k(1-\varepsilon_0)-2+\varepsilon_0}\delta^l\|\mu\slashed d\bar Z^m\mathring L\vp\|_{s, u}\no\\
&+\delta^{2k(1-\varepsilon_0)-2}(1+\delta^{(k+1)(1-\varepsilon_0)-\varepsilon_0})\delta^{l_1}\|\slashed{\mathcal{L}}_{\bar{Z}}^{m_1}\slashed g\|_{s, u}+\delta^{3k(1-\varepsilon_0)-1-2\varepsilon_0}\delta^{l_{-1}}\|\bar Z^{m_{-1}}x\|_{s,u}\nonumber\\
&+\delta^{(3k-1)(1-\varepsilon_0)-2\varepsilon_0}\sum_{i=1}^2\sqrt{\mathcal E_{i,\leq m+2}(s,u)}+\delta^l\|\bar Z^m\bigl(L(G_{\mathring L\mathring L}^\gamma)T\mathring{\underline L}\vp_\gamma\bigr)\|_{s,u}\nonumber\\
&+\delta^{(k-1)(1-\varepsilon_0) -1}\delta^{l_0}\|\slashed d\bar Z^{m_0}\vp\|_{s, u}+\delta^{k(1-\varepsilon_0)-1}\delta^{l_1}\bigl(\|\slashed{\mathcal{L}}_{\bar{Z}}^{m_1}\leftidx{^{(T)}}{\slashed{\pi}}_{\mathring{L}}\|_{s, u}+\|\slashed{\mathcal{L}}_{\bar{Z}}^{m_1}\leftidx{^{(R)}}{\slashed{\pi}}\|_{s, u}\bigr)\nonumber\\
&+\delta^{(2k-1)(1-\varepsilon_0)-1}\delta^{l_0}\|\bar Z^{m_0}T\vp\|_{s, u}+\delta^{2k(1-\varepsilon_0)-1}\delta^{l_1}\|\bar Z^{m_1}T\mathring{L}\vp\|_{s, u}\nonumber\\
&+\delta^{3k(1-\varepsilon_0)-2\varepsilon_0}\delta^{l_1}\|\slashed{\mathcal{L}}_{\bar{Z}}^{m_1}\leftidx{^{(T)}}{\slashed{\pi}}\|_{s, u}+\delta^{-\varepsilon_k-1}\delta^{l_1}\|\bar Z^{m_1}\operatorname{tr}\check{\mathscr X}\|_{s, u} \nonumber\\
&+\delta^{2k(1-\varepsilon_0)-1-\varepsilon_0}\delta^{l_0}\sum_{i=1}^2\|\bar Z^{m_0}\check L^i\|_{s,u}+(1+\delta^{2k(1-\varepsilon_0)-\varepsilon_0})\delta^{l_1}\|\bar Z^{m_1}(G_{\mathring L\mathring L}^\gamma T^2\vp_\gamma)\|_{s,u}\nonumber\\
&+\delta^{-\varepsilon_k}\delta^l\|
\mu^{-1}\bigl(\widetilde F^m+\bar Z^m\widetilde E\bigr)
\|_{s,u}+\delta^{k(1-\varepsilon_0)-1}\delta^{l_1}\|\bar Z^{m_1}(G_{\mathring L\mathring L}^\gamma T\vp_\gamma)\|_{s,u}\nonumber\\
\lesssim &\delta^{-\varepsilon_k}\mu_{\min}^{-1}(s)\de^l\|\tilde F^m\|_{s, u}+\delta^{2k(1-\ve0)-1/2-2\ve0}\mu_{\min}^{-1}(s)\label{Zme}\\
&+\delta^{2k(1-\ve0)-2}\mu_{\min}^{-1}(s)\bigl(\sqrt{\tilde{\mathcal E}_{1,\leq m+2}(s,u)}+\delta^{1-\varepsilon_0}\sqrt{\tilde{\mathcal E}_{2,\leq m+2}(s,u)}\bigr).\nonumber
\end{align}
Furthermore, when $m_1+m_2=m-1$,
\begin{align}
&\delta^l\|{\bar Z}^{m_1}\Big\{-(\bar Z\mu)\mathring L\bar Z^{m_2}\slashed\triangle\mu+(\slashed d^X{\bar Z}^{m_2}\mathrm{tr}\check{\mathscr{X}})\slashed{\mathcal L}_{\bar Z}(\mu\mathcal Q_X)\Big\}\|_{s, u}\nonumber\\
&+\delta^l\|{\bar Z}^{m_1}\Big\{\leftidx^{{(\bar Z)}}{\slashed\pi}_{\mathring L}^X(\mu\slashed d_X\bar Z^{m_2}\slashed\triangle\mu-\slashed d_X\bar Z^{m_2}\tilde E)\nonumber\\
&\qquad-\bar Z\bigl(\f12\mu G_{\mathring L\mathring L}^\gamma\mathring L\vp_\gamma+\mu G_{\tilde T\mathring L}^\gamma\mathring L\vp_\gamma+\mu\slashed g^{XX}G_{XX}^\gamma\mathring L\vp_\gamma+2\mu\,\mathrm{tr}\check{\mathscr{X}}+\f{2\mu}\varrho\bigr)\bar Z^{m_2}\slashed \triangle\mu\Big\}\|_{s, u}\nonumber\\
\lesssim & \delta^{-1-\varepsilon_k}\delta^{l_1}\bigl(\|\slashed{\mathcal{L}}_{\bar{Z}}^{m_1}\leftidx{^{(R)}}{\slashed{\pi}}_{\mathring L}\|_{s, u}
+\delta\|\slashed{\mathcal{L}}_{\bar{Z}}^{m_1}\leftidx{^{(T)}}{\slashed{\pi}}_{\mathring{L}}\|_{s, u}
+\delta^2\|\slashed{\mathcal{L}}_{\bar{Z}}^{m_1}\leftidx{^{(R)}}{\slashed{\pi}}_T\|_{s, u}\bigr)\nonumber\\
&+\delta^{k(1-\varepsilon_0)}(1+\delta^{2k(1-\varepsilon_0)-\varepsilon_0})\bigl(\delta^{l}\|\tilde F^m\|_{s, u}+\delta^{l_2}\|\bar Z^{m_2}\tilde E\|_{s, u}+\delta^{-1-\varepsilon_k}\delta^{l_1}\|\slashed{\mathcal{L}}_{\bar{Z}}^{m_1}\slashed g\|_{s, u}\bigr)\nonumber\\
&+\delta^{-\varepsilon_k}\bigl(\delta^{l_0}\|\bar Z^{m_0}\mu\|_{s, u}+\delta^{l_{-1}}\|\bar Z^{m_{-1}}x\|_{s,u}\bigr)+\delta^{k(1-\varepsilon_0)}\delta^{l_0}\|T\bar Z^{m_0}\vp\|_{s, u}\nonumber\\
& +\delta^{k(1-\varepsilon_0)-1}\bigl(\sum_{i=1}^2\delta^{l_0}\|\bar Z^{m_0}\check L^i\|_{s, u}+\delta^{l_0}\|\bar Z^{m_0}\check\varrho\|_{s, u}+\delta^{l_1}\|\bar Z^{m_1}\operatorname{tr}\check{\mathscr X}\|_{s, u}+\delta^{l_0}\|\bar Z^{m_0}\vp\|_{s, u}\bigr)\nonumber\\
&+\delta^{(k-1)(1-\varepsilon_0)-1}\delta^{l_0}\bigl(\|\bar Z^{m_0}\mathring L\phi\|_{s,u}+\|\slashed d\bar Z^{m_0}\phi\|_{s,u}+\delta\|\mu\mathring L\bar Z^{m_0}\vp\|_{s, u}+\delta\|\mu\slashed d\bar Z^{m_0}\vp\|_{s,u}\bigr)\nonumber\\
\lesssim & \delta^{k(1-\varepsilon_0)}(1+\delta^{2k(1-\varepsilon_0)-\varepsilon_0})\delta^{l}\|\tilde F^m\|_{s, u}+\delta^{k(1-\varepsilon_0)}\|R^m\slashed\triangle\mu\|_{s,u}\nonumber\\
&+(1+\delta^{2k(1-\varepsilon_0)-\varepsilon_0})\Bigl\{\delta^{2k(1-\varepsilon_0)-\varepsilon_0-1/2}
+\delta^{k(1-\varepsilon_0)-1}\sqrt{\tilde{\mathcal E}_{1,\leq m+2}}+\delta^{2k(1-\varepsilon_0)-1}\sqrt{\tilde{\mathcal E}_{2,\leq m+2}}\Bigr\}\nonumber\\
\lesssim & \delta^{k(1-\varepsilon_0)}(1+\delta^{2k(1-\varepsilon_0)-\varepsilon_0})\mu_{\min}^{-1}(s)\Bigl\{\delta^{l}\|\tilde F^m\|_{s, u}+\delta^{-\varepsilon_k -1/2}\label{eleft} +\delta^{-1}\sqrt{\tilde{\mathcal E}_{1,\leq m+2}(s,u)}\\
&\qquad\qquad\qquad\qquad\qquad\qquad\qquad+\delta^{k(1-\varepsilon_0)-1}\sqrt{\tilde{\mathcal E}_{2,\leq m+2}(s,u)}\Bigr\}.\nonumber
\end{align}
By substituting \eqref{Zme} and \eqref{eleft} into \eqref{bare}, one has
\begin{equation}\label{intem}
\begin{split}
\delta^l \int_{t_0}^s \|\tilde{e}^m\|_{\tau,u} \, d\tau
\lesssim \,&\delta^{-\varepsilon_k} \int_{t_0}^s \mu_{\min}^{-1}(\tau) \, \delta^l \|\tilde{F}^m\|_{\tau,u} \, d\tau
+ \delta^{2k(1-\varepsilon_0)-2\varepsilon_0-1/2} \int_{t_0}^s \mu_{\min}^{-1}(\tau) \, d\tau \\
&+ \frac{\delta^{k(1-\varepsilon_0)-1}}{b_{m+2}} \, \mu_{\min}^{-b_{m+2}}(s)
\bigl(\delta^{\varepsilon_0-1} \sqrt{\widetilde{\mathcal{E}}_{1,\leq m+2}(s, u)}
+ \sqrt{\widetilde{\mathcal{E}}_{2,\leq m+2}(s, u)} \bigr).
\end{split}
\end{equation}
Combining \eqref{chi}, \eqref{ZmE}, \eqref{intem}
and \eqref{F}, and then applying the weighted Gronwall's
inequality together with \eqref{eq:mu-integrability-full},
when $b_{m+2}$ is chosen sufficiently large, we can obtain
\begin{equation*}
\mu_{\min}^{b_{m+2}} \, \delta^l \|\tilde{F}^m\|_{s, u}
\lesssim \delta^{-\varepsilon_k-1/2}
+ \frac{\delta^{k(1-\varepsilon_0)-1}}{b_{m+2}}
\bigl(\delta^{\varepsilon_0-1} \sqrt{\widetilde{\mathcal{E}}_{1,\leq m+2}(s, u)}
+ \sqrt{\widetilde{\mathcal{E}}_{2,\leq m+2}(s, u)} \bigr).
\end{equation*}
This, together with \eqref{ZtE}, yields
\begin{equation*}
\mu_{\min}^{b_{m+2}} \, \delta^l \|\mu \bar{Z}^m \slashed{\triangle} \mu\|_{s, u}
\lesssim \delta^{-\varepsilon_k-1/2}
+ \delta^{k(1-\varepsilon_0)-1}
\bigl(\delta^{\varepsilon_0-1} \sqrt{\widetilde{\mathcal{E}}_{1,\leq m+2}(s, u)}
+ \sqrt{\widetilde{\mathcal{E}}_{2,\leq m+2}(s, u)}\bigr).
\end{equation*}

When $Z^m$ contains at least one $\varrho\mathring L$, similarly to \eqref{ZmX}, one 
can use \eqref{lmu} to estimate $\|\mu{Z}^m \slashed{\triangle} \mu\|_{s, u}$ directly.

In the end, for any string $Z^m$ generated by
$\{\varrho\mathring L,T,R\}$, where $l$ and $p$ denote
the total numbers of $T$ and
$\varrho\mathring L$ occurring in $Z^m$, respectively, it holds that
\begin{equation}\label{Zmu}
\mu_{\min}^{b_{m+2}}(s) \, \delta^{l+p\varepsilon_k} \bigl\| \mu Z^m \slashed{\triangle} \mu \bigr\|_{s, u}
\lesssim \delta^{-\varepsilon_k-1/2}
+ \delta^{k(1-\varepsilon_0)-1}
\bigl(\delta^{\varepsilon_0-1} \sqrt{\widetilde{\mathcal{E}}_{1,\leq m+2}}
+ \sqrt{\widetilde{\mathcal{E}}_{2,\leq m+2}}\bigr).
\end{equation}
Therefore, substituting \eqref{Zmu} into \eqref{dnnchi}, we have
\begin{equation}\label{mucheckX}
\mu_{\min}^{b_{m+2}}(s)\delta^{l+p\varepsilon_k} \bigl\| \mu \slashed{d} Z^m \, \mathrm{tr}\check{\mathscr{X}} \bigr\|_{s, u}
\lesssim \delta^{-\varepsilon_k+1/2}
+ \delta^{k(1-\varepsilon_0)}
\bigl(\delta^{\varepsilon_0-1} \sqrt{\widetilde{\mathcal{E}}_{1,\leq m+2}}
+ \sqrt{\widetilde{\mathcal{E}}_{2,\leq m+2}}\bigr).
\end{equation}

\section{Estimates for the error terms}\label{ert}

Having established the optimal $L^2$ estimates for the relevant
quantities in Section~\ref{EE}, we are now ready to treat the error
terms
$\delta|\int_{D^{s,u}}\Phi\cdot\mathring{\underline L}\Psi|$
and
$\delta^{\varepsilon_k}|\int_{D^{s,u}}\Phi\cdot\mathring L\Psi|$
in~\eqref{e} so that the final energy estimates for
$\varphi$ can be derived. Following~\cite{M-Y,L-Y,Sp}, we examine all the terms in
\eqref{Phik} whose explicit expressions are given in
\eqref{T1}--\eqref{R3}.

\subsection{Treatment of $J_1^{m+1}$ in \eqref{Phik}}\label{tot}

\begin{enumerate}
\item
From the explicit expression of $J_1^{m+1}$, it follows directly
that the highest derivative order occurring in
${}^{(Z)}D_{\gamma,i}^{n}$ ($i=1,3$) does not exceed $m-n$.
Since neither the corresponding $L^\infty$ estimates in Section~\ref{Section 3}
nor the related $L^2$ estimates in Proposition~\ref{L2chi} depend on the
top-order derivative estimates in Subsection~\ref{Section 6.2}, the
$L^2$ norms of all the terms in $J_1^{m+1}$ can be estimated by
the aforementioned results. Therefore,
\begin{align}
&\delta\cdot\delta^{2l+2p\varepsilon_k}
\bigl|
\int_{D^{s,u}}
\sum_{j=1}^{m}
\bigl(Z_{m+1}+\leftidx{^{(Z_{m+1})}}\lambda\bigr)
\cdots
\bigl(Z_{m+2-j}+\leftidx{^{(Z_{m+2-j})}}\lambda\bigr)
\leftidx{^{(Z_{m+1-j})}}D_{\gamma,1}^{m-j}
\cdot\mathring{\underline L}\varphi_\gamma^{m+1}
\bigr|
\notag\\
\lesssim&\,
\delta\cdot\delta^{2l+2p\varepsilon_k}
\int_{t_0}^s
\sum_{j=1}^{m}
\bigl\|
\bigl(Z_{m+1}+\leftidx{^{(Z_{m+1})}}\lambda\bigr)
\cdots
\bigl(Z_{m+2-j}+\leftidx{^{(Z_{m+2-j})}}\lambda\bigr)
\leftidx{^{(Z_{m+1-j})}}D_{\gamma,1}^{m-j}
\bigr\|_{\tau,u}
\notag\\
&\qquad\qquad\qquad\qquad
\cdot\bigl\|\mathring{\underline L}Z^{m+1}\varphi_\gamma\bigr\|_{\tau,u}\,d\tau
\notag\\
\lesssim&\,
\delta
\int_{t_0}^s
\sqrt{\mathcal{E}_{2,\leq m+2}(\tau,u)}
\bigl\{
(1+\delta^{2k(1-\varepsilon_0)-\varepsilon_0})\bigl(\delta^{-\varepsilon_k+1/2}
+ \mu_{\min}^{-1/2}(\tau)\sqrt{\widetilde{\mathcal{E}}_{1,\leq m+2}(\tau,u)}\bigr)
\notag\\
&\qquad\qquad\qquad\qquad\qquad
+ \delta^{2k(1-\varepsilon_0)-\varepsilon_0}\sqrt{\widetilde{\mathcal{E}}_{2,\leq m+2}(\tau,u)}+\delta^{-\varepsilon_k}\delta^{l+p\varepsilon_k}\|\mathring LZ^{m+1}\vp\|_{\tau,u}
\bigr\}\!d\tau
\notag\\
\lesssim & \f{\mu_{\min}^{-2b_{m+2}}(s)}{b_{m+2}}\big\{
(1+\delta^{4k(1-\varepsilon_0)-2\varepsilon_0})\bigl(\delta^{2}+\delta^{1+2\varepsilon_k}{\widetilde{\mathcal{E}}_{1,\leq m+2}(s, u)} \bigr)
+\delta {\widetilde{\mathcal{E}}_{2,\leq m+2}(s, u)} \big\}\label{D11}\\
&+\delta^{(k+1)(1-\varepsilon_0)}\int_0^u\mathcal F_{1,\leq m+2}(s,u')du'.\nonumber
\end{align}

Similarly,
\begin{align}
&\delta\cdot\delta^{2l+2p\varepsilon_k}\bigl|\int_{D^{s,u}}\sum_{j=1}^{m}\big(Z_{m+1}+\leftidx{^{(Z_{m+1})}}\lambda\big)\dots\big(Z_{m+2-j}
+\leftidx{^{(Z_{m+2-j})}}\lambda\big)\leftidx{^{(Z_{m+1-j})}}D_{\gamma,3}^{m-j}\cdot\mathring{\underline L}Z^{m+1}\varphi_\gamma\bigr|\notag\\
\lesssim&\delta^{2}(1+\delta^{4k(1-\varepsilon_0)-2\varepsilon_0})+\delta^{1+2k(1-\varepsilon_0)}{\mathcal{E}}_{1,\leq m+2}(s, u)+\f{\mu_{\min}^{-2b_{m+2}}(s)}{b_{m+2}}\delta {\widetilde{\mathcal{E}}_{2,\leq m+2}(s, u)}.\label{D33}
\end{align}

The terms in $\delta^{2l+2p\varepsilon_k}|\int_{D^{s,u}}\Phi\cdot \delta^{\varepsilon_k}\mathring L\Psi|$
related to the integrand factors $\leftidx{^{(Z)}}D_{\gamma,i}^{n}$ $(i=1,3)$  can also be treated as
\begin{align}
&\delta^{2l+2p\varepsilon_k}\int_{D^{s,u}}\bigl|\sum_{j=1}^{m}\big(Z_{m+1}+\leftidx{^{(Z_{m+1})}}\lambda\big)\dots\big(Z_{m+2-j}
+\leftidx{^{(Z_{m+2-j})}}\lambda\big)\big(\leftidx{^{(Z_{m+1-j})}}D_{\gamma,1}^{m-j}\no\\
&\qquad\qquad\qquad+\leftidx{^{(Z_{m+1-j})}}D_{\gamma,3}^{m-j}\big)\delta^{\varepsilon_k}\mathring L\varphi_\gamma^{m+1}\bigr|\no\\
\lesssim&\delta^{2l+2p\varepsilon_k +1+\varepsilon_k}\int_{D^{s,u}}\Big\{\sum_{j=1}^{m}\big(Z_{m+1}+\leftidx{^{(Z_{m+1})}}\lambda\big)\dots\big(Z_{m+2-j}
+\leftidx{^{(Z_{m+2-j})}}\lambda\big)\nonumber\\
&\qquad\qquad\cdot\big(\leftidx{^{(Z_{m+1-j})}}D_{\gamma,1}^{m-j}+\leftidx{^{(Z_{m+1-j})}}D_{\gamma,3}^{m-j}\big)\Big\}^2
+\delta^{2l+2p\varepsilon_k}\int_{D^{s,u}}\delta^{\varepsilon_k-1} |\mathring LZ^{m+1}\varphi_\gamma|^2\no\\
\lesssim &
(1+\delta^{4k(1-\varepsilon_0)-2\varepsilon_0})\Bigl(\delta^{2}+\delta^{1+2\varepsilon_k}{\mu_{\min}^{-2b_{m+2}}(s)\widetilde{\mathcal{E}}_{1,\leq m+2}(s, u)}\Bigr)\label{D13}\\
&+\f{\mu_{\min}^{-2b_{m+2}}(s)}{b_{m+2}}\delta {\widetilde{\mathcal{E}}_{2,\leq m+2}(s, u)}+\delta^{-1}\int_0^u\delta^{\varepsilon_k}\mathcal F_{1,\leq m+2}(s,u')du'.\nonumber
\end{align}

\item As in \cite[Section 5]{Ding4}, we now proceed to estimate the terms in $J_1^{m+1}$ that involve $\leftidx{^{(Z)}}D_{\gamma,2}^{n}$ for $0 \leq n \leq m-1$. Specifically, in the case $n=0$, the index $j$ in $J_1^{m+1}$ equals $m$, and the top-order derivatives in $\leftidx{^{(Z)}}D_{\gamma,2}^{0}$ are of order $m$. Consequently, $\leftidx{^{(Z)}}D_{\gamma,2}^{0}$ contains the terms involving $(m+1)^\text{th}$-order derivatives of the deformation tensor.

This structure precludes the direct application of Proposition \ref{L2chi} to estimate the $L^2$ norm of $\leftidx{^{(Z)}}D_{\gamma,2}^0$, as the $L^2$ norm of the $(m+1)^\text{th}$-order derivatives of the deformation tensor can only be controlled by $\mathcal{E}_{1,\leq m+3}(s, u)$ and $\mathcal{E}_{2,\leq m+3}(s, u)$ in the energy estimates. Notably, by \eqref{e}, $\mathcal{E}_{1,\leq m+3}(s, u)$ and $\mathcal{E}_{2,\leq m+3}(s, u)$ cannot be directly absorbed by the energies $\mathcal{E}_{1,\leq m+2}(s, u)$ and $\mathcal{E}_{2,\leq m+2}(s, u)$ on the left-hand side of the resulting energy inequality.

To solve this, we will carefully analyze the expression of $\leftidx{^{(Z)}}D_{\gamma,2}^n$ and apply the estimates from Subsections \ref{trchi} and \ref{Sformu} to handle the top-order derivatives of $\operatorname{tr}{\check{\mathscr X}}$ and $\mu$.

According to the expressions given in \eqref{T2}, \eqref{rL2} and \eqref{R2}, special attention should be devoted to handling the terms marked with underlines, wavy lines, boxes or braces. For $\leftidx{^{(T)}}D_{\gamma,2}^{n}$, we first utilize the relation $\mathring{\underline L} = \mu\mathring L + 2T$ to obtain the identity $\frac{1}{2}\slashed{\mathcal L}_{\mathring{\underline L}}\slashed d_X\mu = \slashed d_XT\mu + \frac{1}{2}\mu\slashed d_X\mathring L\mu$. By substituting this identity into the underlined terms in $\leftidx{^{(T)}}D_{\gamma,2}^{n}$, one has
\begin{equation}\label{Y-25}
-\slashed d_XT\mu + \frac{1}{2}\slashed{\mathcal L}_{\mathring{\underline L}}\slashed d_X\mu = \frac{1}{2}\mu\slashed d_X\mathring L\mu.
\end{equation}
Based on this, the underlined terms in $\leftidx{^{(T)}}D_{\gamma,2}^{n}$ will be addressed by means of \eqref{lmu}.

It follows from \eqref{Rpi} and \eqref{theta} that all the terms marked by
wavy underlines in \eqref{T2}, \eqref{rL2} and \eqref{R2} involve the quantity
$\slashed{\mathcal L}_{\mathring{\underline L}}\check{\mathscr X}$.
 As indicated in \eqref{Tchi'}, this quantity can be expanded as $2\slashed\nabla^2\mu + \mu\slashed{\mathcal L}_{\mathring{L}}\check{\mathscr X} + \cdots$. Specifically, the terms contained in $\mu\slashed{\mathcal L}_{\mathring{L}}\check{\mathscr X} + \cdots$ can be estimated based on \eqref{Lchi'} and Proposition \ref{L2chi}, whereas the $L^2$-norm of the terms corresponding to $\slashed\triangle\mu$ may be handled by \eqref{Zmu}. Meanwhile, the boxed terms and braced terms are estimated through \eqref{Zmu} and \eqref{dnnchi}, respectively.

On the other hand, some terms contain $\mathring L$-derivatives of deformation tensors. A typical example is $\frac{1}{4}\mathring L(\text{tr}\leftidx{^{(T)}}{\slashed\pi})\underline{\mathring L}\varphi_\gamma^n$ appeared in \eqref{T2}. These terms do not cause a derivative loss: a detailed examination for each term in \eqref{Lpi} and \eqref{Rpi} shows that
it is found that the $\mathring L$-derivatives of the involved deformation tensors are associated with ``good" quantities, which can be
treated by \eqref{lmu} and \eqref{Lchi'}.

Therefore, it follows from Proposition \ref{L2chi} and \eqref{Zmu}-\eqref{mucheckX} that
		\begin{align}
&\delta^{l+p\varepsilon_k}\|\sum_{j=1}^{m}\big(Z_{m+1}+\leftidx{^{(Z_{m+1})}}\lambda\big)\dots\big(Z_{m+2-j}
+\leftidx{^{(Z_{m+2-j})}}\lambda\big)\leftidx{^{(Z_{m+1-j})}}D_{\gamma,2}^{m-j}\|_{s, u}\nonumber\\
\lesssim&\delta^{l_0+p_0\varepsilon_k}\big(\delta^{(k+1)(1-\varepsilon_0)-\varepsilon_0}(1+\delta^{2k(1-\varepsilon_0)
-\varepsilon_0})\|Z^{m_0}\mu\|_{s, u}+\delta^{2k(1-\varepsilon_0)-\varepsilon_0}\|TZ^{m_0}\vp\|_{s, u}\nonumber\\
&\qquad\qquad+\delta^{k(1-\varepsilon_0)-1}\|\mathring LZ^{m_0}\vp\|_{s, u}+\delta^{2k(1-\varepsilon_0)-1}\|\slashed dZ^{m_0}\vp\|_{s, u}+\delta^{2k(1-\varepsilon_0)-\varepsilon_0-1}\|Z^{m_0}\vp\|_{s, u}\nonumber\\
&\qquad\qquad+\delta^{-\varepsilon_0}(1+\delta^{2k(1-\varepsilon_0)-\varepsilon_0})\sum_{i=1}^2\|Z^{m_0}\check L^i\|_{s, u}+\delta^{k(1-\varepsilon_0)-2\varepsilon_0}\|Z^{m_0}\upsilon\|_{s, u}\nonumber\\
 &\qquad\qquad+\delta^{-2\varepsilon_k}\|Z^{m_0}\check\varrho\|_{s,u}+\delta^{-\varepsilon_k}\|\slashed{\mathcal L}_Z^{m_0}R\|_{s,u}+\delta^{(k+1)(1-\varepsilon_0)-\varepsilon_0}\|Z^{m_0}\mathring L\mu\|_{s,u}\big)\nonumber\\
&+\delta^{l_1+p_1\varepsilon_k}\big(\delta^{-\varepsilon_0}\|\slashed dZ^{m_1}\operatorname{tr}\check{\mathscr X}\|_{s, u}+\delta^{1-\varepsilon_0}\|Z^{m_0}\slashed\triangle\mu\|_{s, u}+(\delta^{-2\varepsilon_k} +\delta^{-k(1-\varepsilon_0)})\|Z^{m_1}\operatorname{tr}\check{\mathscr X}\|_{s, u}\nonumber\\
&\qquad\qquad\quad+\delta^{(2k+1)(1-\varepsilon_0)-\varepsilon_0}\|\slashed{\mathcal{L}}_{{Z}}^{m_1}\leftidx{^{(R)}}{\slashed{\pi}}_T\|_{s, u}+\delta^{(k+1)(1-\varepsilon_0)}\|\slashed{\mathcal{L}}_{{Z}}^{m_1}\leftidx{^{(T)}}{\slashed{\pi}}_{\mathring L}\|_{s, u}+\delta^{-\varepsilon_k}\|\slashed{\mathcal{L}}_{{Z}}^{m_1}\leftidx{^{(R)}}{\slashed{\pi}}_{\mathring L}\|_{s, u}\nonumber\\
&\qquad\qquad\quad+\delta^{(k+1)(1-\varepsilon_0)}(1+\delta^{2k(1-\varepsilon_0)-\varepsilon_0})
\|\slashed{\mathcal{L}}_{{Z}}^{m_1}\leftidx{^{(T)}}{\slashed{\pi}}\|_{s, u}\big)\nonumber\\
&+\delta^{-2\varepsilon_k}\delta^{l_{-1}+p_{-1}\varepsilon_k}\|Z^{m_{-1}}x\|_{s,u}\nonumber\\
\lesssim & \mu_{\min}^{-1-b_{m+2}}(s)\delta^{k(1-\varepsilon_0)}\Big\{\delta^{-2\varepsilon_0+1/2} +\delta^{-1}{\widetilde{\mathcal{E}}_{1,\leq m+2}(s, u)}
+\delta^{-\varepsilon_0}
{\widetilde{\mathcal{E}}_{2,\leq m+2}(s, u)} \Bigr\},\label{Dg2}
\end{align}
which gives
\begin{align}
&\delta^{2l+2p\varepsilon_k}
\bigl|
\int_{D^{s,u}}
\sum_{j=1}^{m}
\bigl(Z_{m+1}+\leftidx{^{(Z_{m+1})}}\lambda\bigr)
\cdots
\bigl(Z_{m+2-j}+\leftidx{^{(Z_{m+2-j})}}\lambda\bigr)
\leftidx{^{(Z_{m+1-j})}}D_{\gamma,2}^{m-j}
\cdot\delta\mathring{\underline L}\varphi_\gamma^{m+1}
\bigr|\notag\\
&\lesssim\f{\mu_{\min}^{-2b_{m+2}}(s)}{b_{m+2}}\bigl(\delta^{2-2\varepsilon_0}
+\delta^{2\varepsilon_0-1} {\widetilde{\mathcal{E}}_{1,\leq m+2}(s, u)}
+\delta {\widetilde{\mathcal{E}}_{2,\leq m+2}(s, u)}
\bigr).\label{iDg2}
\end{align}

But it should be noted that \eqref{Dg2} is not sufficient to close the energy estimate in $\delta^{2l+2p\varepsilon_k}|\int_{D^{s,u}}\Phi \cdot \delta^{\varepsilon_k}\mathring{L}\Psi|$ if one adopts the same strategy as in \eqref{D13}. Indeed, after examining all terms in $\leftidx{^{(Z)}}D_{\gamma,2}^{0}$, we find that the worst terms which cannot be controlled as in \eqref{D13} are $(R^{m+1}\operatorname{tr}\check{\mathscr{X}})\underline{\mathring L}\vp_\gamma$ (arising from $R^m\big(\leftidx{^{(R)}}D_{\gamma,2}^{0}\big)$) and $(\bar{Z}^m\slashed{\triangle}\mu) \underline{\mathring L}\vp_\gamma$ (appearing in $\bar{Z}^m\big(\leftidx{^{(T)}}D_{\gamma,2}^{0}\big)$). 
These two terms are not suitable for applying \eqref{mucheckX} and \eqref{Zmu} due to the lack of the factor $\mu$.
Consequently, the final energy estimates cannot be closed by
directly applying the same argument as in \eqref{D13}.

Inspired by \cite[Section 9.2]{M-Y} and \cite[Section 7.2]{L-Y}, we have
\begin{align}
&\int_{D^{s,u}}\bigl(R^{m+1}\operatorname{tr}\check{\mathscr{X}}\bigr)\underline{\mathring L}\vp_\gamma\bigl(\delta^{\varepsilon_k}\mathring L R^{m+1}\vp_\gamma\bigr)\notag\\
=&-\delta^{\varepsilon_k}\int_{D^{s,u}}\bigl(R^{m+1}\operatorname{tr}\check{\mathscr{X}}\bigr)R^{m+1}\vp_\gamma
\big(\mathring L\underline{\mathring L}\vp_\gamma
+\big(\operatorname{tr}\check{\mathscr{X}}+\frac12\slashed g^{XX}G_{XX}^\al\mathring L\vp_\alpha+\frac1\varrho\big)\underline{\mathring L}\vp_\gamma\big)\notag\\
&+\delta^{\varepsilon_k}\int_{D^{s,u}}\bigl([R^{m+1},\mathring L]\operatorname{tr}\check{\mathscr{X}}\bigr)\underline{\mathring L}\vp_\gamma\bigl(R^{m+1}\vp_\gamma\bigr)\notag\\
&+\delta^{\varepsilon_k}\int_{D^{s,u}}\bigl(R^m\mathring L\operatorname{tr}\check{\mathscr{X}}\bigr)
\big(\frac12\underline{\mathring L}\vp_\gamma\bigl(R^{m+1}\vp_\gamma\bigr)\operatorname{tr}\leftidx{^{(R)}}{\slashed{\pi}}
+R\underline{\mathring L}\vp_\gamma\bigl(R^{m+1}\vp_{\gamma}\bigr)
+\underline{\mathring L}\vp_\gamma\bigl(R^{m+2}\vp_\gamma\bigr)\big)\nonumber\\
&-\delta^{\varepsilon_k}\int_{\Sigma_{t_0}^u}\bigl(R^m\operatorname{tr}\check{\mathscr{X}}\bigr)\underline{\mathring L}\vp_\gamma\bigl(R^{m+1}\vp_\gamma\bigr)\label{RmX}\\
&-\delta^{\varepsilon_k}\int_{\Sigma_s^u}\bigl(R^m\operatorname{tr}\check{\mathscr{X}}\bigr)R^{m+1}\vp_\gamma
\Big(\frac12\underline{\mathring L}\vp_\gamma\operatorname{tr}\leftidx{^{(R)}}{\slashed{\pi}}
+R\underline{\mathring L}\vp_\gamma\Big)\notag\\
&-\delta^{\varepsilon_k}\int_{\Sigma_s^u}\bigl(R^m\operatorname{tr}\check{\mathscr{X}}\bigr)\underline{\mathring L}\vp_\gamma\bigl(R^{m+2}\vp_\gamma\bigr).\nonumber
\end{align}
With the help of \eqref{fequation}, the first integral on the right-hand side of \eqref{RmX} can be bounded by
\begin{equation}\label{YHC-A8}
\begin{split}
\bigl(\delta^{-k(1-\varepsilon_0)}+\delta^{-\varepsilon_k}\bigr) \int_{t_0}^s \mu_{\min}^{-1}(\tau) \bigl\| \mu \slashed{d} R^m \operatorname{tr}\check{\mathscr{X}} \bigr\|_{\tau,u} \bigl\| R^{m+1} \vp \bigr\|_{\tau,u} \,d\tau.
\end{split}
\end{equation}
Note that from \eqref{mucheckX},  \eqref{YHC-A8} is estimated by
\[
\frac{\mu_{\min}^{-2b_{m+2}}(s)}{b_{m+2}}
\bigl(\delta^{2-2\varepsilon_0}
+\delta^{\varepsilon_k} \widetilde{\mathcal{E}}_{1,\leq m+2}(s, u)
+\delta \widetilde{\mathcal{E}}_{2,\leq m+2}(s, u)
\bigr).
\]
The estimate for the last integral on the right-hand side of \eqref{RmX} has been treated separately in Corollary \ref{Cor6.2}. More precisely,
\[
\bigl|\delta^{\varepsilon_k}\int_{\Sigma_s^u}\bigl(R^m\operatorname{tr}\check{\mathscr{X}}\bigr)\underline{\mathring L}\vp_\gamma\bigl(R^{m+2}\vp_\gamma\bigr)\bigr|
\]
can be controlled by
\begin{align*}
&\delta^{2-2\varepsilon_0}\mu_{\min}^{-2b_{m+2}}(s)b_{m+2}
+\delta^{-(k+1)(1-\varepsilon_0)}\int_0^u \mathcal{F}_{1,\leq m+2}(s,u')\,du'\\
&+\frac{\mu_{\min}^{-2b_{m+2}}(s)}{b_{m+2}}
\bigl(\delta^{\varepsilon_k}\widetilde{\mathcal{E}}_{1,\leq m+2}(s,u)
+\delta\widetilde{\mathcal{E}}_{2,\leq m+2}(s,u)\bigr).
\end{align*}
For the remaining integrals on the right-hand side of \eqref{RmX}, we employ \eqref{Lchi'} and \eqref{Gtrivp} to show that they are bounded by
\begin{align*}
&\delta^{2-\varepsilon_0}
+\delta^{-k(1-\varepsilon_0)}\bigl\|R^{\leq m}\operatorname{tr}\check{\mathscr{X}}\bigr\|_{s,u}\bigl\|R^{m+1}\vp_\gamma\bigr\|_{s,u}
+\int_{t_0}^s\bigl\|R^{m+1}\vp_\gamma\bigr\|_{\tau,u}
\bigl\|\slashed{\mathcal{L}}_R^{\leq m}\leftidx{^{(R)}}{\slashed{\pi}}_{\mathring L}\bigr\|_{\tau, u}\,d\tau\\
&+\int_{t_0}^s\delta^{-k(1-\varepsilon_0)}\bigl(\bigl\|R^{m+2}\vp_\gamma\bigr\|_{\tau,u}+\bigl\|R^{m+1}\vp_\gamma\bigr\|_{\tau,u}\bigr)
\big\{
\bigl(1+\delta^{2k(1-\varepsilon_0)-\varepsilon_0}\bigr)\bigl\|R^{\leq m}\operatorname{tr}\check{\mathscr{X}}\bigr\|_{\tau,u}\\
&+\delta^{(k-1)(1-\varepsilon_0)}\bigl\|\slashed{d}R^{m}\mathring L\vp\bigr\|_{\tau,u}
+\delta^{1-\varepsilon_0}\bigl\|R^{m}\bigl(G_{\mathring L\mathring L}^\gamma\mathring L_\gamma\bigr)\bigr\|_{\tau,u}
+\delta^{k(1-\varepsilon_0)}\bigl\|R^m\slashed\triangle\vp\bigr\|_{\tau,u}\\
&+\delta^{2k(1-\varepsilon_0)}\bigl(\sqrt{\mathcal{E}_{1,\leq m+2}(\tau,u)}+\sqrt{\mathcal{E}_{2,\leq m+2}(\tau,u)}\bigr)\\
&+\delta^{2k(1-\varepsilon_0)-\varepsilon_0}
\bigl(\sum_{i=1}^2\bigl\|R^{\leq m+1}\check L^i\bigr\|_{\tau,u}
+\bigl\|R^{\leq m+2}x\bigr\|_{\tau,u}
+\bigl\|\slashed{\mathcal{L}}_R^{\leq m}\slashed g\bigr\|_{\tau,u}\bigr)
\big\}\,d\tau.
\end{align*}
This expression can be directly estimated by Proposition \ref{L2chi} and Corollary \ref{Cor6.3}. Consequently, one has
\begin{align}
&\bigl|\int_{D^{s,u}}\bigl(R^{m+1}\operatorname{tr}\check{\mathscr{X}}\bigr)\underline{\mathring L}\vp_\gamma\bigl(\delta^{\varepsilon_k}\mathring L R^{m+1}\vp_\gamma\bigr)\bigr|\nonumber\\
\lesssim & \delta^{2-2\varepsilon_0}\mu_{\min}^{-2b_{m+2}}(s)b_{m+2}
+\delta^{-(k+1)(1-\varepsilon_0)}\int_0^u \mathcal{F}_{1,\leq m+2}(s,u')\,du'\label{topX}\\
&+\frac{\mu_{\min}^{-2b_{m+2}}(s)}{b_{m+2}}
\bigl(\delta^{\varepsilon_k}\widetilde{\mathcal{E}}_{1,\leq m+2}(s,u)
+\delta\widetilde{\mathcal{E}}_{2,\leq m+2}(s,u)\bigr).\nonumber
\end{align}
	
In addition, due to
\begin{align*}
&\delta^{2l+2}\int_{D^{s,u}}(\bar Z^m\slashed\triangle\mu)\underline{\mathring L}\vp_\gamma(\delta^{\varepsilon_k}\mathring L\bar Z^mT\vp_\gamma)\\
=&\delta^{2l+2}\delta^{\varepsilon_k}\int_{D^{s,u}}(\bar Z^m\slashed\triangle\mu)\underline{\mathring L}\vp_\gamma\big(\mathring L[\bar Z^m, T]\vp_\gamma+[\mathring L,T]\bar Z^m\vp_\gamma+T\mathring L\bar Z^m\vp_{\gamma}\big),
\end{align*}
it follows from \eqref{Zmu} and Proposition \ref{L2chi} that
\begin{equation}\label{topmu}
\begin{split}
&\bigl|\delta^{2l+2}\int_{D^{s,u}}(\bar Z^m\slashed\triangle\mu)\underline{\mathring L}\vp_\gamma(\delta^{\varepsilon_k}\mathring L\bar Z^mT\vp_\gamma)\bigr|\\
\lesssim & \frac{\mu_{\min}^{-2b_{m+2}}(s)}{b_{m+2}}
\bigl(\delta^{2-2\varepsilon_0}
+\delta^{\varepsilon_k} \widetilde{\mathcal{E}}_{1,\leq m+2}(s, u)
+\delta \widetilde{\mathcal{E}}_{2,\leq m+2}(s, u)
\bigr).
\end{split}
\end{equation}

On the other hand, the factors $\bar Z^m\slashed\triangle\mu$ and $\slashed{d} R^{m}\operatorname{tr}\check{\mathscr X}$ also emerge in the expression of $\bar{Z}^m\big(\leftidx{^{(\varrho\mathring L)}}D_{\gamma,2}^{0}\big)$. These factors can be handled analogously by virtue of the estimates \eqref{Zmu} and \eqref{mucheckX} as above.
Therefore, with the delicate estimates \eqref{topX} and \eqref{topmu}, we have
\begin{align}\label{DL2}
&\delta^{2l+2p\varepsilon_k}\bigl|\int_{D^{s,u}}\sum_{j=1}^{m}\big(Z_{m+1}+\leftidx{^{(Z_{m+1})}}\lambda\big)\dots\big(Z_{m+2-j}
+\leftidx{^{(Z_{m+2-j})}}\lambda\big)\leftidx{^{(Z_{m+1-j})}}D_{\gamma,2}^{m-j}(\delta^{\varepsilon_k}\mathring L\varphi_\gamma^{m+1})\bigr|\notag\\
\lesssim & \delta^{2-2\varepsilon_0}\mu_{\min}^{-2b_{m+2}}(s)b_{m+2}
+\delta^{-1}\int_0^u \delta^{\varepsilon_k}\mathcal{F}_{1,\leq m+2}(s,u')\,du'\\
&+\frac{\mu_{\min}^{-2b_{m+2}}(s)}{b_{m+2}}
\bigl(\delta^{\varepsilon_k}\widetilde{\mathcal{E}}_{1,\leq m+2}(s,u)
+\delta\widetilde{\mathcal{E}}_{2,\leq m+2}(s,u)\bigr).\notag
\end{align}
\end{enumerate}

\subsection{Treatment of $J_2^{m+1}$ and $\Phi_\gamma^1$ in \eqref{Phik}}\label{l}

In this subsection, we analyze the structures of $J_2^{m+1}$ and
$\Phi_\gamma^1$ appearing in \eqref{Phik}. We observe that neither
$J_2^{m+1}$ nor $\Phi_\gamma^1$ contains the highest-order
derivatives of $\operatorname{tr}\check{\mathscr X}$ or $\Delta\mu$.
Therefore, Proposition~6.2 and the expressions for
${}^{(Z)}D_{\gamma,j}^n$ in \eqref{T1}--\eqref{R3} yield

\begin{align}
&\delta^{2l+2p\varepsilon_k}\int_{D^{s,u}}\bigl\{|\sum_{j=1}^3\leftidx{^{(Z_{m+1})}}D_{\gamma,j}^m\cdot \delta\mathring{\underline L}{\varphi}_\gamma^{m+1}|+|\sum_{j=1}^3\leftidx{^{(Z_{m+1})}}D_{\gamma,j}^m\cdot \delta^{\varepsilon_k}\mathring L{\varphi}_\gamma^{m+1}|\big\}\no\\
\lesssim &
\delta^{2-2\varepsilon_0}
+\delta(\delta^{2\varepsilon_k}+	\delta^{2k(1-\varepsilon_0)}){\mu_{\min}^{-2b_{m+2}}(s)\widetilde{\mathcal{E}}_{1,\leq m+2}(s, u)}\label{ZL}\\
&+\f{\mu_{\min}^{-2b_{m+2}}(s)}{b_{m+2}}\delta {\widetilde{\mathcal{E}}_{2,\leq m+2}(s, u)}
+\delta^{-1}\int_0^u\delta^{\varepsilon_k}\mathcal F_{1,\leq m+2}(s,u')du'.\nonumber
\end{align}

Using the explicit expression
$\Phi_\gamma^0=\mu\Box_g\varphi_\gamma$ in \eqref{ge}, the $L^2$
estimate for $\operatorname{tr}\check{\mathscr X}$ in Proposition~\ref{L2chi}, and the
asymptotic behavior given in \eqref{lamda}, one arrives at
\begin{align}
&\delta^{2l+2p\varepsilon_k}\int_{D^{s,u}}\Big\{|(Z_{m+1}+\leftidx{^{(Z_{m+1})}}\lambda)\cdots(Z_{1}
+\leftidx{^{(Z_{1})}}\lambda)\Phi_\gamma^0\cdot\delta\mathring{\underline L}\varphi_\gamma^{m+1}|\no\\
&\qquad\qquad\qquad\quad +|(Z_{m+1}+\leftidx{^{(Z_{m+1})}}\lambda)\cdots(Z_{1}
+\leftidx{^{(Z_{1})}}\lambda)\Phi_\gamma^0\cdot\delta^{\varepsilon_k}\mathring L\varphi_\gamma^{m+1}|\Big\}\nonumber\\
\lesssim &
\delta^{2-2\varepsilon_0}+\f{\mu_{\min}^{-2b_{m+2}}(s)}{b_{m+2}}\bigl\{\delta {\widetilde{\mathcal{E}}_{2,\leq m+2}(s, u)}+\delta^{2\varepsilon_0-1}\widetilde{\mathcal{E}}_{1,\leq m+2}(s, u)\bigr\}\label{Phi0}\\
&+\delta^{-1}\int_0^u\delta^{\varepsilon_k}\mathcal F_{1,\leq m+2}(s,u')du'.\nonumber
\end{align}

\subsection{Closure of the bootstrap assumptions \eqref{bootstrap-assumptions}}

In Subsections~\ref{tot} and~\ref{l}, we have established some basic estimates for all the terms comprising the expressions
of $\delta|\int_{D^{s,u}}\Phi \cdot \mathring{\underline{L}}\Psi|$ and $\delta^{\varepsilon_k}|\int_{D^{s,u}}\Phi \cdot \mathring{L}\Psi|$
in \eqref{e}.
Substituting the bounds from the inequalities \eqref{D11}--\eqref{D13}, \eqref{iDg2}, \eqref{DL2}--\eqref{Phi0} into \eqref{e},
one can obtain
\begin{equation}\label{energy}
\begin{split}
\delta\widetilde{\mathcal{E}}_{2,\leq m+2}(s, u) + \delta \widetilde{\mathcal{F}}_{2,\leq m+2}(s, u) + \delta^{\varepsilon_k}\widetilde{\mathcal{E}}_{1,\leq m+2}(s, u) + \delta^{\varepsilon_k}\widetilde{\mathcal{F}}_{1,\leq m+2}(s, u)
\lesssim_b \delta^{2-2\varepsilon_0},
\end{split}
\end{equation}
where $ f\lesssim_b g$ means $f\leq C_bg$ with $C_b$ depending on $b_{m+2}$.

In \eqref{energy}, we choose $b_{m+2}>0$ sufficiently large, with
fractional part $1/4$; that is,
$b_{m+2}=B+\frac14$
for some sufficiently large integer $B$. Under this choice, the top-order energies
$\widetilde{\mathcal E}_{1,\leq m+2}$ and
$\widetilde{\mathcal E}_{2,\leq m+2}$, together with the corresponding
fluxes, are degenerate in the sense of the definitions in \eqref{higher order energy and flux}.
 To close the bootstrap assumption \eqref{bootstrap-assumptions}, it is necessary to construct the non-degenerate energies for the lower-order derivatives of the solutions. For this purpose, we next estimate $\tilde{\mathcal{E}}_{1,\leq m+1}$ 
 and $\tilde{\mathcal{E}}_{2,\leq m+1}(s, u)$ by \eqref{energy} with a fixed constant $b_{m+2}$.

For any $Z\in\{\varrho\mathring L, T, R\}$, it follows from \eqref{fequation} that
\begin{align*}
 \mathring{L}\bigl(\varrho^{1/2}\underline{L}{Z}^m \vp_\gamma \bigr)
&=\varrho^{1/2}\mathring L[\mathring{\underline L},Z^m]\vp_\gamma- \f12\varrho^{-1/2}[Z^m,\mathring {\underline L}]\vp_{\gamma}+
\varrho^{1/2}{Z}^m (\mu\slashed\triangle\vp_\gamma+H_\gamma )\\
&+ \varrho^{1/2}[\mathring{L}, {Z}^m] \underline{L} \vp_\gamma
- \varrho^{1/2}\sum_{\substack{m_1 + m_2 = m \\ m_1 > 0}}{Z}^{m_1} \big(\frac{1}{2\varrho}\big){Z}^{m_2} \underline{L} \vp_\gamma.
\end{align*}
This, together with Proposition \ref{L2chi} and \eqref{energy}, yields
\begin{align*}
&\delta^{l+p\varepsilon_k}\|\mathring{L}\bigl(\varrho^{1/2}\underline{L}{Z}^m \vp\bigr)\|_{s, u}\\
\lesssim & \,\delta^{l_0+p_0\varepsilon_k}\bigl(\delta^{1-\varepsilon_0-2\varepsilon_k}\|Z^{m_0}\mu\|_{s, u}+\delta^{-1}(1+\delta^{2k(1-\varepsilon_0)-\varepsilon_0})\|Z^{m_0}\vp\|_{s, u}\bigr)
+\delta^{l+p\varepsilon_k}\|\mu\slashed\triangle Z^m\vp\|_{s, u}\\
&+\delta^{-\varepsilon_0}\delta^{l_1+p_1\varepsilon_k}\bigl(\|Z^{m_1}\operatorname{tr}\check{\mathscr X}\|_{s, u}+\delta^{k(1-\varepsilon_0)}\|\slashed{\mathcal{L}}_{{Z}}^{m_1}\leftidx{^{(R)}}{\slashed{\pi}}_T\|_{s, u}+\delta\|\slashed{\mathcal{L}}_{{Z}}^{m_1}\leftidx{^{(T)}}{\slashed{\pi}}_{\mathring L}\|_{s, u}\\
&\qquad+\|\slashed{\mathcal{L}}_{{Z}}^{m_1}\leftidx{^{(R)}}{\slashed{\pi}}_{\mathring L}\|_{s, u}+\delta^{2}\|\slashed{\mathcal{L}}_{{Z}}^{m_1}\leftidx{^{(T)}}{\slashed{\pi}}\|_{s, u}+\delta\|\slashed{\mathcal{L}}_{{Z}}^{m_1}\leftidx{^{(R)}}{\slashed{\pi}}\|_{s, u}+\delta^{2k(1-\varepsilon_0)-\varepsilon_0}\sum_{i=1}^2\|Z^{m_0}\check L^i\|_{s,u}\bigr)\\
&+\delta^{-\varepsilon_k}(1+\delta^{2k(1-\varepsilon_0)-\varepsilon_0})\delta^{l_1+p_1\varepsilon_k}\|\slashed{\mathcal{L}}_{{Z}}^{m_1}\slashed g\|_{s, u}+\delta^{-2\varepsilon_k}\delta^{l_{-1}+p_{-1}\varepsilon_k}\|Z^{m_{-1}}x\|_{s,u}\\
\lesssim &\,(1+\delta^{2k(1-\varepsilon_0)-\varepsilon_0})\bigl\{\delta^{-\varepsilon_k+1/2}+\mu_{\min}^{-b_{m+2}}(s)\bigl( \sqrt{\widetilde{\mathcal{E}}_{1,\leq m+2}(s, u) }+ \sqrt{\widetilde{\mathcal{E}}_{2,\leq m+2}(s, u) }\bigr)\bigr\}\\
\lesssim_b &\,\delta^{-\varepsilon_0+1/2}(1+\delta^{2k(1-\varepsilon_0)-\varepsilon_0})\mu_{\min}^{-b_{m+2}}(s).
\end{align*}
Therefore, by \eqref{Ff}, we have
\begin{align}
\delta^{l+p\varepsilon_k}\|\varrho^{1/2}\underline{L}{Z}^m \vp\|_{s, u}
\lesssim&\,\delta^{-\varepsilon_0+1/2}+\delta^{l+p\varepsilon_k}\int_{t_0}^s\|\mathring{L}\bigl(\varrho^{1/2}\underline{L}{Z}^m \vp\bigr)\|_{\tau,u}d\tau\nonumber\\
\lesssim_b&\, \delta^{-\varepsilon_0+1/2}\mu_{\min}^{1-b_{m+2}}(s).\label{E2m+1}
\end{align}
In addition, due to $\mu^{-1}\mathring L\mu\lesssim\delta^{-\varepsilon_k}$ by \eqref{blowup} and \eqref{eq:higher-order-L-infty-estimates},
it holds that
\begin{align*}
&\delta^{2l+2p\varepsilon_k}\bigl(\mu|RZ^m\vp_\gamma|^2+\mu|\mathring LZ^m\vp_\gamma|^2\bigr)(s,u,\vartheta)\\
\lesssim&\delta^{2-2\varepsilon_0-2\varepsilon_k}+\delta^{2l+2p\varepsilon_k}\int_{t_0}^s\mu^{-1}\mathring L\mu\bigl(\mu|RZ^m\vp_\gamma|^2+\mu|\mathring LZ^m\vp_\gamma|^2\bigr)(\tau,u,\vartheta)d\tau\\
&+\delta^{2l+2p\varepsilon_k}\int_{t_0}^s2\mu\bigl(RZ^m\vp_\gamma\cdot\mathring LRZ^m\vp_\gamma+\mathring LZ^m\vp_\gamma\cdot\mathring L^2Z^m\vp_\gamma\bigr)(\tau,u,\vartheta)d\tau\\
\lesssim & \delta^{2-2\varepsilon_0-2\varepsilon_k}+\delta^{2l+2p\varepsilon_k}\int_{t_0}^s\delta^{-\varepsilon_k}\bigl(\mu|RZ^m\vp_\gamma|^2+\mu|\mathring LZ^m\vp_\gamma|^2\bigr)(\tau,u,\vartheta)d\tau\\
&+\delta^{2l+2p\varepsilon_k}\int_{t_0}^s\mu\bigl(|RZ^m\vp_\gamma|\cdot|\mathring LRZ^m\vp_\gamma|+|\mathring LZ^m\vp_\gamma|\cdot|\mathring L^2Z^m\vp_\gamma|\bigr)(\tau,u,\vartheta)d\tau,
\end{align*}
and hence,
\begin{equation*}
\begin{split}
&\delta^{l+p\varepsilon_k}\bigl(\mu^{1/2}|RZ^m\vp_\gamma|+\mu^{1/2}|\mathring LZ^m\vp_\gamma|\bigr)(s,u,\vartheta)\\
\lesssim&\,\delta^{1-\varepsilon_0-\varepsilon_k}
+\delta^{l+p\varepsilon_k}\int_{t_0}^s\bigl(|\mu^{1/2}\mathring LRZ^m\vp_\gamma|
+|\mu^{1/2}\mathring L^2Z^m\vp_\gamma|\bigr)(\tau,u,\vartheta)d\tau.
\end{split}
\end{equation*}
Together with \eqref{Ff}, this implies
\begin{align}
&\delta^{l+p\varepsilon_k}\bigl(\|\mu^{1/2}RZ^m\vp\|_{s, u}+\|\mu^{1/2}\mathring LZ^m\vp\|_{s, u}\bigr)\nonumber\\
\lesssim&\, \delta^{-\varepsilon_0-\varepsilon_k+3/2}+\int_{t_0}^s\delta^{-\varepsilon_k}\mu_{\min}^{-b_{m+2}}(\tau)\sqrt{\tilde{\mathcal E}_{1,\leq m+2}(\tau,u)}d\tau\nonumber\\
\lesssim_b &\, \delta^{1-\f32\varepsilon_0+k(1-\varepsilon_0)}\mu_{\min}^{1-b_{m+2}}(s).\label{E1m+1}
\end{align}
Set $b_{m+1} = b_{m+2} - 1$. Then from \eqref{E2m+1} and \eqref{E1m+1}, one has
\[
\delta \widetilde{\mathcal{E}}_{2,\leq m+1}(s, u) + \delta^{\varepsilon_k}\widetilde{\mathcal{E}}_{1,\leq m+1}(s, u) \lesssim_b\, \delta^{2-2\varepsilon_0}.
\]
Note that we can define $b_{m+2-n} = b_{m+2} - n$ for $1 \leq n \leq [b_{m+2}]$ and $b_{m+2-n} = 0$ for $[b_{m+2}] + 1 \leq n \leq m + 2$. Repeating the argument as in deriving \eqref{E2m+1} and \eqref{E1m+1} step by step, we eventually arrive at
\[
\delta\widetilde{\mathcal{E}}_{2,\leq n}(s, u) + \delta^{\varepsilon_k}\widetilde{\mathcal{E}}_{1,\leq n}(s, u) \lesssim_b\, \delta^{2-2\varepsilon_0}, \quad 0 \leq n \leq m+2.
\]
In particular, it holds that
\begin{align}
\mathcal E_{2,\leq m + 1 - [b_{m+2}]}(s, u) &\lesssim_b\, \delta^{1-2\varepsilon_0}, \label{E2low}\\
\mathcal E_{1,\leq m + 1 - [b_{m+2}]}(s, u) &\lesssim_b\, \delta^{2-3\varepsilon_0+k(1-\varepsilon_0)}.\label{E1low}
\end{align}
which are the non-degenerate energy estimates,

We next apply the Sobolev inequality on $S_{s,u}$ and \eqref{SSi} to close the first bootstrap assumption in \eqref{bootstrap-assumptions}.
For any $n\leq m-1-[b_{m+2}]$, one has
\begin{equation}\label{improvement}
\begin{split}
&\delta^{l+p\varepsilon_k}|Z^n\vp_\gamma(s,u,\vartheta)|
\lesssim\delta^{l+p\varepsilon_k}\sum_{i\leq 1}\|R^iZ^n\vp_\gamma\|_{L^2(S_{s,u})}\\
\lesssim&\delta^{1/2}\bigl(\sqrt{\mathcal E_{1,\leq n+2}(s, u)}+\sqrt{\mathcal E_{2,\leq n+2}(s, u)}\bigr)\lesssim_b\,\delta^{1-\varepsilon_0},
\end{split}
\end{equation}
where $l$ and $p$ denote the numbers of occurrences of $T$
and $\varrho\mathring L$ in $Z^n$, respectively.
Therefore, \eqref{improvement} obviously improves the first inequality in \eqref{bootstrap-assumptions}.

To verify the second assumption in \eqref{bootstrap-assumptions}, we now estimate the Sobolev norm $\|Z^{n+1}\phi\|_{s,u}$.
Following the same procedure as in \eqref{lowerphi}, one has
\begin{align*}
&\delta^{l+p\varepsilon_k}\|Z^{n+1}\phi\|_{s,u}\\
\lesssim & \ \delta^{3-\varepsilon_0}\delta^{l_1+p_1\varepsilon_k}\bigl(\|\slashed{\mathcal L}_{Z}^{n_1}\leftidx{^{(R)}}{\slashed\pi}_T\|_{s,u}+\delta^{\varepsilon_k}\|\slashed{\mathcal L}_{Z}^{n_1}\leftidx{^{(T)}}{\slashed\pi}_{\mathring L}\|_{s,u}\bigr)+\delta^2(1+\delta^{2k(1-\varepsilon_0)-\varepsilon_0})\delta^{l_1+p_1\varepsilon_k}\|\slashed{\mathcal L}_{\bar Z}^{n_1}\slashed g\|_{s,u}\\
&+\delta^{2-\varepsilon_0}\delta^{l_0+p_0\varepsilon_k}\bigl(\| Z^{n_0}\mu\|_{s,u}+\sum_{i=1}^2\big(\| Z^{n_0}\check L^i\|_{s,u}+\| Z^{n_0}x^i\|_{s,u}\big)\big)+\delta\cdot\delta^{l_0+p_0\varepsilon_k}\| Z^{n_0}\varphi\|_{s,u}\\
\lesssim & \ \delta^{5/2-\varepsilon_0}+\delta^{1+\varepsilon_0}\mu_{\min}^{-b_{n+2}}\sqrt{\widetilde {\mathcal E}_{1,\leq n+2}(s,u)}+\delta^2\sqrt{\widetilde {\mathcal E}_{2,\leq n+2}(s,u)}.
\end{align*}
Under the condition of $n+1\leq m-[b_{m+2}]$, it follows from the estimates \eqref{E2low}--\eqref{E1low} that
\[
\delta^{l+p\varepsilon_k}\|Z^{n+1}\phi\|_{s,u}\lesssim_b \delta^{5/2-\varepsilon_0}.
\]
Furthermore, for all $n\leq m-2-[b_{m+2}]$, it holds that
\[
\delta^{l+p\varepsilon_k}|Z^{n}\phi(s,u,\vartheta)|\lesssim_b\delta^{2-\varepsilon_0}.
\]
This verifies the second assumption in \eqref{bootstrap-assumptions} and ultimately completes the proof of Theorem \ref{maintheorem}.

\vskip 0.2 true cm

{\bf \color{blue}{Conflict of Interest Statement:}}

\vskip 0.2 true cm

{\bf The authors declare that there is no conflict of interest in relation to this article.}

\vskip 0.2 true cm
{\bf \color{blue}{Data availability statement:}}

\vskip 0.2 true cm

{\bf  Data sharing is not applicable to this article as no data sets are generated
during the current study.}

\end{document}